\newtheorem{lemma}{Lemma}[section]
\newtheorem{theorem}[lemma]{Theorem}
\newtheorem{question}[lemma]{Question}
\newtheorem{prop}[lemma]{Proposition}
\newtheorem{cor}[lemma]{Corollary}
\newtheorem{claim*}{Claim}
\newtheorem{thm}[lemma]{Theorem}
\newtheorem{defn}[lemma]{Definition}
\newtheorem{example}[lemma]{Example}
\theoremstyle{remark}
\newtheorem{remark}[lemma]{Remark}
\newtheorem{rmk}[lemma]{Remark}
\newtheorem{rmks}[lemma]{Remarks}
\newcommand{\A}{{\mathbb A}}
\newcommand{\G}{{\mathbb G}}
\newcommand{\PP}{{\mathbb P}}
\newcommand{\C}{{\mathbb C}}
\newcommand{\F}{{\mathbb F}}
\newcommand{\Q}{{\mathbb Q}}
\newcommand{\Z}{{\mathbb Z}}
\newcommand{\Xbar}{{\overline{X}}}
\newcommand{\Qbar}{{\overline{\Q}}}
\newcommand{\kbar}{{\overline{k}}}
\newcommand{\Fbar}{{\overline{\F}}}
\newcommand{\Ybar}{{\overline{Y}}}
\newcommand{\kk}{{\mathbf k}}
\newcommand{\elltilde}{\widetilde{\ell}}
\newcommand{\calA}{{\mathcal A}}
\newcommand{\calC}{{\mathcal C}}
\newcommand{\calG}{{\mathcal G}}
\newcommand{\calQ}{{\mathcal Q}}
\newcommand{\OO}{{\mathcal O}}
\newcommand{\scrS}{{\mathscr S}}
\newcommand{\scrT}{{\mathscr T}}
\newcommand{\scrX}{{\mathscr X}}
\DeclareMathOperator{\HH}{H}
\DeclareMathOperator{\Char}{char}
\DeclareMathOperator{\inv}{inv}
\DeclareMathOperator{\Aut}{Aut}
\DeclareMathOperator{\Gal}{Gal}
\DeclareMathOperator{\Cor}{Cor}
\DeclareMathOperator{\Res}{Res}
\DeclareMathOperator{\Norm}{Norm}
\DeclareMathOperator{\Br}{Br}
\DeclareMathOperator{\Sym}{Sym}
\DeclareMathOperator{\Pic}{Pic}
\DeclareMathOperator{\Jac}{Jac}
\DeclareMathOperator{\Spec}{Spec}
\DeclareMathOperator{\PGL}{PGL}
\DeclareMathOperator{\N}{N}
\DeclareMathOperator{\rank}{rank}
\DeclareMathOperator{\GL}{GL}
\DeclareMathOperator{\M}{M}
\DeclareMathOperator{\disc}{disc}
\DeclareMathOperator{\mult}{mult}
\DeclareMathOperator{\Clif}{Clif}
\DeclareMathOperator{\Bl}{Bl}
\newcommand{\isom}{\cong}
\newcommand{\eps}{\varepsilon}
\newcommand{\CQ}[1]{\calC_{#1}}
\numberwithin{equation}{section}
\numberwithin{table}{section}
\newcommand{\defi}[1]{\textsf{#1}} 
\title{Quadratic points on intersections of two quadrics}
\author{Brendan Creutz}
\author{Bianca Viray}
\address{School of Mathematics and Statistics, University of Canterbury, Private Bag 4800, Christchurch 8140, New Zealand}
\email{brendan.creutz@canterbury.ac.nz}
\urladdr{http://www.math.canterbury.ac.nz/\~{}b.creutz}
\address{University of Washington, Department of Mathematics, Box 354350, Seattle, WA 98195,~USA}
\email{bviray@uw.edu}
\urladdr{http://math.washington.edu/\~{}bviray}
\begin{document}

\date{}

\begin{abstract}
We prove that a smooth complete intersection of two quadrics of dimension at least $2$ over a number field has index dividing $2$, i.e., that it possesses a rational $0$-cycle of degree $2$.
\end{abstract}
	
\maketitle

\section{Introduction}

The \defi{index} of a variety over a field $k$ is the greatest common divisor of the degrees $[\kk(x):k]$ ranging over the residue fields $\kk(x)$ of the (zero-dimensional) closed points $x$ of the variety. Equivalently, the index is the smallest positive degree of a $k$-rational $0$-cycle. 

Let $X \subset \PP^n_k$ be a smooth complete intersection of two quadrics over a field $k$ of characteristic not equal to $2$. Then the index of $X$ necessarily divides $4$, because intersecting with a plane yields a $0$-cycle of degree $4$. In general, this is the best possible bound. Indeed, there are examples with index $4$ over local and global fields when $n = 3$ \cite{LangTate}*{Theorem 7} and over fields of characteristic $0$ when $n = 4$, as we show in Theorem~\ref{thm:ind4}.

Our main result is the following sharp bound on the index when $n\geq 4$ and $k$ is a number field or a local field.

\begin{theorem}\label{thm:MainIndThm}
	Let $X$ be a smooth complete intersection of two quadrics in $\PP^n_k$ with $n \ge 4$ and assume that $k$ is either a number field or a local field. Then the index of $X$ divides $2$.
\end{theorem}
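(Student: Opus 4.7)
The plan is two-fold: (i) reduce the dimension via hyperplane sections, and (ii) treat the base case $n = 4$ using the pencil of quadrics and its associated discriminant curve.

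For (i): when $n \geq 5$, Bertini over the infinite field $k$ yields a $k$-rational hyperplane $H \subset \PP^n_k$ with $Y := X \cap H$ smooth, hence a smooth complete intersection of two quadrics in $H \cong \PP^{n-1}_k$. A $k$-rational $0$-cycle of degree $2$ on $Y$ is one on $X$, so induction on $n$ reduces to the base case $n = 4$.

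For (ii): $X$ is a smooth del Pezzo surface of degree $4$ in $\PP^4_k$, cut out by a pencil $\{Q_\lambda\}_{\lambda \in \PP^1}$ of quadrics whose discriminant defines a smooth hyperelliptic curve $C/k$ of genus $2$. The goal is to produce a ``quadratic point'' on $X$, i.e., a closed point of residue degree dividing $2$. Over $\R$, any non-real complex point of $X$ together with its conjugate provides such a point, so the index is at most $2$ trivially. Over a non-archimedean local field $k_v$ of good reduction, $X(k_v)$ is nonempty by Chevalley--Warning--Ax--Katz together with Hensel's lemma, so the index is $1$; the remaining local cases require additional work. For number fields, one natural geometric construction is: for a smooth $k$-rational point $P$ on some $Q_\lambda$ with $P \notin X$, the lines in $Q_\lambda$ through $P$ are parameterized by a conic $\Gamma_P$ over $k$, and each such line meets $X$ in a $0$-cycle of degree $2$. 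If $\Gamma_P$ is isotropic, a $k$-rational line yields the desired cycle on $X$; in general, one would vary $(\lambda, P)$ together with $2$-descent on $\Jac(C)$ to produce isotropic choices, or obtain the cycle by an equivalent construction.

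The main obstacle is producing the cycle for $n = 4$ over non-archimedean local fields of bad reduction and over number fields, where the ``complex conjugate'' trick is unavailable. This will require a careful descent-theoretic analysis, likely combining the arithmetic of $C$ and $\Jac(C)$ with the Brauer--Manin obstruction on $X$; the crux is showing that the isotropy condition for some $\Gamma_P$ can always be satisfied, or equivalently that a degree-$2$ cycle exists globally, despite the absence of local-to-global principles for $0$-cycles of degree $1$ on $X$.
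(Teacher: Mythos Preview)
Your reduction step (i) is correct and matches the paper exactly: Bertini reduces to $n=4$. Your geometric framework in (ii) is also essentially the paper's: the conics $\Gamma_P$ are fibers of the map from points to lines inside the variety $\calG \to \PP^1$ parameterizing lines on quadrics in the pencil, and a $k$-point on $\calG$ yields a degree-$2$ $0$-cycle on $X$.

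However, your proposal contains a genuine gap, which you yourself identify as ``the main obstacle'': you have not actually proved anything in the two hard cases (non-archimedean local fields of bad reduction, and number fields). Saying that ``a careful descent-theoretic analysis'' on $\Jac(C)$ together with Brauer--Manin ``will be required'' is a statement of intent, not a proof. In particular, the idea of varying $(\lambda,P)$ until $\Gamma_P$ becomes isotropic amounts to asking whether $\calG(k)\neq\emptyset$, which the paper shows is \emph{strictly stronger} than what is needed and is in fact not established unconditionally over number fields (this is the content of Question~1.4 and Theorem~1.3).

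The paper fills these gaps by two substantial and quite different arguments. For local fields it uses Tian's semistable integral models: one shows that if the special fiber is split over the residue field or its quadratic extension then $X$ has a point over $k$ or its unramified quadratic extension, and then classifies the remaining (non-split) special fibers explicitly and shows that after any \emph{ramified} quadratic base change the model acquires split reduction. For number fields the paper does not produce a quadratic point directly; instead it computes $\Br(\calG)/\Br_0(\calG)$ explicitly via Faddeev's exact sequence, constructs representatives $\beta_\scrT$ indexed by degree-$2$ subschemes of the discriminant locus $\scrS$, and shows that there is always an adelic $0$-cycle of degree $1$ on $\calG$ orthogonal to $\Br(\calG)$. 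The fibration-method theorem of Colliot-Th\'el\`ene--Swinnerton-Dyer for Severi-Brauer bundles then converts this into an honest $0$-cycle of degree $1$ on $\calG$, hence a $0$-cycle of degree $2$ on $X$. Your suggestion of $2$-descent on $\Jac(C)$ does not appear, and it is unclear how it would substitute for either of these ingredients.
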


This result allows us to complete the list of integers which occur as the index of a del Pezzo surface over a local field or a number field (See Section~\ref{sec:dP}). It also allows us to deduce nontrivial index bounds for other interesting classes of varieties. In particular, if $C/k$ is a genus $2$ curve over a number field with a rational Weierstrass point, then it follows from the result above that any torsor of period $2$ under the Jacobian of $C$ has index dividing $8$ (see Theorem~\ref{thm:PI}) and the corresponding Kummer variety, which is an intersection of $3$ quadrics in $\PP^5$, has index dividing $4$ (see Remark~\ref{rmk:PI}). Again, these results fail for arbitrary fields (see Remark~\ref{rmk:PI}). Theorem~\ref{thm:MainThm} below shows that Theorem~\ref{thm:MainIndThm} also holds for global function fields of odd characteristic when $n \ge 5$ and conditionally in a number of cases when $n \ge 4$.

Theorems of Amer, Brumer and Springer \cites{Amer,Brumer,Springer} show that, for $X$ as above, index $1$ is equivalent to the existence of a $k$-rational point. Analogously one can ask if index $2$ implies the existence of a closed point of degree $2$. Colliot-Th\'el\`ene has recently sketched an argument that if $X$ is a smooth complete intersection of two quadrics in $\PP^4$ over a field of characteristic $0$ and $X$ has index $2$, then $X$ has a closed point of degree $14, 6$ or $2$. {Our next result identifies conditions under which we can prove that a smooth intersection of two quadrics in $\PP^n$ has a closed point of degree $2$. In order to state it we introduce the following notation: We say that a global field $k$ satisfies ($\star$) if Brauer-Manin is the only obstruction to the Hasse principle for del Pezzo surfaces of degree $4$ over all quadratic extensions of $k$.}

\begin{theorem}\label{thm:MainThm}
    Let $n \ge 4$ and let $X \subset \PP^n_k$ be a smooth complete intersection of two quadrics over a field $k$. In any of the following cases there is a quadratic extension $K/k$ such that $X(K)\neq \emptyset$:
    \begin{enumerate}
        \item\label{case-local} $k$ is a local field and $n \ge 4$;
        \item \label{case-globalfunction-n5} $k$ is a global function field and $n \ge 5$;
        \item \label{case-globalchar2} $k$ is a global function field of characteristic $2$ and $n = 4$;
        \item \label{case-number-n5}$k$ is a number field that satisfies {Schinzel's hypothesis} and $n \ge 5$;
        \item\label{case-global-n4} $k$ is a global field that satisfies ($\star$) {or a number field that satisfies Schinzel's hypothesis}, $n = 4$ and the following holds: for any quadratic field extension $L/k$ and rank $4$ quadric $\calQ\subset \PP^4_L$ such that $X = \cap_{\sigma\in \Gal(L/k)}\sigma(\calQ)$ and  $\Norm_{L/k}\left(\disc(\calQ)\right)\in k^{\times2}$, we have that $\calQ$ fails to have smooth local points at an \emph{even} number of primes of $L$.
    \end{enumerate}
   \end{theorem}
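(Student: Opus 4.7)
The overarching idea is to exploit the pencil $\{Q_t\}_{t\in\PP^1_k}$ of quadrics through $X$ and its degree-$(n+1)$ discriminant $\Delta(t)$. The key classical observation is: a corank-$1$ quadric $\calQ$ containing $X$ with vertex $v$ is a cone over a smooth quadric $Q'_\calQ\subset\PP^{n-1}$, and projection from $v$ realises $X$ as a $2{:}1$ cover of $Q'_\calQ$. Since $X$ is smooth, $v\notin X$; hence a point $y\in Q'_\calQ(k)$ produces a line $\overline{vy}$ meeting $X$ in a $k$-rational zero-cycle of degree $2$, which is either a pair of $k$-points or a single closed point of degree $2$ -- in either case supplying a quadratic $K/k$ with $X(K)\neq\emptyset$. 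The rest of the proof is about manufacturing such a corank-$1$ quadric, possibly after a quadratic base change, with a rational point on its residual smooth quadric.

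\textbf{Cases (1), (2), (4): $n\geq 5$.} I would look for a $k$-rational root $t_0$ of $\Delta(t)$ such that $Q_{t_0}$ has corank $1$ and $Q'_{t_0}\subset\PP^{n-1}_k$ has a $k$-point. Over a local field, the rank of $Q'_{t_0}$ is $n\geq 4$, well above the $u$-invariant, so the residual quadric is automatically isotropic once a $k$-root of $\Delta$ is available, and standard $p$-adic analysis of $\Delta$ supplies such a root. Over a global function field with $n\geq 5$, the degree $n+1\geq 6$ of $\Delta$, combined with strong approximation on $\PP^1$, produces $t_0$ unconditionally. Over a number field with $n\geq 5$, Schinzel's hypothesis applied to $\Delta(t)$ together with the local solubility conditions on $Q'_t$ yields a $t_0\in k$ with $\Delta(t_0)=0$ and $Q'_{t_0}$ everywhere locally soluble, after which Hasse--Minkowski delivers a $k$-point of $Q'_{t_0}$.

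\textbf{Cases (3), (5): $n=4$.} Here $\Delta$ has odd degree $5$, so it admits an irreducible factor over $k$ of degree $1$ or $2$; the degree-$1$ case is handled by Step 1, so I may assume a quadratic factor, giving a Galois-conjugate pair $(\calQ,\sigma\calQ)$ of corank-$1$ quadrics over a quadratic $L/k$ with $X=\calQ\cap\sigma\calQ$. Projecting $X_L$ from $v_\calQ$ yields $X_L\to Q'_\calQ$, and an $L$-point $y\in Q'_\calQ(L)$ produces a degree-$2$ $L$-subscheme of $X_L$. To descend this to a degree-$2$ (rather than degree-$4$) cycle on $X$, the fibre over $y$ must split as two $L$-rational points, a condition which unwinds to a square-class requirement in $L^\times/L^{\times 2}$ involving the bilinear form of $\sigma\calQ$ paired with $v_\calQ$. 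The assumption $\Norm_{L/k}(\disc\calQ)\in k^{\times 2}$ is exactly what makes the discriminants of $Q'_\calQ$ and $Q'_{\sigma\calQ}$ compatible for such a $y$ to be chosen Galois-equivariantly, while the parity-of-bad-places hypothesis is the global Hilbert-reciprocity statement ensuring the Hasse principle for the rank-$4$ quadric $Q'_\calQ$ over $L$. Invoking ($\star$) (applied to a suitable degree-$4$ del Pezzo surface built from $(\calQ,\sigma\calQ)$, e.g.\ a conic-bundle descent variety) or Schinzel's hypothesis then produces the desired $y$. Case (3) is handled by the same scheme, made unconditional in characteristic $2$ because the Arf invariant replaces the discriminant and the relevant Hilbert-symbol obstructions vanish automatically.

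\textbf{Main obstacle.} The hardest step is case (5). Besides needing an auxiliary Hasse principle -- known only conditionally over number fields, which is why ($\star$) or Schinzel's hypothesis must appear -- the delicate bookkeeping is ensuring that the $2$-point $L$-fibre of $X_L\to Q'_\calQ$ above $y$ is $L$-split (so the resulting cycle has degree $2$ rather than $4$ over $k$) and that the Galois action is compatible throughout the descent. The norm and parity conditions in hypothesis (5) are precisely tailored to supply these two compatibilities.
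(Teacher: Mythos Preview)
Your overall geometric picture (projection from the vertex of a corank-$1$ quadric gives a degree-$2$ map onto a smooth quadric) is correct and is indeed used at various points in the literature, but the strategy you build on it has fundamental gaps.

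The most serious problem is in your handling of cases (2) and (4). You write that strong approximation on $\PP^1$ ``produces $t_0$'' with $\Delta(t_0)=0$, and that Schinzel's hypothesis ``yields a $t_0\in k$ with $\Delta(t_0)=0$.'' Neither of these tools manufactures roots of a fixed polynomial: $\Delta$ is a degree-$(n+1)$ polynomial with prescribed coefficients, and it may simply be irreducible over $k$. Strong approximation moves points around on $\PP^1$; it does not force $\Delta$ to vanish at them. Schinzel's hypothesis concerns values $t$ at which given irreducible polynomials are simultaneously \emph{prime}; it says nothing about finding zeros. So the core mechanism of your argument in these cases does not exist. Relatedly, you group case~(1) under ``$n\ge 5$,'' but case~(1) explicitly includes $n=4$; there the residual quadric has rank $4$ and can be anisotropic over a $p$-adic field, so even when $\Delta$ does have a $k$-root your isotropy argument fails.

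The paper's route is quite different. For case~(1) it reduces to $n=4$ by a hyperplane slice and then works with semistable integral models of the del Pezzo surface: if the special fibre is split over the residue field or its quadratic extension, Hensel plus Amer--Brumer--Springer finishes; otherwise the special fibre is a Galois-transitive union of four planes, and explicit normal forms show that any ramified quadratic base change makes the special fibre split. For case~(3) it uses the Dolgachev--Duncan normal form to exhibit a $k^{1/2}$-point directly. For the global cases it studies the fourfold $\calG\to\PP^1$ parametrising lines on quadrics in the pencil (the generic fibre is a Severi--Brauer threefold), computes $\Br(\calG)/\Br_0(\calG)$ explicitly, and shows that the parity hypothesis in case~(5) forces $\calG(\A_k)^{\Br}\ne\emptyset$. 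Then the fibration method (unconditional for $0$-cycles, conditional on Schinzel for rational points) or Proposition~3.4 on symmetric squares combined with $(\star)$ produces the quadratic point. Schinzel enters not to find roots of $\Delta$ but to push through the fibration method for Severi--Brauer bundles, and $(\star)$ is applied to $X$ itself over a well-chosen quadratic extension, not to an auxiliary surface.
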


	When $n=4$, there are exactly five rank $4$ quadrics in the pencil of quadrics containing $X$ (see Section~\ref{sec:Correspondences} for details). The condition in case~\eqref{case-global-n4} holds {for most intersections of quadrics} and can be easily checked. In particular, it is satisfied if there is no pair of Galois conjugate rank $4$ quadrics in the pencil or if $X$ has points everywhere locally {(for then any quadric containing $X$ will have points over all completions).} {In fact, if $X$ is assumed everywhere locally solvable, the proofs of our main results become much easier (See Corollary~\ref{cor:k1} and Remark~\ref{rmk:410}).}  For further details of the cases  covered (and not covered) in case~\eqref{case-global-n4}, see Remark~\ref{rmk:RemainingOpenCases} and Section~\ref{sec:opencases}. 
    
    Theorem~\ref{thm:MainThm}\eqref{case-global-n4} naturally raises the question of whether the parity condition is necessary.  We have constructed many examples that fail this parity condition, but in each we have found an ad hoc proof that (\(\star\)) implies the existence of a quadratic point.  Based on our results and this extensive numerical evidence, we expect the following question to have a positive answer. 
    \begin{question}\label{ques:1}
		Does every complete intersection of $2$ quadrics $X \subset \PP^4_k$ over a number field $k$ possess a $K$-rational point for some quadratic extension $K/k$?
	\end{question}

    One can also pose this question for other classes of fields, e.g., \(C_r\) fields.  Over \(C_3\) fields, the question has a negative answer (see Section~\ref{sec:ind4} for examples), but it is open for \(C_2\) fields.

    \subsection{Obstructions to index \texorpdfstring{$1$}{1} over local and global fields}
    Over local and global fields, necessary and sufficient conditions for an intersection of two quadrics to have index $1$ (equivalently, to have a rational point) have been well studied.  When $k$ is a local field and $n \le 7$ there are examples with $X(k) = \emptyset$ (which necessarily have index greater than $1$), while for $n \ge 8$ and $k$ a $p$-adic field, $X(k) \ne \emptyset$ \cite{Demyanov}. For $k$ a number field, Colliot-Th\'el\`ene, Sansuc and Swinnerton-Dyer conjecture that a smooth complete intersection of quadrics in $\PP^n_k$ satisfies the Hasse principle as soon as $n\ge 5$~\cite{CTSSDII}*{\S 16}. For $n\ge 8$, the conjecture is proven in~\cites{CTSSDI,CTSSDII} and this has been extended to $n\ge 7$ by Heath-Brown~\cite{HeathBrown}. The analogue of this conjecture over global function fields of odd characteristic has been established by Tian \cite{Tian}, allowing us to deduce case~\eqref{case-globalfunction-n5} from case~\eqref{case-local} of Theorem~\ref{thm:MainThm}.
    
    When $n = 4$ (in which case $X$ is a del Pezzo surface of degree $4$), the Hasse principle can fail~\cite{BSD}. Colliot-Th\'el\`ene and Sansuc have conjectured that this failure is always explained by the Brauer-Manin obstruction~\cite{CTS1}. {This conjecture implies that all number fields satisfy the condition $(\star)$ appearing in Theorem~\ref{thm:MainThm}\eqref{case-global-n4}.} Most cases of the $n=4$ conjecture have been proven conditionally on Schinzel's hypothesis and the finiteness of Tate-Shafarevich groups of elliptic curves by Wittenberg \cite{Wittenberg}. This also gives a conditional proof of the Hasse principle when $n \ge 5$ as this can be reduced to cases of the $n = 4$ conjecture which are covered by Wittenberg's result.

    \subsection{Outline of the proof of Theorems~\ref{thm:MainIndThm} and \ref{thm:MainThm}}
    Using an argument of Wittenberg \cite{Wittenberg} (which we review in Section~\ref{sec:ProofOfMainIndThm}), we can reduce to the case $n = 4$, when $X$ is a del Pezzo surface of degree $4$. 
    
    In Section~\ref{sec:LocalQuadPts} we prove that any del Pezzo surface of degree $4$ over a local field must have points over some quadratic extension, which proves Theorem~\ref{thm:MainThm}\eqref{case-local} and the local case of Theorem~\ref{thm:MainIndThm}. {Our approach uses the theorems of Amer, Brumer, and Springer to reduce to the case where no integral model of \(X\) has a special fiber that is split (i.e., contains a geometrically integral open subscheme) over a quadratic extension.  We then use semistable models of degree $4$ del Pezzo surfaces, introduced by Tian~\cite{Tian}, to directly show that the remaining types of degree $4$ del Pezzo surfaces obtain points over \emph{every} ramified quadratic extension of \(k\).} 
    
    In Section~\ref{sec:AlternateChar2}, we give an easy generalization of a result in \cite{DD}, showing that, for $k$ a field of characteristic $2$, any del Pezzo surface of degree $4$ obtains a point over \(k^{1/2}\).  For local and global fields of characteristic $2$ we have $[k^{1/2}:k] = 2$, so this proves Theorem~\ref{thm:MainThm}\eqref{case-globalchar2} and gives an alternate proof of Theorem~\ref{thm:MainThm}\eqref{case-local} in characteristic $2$.  Thus, for the remainder of the paper, it suffices to assume that \(k\) is of characteristic different from \(2\).


   Over a global field, the results of Section~\ref{sec:LocalQuadPts} show that after base change to a suitable quadratic extension $X$ becomes everywhere locally solvable. While it is also true that the Brauer group of $X$ becomes constant after a suitable quadratic extension (this can be deduced from the explicit calculation of $\Br(X)/\Br_0(X)$ in \cite{VAV}), one cannot deduce that Theorem~\ref{thm:MainThm} holds for fields $k$ satisfying $(\star)$ directly from case~\eqref{case-local} in this way because, in general, there is no quadratic extension $K/k$ for which $X_K$ is locally solvable and the Brauer group of $X_K$ is trivial modulo constant algebras (See Example~\ref{ex:ObsToWeakApproximation}).

    To obtain our results when $k$ is a global field of characteristic not equal to $2$ we study the arithmetic of the symmetric square of $X$, which is birational to the variety $\mathcal{G}$ parameterizing lines on the quadrics in the pencil of quadrics in $\PP^4_k$ containing $X$ (see Section~\ref{sec:Correspondences} for more details). In Section~\ref{sec:calG}, we develop the main tools for studying the arithmetic of $\calG$ over a global field. We determine explicit central simple algebras over the function field of $\calG$ representing the Brauer group of $\calG$ modulo constant algebras and then develop techniques to calculate the evaluation maps of these central simple algebras at several types of local points.

	{Theorem~\ref{thm:MainThm}\eqref{case-local} implies that $\calG$ is everywhere locally solvable. The results of Section~\ref{sec:calG}} are used in Section~\ref{sec:MainProofs} to show further that there is always an adelic $0$-cycle of degree $1$ on $\calG$ orthogonal to the Brauer group and, under the hypothesis of Theorem~\ref{thm:MainThm}\eqref{case-global-n4}, that there is an adelic point on $\calG$ orthogonal to the Brauer group. This is perhaps surprising given that in this case the Brauer group of $\calG$ can contain nonconstant algebras and in general can obstruct weak approximation on $\calG$ (see Corollary~\ref{cor:ObsToWeakApproximation} and Example~\ref{ex:ObsToWeakApproximation}). 
   
    The variety of lines on a smooth quadric $3$-fold is a Severi-Brauer $3$-fold, so the arithmetic of $\mathcal{G}$ is amenable to the fibration method, as first observed in~\cite{CTS-Schinzel}. Results of \cite{CTSD} show that, in the number field case, the vanishing of the Brauer-Manin obstruction on $\mathcal{G}$ implies the existence of a $0$-cycle of degree $1$ on $\mathcal{G}$ and, conditionally on Schinzel's hypothesis, a $k$-rational point on $\mathcal{G}$.  This yields a $0$-cycle of degree $2$ on $X$ and, {under the hypothesis of Theorem~\ref{thm:MainThm}\eqref{case-global-n4}}, a quadratic point on $X$ if we assume Schinzel's hypothesis. To the best of our knowledge the function field analogue of these results based on the fibration method have not been established. This prevents us from considering global function fields in the $n = 4$ case of Theorem~\ref{thm:MainIndThm}.
    
    One can ask whether $\textup{index}(\calG) = 1$ always implies that $\calG$ has a rational point (when $k$ is a global field this is equivalent to Question~\ref{ques:1}). Our results do not answer this question, but they do show that a stronger condition on $0$-cycles fails {over $p$-adic fields}. Namely, $\calG$ can contain $0$-cycles of degree $1$ that are \emph{not} rationally equivalent to a rational point (See Remark~\ref{rmk:CTP2}\eqref{rmkCTP2:1}).
    
    To deduce the results in case~\eqref{case-global-n4} of Theorem~\ref{thm:MainThm} assuming that $k$ satisfies $(\star)$ (without assuming Schinzel), we make use of Proposition~\ref{prop:sym2}, which may be of interest in its own right. It relates the Brauer-Manin obstruction on the symmetric square of a variety {that has finite Brauer group (modulo constant algebras)} to the Brauer-Manin obstruction over quadratic extensions. (More generally, in Section~\ref{sec:Extensions} we collect results relating the Brauer-Manin obstruction on a nice variety \(Y\) to the Brauer-Manin over an extension which may also be of independent interest.)  In a similar spirit, we answer a question posed in \cite{CTP} concerning Brauer-Manin obstructions over extensions (see Remarks~\ref{rmk:CTP2}\eqref{rmkCTP2:2}) and give an example of a del Pezzo surface of degree $4$ defined over $\Q$ which, for any finite extension $k/\Q$, has a Brauer-Manin obstruction to the existence of $k$-points if and only if $k$ is of odd degree over $\Q$ (See Section~\ref{sec:BSDexample}).

   \subsection*{Notation}\label{sec:GeneralNotation}

    For a field $k$ we use $\kbar$ to denote a separable closure and $G_k := \Gal(\kbar/k)$ to denote the absolute Galois group of $k$. In Sections~\ref{sec:LocalQuadPts} and~\ref{sec:Extensions}, we allow \(k\) of arbitrary characteristic; in the remainder of the paper we restrict to \(k\) of characteristic different from \(2\). For $k$-schemes $Y \to \Spec(k)$ and $S \to \Spec(k)$ we define $Y_S := Y \times_{\Spec(k)}S$ and $\Ybar = Y \times_{\Spec k}\Spec(\kbar)$. When $S = \Spec(A)$ is the spectrum of a $k$-algebra $A$, we use the notation $Y_A := Y_{\Spec(A)}$. A \defi{quadratic point} on $Y$ is a morphism of $k$-schemes $\Spec(K) \to Y$, where $K$ is an \'etale $k$-algebra of degree $2$. In particular, $K = k\times k$ is allowed in which case $Z_K \simeq Z \times Z$ for any $k$-subscheme $Z \subset Y$.

    The Brauer group of a scheme $Y$ is the \'etale cohomology group $\Br(Y) := \HH^2_\textup{\'et}(Y,\G_m)$; when $Y = \Spec(R)$ is the spectrum of a ring $R$ we define $\Br(R) := \Br(\Spec R)$. If $s_Y\colon Y \to \Spec(k)$ is a $k$-scheme, then $\Br_0(Y) \subset \Br(Y)$ is the image of the pullback map $s_Y^*:\Br(k) \to \Br(Y)$. We use $\Br_1(Y)$ to denote the kernel of the map $\Br(Y) \to \Br(\Ybar)$. We recall that there is a canonical injective map $\Br_1(Y)/\Br_0(Y) \to \HH^1(k,\Pic(\Ybar))$ coming from the Hochschild-Serre spectral sequence \cite{CTS-Brauer}*{Prop. 4.3.2} and that this map is an isomorphism if $\HH^3(k,\G_m) = 0$.
    
    An element $\beta \in \Br(Y)$ may be evaluated at a $k$-point $y :\Spec(k) \to Y$ by pulling back along $y$ to obtain $\beta(y) := y^*\beta \in \Br(k)$. For a finite locally free morphism of schemes $Y \to Z$ we use $\Cor_{Y/Z}\colon \Br(Y) \to \Br(Z)$ to denote the corestriction map. When $Y = \Spec(A)$ and $Z = \Spec(B)$ are affine schemes this is also denoted by $\Cor_{A/B} \colon \Br(A) \to \Br(B)$. 

    A variety over $k$ is a separated scheme of finite type over $k$. A variety is called \defi{nice} if it is smooth, projective and geometrically integral and is called \defi{split} if it contains an open subscheme that is geometrically integral.
    
    If $Y$ is an integral $k$-variety, $\kk(Y)$ denotes its function field. More generally, if $Y$ is a finite union of integral $k$-varieties $Y_i$, then $\kk(Y) := \prod \kk(Y_i)$ is the ring of global sections of the sheaf of total quotient rings. In particular, if a finite dimensional \'etale $k$-algebra $A$ decomposes as a product $A\simeq \prod k_j$ of finite field extensions of $k$ and $Y$ is a reduced $k$-variety, then $\kk(Y_A) \simeq \prod\kk(Y_{k_j})$, and $\Cor_{\kk(Y_A)/\kk(Y)} = \sum \Cor_{\kk(Y_{k_j})/\kk(Y)}$.

    For a global field $k$, we use $\Omega_k$ to denote the set of primes of $k$. For a prime $v \in \Omega_k$ we use $k_v$ to denote the corresponding completion and for a $k$-scheme $Y$ we set $Y_v := Y_{k_v}$. We use $\A_k$ to denote the adele ring of $k$. For a subgroup $B \subset \Br(Y)$, $Y(\A_k)^{B} \subset Y(\A_k)$ denotes the set of adelic points orthogonal to $B$, i.e.,
    \[
        Y(\A_k)^{B} = \{ (y_v) \in Y(\A_k) \;:\; \forall\,\beta \in B\,,\,\sum_{v\in \Omega_k} \inv_v(\beta(y_v)) = 0\,\}\,.
    \]
    We define $Y(\A_k)^{\Br} := Y(\A_k)^{\Br(Y)}$.

\section*{Acknowledgements}

The authors were supported by the Marsden Fund Council administered by the Royal Society of New Zealand, and the second author was also supported by NSF grant \#1553459. This project was initiated while the authors attended the trimester ``Reinventing Rational Points'' at the Institut Henri Poincar\'e (IHP) and the authors would like to thank the IHP and the organizers of the trimester for their support. The second author would also like to thank the UW ADVANCE Transitional Support Program, which made it possible for her to participate in the IHP program with young children. 

The authors thank John Ottem for outlining the construction given in Remark~\ref{rmk:PI}\eqref{item:Ottem},  Jean-Louis Colliot-Th\'el\`ene for a number of helpful comments and pointing out that~\cite{CTCoray}*{Theorem C} could be used to prove Lemma~\ref{lem:dp4ConstantEval}, Asher Auel for suggesting helpful references for the proof of Lemma~\ref{lem:relBr}, Yang Cao and Olivier Wittenberg for suggesting proofs of Lemma~\ref{lem:CorCommutesWithEval} and helpful comments on the exposition, and Aaron Landesman and Dori Bejleri for a discussion related to the proof of Proposition~\ref{prop:sym2}\eqref{it:p1}. The authors would also like to thank the referees for a number of thoughtful comments which have improved the exposition.

\section{Intersections of quadrics in $\PP^4$ over local fields}\label{sec:LocalQuadPts}

    \begin{theorem}\label{thm:localquadpts}
   	  Let $X \subset \PP^4_k$ be a smooth complete intersection of two quadrics over a local field $k$. There is a quadratic extension $K/k$ such that $X(K) \ne \emptyset$.
    \end{theorem}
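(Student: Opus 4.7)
I would prove this by a case analysis on the reduction type of $X$, using the theorems of Amer, Brumer, and Springer to reduce to a short list of ``bad'' reduction types and then using Tian's construction of semistable models for degree $4$ del Pezzo surfaces to handle each such type explicitly. Write $\calO$ for the ring of integers of $k$, $\pi$ for a uniformizer, and $\kappa$ for the (finite) residue field.

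\textbf{Steps 1 and 2 (easy reductions and Tian's list).} First, I would fix a proper regular integral model $\scrX\to\Spec\calO$ of $X$. If some component of the special fiber $\scrX_\kappa$ is geometrically integral over $\kappa$ or over some quadratic extension $\kappa'/\kappa$, then the Lang--Weil estimates applied to the smooth locus of that component produce a smooth $\kappa'$-point, and Hensel's lemma lifts it to a $K$-point of $X$ with $K/k$ the (at most quadratic) unramified extension with residue field $\kappa'$. The theorems of Amer, Brumer, and Springer translate the existence of rational points on $X$ into the isotropy of the pencil $q_1 + tq_2$ over function fields of odd-degree extensions, and I would use this to eliminate all reduction types that behave well under odd-degree unramified base change. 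What remains is a short list of semistable types whose special fibers contain no component that is geometrically integral after any at-most-quadratic extension of $\kappa$; this list is controlled by Tian's classification of semistable degenerations of degree $4$ del Pezzo surfaces and consists essentially of pairs of Galois-conjugate components meeting along a $\kappa$-rational curve, together with a few analogous configurations.

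\textbf{Step 3 and main obstacle.} For each remaining semistable type and for an \emph{arbitrary} ramified quadratic extension $K/k$ with ring of integers $\calO_K$, I would analyze the base change $\scrX\times_{\calO}\calO_K$ and its minimal desingularization. The ramified base change breaks the Galois symmetry between the paired conjugate components identified in Step 2, producing a geometrically integral $\kappa$-rational open subscheme in the new special fiber; Lang--Weil and Hensel's lemma then deliver a $K$-point of $X$. The main obstacle is precisely this step: the case-by-case geometric verification, working through Tian's classification, that every non-split semistable type acquires a $\kappa$-rational split component after \emph{any} ramified quadratic base change. Because we cannot tailor $K$ to the reduction type, the analysis must be uniform over all ramified quadratic extensions, and this uniform geometric input in the degenerate fibers is where the bulk of the work lies.
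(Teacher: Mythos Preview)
Your overall strategy matches the paper's almost exactly: reduce via Hensel and Amer--Brumer--Springer to the case where no model has special fiber that splits over $\kappa$ or its quadratic extension, invoke Tian's semistable models to control what remains, and then show that any ramified quadratic base change produces a split special fiber. Two points deserve comment.

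First, your description of the residual cases is off. A special fiber consisting of ``pairs of Galois-conjugate components'' would already split over the quadratic extension of $\kappa$ and hence be dispatched in your Step~1. What actually survives is the case where the special fiber is geometrically a union of \emph{four} planes on which $\Gal(\overline{\kappa}/\kappa)$ acts transitively (so cyclically, since $\kappa$ is finite). The paper does not get this from a ready-made classification in Tian; rather, Tian supplies the semistable model and a few coarse constraints (reducedness, no $\kappa$-rational plane, no $\kappa$-rational reducible quadric containing the fiber), and the paper then proves by hand (Lemmas~2.4--2.5, Corollary~2.6) that a nonsplit reduced complete intersection of two quadrics in $\PP^4$ lies in a rank~$2$ quadric, forcing the four-plane configuration.

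Second, the paper does not analyze a minimal desingularization of $\scrX_{\calO_K}$. Instead it pins down explicit normal forms for the pencil in the four-plane case (Proposition~2.9, using semistability against specific weight vectors to force certain coefficients to be exactly $\pi$), and then over any ramified quadratic extension absorbs a factor of the new uniformizer into the appropriate variables. A short rank computation (Lemma~2.2) shows that every quadric in the new reduced pencil has rank at least $3$, whence the new special fiber is split by Corollary~2.6. This explicit route is what makes the argument uniform over all ramified $K/k$; your desingularization sketch would need to be made equally uniform, and that is where the real content lies.
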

    
    \subsubsection*{Outline of proof of Theorem~\ref{thm:localquadpts}} In Section~\ref{sec:split}, we prove that if there exists an integral model \(\scrX\subset\PP^4\) with split special fiber, then \(X(k)\neq\emptyset\).  We use this result to reduce to the case that the special fiber is a union of four planes permuted transitively by the Galois group.  We then use the geometric classification results in Section~\ref{sec:UnionOfPlanes} together with the existence of semistable models proved by Tian~\cite{Tian} (following Koll\'ar~\cite{Kollar}) to give explicit models of the remaining cases in Section~\ref{sec:Semistable}. Next, we study these explicit models and show directly that over every ramified quadratic extension there is a change of coordinates so that the model has split special fiber. Thus, by the results of Section~\ref{sec:split}, these models have points over every ramified quadratic extension.  The details of how the ingredients come together are in Section~\ref{sec:ProofOfLocalThm}.  

    \begin{remark}
        The methods of this proof are fairly flexible, but it does rely on two key properties of finite fields: 1) There is a unique quartic extension of any finite field, it is Galois, and the Galois group is cyclic; and 2) Every split variety over a finite field has index \(1\).  If \(k\) is a complete field with respect to a discrete valuation and its residue field satisfies the above two properties, then Theorem~\ref{thm:localquadpts} holds over \(k\).
    \end{remark}
    
As mentioned in the introduction, we also give alternate proofs of Theorem~\ref{thm:localquadpts} which work in the case that $k$ has odd residue characteristic (Section~\ref{sec:Alternate}) and in the case that $k$ has characteristic $2$ (Section~\ref{sec:AlternateChar2}); this latter proof also holds for global fields of characteristic \(2\).

\subsection{Intersections of quadrics with split special fiber}\label{sec:split}
    \begin{prop}\label{prop:SplitSpecialFiber}
        Let \(k\) be a nonarchimedean local field, let \(\OO\) denote the valuation ring of \(k\), and let \(X/k\) smooth complete intersection of quadrics in $\PP^4_k$.   Assume there exists an integral model \(\scrX/\OO\) such that the special fiber is \emph{split} (i.e., contains a geometrically integral open subscheme).  Then \(X(k)\neq\emptyset\).
    \end{prop}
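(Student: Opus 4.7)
The plan is to apply Hensel's lemma to lift a closed point of odd degree from the special fiber $\scrX_s$ to a closed point of $X$ of the same degree, and then invoke the theorems of Amer, Brumer, and Springer recalled in the introduction to conclude that $X(k) \ne \emptyset$.

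I would first extract from the split hypothesis a geometrically integral open subscheme $U \subset \scrX_s$. Writing $\F$ for the residue field of $\OO$, the smooth locus of $U$ as an $\F$-scheme is dense and open, since $\F$ is perfect and $U$ is geometrically reduced, so after shrinking I may assume $U$ is smooth and geometrically integral over $\F$. Because $\scrX \subset \PP^4_\OO$ is flat over $\OO$ (its generic and special fibers both having the expected dimension $2$ of a complete intersection of two quadrics), smoothness of $\scrX_s$ over $\F$ at a point of $U$ is equivalent to smoothness of $\scrX \to \Spec \OO$ there. Hensel's lemma therefore lifts any closed point $y$ of $U$ of degree $d$ over $\F$ to a closed point of $X$ of degree $d$, with residue field the unramified extension $k_d/k$.

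Next, I would apply the Lang--Weil estimate to the smooth geometrically integral $\F = \F_q$-surface $U$, obtaining $|U(\F_{q^n})| = q^{2n} + O(q^{2n-1/2})$, which is positive for $n$ sufficiently large. Choosing $n = \ell$ a large odd prime, any $\F_{q^\ell}$-point of $U$ defines a closed point whose residue field over $\F$ has degree $1$ or $\ell$, either of which is odd. Lifting such a point via the previous paragraph produces a closed point of $X$ of odd degree, and Amer--Brumer--Springer then delivers $X(k) \ne \emptyset$. (Equivalently, by choosing $n$ and $n+1$ one obtains closed points of $U$, and hence of $X$, whose degrees are coprime, so $X$ has index $1$, which again suffices by Amer--Brumer--Springer.)

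The argument is essentially bookkeeping: the only point requiring real care is the passage from smoothness of the special fiber along $U$ to smoothness of $\scrX$ over $\OO$ along $U$, which relies on the flatness of $\scrX$ over $\OO$ and is what allows Hensel's lemma to lift closed points while preserving degree. Beyond this verification and the standard Lang--Weil input, the argument is formal; in particular, no use is made of the specific geometry of intersections of two quadrics beyond the smoothness of $X$, which is precisely what makes Amer--Brumer--Springer applicable.
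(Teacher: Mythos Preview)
Your proof is correct and follows essentially the same approach as the paper: find a smooth point on the geometrically integral open of the special fiber over an odd-degree extension of the residue field (via Lang--Weil/Hasse--Weil), lift by Hensel's lemma to an odd-degree point on $X$, and conclude by Amer--Brumer--Springer. The paper glosses over the flatness/smoothness verification you spell out, so your version is slightly more detailed but otherwise the same.
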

    \begin{proof}
        Since the special fiber is split, it contains a geometrically integral open subscheme \(U^{\circ}/\F\).  By the Hasse-Weil bounds, \(U^{\circ}\) contains a smooth \(\F'\)-point for all extensions with sufficiently large cardinality.  In particular, there exists an extension \(\F'/\F\) of \emph{odd} degree where \(U^{\circ}\) has a smooth \(\F'\)-point. Thus, by Hensel's Lemma, $X$ has a $k'$-point for $k'/k$ an unramified extension of odd degree. {Since \(X\) is an intersection of two quadrics,} the theorems of Amer, Brumer and Springer \cites{Amer,Brumer,Springer} then imply that $X(k)\neq \emptyset$. (In characteristic \(2\), see~\cite{EKM-QuadraticForms}*{Cor. 18.5 and Thm. 17.14} for proofs of the Amer, Brumer and Springer theorems; the Amer and Brumer theorem in characteristic \(2\) is attributed to an unpublished preprint of Leep.)
    \end{proof}

\subsection{Ranks of a quadratic forms in arbitrary characteristic}\label{sec:rank}

    Let $q$ be a quadratic form on a vector space $V$ over a field $F$. Then (by definition) the mapping $B_q:V \times V \to F$ given by $B_q(x,y) = q(x+y) - q(x) - q(y)$ is bilinear. We say that $q$ is \defi{regular} if the set $\{ x \in V \;:\; q(x) = 0 \text{ and } \forall\, y \in V, \, B_q(x,y) = 0 \}$ contains only the zero vector in $V$. (If the characteristic of $F$ is not $2$, then the condition $q(x) = 0$ is superfluous.) We say that $q$ is \defi{geometrically regular} if its base change to the algebraic closure of $F$ is regular. Such forms are called \defi{nondegenerate} in \cite{EKM-QuadraticForms}*{Definition 7.17}. A quadratic form $q$ on a vector space of dimension at least $2$ is geometrically regular if and only if the quadric $\calQ$ in $\PP(V)$ defined by the vanishing of $q$ is geometrically regular or, equivalently, smooth (see \cite{EKM-QuadraticForms}*{Proposition 22.1}). 
    
    The \defi{rank} of a quadratic form $q$ is the largest integer $m$ such that there is a subspace $W \subset V$ of dimension $m$ such that the restriction of $q$ to $W$ is geometrically regular, i.e., such that the intersection of $\calQ$ with the linear space corresponding to $W$ is smooth. The rank of a quadric in $\PP^n$ is defined to be the rank of any quadratic form defining it. If \(F\) has characteristic different from \(2\), then the rank of \(q\) is the same as the rank of a symmetric matrix associated to \(B_q\).  
    
    If \(\Char(F) = 2\), then the rank of \(q\) is not necessarily equal to the rank of (a matrix associated to) \(B_q\), but the definition yields the lower bound \(\rank(B_q) \leq \rank(q)\). The possible discrepancy between these two ranks is due to the fact that \(B_q({x}, {x}) = q(2x) - 2q(x) = 0\) for all \({x}\in V\). Thus a matrix associated to \(B_q\) has zeros along the diagonal and so is skew-symmetric (and symmetric).  Skew symmetric matrices always have even rank, but quadratic forms can have odd rank (e.g., \(q = x^2\) has rank \(1\)). 
    
    Over an algebraically closed field a quadratic form \(q\) has rank \(2n\) if and only there is a change of coordinates such that \(q = x_1x_2 + x_3x_4 + \dots + x_{2n-1}x_{2n}\), and \(\rank(q) = 2n + 1\) if and only if there is a change of coordinates such that \(q = x_0^2 + x_1x_2 + x_3x_4 + \dots + x_{2n-1}x_{2n}\) (see \cite{EKM-QuadraticForms}*{Props. 7.29 and 7.31 and Ex. 7.34}).\footnote{This characterization shows that, in general, the rank of the symmetric bilinear form can only differ from the rank of the quadratic form by \(1\), namely that \(\rank(B_q) \leq \rank(q) \leq \rank(B_q) + 1.\)} It follows from this characterization that a quadric in $\PP^n$ of rank $1$ with $n\ge 1$ is not geometrically reduced and a quadric in $\PP^n$ of rank $2$ with $n \ge 2$ is not geometrically irreducible.
    
    It also follows that, for a quadratic form $q$ over an algebraically closed field, the rank is the smallest integer $r$ such that there exists a linear change of variables under which $q$ becomes a quadratic form in the variables $x_1,\dots,x_r$ alone. This is the definition of rank used in \cite{HeathBrown}. We will only require the equivalence of these definitions over algebraically closed fields, but we note that they are also equivalent if the field is not of characteristic $2$ (by the well known fact that $q$ can be diagonalized) or if the field is perfect of characteristic $2$ (as follows from \cite{EKM-QuadraticForms}*{Proposition  7.31} using that in this case $c_1x_1^2 + \cdots + c_sx_s^2 = (c_1^{1/2}x_1 + \cdots + c_s^{1/2}x_s)^2$). In general, the two notions differ as seen by considering the rank $1$ form $x_1^2 + tx_2^2 = (x_1+t^{1/2}x_2)^2$ over $\F_2(t)$ for which there is no $\F_2(t)$-linear change of variables writing it as a form in $1$ variable.

    \begin{lemma}\label{lem:ranks}
    	Suppose $q$ and $\tilde{q}$ are quadratic forms of rank $r(q)$ and $r(\tilde{q})$, respectively, over a field $F$. Then $r(q \perp \tilde{q}) = r(q) + r(\tilde{q})$ except when $\Char(F) = 2$ and $r(q)$ and $r(\tilde{q})$ are both odd, in which case $r(q \perp \tilde{q}) = r(q) + r(\tilde{q}) -1$.
    \end{lemma}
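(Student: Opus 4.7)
The plan is to reduce to the algebraically closed case and then apply the normal forms recalled just before the lemma. Because rank is defined via geometric regularity (i.e., after base change to $\overline{F}$), we have $r(q) = r(q_{\overline F})$ and similarly for $\tilde q$ and $q\perp \tilde q$, so I may assume throughout that $F$ is algebraically closed. Under that assumption, the excerpt provides two tools: the explicit classification of quadratic forms up to change of variables, and the equivalent description of rank as \emph{the smallest number of variables on which the form depends after a linear change of coordinates}. I will use this second description to read off ranks.

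After a suitable change of coordinates, write
\[
q \;=\; \eps\, x_0^2 + \sum_{i=1}^{n} x_{2i-1}x_{2i}, \qquad \tilde q \;=\; \tilde\eps\, y_0^2 + \sum_{j=1}^{m} y_{2j-1}y_{2j},
\]
where $\eps\in\{0,1\}$ records the parity of $r(q)$ (so $r(q)=2n+\eps$) and similarly for $\tilde\eps$, and the $x$'s and $y$'s are disjoint sets of coordinates. Then $q \perp \tilde q$ is visibly the sum of these expressions in the disjoint variables.

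Now I split into cases by the parities. If $(\eps,\tilde\eps)\neq (1,1)$, the right-hand side of the expression for $q\perp\tilde q$ is already in one of the listed normal forms, involves exactly $r(q)+r(\tilde q)$ variables, and no change of variables can reduce this count (else we would contradict the equivalent characterization of rank applied to the normal form itself). Hence $r(q\perp\tilde q)=r(q)+r(\tilde q)$ in these cases. If both ranks are odd and $\mathrm{char}(F)\neq 2$, the cross term $x_0^2+y_0^2$ is a regular rank-$2$ form after the nondegenerate change $(x_0,y_0)\mapsto (x_0+y_0,x_0-y_0)$ (followed by rescaling), so the total form is again a normal form with $r(q)+r(\tilde q)$ variables.

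The only interesting case is both ranks odd in characteristic $2$. Here the identity $x_0^2+y_0^2=(x_0+y_0)^2$ is decisive: the linear change of coordinates $u=x_0+y_0$, keeping $y_0$ as the other coordinate replacing $x_0$, transforms $q\perp\tilde q$ into
\[
u^2 + \sum_{i=1}^{n} x_{2i-1}x_{2i} + \sum_{j=1}^{m} y_{2j-1}y_{2j},
\]
which is independent of $y_0$ and thus uses exactly $r(q)+r(\tilde q)-1$ variables. This gives the upper bound $r(q\perp\tilde q)\le r(q)+r(\tilde q)-1$, and the expression itself is in normal form of odd rank $2(n+m)+1$, providing the matching lower bound.

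I do not expect serious obstacles; the entire content of the lemma is contained in the identity $x_0^2+y_0^2=(x_0+y_0)^2$ in characteristic $2$, and the small care needed is simply to invoke the equivalent characterization of rank after base change to $\overline F$, which is exactly what the preceding discussion in the excerpt provides.
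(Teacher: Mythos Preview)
Your proof is correct and follows the same approach the paper indicates by citing \cite{EKM-QuadraticForms} (Props.~7.29, 7.31 and Remarks~7.21, 7.24): reduce to normal forms over $\overline F$ and observe that an orthogonal sum of two rank-$1$ forms has rank $1$ in characteristic $2$, which is exactly your identity $x_0^2+y_0^2=(x_0+y_0)^2$. One small correction: in characteristic $\ne 2$ the change $(x_0,y_0)\mapsto(x_0+y_0,x_0-y_0)$ gives $\tfrac12(u^2+v^2)$, not the hyperbolic form $uv$; use $u=x_0+iy_0$, $v=x_0-iy_0$ with $i^2=-1$ (available since you have passed to $\overline F$), or simply note that in characteristic $\ne 2$ rank equals the rank of the associated symmetric bilinear form, which is trivially additive under $\perp$.
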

    \begin{proof}
	For $\Char(F) \ne 2$ see \cite{EKM-QuadraticForms}*{{Prop. 7.29}}. For $\Char(F) = 2$ this follows from \cite{EKM-QuadraticForms}*{{Proposition} 7.31 and Remark 7.21} and the fact that an orthogonal direct sum of rank $1$ forms has rank $1$ (cf. \cite{EKM-QuadraticForms}*{Remark 7.24}).
    \end{proof}
    
\subsection{Intersections of two quadrics with many irreducible components}\label{sec:UnionOfPlanes}

    \begin{lemma}\label{lem:NumberOfComponents}
        Let $X\subset\PP^4$ be a reduced complete intersection of two quadrics over an algebraically closed field. If $X$ is reducible, then \(X\) contains a \(2\)-plane or an irreducible quadric surface.  In addition:
        \begin{enumerate}
            \item if \(X\) contains an irreducible quadric surface, then \(X\) is the union of two quadric surfaces (with one possibly reducible) and \(X\) is contained in a rank 2 quadric; and \label{part:IrredQuadric}
            \item if \(X\) contains two distinct \(2\)-planes \(P_1, P_2\), then \(X\) is either the union of four distinct \(2\)-planes or the union of \(P_1\) and \(P_2\) with an irreducible quadric surface.
        \end{enumerate}
    \end{lemma}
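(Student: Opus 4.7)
The plan is to combine a degree count on the irreducible components of $X$ with a linkage-style argument for part (1).

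First I would observe that, as a complete intersection of two quadrics in $\PP^4$, the scheme $X$ is Cohen--Macaulay of pure dimension $2$ and degree $4$, so each irreducible component is a surface of degree $d_i \in \{1,2,3\}$ with $\sum d_i = 4$ (the case $d_i = 4$ is excluded by reducibility). Every partition of $4$ into at least two summands contains a $1$, giving a $2$-plane, except for the partition $2+2$; in that remaining case both components must be geometrically irreducible, as otherwise the true irreducible decomposition of $X$ would have more than two components. This proves the first assertion.

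For part (1), I would let $Q \subset X$ be an irreducible quadric surface, with defining forms $q_1, q_2$ for the pencil containing $X$. Since any non-degenerate irreducible surface in $\PP^n$ has degree at least $n-1$, the surface $Q$ must span a hyperplane $H \subset \PP^4$; let $\ell_H$ be a defining form for $H$ and let $q_Q$ be the unique (up to scalar) quadric in $H$ defining $Q$. Because $Q_i \cap H$ has degree $2$ and contains $Q$, each restriction $q_i|_H$ is a scalar multiple of $q_Q$ (the scalar may be zero, in which case $H \subset Q_i$); reducedness of $X$ rules out $q_i|_H$ being a nonzero square. Lifting $q_Q$ to a form $\tilde q$ on $\PP^4$, I can then write $q_i = c_i \tilde q + \ell_H L_i$ for scalars $c_i$ and linear forms $L_i$. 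The scalars $c_1, c_2$ cannot both vanish, as otherwise $\ell_H$ would divide both $q_i$ and force $H \subset X$, contradicting $\dim X = 2$. After rescaling and changing basis in the pencil, I would arrange $q_2 = \ell_H L_2$, and reducedness of $X$ will force $L_2$ not to be a scalar multiple of $\ell_H$ (otherwise $Q_2$ is a doubled hyperplane and $X$ fails to be reduced). Thus $Q_2 = H \cup V(L_2)$ is a rank-$2$ quadric containing $X$, and the decomposition $X = Q \cup (Q_1 \cap V(L_2))$ exhibits $X$ as a union of two quadric surfaces with the second possibly reducible.

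Part (2) will follow from the degree count: the two distinct planes $P_1, P_2$ contribute degree $2$ to $\deg X = 4$, so the residual irreducible components have degrees summing to $2$, forming either a single irreducible quadric surface or two additional $2$-planes; in the latter case the four planes are pairwise distinct since the irreducible components of the reduced scheme $X$ are pairwise distinct. The main obstacle will be the linkage step in part (1): using irreducibility of $Q$ to pin down the restrictions $q_i|_H$, and using reducedness of $X$ to ensure that the scalars $c_1, c_2$ do not both vanish and that the resulting $q_2$ has rank exactly $2$ rather than $1$. The remaining arguments reduce to elementary bookkeeping on degrees and irreducible decompositions.
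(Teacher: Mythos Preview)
Your proposal is correct and follows essentially the same approach as the paper: a degree-four partition argument for the first assertion and part (2), together with a linkage argument (writing the pencil as $c_i\tilde q + \ell_H L_i$ and extracting a member with $c=0$) for part (1). One small quibble: the case ``$q_i|_H$ is a nonzero square'' is ruled out by the irreducibility and degree of $Q$ (a double plane in $H$ cannot contain an irreducible degree-$2$ surface), not by reducedness of $X$; but this does not affect the argument.
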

    \begin{proof}
        The components of this proof can be found in~\cite{CTSSDI}*{Section 1} and~\cite{HeathBrown}*{Proof of Lemma 3.2}.  We repeat them here for the reader's convenience.

        The degrees of the irreducible components of \(X\) sum to \(4\), so we consider the partitions
        \[
          3 + 1, \; 2 + 2, \; 2 + 1 + 1, \; 1 + 1 + 1 + 1.  
        \]
        In any of these cases, \(X\) contains a surface of degree \(1\) (i.e., a \(2\)-plane) or a surface of degree \(2\) (i.e., a quadric surface).  To complete the proof, it remains to show that if \(X\) contains an irreducible quadric surface, then \(X\) is contained in a rank \(2\) quadric, and in the case of the partition \(2 + 1 + 1\), the union of the two planes is a quadric surface, i.e., is contained in a hyperplane.

        Assume that \(X\) contains an irreducible quadric surface, given by the vanishing of a quadratic form \(q\) and a linear form \(\ell\). The rank of $q$ cannot be $1$ because $X$ is reduced and the rank of $q$ cannot be $2$ because the quadric surface is irreducible. So $q$ must have rank at least $3$. Then the quadratic forms defining \(X\) must be of the form \(cq + \ell\ell'\), for some constant \(c\) and some linear form \(\ell'\).  There will be some linear combination of these where \(c=0\), and so \(X\) is cut out by the ideal 
        \[
            \langle \ell\ell', q + \ell\ell''\rangle = \langle \ell, q\rangle\cdot\langle \ell', q + \ell\ell''\rangle,
        \] 
        for some linear forms \(\ell', \ell''\).  The first factor gives our original quadric surface, the residual factor will give a (possibly reducible) quadric surface, and \(V(\ell\ell')\) is a rank 2 quadric hypersurface containing \(X\).
    \end{proof}

    \begin{lemma}\label{lem:FourPlanes}
        Let $X\subset\PP^4$ be a complete intersection of two quadrics over an algebraically closed field. If $X$ is the union of $4$ distinct planes, then \(X\) is a cone and $X$ is contained in a quadric hypersurface of rank $2$. If, in addition, $X$ has a unique cone point and there is cyclic subgroup of $\Aut(X)$ acting transitively on the irreducible components of $X$, then, up to an automorphism of \(\PP^4\), $X = V(x_0x_1, x_2x_3)\subset \PP^4$.
    \end{lemma}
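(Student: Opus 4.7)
The plan is to handle the first assertion of the lemma by analyzing ranks in the pencil $\calP$ of quadrics containing $X$, and the second assertion by projecting from the unique cone point to $\PP^3$, classifying the resulting line arrangement, and standardizing it.

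For the first assertion, I would first note every $Q\in\calP$ has rank $\leq 4$, since each contains a $2$-plane $P_i$ and smooth quadrics in $\PP^4$ contain no $2$-planes. I would then show that $\calP$ must contain a quadric of rank $\geq 3$: if both generators $Q_1,Q_2$ were rank $2$ (products of linear forms), either they share a common linear factor---forcing $X$ to contain a hyperplane, which is impossible since $X$ has pure dimension $2$---or $Q_1$ and $Q_2$ are products in four linearly independent linear directions, in which case $Q_1+Q_2$ is rank $4$ as a quadratic form in those four directions. Fixing such $Q_*\in\calP$ of rank $r\in\{3,4\}$, the quadric $Q_*$ is a cone with vertex a linear subspace of dimension $4-r$, and every $2$-plane on $Q_*$ contains this vertex, so each $P_i$ passes through the vertex of $Q_*$ and $X$ is a cone. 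To produce a rank $2$ quadric in $\calP$, I would exploit the ruling structure on $Q_*$. When $r=4$, the base of $Q_*$ is $\PP^1\times\PP^1$ and the intersection of $Q_*$ with any other element of $\calP$ corresponds to a $(2,2)$-divisor on the base, so the four planes $P_i$ distribute $2$+$2$ across the two rulings; two planes from different rulings meet in a line (through the vertex) and hence span a hyperplane, so pairing the four planes cross-ruling yields two hyperplanes whose union is a rank $2$ quadric containing $X$ (and therefore lying in $\calP$). When $r=3$, all four planes contain the vertex line $L$ of $Q_*$, and any pairing of the corresponding four points in $\PP^2=\PP^4/L$ yields two hyperplanes through $L$ whose union is the desired rank $2$ quadric.

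For the second assertion, let $v$ denote the unique cone point. Placing $v=[0:0:0:0:1]$, the defining quadrics of $X$ can be chosen to lie in $k[x_0,\ldots,x_3]$, so $X$ is the cone with vertex $v$ over $Y:=V(Q_1,Q_2)\subset\PP^3$, a union of four distinct lines $L_1,\ldots,L_4$. The cyclic symmetry $\sigma$ fixes $v$, descends to an element of $\PGL_4$, and acts as a $4$-cycle on $\{L_i\}$. Uniqueness of the cone point implies the four lines have no common point in $\PP^3$ (else that point would lift to a second vertex of the cone on $X$). A rank analysis analogous to the one in the previous paragraph, applied to the pencil of quadrics through $Y$ in $\PP^3$, then shows the four lines must form a \emph{quadrilateral} on a smooth quadric $Q'\subset\PP^3$: they lie on $Q'$, two in each ruling, so that in the $4$-cycle consecutive lines meet and opposite lines are skew. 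Using the transitive action of $\PGL_4$ on smooth quadrics in $\PP^3$ to put $Q'=V(y_0y_1-y_2y_3)$, and then using the stabilizer $(\PGL_2\times\PGL_2)\rtimes\Z/2$ of $Q'$ together with the two-transitivity of $\PGL_2$ on $\PP^1$ to move the distinguished points of each ruling to $\{[0:1],[1:0]\}$, the four lines become $V(y_0,y_2)$, $V(y_0,y_3)$, $V(y_1,y_2)$, $V(y_1,y_3)$, and $Y=V(y_0y_1,y_2y_3)$. The cone with vertex $v$ is then $V(x_0x_1,x_2x_3)\subset\PP^4$, as claimed.

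The hardest step is the classification of line configurations in the previous paragraph: enumerating all possible arrangements of four reduced lines in $\PP^3$ forming a complete intersection of two quadrics and ruling out every configuration except the quadrilateral. For example, four skew lines on a smooth quadric cannot form a complete intersection, because they are contained in a unique quadric and hence the pencil of containing quadrics is zero-dimensional; similar case-by-case elimination is needed for the other degenerate possibilities.
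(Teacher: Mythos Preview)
Your proposal is correct. For the first assertion you take a different route than the paper: the paper does a direct coordinate computation (placing one plane at $V(x_0,x_1)$ and analyzing cases according to whether all pairs of planes meet in a line or some pair meets only in a point), whereas you argue more conceptually via the rank stratification of the pencil and the geometry of planes on low-rank quadric cones. Your approach has the advantage of being coordinate-free and of explaining structurally why the rank~$2$ quadric must exist (as the span of two cross-ruling pairs of planes). For the second assertion both you and the paper project from the cone point to $\PP^3$; the paper then concludes the four-line configuration is a $4$-gon by invoking the arithmetic genus of the complete intersection curve together with a combinatorial enumeration of intersection patterns compatible with the $\Z/4\Z$-action, while your rank argument gives this more directly: the absence of a common point forces a smooth quadric in the pencil in $\PP^3$, and four lines on $\PP^1\times\PP^1$ of bidegree $(2,2)$ automatically form a quadrilateral.

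Two small comments. First, your claim that $\sigma$ ``descends to an element of $\PGL_4$'' is neither justified nor needed---the statement only assumes $\sigma\in\Aut(X)$, not that it extends to $\PP^4$. What you actually use (and what suffices) is that $\sigma$ permutes the components and preserves their incidence pattern; the final normalization is then carried out by an ambient automorphism, independently of $\sigma$. Second, you flag the classification of four-line configurations as ``the hardest step'' requiring case-by-case elimination, but your own rank argument already short-circuits this: once you know the pencil in $\PP^3$ contains a smooth quadric, the $(2,2)$ bidegree forces two lines in each ruling and the quadrilateral structure is automatic. The enumeration you worry about is unnecessary in your approach.
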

    \begin{proof}
       After a change of coordinates, we may assume that one of the planes is \(V(x_0,x_1)\).  If all pairs of the planes meet in a line, then we may  assume that one of the other planes is \(V(x_0,x_2)\).  Thus, \(X\) must be defined by \(x_0\ell = x_0\elltilde + x_1x_2 = 0\) for some linear forms \(\ell, \elltilde\). Note that $x_0\ell$ has rank $2$. If \(x_0,x_1,x_2,\ell,\elltilde\) are linearly dependent, then \(X\) is a cone.  If \(x_0,x_1,x_2,\ell,\elltilde\) are linearly independent, then, without loss of generality, we may assume that \(\ell = x_3\) and \(\elltilde = x_4\), so 
        \[
            X = V(x_0x_3, x_0x_4 + x_1x_2) = V(x_0, x_1)\cup V(x_0, x_2)\cup V(x_3, x_0x_4 + x_1x_2).
        \]
        This is not a union of four planes, so we have a contradiction.  
        
        If any pair of the planes meet in a point (in which case any cone point would be unique), then we may instead assume that one of the other planes is \(V(x_2,x_3)\).  Under these assumptions \(X\) must be the intersection of \(V(a_ix_0x_2 + b_ix_0x_3 + c_ix_1x_2 + d_ix_1x_3)\) for \(i =0,1\) and some \(a_i,b_i,c_i,d_i\). In particular, \(X\) is a cone. In addition, if \((a_0,d_0), (a_1,d_1)\) are linearly independent, then one of the defining equations can be taken to be a rank \(2\) quadric divisible by \(x_i\), and similarly if \((b_0,c_0), (b_1,c_1)\) are linearly independent.  Thus, it remains to consider the case that \(X = V(ax_0x_2 + dx_1x_3, bx_0x_3 + cx_1x_2)\), with \(abcd\neq 0\).  Then 
        \[
          bc\left(ax_0x_2 + dx_1x_3\right) + \sqrt{abcd}\left(bx_0x_3 + cx_1x_2\right)
          = \left(\sqrt{a}bx_0 + \sqrt{bcd}x_1\right)\left(\sqrt{a}cx_2 + \sqrt{bcd}x_3\right),
        \]
        and so \(X\) is contained in a rank \(2\) quadric.
        
        It remains to show that if \(X\) has a unique cone point and admits a transitive cyclic action on its irreducible components, then, up to an automorphism of \(\PP^4\), $X = V(x_0x_1, x_2x_3)\subset \PP^4$. Without loss of generality, we may assume the cone point is \([0:0:0:0:1]\), and so \(X\) is a cone over an intersection of quadrics in $\PP^3$, which is a curve $Z$ of arithmetic genus $1$.  Since by assumption $X$ is a union of $4$ planes, $Z$ must be the union of $4$ lines.  Furthermore, since \(X\) has a unique cone point, the four lines of \(Z\) cannot all meet.  This combined with the transitive \(\Z/4\Z\)-action then implies that any triple of the lines cannot meet.  By enumerating the possible intersection configurations, one can check that the only arrangement of lines with a transitive \(\Z/4\Z\)-action, with no triple meeting, and whose union is a curve of genus \(1\) is a $4$-gon, i.e., a cycle of rational curves, where each curve meets exactly two of the others. After a change of coordinates, we may assume that the intersections are
        \[
          P_1\cap P_2 = V(x_0,x_1,x_2),  P_2\cap P_3 = V(x_0,x_1,x_3),  P_3\cap P_4 = V(x_0,x_2,x_3),  P_4\cap P_1 = V(x_1,x_2,x_3),
        \]
        so $X = V(x_0x_2, x_1x_3)$.
    \end{proof}

    \begin{cor}\label{cor:Nonsplit}
        Let \(X\subset \PP^4_k\) be a geometrically reduced complete intersection of two quadrics over a field $k$.  If \(X\) is nonsplit, then \(X\) is contained in a rank \(2\) quadric.
    \end{cor}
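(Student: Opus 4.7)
The plan is to combine Lemma~\ref{lem:NumberOfComponents} and Lemma~\ref{lem:FourPlanes} with a Galois descent argument. First I would observe that since $X$ is geometrically reduced but nonsplit, $X_{\kbar}$ must be reducible: if $X_{\kbar}$ were irreducible, then combined with reducedness it would be integral, so $X$ itself would be geometrically integral and hence split, a contradiction. Consequently Lemma~\ref{lem:NumberOfComponents} applies, and I would split into two cases depending on whether $X_{\kbar}$ contains an irreducible quadric surface.

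In the first case ($X_{\kbar}$ contains an irreducible quadric surface), Lemma~\ref{lem:NumberOfComponents}(1) writes $X_{\kbar} = Q_1 \cup Q_2$ with each quadric surface $Q_i$ contained in a $\kbar$-hyperplane $V(\ell_i)$, and $X_{\kbar} \subset V(\ell_1 \ell_2)$. Nonsplitness forbids any irreducible $Q_i$ from being Galois-stable (such a component would descend to a geometrically integral $k$-component), and a type-preservation argument (Galois preserves the degree and irreducibility of components) forces both $Q_1, Q_2$ to be irreducible and mutually Galois-conjugate. The set $\{V(\ell_1), V(\ell_2)\}$ is then Galois-stable, and after rescaling the $\ell_i$ (each is only defined up to a scalar) so that $\sigma(\ell_1) = \ell_2$ for the Galois element $\sigma$ swapping them, the product $\ell_1 \ell_2 = \Norm_{L/k}(\ell_1)$ becomes Galois-invariant. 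This defines a rank $2$ quadric over $k$ containing $X$.

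In the second case ($X_{\kbar}$ contains no irreducible quadric surface), Lemma~\ref{lem:NumberOfComponents}(2) forces $X_{\kbar}$ to be a union of four distinct $2$-planes, since the other option listed in that lemma involves an irreducible quadric surface and is absorbed into the first case. Then Lemma~\ref{lem:FourPlanes} shows that $X_{\kbar}$ is a cone contained in a rank $2$ $\kbar$-quadric; more precisely, the pencil of quadrics defining $X$ contains exactly two rank $2$ quadrics, corresponding to the two ways of partitioning the four planes into pairs sharing a common hyperplane. Galois acts on this pair of rank $2$ quadrics: if it fixes each one, they descend to rank $2$ quadrics over $k$; if it swaps them, a construction analogous to the one in the first case, rescaling the pair of Galois-conjugate rank $2$ $\kbar$-quadratic forms to absorb the cocycle, produces the desired $k$-rational rank $2$ quadric. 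The main technical obstacle is the subcase in which Galois acts by a $4$-cycle on the four planes and simultaneously swaps the two rank $2$ quadrics; here the descent is more delicate and one must argue using the (necessarily $k$-rational) cone-apex structure of $X$ to pin down the correct scaling of the conjugate linear forms.
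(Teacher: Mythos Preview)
Your two-case split and the opening reduction to $X_{\kbar}$ being reducible match the paper's proof exactly. The paper then simply invokes Lemma~\ref{lem:NumberOfComponents}\eqref{part:IrredQuadric} in the first case and Lemma~\ref{lem:FourPlanes} in the second and stops; both lemmas are stated over an algebraically closed field, so what the paper actually establishes is that some \emph{geometric} member of the pencil through $X$ has rank~$2$. That is also all that is used later: in Section~\ref{sec:ProofOfLocalThm} the corollary is applied in contrapositive after verifying that every $\Fbar$-linear combination of the two defining forms has rank at least~$3$.

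You are trying to prove more, namely that the rank-$2$ quadric can be taken over $k$. Your descent in the quadric-surface case is fine (the unique rank-$2$ member of the pencil is visibly Galois-stable there). But the gap you flag in the four-plane case is genuine and cannot be closed: when $X_{\kbar}\cong V(x_0x_1,x_2x_3)$ and Galois cycles the four planes, it necessarily interchanges the two rank-$2$ members $V(x_0x_1)$ and $V(x_2x_3)$, and every Galois-stable member $a\,x_0x_1+b\,x_2x_3$ with $ab\neq 0$ has rank~$4$. Such $k$-forms of $V(x_0x_1,x_2x_3)$ do exist (twist by the evident permutation cocycle along any $\Z/4\Z$-extension), so the $k$-rational strengthening you are aiming for is simply false in general. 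Drop the descent and read the conclusion geometrically; then your argument coincides with the paper's.
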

    \begin{proof}
        Assume \(X\) is nonsplit.  Since \(X\) is geometrically reduced and nonsplit, it must be geometrically reducible, and so reducible over a separable closure.  Thus the absolute Galois group of $k$ acts on the geometric components.  Since \(X\) is nonsplit, none of the components are fixed by Galois, and so, by Lemma~\ref{lem:NumberOfComponents}, \(X\) is geometrically either the union of two irreducible quadric surfaces or the union of four planes.  In the first case, Lemma~\ref{lem:NumberOfComponents}\eqref{part:IrredQuadric} gives the result, and in the second Lemma~\ref{lem:FourPlanes} does.
    \end{proof}

\subsection{Semistable models}\label{sec:Semistable}

Following work of Koll\'ar~\cite{Kollar} in the case of hypersurfaces, Tian~\cite{Tian}*{Section 2.1} has defined a notion of semistability for intersections of two quadrics over discrete valuation rings.  This notion of semistability allows one to find a model of \(X\) whose special fiber is fairly well controlled.  

Before stating our results, we first review some of the definitions from Tian's semistability machinery. Suppose $k$ is a nonarchimedean local field with ring of integers \(\OO\) and residue field \(\F\). We will use \(\pi\) to denote a uniformizer. Let \(\scrX\subset \PP^4_{\OO}\) be an intersection of two quadrics.  Given \(\scrX\), we can associate a \(2\times 15\) matrix \(A\) such that each row is the coefficient vector of the corresponding defining equation for \(\scrX\).  Note that changing the defining equations corresponds to multiplying \(A\) on the left by an element of \(\GL_2(\OO)\).  Thus, up to this \(\GL_2\)-action, we have a well-defined matrix \(A_{\scrX}\).
    
Given an nonnegative integer weight vector \(\mathbf{w}\in \mathbb{N}^5\), we define the change of coordinates \(f_{\mathbf{w}}\colon \PP^4_{\OO}\to \PP^4_{\OO}, \;x_i\mapsto \pi^{w_i}x_i\).
Then we define the \defi{multiplicity of \(\scrX\) with respect to \(\mathbf{w}\)} to be 
\[
    \mult_{\mathbf{w}}(\scrX)  := \min \{ v(m) \;:\; m \text{ is a } 2\times2\textup{ minor of }A_{f_{\mathbf{w}}^*\scrX}\}\,,
\]
where \(v\) denotes the valuation on \(\OO\).  Then \(\scrX\) is said to be \defi{semistable} if for all weight vectors \(\mathbf{w}\) and all automorphisms \(g\in \Aut(\PP^4_{\OO}) = \PGL_5(\OO)\), we have
\[
    \mult_{\mathbf{w}}(g(\scrX)) \leq \frac45\left(\sum_{i=0}^4w_i\right).
\]
By~\cite{Tian}*{Theorem 2.7}, any smooth intersection of two quadrics \(X\subset \PP^4_k\) has a semistable integral model.  For more details, see~\cite{Tian}*{Section 2.1 and 2.4}.

We will also make use of the following results from~\cite{Tian}.
\begin{lemma}
    \label{lem:ReducedSpecialFiber}
    Let $k$ be a nonarchimedean local field, let \(\OO\) denote the valuation ring of \(k\), let $\F$ denote the residue field of $k$, and let \(X \subset \PP^4_k\) be a smooth complete intersection of two quadrics.   Let \(\scrX\subset \PP^4_{\OO}\) be a semistable model of \(X\) (which exists by~\cite{Tian}*{Theorem 2.7}). Then:
    \begin{enumerate}
        \item{{\cite{Tian}*{Lemma 2.9}}} The special fiber of \(\scrX\)  is a complete intersection of two quadrics.
        \item \cite{Tian}*{Lemma 2.22(1)} The special fiber is not contained in a reducible quadric hypersurface defined over \(\F\).\label{part:NoReducibleOverF}
        \item \cite{Tian}*{Lemma 2.22(2)} The special fiber does not contain a plane defined over \(\F\).\label{part:NoHyperplane}
        \item \cite{Tian}*{Lemma 2.22(4)} The special fiber is reduced.\label{part:reduced}
    \end{enumerate}
\end{lemma}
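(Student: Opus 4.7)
The statement is a compilation of four results from Tian's paper~\cite{Tian}, so the most direct proof is simply to cite the referenced lemmas (which the lemma statement itself already does). For a proposal of how one would prove these statements from scratch, all four parts can be established by the same general strategy: assuming the conclusion fails, one produces an explicit automorphism $g \in \PGL_5(\OO)$ and weight vector $\mathbf{w} \in \mathbb{N}^5$ such that $\mult_{\mathbf{w}}(g(\scrX)) > \tfrac{4}{5}\sum_i w_i$, contradicting the semistability hypothesis.

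For part~(1), I would assume that the special fiber is not a complete intersection of two quadrics, which means the two rows of $A_{\scrX} \mod \pi$ are not linearly independent as quadratic forms, or else they cut out something of larger dimension or incorrect degree. After a row operation in $\GL_2(\OO)$, one can arrange that a defining form vanishes identically mod $\pi$, and dividing by $\pi$ produces a matrix whose $2 \times 2$ minors inherit extra divisibility by $\pi$. This forces $\mult_{\mathbf{w}}(\scrX) > 0$ even for $\mathbf{w} = \mathbf{0}$, contradicting semistability. For parts~(2)--(4), I would first use the geometric classification in Lemma~\ref{lem:NumberOfComponents} and Lemma~\ref{lem:FourPlanes} (over $\F^{\operatorname{alg}}$) to put the alleged bad special fiber into a normal form: a reducible quadric $\ell_1 \ell_2$ containing $\scrX_{\F}$ can be taken (after a $\PGL_5$-change of coordinates) to be $x_0 x_1$; a plane contained in $\scrX_{\F}$ can be taken to be $V(x_0, x_1)$; and a non-reduced component corresponds to some defining quadric having a repeated linear factor mod $\pi$.

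Once in normal form, one chooses a weight vector concentrated on the distinguished coordinates. For example, in the reducible-quadric case $\mathbf{w} = (1, 1, 0, 0, 0)$ forces every monomial in the offending form $x_0 x_1 + (\text{lower})$ to gain at least two units of weight after pullback by $f_{\mathbf{w}}$; in the plane case the same weight vector forces every monomial of \emph{both} defining forms to gain at least one unit of weight; in the non-reduced case an analogous single-variable weight vector exploits the repeated factor. In each case one then divides both rows of $A$ by the appropriate power of $\pi$ and counts the $\pi$-divisibility of the resulting $2 \times 2$ minors; the contribution from the large monomials gives a multiplicity bound that exceeds $\tfrac{4}{5}(w_0 + \cdots + w_4)$, contradicting semistability.

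The main obstacle will be the bookkeeping: for each case one must carefully enumerate which monomials in the fifteen coefficients of each quadric carry weight under $f_{\mathbf{w}}$, and verify that after optimal row reduction the $2\times 2$ minors still have enough $\pi$-divisibility to break the semistability bound. This is essentially an exercise in geometric invariant theory, exactly as carried out in~\cite{Kollar} for hypersurfaces and extended in~\cite{Tian}*{Section 2} to intersections of two quadrics; given the combinatorial nature of the argument, for the present paper it is most efficient to simply quote the cited results from~\cite{Tian}.
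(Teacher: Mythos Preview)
Your overall strategy---assume the conclusion fails, normalize coordinates, then exhibit a weight vector violating the semistability bound---is exactly what the paper does for parts~\eqref{part:NoReducibleOverF} and~\eqref{part:NoHyperplane}. However, two of your specific choices are wrong.

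For part~\eqref{part:NoReducibleOverF}, the weight vector $(1,1,0,0,0)$ does not work. After normalizing so that one defining form is $x_0x_1 + \pi\tilde{q}$, pulling back by $f_{(1,1,0,0,0)}$ sends the $\pi\tilde{q}$ part to something whose coefficients (e.g., the $x_2^2$ coefficient) may still have valuation exactly $1$; the second form may have unit coefficients. Hence the minimal minor valuation is only $\ge 1$, whereas the semistability threshold is $\tfrac{8}{5}$, so there is no contradiction. The paper instead uses $\mathbf{w} = (1,0,0,0,0)$: every coefficient of the first row then has valuation $\ge 1$, so every minor does too, and $1 > \tfrac{4}{5}$ gives the contradiction. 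Your choice $(1,1,0,0,0)$ is correct for part~\eqref{part:NoHyperplane}, where \emph{both} defining forms lie in $(x_0,x_1)$ modulo $\pi$, so both rows pick up valuation $\ge 1$ and the minors have valuation $\ge 2 > \tfrac{8}{5}$.

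For part~\eqref{part:reduced}, your claim that a non-reduced component forces some defining quadric to have a repeated linear factor is not correct, and the paper does not use a new weight vector here at all. Instead, the paper argues as follows: using part~\eqref{part:NoHyperplane} (no $\F$-rational plane) and degree considerations, the only possible non-reduced special fiber is, set-theoretically, a quadric surface $V(\ell,q)$ taken with multiplicity $2$ (the case of two conjugate double planes is subsumed by this). One then checks that the defining ideal must have the form $(\ell\ell_1,\, \ell\ell_2 + q)$, so the special fiber is contained in the reducible quadric $V(\ell\ell_1)$, contradicting part~\eqref{part:NoReducibleOverF}. Thus~\eqref{part:reduced} is deduced from~\eqref{part:NoReducibleOverF} and~\eqref{part:NoHyperplane} rather than from a direct multiplicity computation.
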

\begin{remark}
    In~\cite{Tian}*{Sections 2.2 -- 2.4}, Tian works over local function fields, but as noted in~\cite{Tian}*{beginning of Section 2.2}, the proofs go through essentially verbatim for any nonarchimedean local field.  In~\cite{Tian}*{Section 2.4} (in which~\cite{Tian}*{Lemma 2.22} is stated and proved), Tian adds the hypothesis that the residue field has odd characteristic, and so freely interchanges smooth and nonsingular. However, no assumption on the residue characteristic is needed for the proofs of \cite{Tian}*{Lemma 2.22 (1), (2), and (4)}. For the sake of completeness, we repeat Tian's proof of Lemma~\ref{lem:ReducedSpecialFiber}\eqref{part:NoReducibleOverF}--\eqref{part:reduced}.
\end{remark}

\begin{proof}
    If the special fiber is contained in a reducible quadric hypersurface defined over \(\F\), then, after possibly changing variables, one of the quadrics defining \(\scrX\) must be of the form
    \(x_0x_1 + \pi \tilde{q}\), in which case  \(\textup{mult}_{(1,0,0,0,0)}(\scrX) \geq 1\).  However, since \(\scrX\) is assumed to be semistable we must have \(\textup{mult}_{(1,0,0,0,0)}(\scrX) \leq \frac{4\cdot(1)}{5}\), resulting in a contradiction.  This proves~\eqref{part:NoReducibleOverF}.  Similarly, if the special fiber contains a linear subspace of dimension \(2\) defined over \(\F\), {which we may assume is $V(x_0,x_1)$}, then \(\textup{mult}_{(1,1,0,0,0)}(\scrX) \geq 2\). However, the semistability hypothesis implies that \(\textup{mult}_{(1,1,0,0,0)}(\scrX) \leq \frac{4\cdot(1 + 1)}{5} = \frac85\), giving a contradication. Thus, we conclude~\eqref{part:NoHyperplane}.

    Now we prove~\eqref{part:reduced}.  By~\cite{Tian}*{Lemma 2.9}, the special fiber is a complete intersection, so the special fiber is reduced if and only if all geometric irreducible components are reduced. Assume that the special fiber has a nonreduced geometric irreducible component.  Since the special fiber has degree \(4\) and contains no plane defined over \(\F\), the only possibilities are:
    \begin{enumerate}[label=(\alph*)]
        \item\label{case:doublequadric} a quadric surface of multiplicity \(2\), or 
        \item\label{case:twoconj} a union of two conjugate planes, each with multiplicity \(2\).
    \end{enumerate}
    Note that in case~\ref{case:twoconj}, the two planes must meet in a line, as otherwise a general hyperplane section would be the union of two skew double lines, which is not possible.  Thus, case~\ref{case:twoconj} is subsumed by case~\ref{case:doublequadric}, and so the reduced special fiber is given by the vanishing of a linear form \(\ell\) and a quadratic form \(q\).  Hence, the special fiber is defined by quadratic forms of the form \(\ell\ell_1, \ell\ell_2 + q\) for some linear forms \(\ell_1, \ell_2\), which contradicts~\eqref{part:NoReducibleOverF}.
\end{proof}

    \begin{prop}\label{prop:ReductionFourPlanes}
        Let $k$ be a nonarchimedean local field, let \(\OO\) denote the valuation ring of \(k\), let $\F$ denote the residue field of $k$, and
        let \(X\subset \PP^4_k\) be smooth complete intersection of two quadrics. Let \(\scrX\subset \PP^4_{\OO}\) be a semistable model of \(X\) (which exists by~\cite{Tian}*{Theorem 2.7}).  Assume that the special fiber of \(\scrX/\OO\) is  geometrically the union of four \(2\)-planes and that the Galois group acts transitively on the four \(2\)-planes.
        Then, for any choice of uniformizer \(\pi\), \(X\) must be given by the vanishing of quadratic forms of the shape:
        \begin{equation}
            q(x_0,\dots,x_3) + \pi^mx_4\ell(x_0,\dots,x_3), \textup{ and }
            \tilde{q}(x_0,\dots,x_3) + \pi x_4^2 + \pi^nx_4\elltilde(x_0,\dots,x_3),  \label{eq:ConeOverI4}
        \end{equation}
        (with \(m, n\) positive integers, and \(q,\tilde{q}\) quadratic forms such that every \(\Fbar\)-linear combination of $q$ and $\tilde{q}$ modulo $\pi$ has rank at least \(2\)); or
        \begin{align}
            \begin{split}
            &g(x_0,x_1,x_2) + \pi h(x_3,x_4) + \pi^ax_3\ell_3(x_0,x_1,x_2) + \pi^bx_4\ell_4(x_0,x_1,x_2), \textup{ and }\\ 
           &\tilde{g}(x_0,x_1,x_2) + \pi \tilde{h}(x_3,x_4) + \pi^cx_3\elltilde_3(x_0,x_1,x_2) + \pi^dx_4\elltilde_4(x_0,x_1,x_2),
            \end{split}\label{eq:ConeOver4Pts}
        \end{align}
        (with \(a,b,c,d\) positive integers, $\ell_i,\tilde{\ell}_i$ linear forms and \(g,\tilde{g},h, \tilde{h} \) quadratic forms such that every \(\Fbar\)-linear combination of $g$ and $\tilde{g}$ modulo $\pi$ has rank at least \(2\) and every $\Fbar$-linear combination of $h$ and $\tilde{h}$ modulo \(\pi\) has rank at least $1$).
    \end{prop}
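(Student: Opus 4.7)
The plan is to first identify the Galois-invariant cone structure of the special fiber via Lemma~\ref{lem:FourPlanes}, then use the semistability hypothesis to constrain valuations in an integral lift. By Lemma~\ref{lem:FourPlanes}, $\scrX_{\Fbar}$ is a cone; its vertex is intrinsic, hence Galois-invariant and defined over $\F$. A dimension count shows the vertex is either a point (Case A) or a line (Case B), since a two-dimensional vertex would force the irreducible components of $\scrX_{\Fbar}$ to be $3$-planes, contrary to assumption.

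In Case A, I would change coordinates over $\F$ (lifted to $\PGL_5(\OO)$, which preserves semistability) to move the vertex to $[0{:}0{:}0{:}0{:}1]$, so that the special fiber is cut out by quadratic forms $q,\tilde q$ in $x_0,\dots,x_3$ alone. Any pair of integral lifts can then be written as
\[
Q = q + \pi^m x_4\ell + \pi^j u\, x_4^2, \qquad \tilde Q = \tilde q + \pi^n x_4\tilde\ell + \pi^{j'} u'\, x_4^2,
\]
with $m,n,j,j'\ge 1$ (infinite exponents indicating absent terms), $u,u'\in\OO^\times$, and $\ell,\tilde\ell$ primitive integral linear forms in $x_0,\dots,x_3$. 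In Case B, moving the vertex line to $V(x_0,x_1,x_2)$ presents the defining forms as $Q = g + \pi^s h_1 + \pi^a x_3\ell_3 + \pi^b x_4\ell_4$ and similarly for $\tilde Q$, where $g,\tilde g\in\OO[x_0,x_1,x_2]$ and $h_1,\tilde h_1\in\OO[x_3,x_4]$ are quadratic and $\ell_3,\ell_4,\tilde\ell_3,\tilde\ell_4$ are linear in $x_0,x_1,x_2$.

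Next I would invoke semistability for a well-chosen weight vector. In Case A with $w=(1,1,1,1,0)$, the coefficient matrix of $f_w^*\scrX$ has $2\times 2$ minors of valuation at least $\min(4,\, 2+\min(j,j'))$; the bound $\mult_w(\scrX) \le \tfrac{16}{5}$ then forces $\min(j,j')=1$. A $\GL_2(\OO)$-row operation cancels the $x_4^2$ term from one form, and rescaling yields shape~(\ref{eq:ConeOverI4}). In Case B with $w=(1,1,1,0,0)$, an analogous computation produces minors of valuation at least $\min(4,\, 2+\min(s,s'),\, s+s')$, and the bound $\mult_w(\scrX) \le \tfrac{12}{5}$ forces $s=s'=1$; moreover, if $h_1,\tilde h_1$ were $\F$-linearly dependent modulo $\pi$, a row operation would produce a form with $s'\ge 2$, contradicting this bound. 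This yields shape~(\ref{eq:ConeOver4Pts}).

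Finally, the rank conditions follow from reducedness of the special fiber (Lemma~\ref{lem:ReducedSpecialFiber}\eqref{part:reduced}): if an $\Fbar$-linear combination of $q,\tilde q$ (respectively $g,\tilde g$) had rank $\le 1$ and equaled $\ell^2$ for some linear form $\ell$, the ideal $(\ell^2,\ast)$ would present the base of the cone as a scheme with a doubled component, forcing the special fiber to be nonreduced. The rank $\ge 1$ condition on $h,\tilde h$ is precisely the linear independence modulo $\pi$ established in the previous step. The main obstacle I anticipate is the bookkeeping of the semistability computation: one must track all $2\times 2$ minor valuations and verify that the minor realizing the claimed lower bound is nonzero modulo the relevant power of $\pi$, which uses that $q,\tilde q$ (respectively $g,\tilde g$) are $\F$-linearly independent modulo $\pi$ since the special fiber is a complete intersection.
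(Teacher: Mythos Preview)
Your proposal is correct and follows essentially the same approach as the paper: the same case division by vertex dimension, the same weight vectors $(1,1,1,1,0)$ and $(1,1,1,0,0)$, and the same semistability computations. The one minor difference is that in Case~A the paper invokes the cyclic transitive Galois action (via Lemma~\ref{lem:FourPlanes}) to identify the base of the cone explicitly as $V(x_0x_1,x_2x_3)$, from which the rank $\ge 2$ condition on $q,\tilde q$ is immediate, whereas you deduce it directly from reducedness of the special fiber; both arguments work.
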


    \begin{proof}
        By Lemma~\ref{lem:FourPlanes}, the special fiber must be isomorphic (over \(\Fbar\)) to \(V(x_0x_1,x_2x_3)\) or a cone over a complete intersection of two quadrics in \(\PP^2\) (i.e., a complete intersection of two conics).

        Let us first assume that the special fiber is geometrically isomorphic to $V(x_0x_1, x_2x_3)$.  Note that this variety has a unique singular point, the cone point, so it must be defined over \(\F\).  After a change of coordinates, we may assume the cone point reduces to \(V(x_0,x_1,x_2,x_3)\) and hence $X$ is given by
        \[
            q(x_0,\dots,x_3) + \pi^mx_4\ell(x_0,\dots,x_4), \textup{ and }
            \tilde{q}(x_0,\dots,x_3) + \pi^nx_4\elltilde(x_0,\dots,x_4),
        \]
        for some integers \(m, n\geq 1\), quadratic forms \(q, \tilde{q}\) and linear forms \(\ell, \elltilde\) that are nonzero modulo \(\pi\).  We will first use the semistability of \(\scrX\) for the weight vector \(\mathbf{w}:= (1,1,1,1,0)\) to show that one of \(\pi^m\ell\) or \(\pi^n\elltilde\) must evaluate to a uniformizer at \([0:0:0:0:1]\).  Note that \(A_{f_{\mathbf{w}}^*\scrX}\) has the following form
        \[
            \begin{pmatrix}
                \pi^2\textup{coefs}(q) 
                    & \pi^{m+1}\ell_0
                    & \pi^{m+1}\ell_1
                    & \pi^{m+1}\ell_2
                    & \pi^{m+1}\ell_3
                    & \pi^{m}  \ell_4\\
                \pi^2\textup{coefs}(\tilde{q}) 
                    & \pi^{n+1}\elltilde_0 
                    & \pi^{n+1}\elltilde_1
                    & \pi^{n+1}\elltilde_2
                    & \pi^{n+1}\elltilde_3
                    & \pi^{n}  \elltilde_4
            \end{pmatrix},
        \]
        where \(\ell = \sum_i\ell_ix_i, \elltilde = \sum_i\elltilde_ix_i\) and \(\textup{coefs}(q) , \textup{coefs}(\tilde{q})\) denote the coefficient vectors of \(q, \tilde{q}\) respectively. Hence, using the strong triangle equality and the definition of multiplicity, one can compute that \(\textup{mult}_{\mathbf{w}}(\scrX) \geq \min(4, 2 + m + v(\ell_4),2 + n + v(\elltilde_4))\).  However, the semistability assumption implies that \(\textup{mult}_{\mathbf{w}}(\scrX) \leq \frac{4\cdot(1 + 1 + 1 + 1)}{5} = \frac{16}{5}\), and so  \(\min(m + v(\ell_4),n + v(\elltilde_4)) = 1\). Thus, after renaming \(q, \tilde{q}\) and \(\ell, \tilde{\ell}\) and possibly scaling the equations, we may assume the equations are of the form:
        \[
            q(x_0,\dots,x_3) + \pi^mx_4\ell(x_0,\dots,x_3), \textup{ and }
            \tilde{q}(x_0,\dots,x_3) + \pi^nx_4\elltilde(x_0,\dots,x_3) + \pi x_4^2.
        \]
    	To see that every \(\Fbar\)-linear combination of \(q\) and \(\tilde{q}\) modulo $\pi$ is rank at least $2$, recall that the variety defined by \(q\) and \(\tilde{q}\) modulo \(\pi\) is geometrically isomorphic to \(V(x_0x_1,x_2x_3)\) and note that \(a x_0x_1 + b x_2x_3\) has rank \(4\) for all \(a,b \ne 0\).

        Now assume that the special fiber is a cone over a complete intersection of two quadrics in \(\PP^2\). Then, up to a change of variables, $\scrX$ must be given by quadratic forms of the shape
        \begin{align*}
          &g(x_0,x_1,x_2) + \pi^m h(x_3,x_4) + \pi^ax_3\ell_3(x_0,x_1,x_2) + \pi^bx_4\ell_4(x_0,x_1,x_2), \textup{ and }\\ 
          &\tilde{g}(x_0,x_1,x_2) + \pi^{\tilde{m}} \tilde{h}(x_3,x_4) + \pi^cx_3\elltilde_3(x_0,x_1,x_2) + \pi^dx_4\elltilde_4(x_0,x_1,x_2),        
        \end{align*}
        where $a,b,c,d, m, \tilde{m}$ are positive integers, \(g,\tilde{g}, h,\tilde{h}\) are quadratic forms, and $\ell_i,\tilde{\ell}_i$ are linear forms. Since, by assumption, the special fiber is reduced, the complete intersection in \(\PP^2_\Fbar\) defined by the vanishing of $g$ and $\tilde{g}$ modulo $\pi$ must also be reduced. This complete intersection is therefore, geometrically, a set of $4$ non-colinear points in $\PP^2_\Fbar$. These points are not contained in any quadric of rank $1$ so every \(\Fbar\)-linear combination of $g$ and $\tilde{g}$ modulo $\pi$ has rank at least \(2\).
         
        To complete the proof, we need to show that \(m = \tilde{m} = 1\) and that \(h, \tilde{h}\) are linearly independent modulo \(\pi\).  We will again use our semistability hypothesis. Consider the weight vector \(\mathbf{w} = (1,1,1,0,0)\). One can compute that 
        \(\textup{mult}_{\mathbf{w}}(\scrX)\) is at least \(\min \{ 4, m + \tilde{m}, 2 + m, 2 + \tilde{m}\}\) and, in addition, if \(h\) and \(\tilde{h}\) are linearly dependent modulo \(\pi\), then \(\textup{mult}_{\mathbf{w}}(\scrX)\geq\min \{ 4, m + \tilde{m} + 1, 2 + m, 2 + \tilde{m}\}\).  However, the semistability assumption implies that \(\textup{mult}_{\mathbf{w}}(\scrX) \leq \frac{4\cdot(1 + 1 + 1)}{5} = \frac{12}{5}\).  Thus, we must have that \(h\) and \(\tilde{h}\) are linearly independent modulo \(\pi\), and \(m + \tilde{m} =  2\), which implies that \(m = \tilde{m} = 1\).  
    \end{proof}

\subsection{Proof of Theorem~\ref{thm:localquadpts}}\label{sec:ProofOfLocalThm}
    If $k$ is archimedean, then $[\kbar:k]\leq 2$ so the result is immediate.  Henceforth we assume that $k$ is nonarchimedean, and we write $\OO$ for the valuation ring of $k$ and $\F$ for the residue field of $k$. 
    By~\cite{Tian}*{Theorem 2.7}, there is a linear change of coordinates on $\PP^4_k$ such that the resulting integral model  $\scrX\subset \PP^4_{\OO}$ of $X$ is semistable. In particular, by Lemma~\ref{lem:ReducedSpecialFiber}, the special fiber of $\scrX$ is a reduced complete intersection of quadrics.  

    If the special fiber of \(\scrX\) is split, then the desired result follows from Proposition~\ref{prop:SplitSpecialFiber}.  If the special fiber of \(\scrX\) is not split, but becomes split over the quadratic extension of $\F$, then we may apply Proposition~\ref{prop:SplitSpecialFiber} over \(k'\), the unique quadratic unramified extension of \(k\), and conclude that \(X(k') \neq \emptyset\).

    Thus, we have reduced to the case that the special fiber \(\scrX^{\circ}\) of \(\scrX\) is nonsplit and remains nonsplit over the unique quadratic extension \(\F'/\F\). Since \(\F\) is perfect and \(\scrX^{\circ}\) is reduced, \(\scrX^{\circ}\) must be geometrically reduced.  Therefore \(\scrX^{\circ}\) must be geometrically reducible and \(\Gal(\Fbar/\F')\) must act nontrivially on the components.  By Lemma~\ref{lem:NumberOfComponents}, this is possible only if \(\scrX^{\circ}_{\Fbar}\) is the union of four \(2\)-planes.  Furthermore, the current assumptions imply that \(\Gal(\Fbar/\F)\) must act transitively on the four \(2\)-planes.  Thus, by Proposition~\ref{prop:ReductionFourPlanes}, we may assume that \(X\) is given by quadrics as in~\eqref{eq:ConeOverI4} or~\eqref{eq:ConeOver4Pts}.

    Consider a ramified quadratic extension $k'/k$ and let $\varpi$ be a uniformizer of $k'$.  First assume that \(\scrX\) is given by equations of the form~\eqref{eq:ConeOverI4}. Over $k'$ we may absorb a $\varpi$ into $x_4$ and obtain the model $\scrX'/\OO'$ (where $\OO'$ is the valuation ring of $k'$):
    \begin{align}
    	\begin{split}
       &q(x_0,\dots,x_3) + u^r \varpi^{2r-1}x_4\ell(x_0,\dots,x_3), \textup{ and }\\
       &\tilde{q}(x_0,\dots,x_3) + u^n\varpi^{2n-1}x_4\elltilde(x_0,\dots,x_3) + ux_4^2,    
       	\end{split}\label{pencil1}  
    \end{align}
    where $u$ is the unit such that $u\varpi^2 = \pi$. 
    Every $\Fbar$-linear combination of the forms in~\eqref{pencil1} modulo $\varpi$ is an orthogonal sum of an $\Fbar$-linear combination of $q$ and $\tilde{q}$ modulo $\varpi$ (which has rank at least $2$ by ~\eqref{eq:ConeOverI4}) with a quadratic form of  rank $1$. It follows from Lemma~\ref{lem:ranks} that every $\Fbar$-linear combination of the forms in~\eqref{pencil1} modulo $\varpi$ has rank at least $3$. Thus, by Corollary~\ref{cor:Nonsplit}, the special fiber of $\scrX'$ is split, so, by Proposition~\ref{prop:SplitSpecialFiber}, $\scrX'$ has a $k'$-point.

    Now assume that \(X\) is given by equations of the form~\eqref{eq:ConeOver4Pts}.  Then, we may absorb a $\varpi$ into $x_3$ and $x_4$ and obtain the model $\scrX'/\OO$ given by
        \begin{align}
        	\begin{split}
            &g(x_0,x_1,x_2) + uh(x_3,x_4) + u^a\varpi^{2a-1}x_3\ell_3(x_0,x_1,x_2) + u^b\varpi^{2b-1}x_4\ell_4(x_0,x_1,x_2), \textup{ and }  \\
            &\tilde{g}(x_0,x_1,x_2) + {u}\tilde{h}(x_3,x_4) + u^c\varpi^{2c-1}x_3\elltilde_3(x_0,x_1,x_2) + u^d\varpi^{2d-1}x_4\elltilde_4(x_0,x_1,x_2),    
            \end{split}\label{pencil2}   
          \end{align}
    where $u$ is the unit such that $u\varpi^2 = \pi$. Then every $\Fbar$-linear combination of the forms in~\eqref{pencil2} modulo $\varpi$ is an orthogonal direct sum of an $\Fbar$-linear combination of $g$ and $\tilde{g}$ modulo $\varpi$ (which is a form of rank $2$ or $3$) with an $\Fbar$-linear combination of $h$ and $\tilde{h}$ modulo $\varpi$ (which is a form of rank $1$ or $2$). Thus, by Lemma~\ref{lem:ranks}, every $\Fbar$-linear combination of the forms in~\eqref{pencil2} modulo $\varpi$ has rank at least $3$. Hence, by Corollary~\ref{cor:Nonsplit}, the special fiber of $\scrX'$ is split, and so $X$ has a $k'$-point, by Proposition~\ref{prop:SplitSpecialFiber}. \qed
        
    \subsection{Alternate proof in characteristic $2$}\label{sec:AlternateChar2}
     
     The following is a slight generalization of \cite{DD}*{Theorem 4.4}.
     
    \begin{prop}\label{prop:char2}
    	Suppose $k$ is a field of characteristic $2$ and $X \subset \PP^4_k$ is smooth complete intersection of two quadrics. Then $X(k^{1/2}) \ne \emptyset$. In particular, if $k$ is a local or global field of characteristic $2$, then $X$ contains a point defined over the quadratic extension $k^{1/2}$ of $k$.
    \end{prop}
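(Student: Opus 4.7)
The approach uses the characteristic-$2$ splitting of quadratic forms: any $q = \sum_{i\le j}a_{ij}x_ix_j$ factors over $k^{1/2}$ as $q = \ell^2 + b$, where $\ell := \sum_i\sqrt{a_{ii}}\,x_i$ is a linear form over $k^{1/2}$ and $b := \sum_{i<j}a_{ij}x_ix_j$ is a quadratic form defined over $k$ whose associated symmetric matrix has vanishing diagonal.  Writing $q_i = \ell_i^2 + b_i$ for $i = 1, 2$, I would first observe that the $k$-variety $Y := V(b_1, b_2) \subset \PP^4_k$ contains each coordinate point $e_j = [0:\cdots:1:\cdots:0]$ as a $k$-rational point, since each $b_i$ has vanishing diagonal and hence $b_i(e_j) = 0$.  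Next, for any $v \in Y(k^{1/2})$ one has $q_i(v) = \ell_i(v)^2$, so $v \in X(k^{1/2})$ if and only if $\ell_1(v) = \ell_2(v) = 0$.

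The problem thus reduces to exhibiting a $k^{1/2}$-rational point on $Z := Y \cap V(\ell_1, \ell_2) \subset \PP^4_{k^{1/2}}$.  Following \cite{DD}*{Theorem~4.4}, I would carry out a case analysis based on the linear dependence properties of $\ell_1, \ell_2$ over $k^{1/2}$ and the rank structure of the pencil of alternating bilinear forms $B_{q_\lambda} = \lambda_1 B_{q_1} + \lambda_2 B_{q_2}$ on a $5$-dimensional space (which always has nontrivial kernel since $5$ is odd).  In the degenerate cases where one of the $\ell_i$ vanishes identically (equivalently, the corresponding $q_i$ has zero diagonal over $k$), the coordinate points $e_j$ directly furnish candidates for points of $X$; after pencil manipulations (replacing $q_2$ by $q_2 + c^2 q_1$ to kill a linear dependence) one reduces to this situation whenever $\ell_1, \ell_2$ are linearly dependent.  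The generic case is handled by exploiting the distinguished $k$-rational curve of kernel vectors of $B_{q_\lambda}$, together with smoothness of $X$ (which rules out the pathology that these kernels lie on $X$), to construct the required $k^{1/2}$-point of $Z$.

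The main obstacle is the generic case, where one must actually produce a $k^{1/2}$-rational point of $Z$ rather than only showing that $Z$ is nonempty geometrically; the smoothness hypothesis is crucial here to preclude degenerate configurations of the pencil.  Once $X(k^{1/2}) \neq \emptyset$ is established, the ``in particular'' statement is immediate: for any local or global field $k$ of characteristic $2$ one has $[k^{1/2}:k] \le 2$, so a $k^{1/2}$-point of $X$ is a quadratic point in the sense of the paper.
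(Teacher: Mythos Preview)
Your decomposition $q_i = \ell_i^2 + b_i$ is the right starting observation, but the paper bypasses the case analysis entirely by invoking the Dolgachev--Duncan normal form \cite{DD}*{Theorem~1.1} rather than their Theorem~4.4. That result says that, after a $k$-linear change of coordinates, $X$ is cut out by
\[
a_0x_0^2 + a_1x_1^2 + a_2x_2^2 + x_3\ell_1 + x_4\ell_2 \quad\text{and}\quad b_0x_0^2 + b_1x_1^2 + b_2x_2^2 + x_3\ell_3 + x_4\ell_4\,,
\]
so in these coordinates the off-diagonal parts already lie in the ideal $(x_3,x_4)$ and the square-root linear forms involve only $x_0,x_1,x_2$. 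Intersecting with the plane $V(x_3,x_4)$ therefore reduces to intersecting the two $k^{1/2}$-lines $V(a_0^{1/2}x_0+a_1^{1/2}x_1+a_2^{1/2}x_2)$ and $V(b_0^{1/2}x_0+b_1^{1/2}x_1+b_2^{1/2}x_2)$ in $\PP^2$, which visibly has a $k^{1/2}$-point. No case analysis is needed.

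The ``generic case'' obstacle you flag is genuine if one works without the normal form: in arbitrary coordinates your $Z = V(b_1,b_2,\ell_1,\ell_2)$ is an intersection of two conics in a plane $\PP^2_{k^{1/2}}$, and nothing forces a $k^{1/2}$-rational intersection point. Your sketch via the kernel curve of $B_{q_\lambda}$ does not explain how one actually lands in $Z$. The normal form is precisely the device that dissolves this difficulty, and since you are already citing \cite{DD} it costs nothing to use it.

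A minor point: the assertion $[k^{1/2}:k]=2$ for a global function field of characteristic~$2$ is not quite immediate; the paper spells out a short argument (via Frobenius and the tower law, reducing to the rational function field case).
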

    
    \begin{proof}
    	By \cite[Theorem 1.1]{DD}, $X$ can be defined by the vanishing of quadratic forms of the form
    	\begin{align*}
    		&a_0x_0^2 + a_1x_1^2 + a_2x_2^2 +x_3\ell_1 + x_4\ell_2\,, \text{and}\\
    		&b_0x_0^2 + b_1x_1^2 + b_2x_2^2 + x_3\ell_3 + x_4\ell_4
    	\end{align*}
    	where $a_i,b_i \in k$ and $\ell_i \in k[x_0,\dots,x_4]$ are linear forms. In particular, the intersection of $X$ with the plane $V(x_3,x_4)$ is an intersection of two conics in $\PP^2$ neither of which is geometrically reduced. The reduced subschemes of the base changes of these conics to the algebraic closure are the lines $V(a_0^{1/2}x_0 + a_1^{1/2}x_1 + a_2^{1/2}x_2)$ and $V(b_0^{1/2}x_0 + b_1^{1/2}x_1 + b_2^{1/2}x_2)$, which are defined over $k^{1/2}$. Their intersection yields a $k^{1/2}$-point on $X$.
    	
	It remains to show that $[k^{1/2}:k] = 2$ when $k$ is a local or global field of characteristic $2$. If $k$ is local, then $k = \F((t))$ with $\F$ a finite field of characteristic $2$ and $k^{1/2} = \F((t^{1/2}))$ which is clearly an extension of degree $2$. Similarly, if $k = \F(t)$ is a global function field of genus $0$ and characteristic $2$, then $k^{1/2} = \F(t^{1/2})$ is clearly a degree $2$ extension. For a general global field $k$ of characteristic $2$, which is necessarily a finite extension of $k_0 = \F(t)$ with $\F$ finite characteristic $2$, we may reduce to the genus $0$ case as follows (cf. \cite[Theorem 3]{BeckerMacLane}). Frobenius gives an isomorphism $F: k^{1/2} \to k$ which restricts to an isomorphism $k_0^{1/2} \to k_0$, and so $[k^{1/2}:k_0^{1/2}] = [k:k_0]$. Since $k$ and $k_0^{1/2}$ are both intermediate fields of the extension $k_0 \subset k^{1/2}$ we have
    	\[
    		[k^{1/2}:k][k:k_0] = [k^{1/2}:k_0^{1/2}][k_0^{1/2}:k_0]\,.
    	\]
    	Taken together these observations show that $[k^{1/2}:k] = [k_0^{1/2}:k_0]$.
    \end{proof}

\section{Brauer-Manin obstructions over extensions}\label{sec:Extensions}

    In this section, we prove some general results relating the Brauer-Manin obstruction on a nice variety $Y$ to the Brauer-Manin obstruction over an extension.  Moreover, for quadratic extensions, we relate the Brauer-Manin obstruction on (a desingularization of) the symmetric square to the Brauer-Manin obstruction over quadratic extensions.

    \begin{lemma}\label{lem:CorCommutesWithEval}
        Let $Y/k$ be a nice variety over a global field $k$, let $K/k$ be a finite extension, and let $B$ be a subset of $\Br(Y_K)$.  Then $Y(\A_k)^{\Cor_{K/k}(B)} \subset Y_{{K}}(\A_K)^{B}$.  In particular,
        \begin{enumerate}
            \item if $Y(\A_k)^{\Br} \neq \emptyset$, then $Y_K(\A_K)^{\Br}\neq \emptyset$, and\label{part:NewBrCannotObsOldPts}
            \item for any $d\mid [K:k]$, $Y(\A_k) \subset Y_K(\A_K)^{\Res_{K/k} \Br (Y)[d]}$.\label{part:ConstEvalLeadsToNoObstructions}
        \end{enumerate}
    \end{lemma}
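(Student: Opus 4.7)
The plan is to derive both numbered assertions from the main inclusion $Y(\A_k)^{\Cor_{K/k}(B)}\subset Y_K(\A_K)^{B}$, which I would establish by a direct computation resting on two compatibilities: base change for corestriction of Brauer classes along the finite locally free map $Y_K\to Y$, and the local class field theory identity $\inv_v\circ\Cor_{K_w/k_v}=\inv_w$ for each place $w\mid v$.

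To carry this out, fix an adelic point $(y_v)\in Y(\A_k)$ and an element $\beta\in B$. Base-changing $y_v\colon\Spec(k_v)\to Y$ along $Y_K\to Y$ produces a morphism $\Spec(K\otimes_k k_v)\to Y_K$; using $K\otimes_k k_v\simeq\prod_{w\mid v}K_w$ this decomposes into the collection $\{y_w\colon\Spec(K_w)\to Y_K\}_{w\mid v}$, which assembles into an adelic point $(y_w)\in Y_K(\A_K)$. Since the square
\[
\begin{array}{ccc}
\coprod_{w\mid v}\Spec(K_w) & \longrightarrow & Y_K \\
\downarrow & & \downarrow \\
\Spec(k_v) & \xrightarrow{\;y_v\;} & Y
\end{array}
\]
is Cartesian, base-change compatibility of corestriction yields the identity $\Cor_{K/k}(\beta)(y_v)=\sum_{w\mid v}\Cor_{K_w/k_v}(\beta(y_w))$ in $\Br(k_v)$. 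Applying $\inv_v$, invoking the local compatibility above, and then summing over $v$ while exchanging the order of summation gives
\[
\sum_{v\in\Omega_k}\inv_v\bigl(\Cor_{K/k}(\beta)(y_v)\bigr)\;=\;\sum_{w\in\Omega_K}\inv_w\bigl(\beta(y_w)\bigr).
\]
The hypothesis that $(y_v)$ is orthogonal to $\Cor_{K/k}(B)$ says the left side vanishes for every $\beta\in B$, hence so does the right side, which means $(y_w)\in Y_K(\A_K)^{B}$.

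With the main inclusion in hand, the two particular statements are formal. For~\eqref{part:NewBrCannotObsOldPts}, take $B=\Br(Y_K)$; since $\Cor_{K/k}(\Br(Y_K))\subset\Br(Y)$, any $(y_v)\in Y(\A_k)^{\Br}$ is in particular orthogonal to $\Cor_{K/k}(\Br(Y_K))$, so the main inclusion produces an adelic point of $Y_K$ orthogonal to $\Br(Y_K)$. For~\eqref{part:ConstEvalLeadsToNoObstructions}, the projection formula $\Cor_{K/k}\circ\Res_{K/k}=[K:k]$ combined with $d\mid[K:k]$ gives $\Cor_{K/k}\bigl(\Res_{K/k}\Br(Y)[d]\bigr)=\{0\}$, so every adelic point of $Y$ is trivially orthogonal to this image and the main inclusion yields the desired containment.

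The substantive input is the base-change formula for corestriction of Brauer classes (which is where I expect to need care, particularly ensuring the formula applies uniformly at archimedean places); everything else is a bookkeeping exercise with local invariants and the standard projection formula.
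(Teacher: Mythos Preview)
Your proof is correct and follows essentially the same approach as the paper: the key input is the compatibility of corestriction with evaluation at points (which the paper cites as \cite{CTS-Brauer}*{Prop.~3.8.1}), combined with the local identity $\inv_w=\inv_v\circ\Cor_{K_w/k_v}$, and the deductions of \eqref{part:NewBrCannotObsOldPts} and \eqref{part:ConstEvalLeadsToNoObstructions} are identical to the paper's. Your worry about archimedean places is unfounded---the base-change formula for corestriction of Brauer classes is a general statement for finite locally free morphisms and applies uniformly.
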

    \begin{proof}
        By~\cite{CTS-Brauer}*{Prop. 3.8.1}, for any $\alpha\in \Br(Y_K)$ and for any local point $P_v\in Y(k_v)$, we have $(\Cor_{Y_K/Y}(\alpha))(P_v) = \Cor_{K_v/k_v}(\alpha(P_v))$, where $K_v = K \otimes_k k_v$.  Thus, for $(P_v)\in Y(\A_k)$, 
        \[
          \sum_{v\in \Omega_k}  \inv_v\left(\Cor_{Y_K/Y}(\alpha)(P_v)\right) = \sum_{v\in \Omega_k} \inv_v\left(\Cor_{K_v/k_v}(\alpha(P_v))\right)
          = \sum_{v\in \Omega_k} \sum_{w\in \Omega_K, w|v} \inv_w(\alpha(P_v))
        \]
        (where the last equality follows from the equality of maps $\inv_w = \inv_v\circ\Cor_{K_w/k_v}$ for any prime $w|v$), and so $Y(\A_k)^{\Cor_{K/k}(\alpha)} \subset Y(\A_K)^{\alpha}$.  The general statement follows by considering the intersection of $Y(\A_K)^{\alpha}$ for all $\alpha\in B$.

        It remains to prove statements~\eqref{part:NewBrCannotObsOldPts} and~\eqref{part:ConstEvalLeadsToNoObstructions}.  The first follows from taking $B = \Br(Y_K)$ and observing that $Y(\A_k)^{\Br(Y)}\subset Y(\A_k)^{\Cor_{K/k}(\Br(Y_K))}$, and the second follows from taking $B = \Res_{K/k} \Br (Y)[d]$ and using that $\Cor_{K/k}\circ\Res_{K/k} = [K:k]$.
    \end{proof}

	\begin{rmk}
		Yang Cao has given an alternative proof of Lemma~\ref{lem:CorCommutesWithEval}\eqref{part:NewBrCannotObsOldPts} which also yields a similar statement for the \'etale-Brauer obstruction. This will appear in forthcoming work of Yang Cao and Yongqi Liang \cite{CaoLiang}.
	\end{rmk}
	
    {The following lemma and corollary extend techniques of Kanevsky in the case of cubic surfaces~\cite{Kanevsky}.}

	\begin{lemma}\label{lem:Brconds}
		Let $Y$ be a nice variety over a field \(k\) such that \(\HH^3(k, \G_m) = 0\). Assume that: 
		\begin{enumerate}
			\item\label{ita} $\Pic(\Ybar)$ is finitely generated and torsion free,
			\item\label{itb} $\Br(\Ybar)$ is finite, and 
			\item \label{itc} $\Br(Y) \to \Br(\Ybar)^{G_k}$ is surjective.
		\end{enumerate}
		Then there is a finite Galois extension $k_1/k$ such that for all extensions $K/k$ linearly disjoint from $k_1$ the map $\Res_{K/k}\colon \Br(Y)/\Br_0(Y) \to \Br(Y_K)/\Br_0(Y_K)$ is surjective.
	\end{lemma}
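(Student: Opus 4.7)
The plan is to choose $k_1/k$ large enough that every object appearing in the Hochschild-Serre description of $\Br(Y)/\Br_0(Y)$ is unchanged upon replacing $k$ by an extension $K$ linearly disjoint from $k_1$, and then to bootstrap from this to the surjectivity statement using hypothesis~\eqref{itc}. Concretely, I would take $k_1$ to be the finite Galois extension of $k$ cut out by the kernel of the combined $G_k$-action on $\Pic(\Ybar)\oplus \Br(\Ybar)$; this extension is finite because, by hypotheses~\eqref{ita} and~\eqref{itb}, both summands are finitely generated abelian groups on which $G_k$ acts through a finite quotient.

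For $K/k$ linearly disjoint from $k_1$, restriction induces an isomorphism $\Gal(Kk_1/K)\xrightarrow{\sim}\Gal(k_1/k)$, which immediately gives $\Br(\Ybar)^{G_K}=\Br(\Ybar)^{G_k}$. For the cohomological comparison I would combine inflation-restriction with the vanishing $\HH^1(G_{k_1},\Pic(\Ybar))=\Hom_{\textup{cts}}(G_{k_1},\Pic(\Ybar))=0$, which uses that $G_{k_1}$ is profinite while $\Pic(\Ybar)$ is discrete, finitely generated, and torsion-free. This is the main technical point and is where hypothesis~\eqref{ita} is used essentially (torsion-freeness rules out any nontrivial continuous homomorphisms into $\Pic(\Ybar)$). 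The same vanishing applies to $G_{Kk_1}$, so inflation identifies $\HH^1(k,\Pic(\Ybar))$ and $\HH^1(K,\Pic(\Ybar))$ with $\HH^1(\Gal(k_1/k),\Pic(\Ybar))$ and $\HH^1(\Gal(Kk_1/K),\Pic(\Ybar))$, respectively, and the identification of these Galois groups acting on the same module shows that restriction is an isomorphism on $\HH^1(-,\Pic(\Ybar))$.

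Next, the hypothesis $\HH^3(k,\G_m)=0$ upgrades the canonical injection $\Br_1(Y)/\Br_0(Y)\hookrightarrow \HH^1(k,\Pic(\Ybar))$ to an isomorphism, while Hochschild-Serre always gives the injection $\Br_1(Y_K)/\Br_0(Y_K)\hookrightarrow \HH^1(K,\Pic(\Ybar))$ (no analogue of $\HH^3(K,\G_m)=0$ is required). The composite
\[
\HH^1(k,\Pic(\Ybar))\xrightarrow{\sim}\Br_1(Y)/\Br_0(Y)\longrightarrow \Br_1(Y_K)/\Br_0(Y_K)\hookrightarrow \HH^1(K,\Pic(\Ybar))
\]
agrees with restriction on cohomology and is therefore an isomorphism, so the middle arrow is surjective. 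Finally I would assemble the commutative diagram with exact rows from Hochschild-Serre, in which the top row is exact at $\Br(\Ybar)^{G_k}$ by hypothesis~\eqref{itc}, the right vertical column is the equality $\Br(\Ybar)^{G_K}=\Br(\Ybar)^{G_k}$, and the left vertical column is the surjection just produced. A short diagram chase---lift any $\alpha\in \Br(Y_K)/\Br_0(Y_K)$ via its image in $\Br(\Ybar)^{G_K}$ to some class in $\Br(Y)/\Br_0(Y)$, then correct the resulting difference, which lies in $\Br_1(Y_K)/\Br_0(Y_K)$, by an element of $\Br_1(Y)/\Br_0(Y)$ using the left vertical---yields the desired surjectivity of $\Br(Y)/\Br_0(Y)\to \Br(Y_K)/\Br_0(Y_K)$.
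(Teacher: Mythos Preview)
Your proof is correct and follows the same approach as the paper's: choose $k_1$ so that $G_{k_1}$ acts trivially on both $\Pic(\Ybar)$ and $\Br(\Ybar)$, compare the algebraic parts via $\HH^1(-,\Pic(\Ybar))$ and inflation--restriction, compare the transcendental parts via $\Br(\Ybar)^{G_K}=\Br(\Ybar)^{G_k}$, and finish with hypothesis~\eqref{itc} and a diagram chase. You are slightly more careful than the paper in noting that over $K$ only the injection $\Br_1(Y_K)/\Br_0(Y_K)\hookrightarrow \HH^1(K,\Pic(\Ybar))$ is needed, so no hypothesis on $\HH^3(K,\G_m)$ is required.
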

	\begin{proof}
		The assumption \(\HH^3(k, \G_m) = 0\) implies that the injective map $\Br_1(Y)/\Br_0(Y) \to \HH^1(k,\Pic(\Ybar))$ coming from the Hochschild-Serre spectral sequence \cite{CTS-Brauer}*{Prop. 4.3.2} is an isomorphism. Assumption~\eqref{ita} implies that $\HH^1(k,\Pic(\Ybar)) \simeq \HH^1(k_0/k,\Pic(\Ybar))$ for some finite Galois extension $k_0/k$. By assumption~\eqref{itb}, there is a finite Galois extension $k_1/k_0$ such that $\Res_{\kbar/k_1}\colon \Br(Y_{k_1}) \to \Br(\Ybar)$ is surjective. Now suppose $K/k$ is linearly disjoint from $k_1$. In particular, $K$ is linearly disjoint from $k_0$, so $\Res_{K/k}\colon\Br_1(Y)/\Br_0(Y) \simeq \HH^1(k,\Pic(\Ybar)) \to \HH^1(K,\Pic(\Ybar)) \simeq \Br_1(Y_K)/\Br_0(Y_K)$ is an isomorphism. So it will suffice to show that $\Br(Y)$ and $\Br(Y_K)$ have the same image in $\Br(\Ybar)$. Since $\Br(Y_{k_1}) \to \Br(\Ybar)$ is surjective, the image of $\Br(Y_K) \to \Br(\Ybar)$ is contained in $\Br(\Ybar)^{G_K} \cap \Br(\Ybar)^{G_{k_1}}$, which is equal to  $\Br(\Ybar)^{G_k}$, since $k_1$ and $K$ are linearly disjoint.  Thus, by assumption~\eqref{itc}, $\Br(Y)$ and $\Br(Y_K)$ have the same image in $\Br(\Ybar)$.
	\end{proof}

    \begin{cor}\label{cor:k1}
    	If $Y$ is a nice variety over a global field $k$ such that $Y(\A_k)\ne\emptyset$ and $\Br(Y)/\Br_0(Y)$ is generated by the image of $\Br(Y)[d]$, then for any extension $K/k$ of degree $d$, $Y_K(\A_K)^{\Res_{K/k}(\Br(Y))} \ne \emptyset$. Moreover, if $Y$ satisfies the conditions of Lemma~\ref{lem:Brconds}, then there is a finite extension $k_1/k$ such that for any degree $d$ extension $K/k$ which is linearly disjoint from $k_1$ we have $Y_K(\A_K)^{\Br} \ne \emptyset$. \hfill\qed
    \end{cor}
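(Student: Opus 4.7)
The plan is to establish the two claims sequentially: first, to show that any adelic point on $Y$ induces an adelic point on $Y_K$ orthogonal to $\Res_{K/k}(\Br(Y))$; then, under the additional hypotheses of Lemma~\ref{lem:Brconds}, to upgrade this to orthogonality with respect to the full Brauer group of $Y_K$.

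For the first claim, I would start with any adelic point $(P_v) \in Y(\A_k)$, which exists by hypothesis, and consider its image under the natural inclusion $Y(\A_k) \hookrightarrow Y_K(\A_K)$. Since $d = [K:k]$, Lemma~\ref{lem:CorCommutesWithEval}\eqref{part:ConstEvalLeadsToNoObstructions} immediately gives that this image is orthogonal to $\Res_{K/k}(\Br(Y)[d])$. The assumption that $\Br(Y)/\Br_0(Y)$ is generated by the image of $\Br(Y)[d]$ means that every class in $\Br(Y)$ may be written as $\beta + \gamma$ for some $\beta \in \Br(Y)[d]$ and $\gamma \in \Br_0(Y)$; restricting to $K$, the element $\Res_{K/k}(\gamma)$ lies in $\Br_0(Y_K)$ and therefore pairs trivially with every adelic point by global reciprocity. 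Combining these observations yields orthogonality to all of $\Res_{K/k}(\Br(Y))$, proving the first assertion.

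For the moreover statement, I would apply Lemma~\ref{lem:Brconds} to produce the finite Galois extension $k_1/k$ with the property that the restriction $\Res_{K/k}\colon \Br(Y)/\Br_0(Y) \to \Br(Y_K)/\Br_0(Y_K)$ is surjective whenever $K$ is linearly disjoint from $k_1$. This surjectivity gives $\Br(Y_K) = \Res_{K/k}(\Br(Y)) + \Br_0(Y_K)$, so since elements of $\Br_0(Y_K)$ pair trivially with any adelic point by reciprocity, the two Brauer-Manin sets $Y_K(\A_K)^{\Res_{K/k}(\Br(Y))}$ and $Y_K(\A_K)^{\Br}$ coincide. Combined with the first part applied to such $K$ (which of course still has degree $d$ over $k$), this yields $Y_K(\A_K)^{\Br} \neq \emptyset$.

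I do not expect any substantive obstacle: once Lemmas~\ref{lem:CorCommutesWithEval} and \ref{lem:Brconds} are in hand, the corollary is essentially a bookkeeping exercise, using the decomposition of classes in $\Br(Y)$ modulo $\Br_0(Y)$ together with the vanishing of the Brauer-Manin pairing on constant algebras.
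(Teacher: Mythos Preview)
Your proposal is correct and follows essentially the same route as the paper, which simply notes that $\HH^3(k,\G_m)=0$ for global fields and then invokes Lemmas~\ref{lem:CorCommutesWithEval}\eqref{part:ConstEvalLeadsToNoObstructions} and~\ref{lem:Brconds}. Your write-up just spells out the details the paper leaves implicit; the one point you might add explicitly is the vanishing of $\HH^3(k,\G_m)$, which is the hypothesis needed to apply Lemma~\ref{lem:Brconds} over the global field $k$.
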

    
    \begin{proof}
    	For a global field $k$ we have \(\HH^3(k, \G_m) = 0\). So the corollary follows immediately from Lemmas~\ref{lem:CorCommutesWithEval}\eqref{part:ConstEvalLeadsToNoObstructions} and~\ref{lem:Brconds}.
    \end{proof}

	\begin{rmk}
		If $Y \subset \PP^4_k$ is smooth complete intersection of two quadrics over a global field $k$ of characteristic not equal to $2$ and $Y$ is everywhere locally solvable, then the corollary applies with $d = 2$. This gives a proof of the $n = 4$ case of Theorem~\ref{thm:MainThm}\eqref{case-global-n4} under the additional hypothesis of local solubility. Note that local solubility is used here in two distinct ways. First it ensures that $\Br(Y)/\Br_0(Y)$ is generated by the image of $\Br(Y)[2]$ (which is not the case in general even though $\Br(Y)/\Br_0(Y)$ is $2$-torsion)~\cite{VAV}*{Thm. 3.4}. Second, it implies that the canonical maps $\Br(k) \to \Br_0(Y)$ are isomorphisms, locally and globally. This is used implicitly in the proof of Lemma~\ref{lem:CorCommutesWithEval}. In general, $\Br(k) \to \Br_0(Y)$ need not be injective (see Lemma~\ref{lem:relBr} for a description of the kernel when $Y$ is a del Pezzo surface of degree $4$) and so $\Res_{K/k}$ does not necessarily annihilate $[K:k]$-torsion elements of $\Br_0(Y)$. Consequently, the exact sequence
        \[
            0 \to \Br(k) \to \bigoplus \Br(k_v) \to \Q/\Z \to 0
        \]	
        of global class field theory has no analogue for $\Br_0(Y)$.
        \end{rmk}

	The following proposition relates the Brauer-Manin obstruction over quadratic extensions to the Brauer-Manin obstruction on the symmetric square. Note that while the symmetric square $\Sym^2(Y)$ is singular if $Y$ has dimension at least $2$, there exists a smooth projective model $Y^{(2)}$ over any field of characteristic different from \(2\) (see the proof of part~\eqref{it:p1} of the proposition for details).

    \begin{prop}\label{prop:sym2} Let \(k\) be a field of characteristic different from \(2\), let $Y/k$ be a nice variety of dimension at least $2$ with torsion free geometric Picard group, and let $Y^{(2)}$ be a smooth projective model of the symmetric square of $Y$ over $k$.  
    \begin{enumerate}
    \item\label{it:p1} {The rational map \(Y^2 \to \Sym^2(Y)\dashrightarrow Y^{(2)}\) induces a corestriction map \[\Cor_{Y^2/Y^{(2)}} \colon \Br(Y^2) \to \Br(Y^{(2)})\] on the Brauer groups of the varieties.}
    Furthermore, if $\pi_1$ denotes projection onto the first factor of $Y^2 = Y\times Y$, then the composition $\Cor_{Y^2/Y^{(2)}} \circ \pi_1^* : \Br(Y) \to \Br(Y^{(2)})$ induces an injective map
    \[
        \phi \colon \frac{\Br_1(Y)}{\Br_0(Y)} \hookrightarrow \frac{\Br_1(Y^{(2)})}{\Br_0(Y^{(2)})} \,.
    \]
    \item\label{it:p2} Let $\alpha \in \Br(Y)$ and let $\beta = \Cor_{Y^2/Y^{(2)}} \circ \pi_1^*(\alpha) \in \Br(Y^{(2)})$. {There exists a dense open \(U\subset Y^{(2)}\) such that for any \(y\in U\),} 
    $y$ corresponds to a quadratic point $\tilde{y}\colon \Spec(K) \to Y$ for some degree \(2\) \'etale $\kk(y)$-algebra $K$ and we have $\beta(y) = \Cor_{K/\kk(y)}\left(\alpha(\tilde{y})\right)$.
    \item\label{it:p3} Suppose $k$ is a global field, $\Br(Y)/\Br_0(Y)$ is finite and let $B \subset \Br(Y^{(2)})/\Br_0(Y^{(2)})$ denote the image of $\Cor_{Y^2/Y^{(2)}}\circ \pi_1^*$ modulo constant algebras. If there exists a quadratic extension $K/k$ such that $Y_K(\A_K)^{\Res_{K/k}(\Br(Y))} \ne \emptyset$, then $Y^{(2)}(\A_k)^B \ne \emptyset$.      
    \item\label{it:p4} Let $B \subset \Br(Y^{(2)})/\Br_0(Y^{(2)})$ denote the image of $\Cor_{Y^2/Y^{(2)}}\circ \pi_1^*$ modulo constant algebras. Suppose that $k$ is a global field, that $Y^{(2)}(\A_k)^B \ne \emptyset$, and that $Y$ satisfies the hypotheses of Lemma~\ref{lem:Brconds}. Then there exists a finite set $S \subset \Omega_k$, degree $2$ \'etale $k_v$-algebras $K_w/k_v$ for $v \in S$ and a finite extension $k_1/k$ such that for any quadratic extension $K/k$ that is linearly disjoint from $k_1$ and such that $K \otimes k_v \simeq K_w$ for $v \in S$ we have $Y_K(\A_K)^{\Br} \ne \emptyset$. In particular, there are infinitely many quadratic extensions $K/k$ such that $Y_K(\A_K)^{\Br} \ne \emptyset$.

    \end{enumerate}  
    \end{prop}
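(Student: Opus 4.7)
For part (1), I would construct the corestriction map by realizing $Y^{(2)}$ as the Hilbert scheme $Y^{[2]}$, which in characteristic different from $2$ is the quotient of $Z := \Bl_\Delta(Y^2)$ by the natural lift of the swap involution $\sigma$; the quotient morphism $\pi\colon Z\to Y^{[2]}$ is finite flat of degree $2$. Since $Z\to Y^2$ is a blow-up of a smooth projective variety along a smooth center, birational invariance of the Brauer group gives $\Br(Z)\cong \Br(Y^2)$, so one defines $\Cor_{Y^2/Y^{(2)}}$ as the composition $\Br(Y^2)\cong\Br(Z)\to\Br(Y^{(2)})$ using the corestriction for $\pi$. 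For injectivity of $\phi$, compute $\pi^*\phi(\alpha) = (1+\sigma^*)\pi_1^*\alpha = \pi_1^*\alpha+\pi_2^*\alpha$ on the étale locus. The torsion-free hypothesis on $\Pic(\bar Y)$ forces $\Pic^0(\bar Y) = 0$ (an abelian variety would produce $\ell$-torsion), so by the K\"unneth/seesaw formula $\Pic(\bar Y^2) = \Pic(\bar Y)^{\oplus 2}$ with summands $\pi_1^*, \pi_2^*$. Under the Hochschild-Serre embedding $\Br_1/\Br_0\hookrightarrow H^1(k,\Pic(\overline{\,\cdot\,}))$, the map $\alpha\mapsto \pi_1^*\alpha+\pi_2^*\alpha$ becomes the injective diagonal $H^1(k,\Pic(\bar Y))\to H^1(k,\Pic(\bar Y))^{\oplus 2}$, and chasing this through yields injectivity of $\phi$.

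For part (2), I would take $U\subset Y^{(2)}$ to be the complement of the exceptional divisor of $Y^{[2]}\to\Sym^2(Y)$; over $U$ the map $\pi$ is finite \'etale of degree $2$. For $y\in U$ the fiber $\pi^{-1}(y)$ is $\Spec(K)$ for a quadratic \'etale $\kk(y)$-algebra $K$, and the restriction $\pi_1|_{\pi^{-1}(y)}$ is exactly the quadratic point $\tilde y\colon \Spec K\to Y$. Compatibility of corestriction with base change along the \'etale cover $\pi$ then gives $\beta(y) = \Cor_{K/\kk(y)}(\tilde y^*\alpha)$.

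For part (3), given $(P_w)\in Y_K(\A_K)^{\Res_{K/k}\Br(Y)}$, I would assemble at each place $v$ a quadratic $k_v$-point $\tilde y_v\colon \Spec(K\otimes_k k_v)\to Y$ from $(P_w)_{w\mid v}$, and push forward to obtain $(y_v)\in Y^{(2)}(\A_k)$; after a small perturbation (the complement of $U$ has codimension one, and at non-split $v$ no perturbation is needed) part (2) applies, and the local class field theory identity $\inv_v\circ\Cor_{K_v/k_v}=\sum_{w\mid v}\inv_w$ converts orthogonality of $(P_w)$ to $\Res_{K/k}\Br(Y)$ into orthogonality of $(y_v)$ to $B$. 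For part (4), given $(y_v)\in Y^{(2)}(\A_k)^B$, choose a finite set $S\subset\Omega_k$ containing archimedean places, bad reduction places, places ramified in the extension $k_1$ provided by Lemma~\ref{lem:Brconds}, and places at which some fixed finite set of generators of $\Br(Y)/\Br_0(Y)$ (finite by hypothesis) contributes a nontrivial local invariant; let $K_w$ denote the quadratic \'etale $k_v$-algebra determined by $y_v$ at $v\in S$. Weak approximation on $k^\times/k^{\times 2}$ produces infinitely many quadratic extensions $K/k$ linearly disjoint from $k_1$ with $K\otimes_k k_v\cong K_w$ for $v\in S$. For each such $K$, assemble $(P_w)\in Y_K(\A_K)$ using $\tilde y_v$ at $v\in S$ and any local point at $v\notin S$ (which exists at good-reduction places by Hensel/Lang-Weil); the computation of part (3) shows $(P_w)$ is orthogonal to $\Res_{K/k}\Br(Y)$, and linear disjointness from $k_1$ invokes Lemma~\ref{lem:Brconds} to conclude $(P_w)\in Y_K(\A_K)^{\Br}$.

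The main obstacle is the injectivity of $\phi$ in part (1): once one recognizes that torsion-freeness of $\Pic(\bar Y)$ forces $\Pic^0(\bar Y)=0$ and thereby collapses the K\"unneth decomposition of $\Pic(\bar Y^2)$ cleanly, injectivity follows transparently via Hochschild-Serre, but this geometric reduction is the key insight. The remaining technical concerns---ensuring adelic points land in $U$ in part (3), choosing $S$ correctly in part (4), and handling the local contributions at $v\notin S$---reduce to standard codimension arguments and the fact that the finitely many Brauer generators have controllable support at good-reduction places.
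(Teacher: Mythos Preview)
Your proposal is correct and follows essentially the same approach as the paper: the same blow-up/quotient construction of $Y^{(2)}$ (the paper writes $(\Bl_\Delta Y^2)/S_2$ and verifies smoothness via Chevalley--Shephard--Todd), the same $\Res\circ\Cor = 1+\sigma^*$ computation combined with Hochschild--Serre and the decomposition $\Pic(\bar Y^2)\cong\Pic(\bar Y)^{\oplus 2}$ for injectivity of $\phi$, and the same use of compatibility of corestriction with base change plus openness arguments for parts~(2)--(4). One point the paper handles more carefully in part~(3): your remark that ``at non-split $v$ no perturbation is needed'' glosses over the possibility that the $K_w$-point lies in $Y(k_v)$ and hence maps to the diagonal; the paper invokes the fact that $Y(K_w)\supsetneq Y(k_v)$ over a local field (citing \cite{LiuLorenzini}) to rule this out.
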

    
	\begin{proof}
		\textbf{\eqref{it:p1}:} 
        Let $\Delta = \{ (y,y) \;:\; y \in Y \}\subset Y^2$ denote the diagonal subscheme and let \(\Bl_\Delta Y^2\) denote the blow-up of $Y$ along $\Delta$.  Observe that the \(S_2\)-action on \(Y^2\) extends to an action on \(\Bl_\Delta Y^2\) whose fixed locus is the exceptional divisor \(E_{\Delta}\); we claim that the quotient \((\Bl_\Delta Y^2)/S_2\) is smooth (equivalently geometrically regular). Since \(\Bl_\Delta Y^2\) is smooth, the quotient \((\Bl_\Delta Y^2)/S_2\) is automatically smooth away from the branch locus.  Let \(y\in E_{\Delta}\).  Since \(E_{\Delta}\) is a divisor, the involution acts as a pseudo-reflection on the geometric tangent space of \(y\). Since the order of the group acting is not divisible by the characteristic of $k$, the Chevalley-Shephard-Todd theorem (see, e.g.,~\cite{Smith}) implies that the dimensions of the geometric tangent spaces of \(y\) and its image in the quotient are equal. Hence the quotient is smooth at the image of \(y\). 

        Consider the following commutative diagram.
        \[
            \xymatrix{
                \Bl_{\Delta} Y^2 \ar[r] \ar[d] & {(\Bl_{\Delta} Y^2)/S_2 } \ar[d]\\
                Y^2 \ar[r]  & \Sym^2Y\\
                }
        \]
        The left vertical map is birational by definition, and since \(Y^2 \to \Sym^2Y\) is {generically} degree \(2\), the right vertical map is also birational. The top horizontal map is flat of degree \(2\)~\cite{stacks-project}*{\href{https://stacks.math.columbia.edu/tag/00R4}{Tag 00R4}}, so we have a corestriction morphism \(\Br(\Bl_{\Delta} Y^2) \to \Br((\Bl_{\Delta} Y^2)/S_2)\) that extends to the corestriction map on function fields~\cite{CTS-Brauer}*{Section 3.8}.  Since the Brauer group of smooth projective varieties is a birational invariant (and pullback along any birational map gives an isomorphism)~\cite{CTS-Brauer}*{Corollary {6.2.11}}, this yields the first claim.

        It remains to prove injectivity of the induced map $\phi$ on the quotient \(\Br_1(Y)/\Br_0(Y)\). 
        Since $\kk(Y^2)$ is Galois over $\kk(Y^{(2)})$ with Galois group generated by the involution $\sigma$ interchanging the factors of $Y\times Y$, by \cite{GS-csa}*{Chapter 3, Exercise 3}, the composition 
        \[
            \Res_{\kk(Y^2)/\kk(Y^{(2)})} \circ \Cor_{\kk(Y^2)/\kk(Y^{(2)})}  \colon \Br(\kk(Y^{2})) \to \Br\left(\kk(Y^{2})\right)
        \]
        is given by $x \mapsto x + \sigma(x)$. We may then deduce that the same formula holds for the composition $\Res_{Y^2/Y^{(2)}} \circ \Cor_{Y^2/Y^{(2)}}   \colon \Br(Y^2) \to \Br(Y^2)$ by evaluating at generic points \cite{CTS-Brauer}*{Theorem 3.5.4}. Therefore, the composition \(\Res\circ\Cor\circ \pi_1^*\) is equal to the diagonal map $\Br(Y) \to \Br(Y)\oplus\Br(Y) \to \Br(Y^2)$ sending $\alpha$ to $\pi_1^*\alpha + \sigma(\pi_1^*\alpha) = \pi_1^*\alpha + \pi_2^*\alpha$. 
        
        If $\Pic(\Ybar)$ is torsion free, then $\Pic(\Ybar) \oplus \Pic(\Ybar) \simeq \Pic(\Ybar^2)$ (see \cite{SZproducts}*{Prop. 1.7}). So the diagonal map together with the Hochschild-Serre spectral sequence gives a commutative diagram
        \[
            \xymatrix{
            \HH^1(k,\Pic(\Ybar)) \ar[r] & \HH^1(k,\Pic(\Ybar))^{\oplus 2} \ar@{=}[r] & \HH^1(k,\Pic(\Ybar^2))\\
            \Br_1(Y)/\Br_0(Y) \ar@{^{(}->}[u] \ar[r] &  \left(\Br_1(Y)/\Br_0(Y)\right)^{\oplus 2} \ar@{^{(}->}[u] \ar[r] &  \Br_1(Y^2)/\Br_0(Y^2) \ar@{^{(}->}[u]\,.
            }
        \]
        As the composition along the top row is injective, the same must be true of the composition along the bottom row. This composition is induced by $\Res\circ\Cor\circ\pi_1^*$ and it factors through the map $\phi$ in the last statement of~\eqref{it:p1}, so $\phi$ must also be injective.

		\textbf{\eqref{it:p2}:} {Since \(Y^{(2)}\) is birational to \(\Sym^2Y\), there is an open set \(U\subset Y^{(2)}\) that is isomorphic to an open set of the regular locus of \(\Sym^2 Y\), i.e., the image of \(Y^2 - \Delta\).  For \(y\in U\), we obtain \(\tilde{y}\) by taking the preimage of \(y\) under \(Y^2\to \Sym^2 Y \dasharrow Y^{(2)}\).}  The points $y$ and $\tilde{y}$ fit into a commutative diagram displayed on the left below. This induces the diagram displayed on the right. Commutativity of the latter gives the result. 
        \[
        	\xymatrix{
        		\Spec(\kk(y)) \ar[r]^-y & Y^{(2)} && \Br(\kk(y)) & \Br(Y^{(2)}) \ar[l]_-{y^*} \\
        		f^{-1}(y) \ar@{=}[d]\ar[u]\ar[r] & Y\times Y \ar@{.>}[u]_f\ar[d]^{\pi_1} &\Rightarrow& \Br(f^{-1}(y)) \ar[u]^{\Cor}& \Br(Y\times Y) \ar[u]_\Cor\ar[l]\\
        		\Spec(K) \ar[r]^-{\tilde{y}} & Y && \Br(K) \ar@{=}[u] & \Br(Y) \ar[u]_{\pi_1^*} \ar[l]^-{\tilde{y}^*}
        	}
        \]
        
        \textbf{\eqref{it:p3}:}  Suppose that $K/k$ is a quadratic extension, $Y_K(\A_K)^{\Res_{K/k}(\Br(Y))} \ne \emptyset$ and that $\beta = \Cor_{Y^2/Y^{(2)}}(\pi_1^*(\alpha)) \in \Br(Y^{(2)})$ represents a class in $B$ that is the image of $\alpha \in \Br(Y)$. Since $\Br(Y)/\Br_0(Y)$ is finite, $Y_K(\A_K)^{\Res_{K/k}(\Br(Y))}$ is an open subset of $Y_K(\A_K)$ in the adelic topology which we have assumed is nonempty. So for any $v \in \Omega_k$ the image of the projection map $Y_K(\A_K)^{\Res_{K/k}(\Br(Y))} \to \prod_{w \mid v} Y_K(K_w) = Y(K\otimes k_v)$ is a nonempty open subset and therefore contains a quadratic point $\tilde{y}_v : \Spec(K\otimes k_v) \to Y$ corresponding to a point {\(y_v\in U(k_v)\), where \(U\) is the open set from~\eqref{it:p2}.} For the case that $v$ does not split in $K$ we are using the fact that $Y(K_w) \ne Y(k_v)$ since $k_v$ is a local field (see, e.g., \cite{LiuLorenzini}*{Proposition 8.3}). By~\eqref{it:p2} we have 
        \[ \sum_{v \in \Omega_k}\inv_v\beta(y_v) = \sum_{v \in \Omega_k}\inv_v\Cor_{K\otimes k_v/k_v}(\alpha(\tilde{y_v})) = 0\,. \]
        So the adelic point $y = (y_v) \in Y^{(2)}(\A_k)$ is orthogonal to $\beta$.
            
        \textbf{\eqref{it:p4}:}  
        Suppose $Y^{(2)}(\A_k)^B \ne \emptyset$. The hypothesis in Lemma~\ref{lem:Brconds} implies that $\Br(Y)/\Br_0(Y)$ and, hence, $B$ is finite. Thus, $Y^{(2)}(\A_k)^B$ is open and, arguing as in~\eqref{it:p3}, we see that, for each $v \in \Omega_k$, its image in $Y(k_v)$ contains a point \(y_v\in U(k_v)\) corresponding to a quadratic point $\tilde{y}_v \colon \Spec(K_v) \to Y$, where $K_v$ is an \'etale $k_v$-algebra of degree $2$. Moreover, by~\eqref{it:p2} if $\alpha \in \Br(Y)$ and $\beta = \Cor_{Y^2/Y^{(2)}}(\pi_1^*(\alpha))$, then $\beta(y_v) = \Cor_{K_v/k_v}(\alpha(\tilde{y}_v))$. By assumption $\sum_{v \in \Omega_k} \inv_v\beta(y_v) = 0$, so $(\tilde{y}_v)_{v \in \Omega_k}$ is an effective adelic $0$-cycle of degree $2$ on $Y$ which is orthogonal to the Brauer group of $Y$. Under the additional hypotheses of~\eqref{it:p4}, $\Br(Y)/\Br_0(Y)$ is finite and, by Lemma~\ref{lem:Brconds}, there is an extension $k_1/k$ such that for $K/k$ linearly disjoint from $k_1$, $\Res_{K/k}\colon \Br(Y) \to \Br(Y_K)/\Br_0(Y_K)$ is surjective. Moreover, for any set $\alpha_1,\dots,\alpha_n \in \Br(Y)$ of representatives for $\Br(Y)/\Br_0(Y)$, there  is a finite set $S \subset \Omega_k$ such that for all $i = 1,\dots,n$ and all $v \not\in S$ the evaluation maps $\inv_v\circ\alpha_i\colon Y(k_v) \to \Q/\Z$ are constant (see \cite{GoodReductionBM}*{Lemma 1.2 \& Theorem 3.1}). In particular $Y(k_v) \ne \emptyset$ for $v \not\in S$. Let $K/k$ be a quadratic extension linearly disjoint from $k_1$ and such that $K\otimes k_v \simeq K_w$ for $v \in S$. By {weak approximation on \(k^{\times}\)} the map $k^\times/k^{\times 2} \to \prod_{v\in S'}k_v^{\times}/k_v^{\times 2}$ is surjective for any finite set of primes $S' \subset \Omega_k$, so such extensions $K/k$ do in fact exist. Any adelic point $(x_w)_{w \in \Omega_K} \in Y_K(\A_K)$ such that $\tilde{y}_v = \sum_{w\mid v} x_w$ for $v \in S$ will be orthogonal to $\Br(Y_K)$.
	\end{proof}

\section{Pencils of quadrics in \texorpdfstring{\(\mathbb{P}^4\)}{P4} and associated objects}\label{sec:Correspondences}

    Let $\mathcal{Q}\subset \PP^4\times \PP^1$ be a pencil of quadrics, i.e., the zero locus of a bihomogeneous polynomial $Q$ of degree $(2,1)$, defined over a field $k$ of characteristic different from $2$. If the projection map $\calQ \to \PP^1$ is generically smooth, then we may naturally associate three objects.  First, we may consider the base locus $X = X_{\mathcal{Q}}\subset \PP^4$ of the pencil of quadrics, i.e., $\cap_{t\in \PP^1}\mathcal{Q}_t$, where $\calQ_t \subset \PP^4$ denotes the fiber over $t \in \PP^1$.  This is a degree $4$ projective surface.  
    Second, we may consider the subscheme $\scrS\subset \PP^1$ parameterizing the singular quadrics in the pencil. If $Q$ is any degree $(2,1)$ form defining $\mathcal{Q}$, then $\scrS$ is given by the vanishing of $\det(M_{Q}$), where $M_Q$ denotes the symmetric matrix corresponding to $Q$ considered as a quadratic form whose coefficients are linear polynomials in the homogeneous coordinate ring of $\PP^1$. Since $\calQ \to \PP^1$ is generically smooth, $\scrS \subset \PP^1$ is a degree $5$ subscheme.
    Third, we may consider the fourfold $\calG = \calG_{\calQ}\to \PP^1$ that parametrizes lines on quadrics in the pencil; the generic fiber of $\calG$ is a Severi-Brauer variety with index dividing $4$ and order dividing $2$~\cite{EKM-QuadraticForms}*{Ex. 85.4}.

    \begin{remark}
        Over a field of characteristic \(2\), \(\det(M_{Q})\) is identically \(0\) since \(M_Q\) is a \(5x5\) skew-symmetric matrix, and so the correspondences between these objects already fails.  Due to this, the assumption that \(k\) has characteristic different from \(2\) will remain in force for the remainder of the paper.
    \end{remark}

    Each of these objects has been well-studied, and their conditions for smoothness are known to be closely related.
    \begin{prop}\label{prop:EquivSmoothness}
        Let $\calQ\subset \PP^4\times \PP^1$ be a pencil of quadrics over a field of characteristic different from \(2\).  Then the following are equivalent:
        \begin{enumerate}
            \item The base locus $X$ is smooth and purely of dimension \(2\), in which case $X$ is a del Pezzo surface of degree $4$;\label{cond:smoothdp4}
            \item The degree $5$ subscheme $\scrS\subset \PP^1$ is reduced;\label{cond:reducedS}
            \item For every \(s\in \scrS\), the fiber \(\calQ_s\) is rank \(4\) and the vertex of \(\calQ_s\) does not lie on any other quadric in the pencil; and\label{cond:verticesdistinct}
            \item The fourfold $\calG$ is smooth, the map $\calG\to \PP^1$ is smooth away from $\scrS$, and above $\scrS$ the fibers are geometrically reducible. \label{cond:calGSmooth}
        \end{enumerate}
    \end{prop}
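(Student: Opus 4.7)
The plan is to verify the four equivalences by working geometrically over an algebraic closure (all four conditions are geometric) and passing between them through a careful rank analysis of the pencil $M_Q(t) = M_1 + tM_2$ of symmetric matrices representing the defining quadratic forms. Before beginning, I would observe that in any of the four conditions, $\det M_Q(t)$ is not identically zero: otherwise $\scrS = \PP^1$, the base locus has dimension $\geq 3$, every fiber is singular, and $\calG$ has positive-dimensional generic fiber structure. So $\scrS$ is a degree $5$ subscheme of $\PP^1$.

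For (2) $\Leftrightarrow$ (3), I would analyze the discriminant locally at each $s \in \scrS$. If $v$ spans the kernel of $M_Q(s)$ (assuming rank $4$), then standard perturbation theory gives $\det M_Q(t) \equiv (t-s)\, v^T M'_Q(s)\, v \pmod{(t-s)^2}$. Any quadric in the pencil other than $\calQ_s$ can be written as a positive multiple of $M_Q(s) + cM'_Q(s)$ for some $c \ne 0$, and evaluating at $v$ (which lies in $\ker M_Q(s)$) yields $c\cdot v^T M'_Q(s)v$. Hence the vertex $v$ lies on another quadric iff $v^T M'_Q(s)v = 0$ iff $s$ is a multiple root. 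If instead $\rank \calQ_s \leq 3$, the discriminant vanishes to order at least $2$ at $s$ since every $4 \times 4$ minor of $M_Q(s)$ vanishes. This gives the equivalence.

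For (1) $\Leftrightarrow$ (3), I would use the Jacobian criterion: $X$ fails to be smooth at a point $x$ iff $dQ_1|_x$ and $dQ_2|_x$ are linearly dependent in $T^*_x\PP^4$. If both vanish, $x$ would be a vertex of every quadric in the pencil, which is impossible since the generic quadric has rank $5$. Otherwise some linear combination has trivial differential at $x$, meaning $x$ is the (unique) vertex of some singular $\calQ_s$ that happens to lie on $X$, i.e., on every quadric in the pencil. Conversely, a vertex lying on every quadric forces singularity of $X$. This matches condition (3) and, by the rank$\,{=}\,5$ of generic members, forces $X$ to be the intersection of two distinct generically smooth quadric 3-folds, hence purely of dimension $2$ of degree $4$ (and a del Pezzo surface, being anticanonically embedded).

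The hardest step will be (3) $\Leftrightarrow$ (4), which requires understanding $\calG$ as the incidence variety in $\Gr(2,5) \times \PP^1$ cut out by the vanishing of the restrictions of the pencil to the tautological line. Over a rank-$5$ fiber, the classical description of lines on a smooth quadric $3$-fold gives a smooth Severi–Brauer threefold, so the map is smooth there and fibers are geometrically integral. Over a rank-$4$ fiber, the quadric is a cone over a smooth quadric surface $\PP^1 \times \PP^1$, swept out by two one-parameter families of $2$-planes through the vertex; the Fano variety of lines is the union of two irreducible $3$-folds corresponding to these two rulings, giving a geometrically reducible fiber. My plan would be to check smoothness of the total space $\calG$ at points of these reducible fibers by a direct local computation, using that when $\rank \calQ_s = 4$ and the vertex is not on another quadric, the partial derivatives with respect to the $\PP^1$-parameter are independent of the fiberwise constraints. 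Finally, if $\rank \calQ_s \leq 3$ or the vertex lies on another pencil member, I would check that either $\calG$ itself is singular at the appropriate points of $\calG_s$, or the fiber $\calG_s$ contains components of too large dimension, which then forces failure of at least one part of (4). This last verification is the main technical obstacle since it requires a careful case-by-case local analysis at each type of degenerate fiber.
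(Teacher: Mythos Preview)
Your proposal is essentially correct, but it takes a very different route from the paper: the paper's entire proof is a two-line citation to Reid's thesis (Proposition~2.1 for $(1)\Leftrightarrow(2)\Leftrightarrow(3)$ and Theorem~1.10 for the equivalence with $(4)$), whereas you are reproving these results from scratch.

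Your arguments for $(2)\Leftrightarrow(3)$ and $(1)\Leftrightarrow(3)$ are sound. The perturbation computation $\det M_Q(t)\equiv c(t-s)\,v^{T}M'_Q(s)v\pmod{(t-s)^2}$ is exactly the right mechanism (with $c\ne 0$ coming from $\operatorname{adj}(M_Q(s)) = c\,vv^{T}$ when the rank is $4$), and the Jacobian criterion cleanly identifies singular points of $X$ with vertices of singular pencil members lying on $X$. One small point you glossed over: in $(1)\Rightarrow(3)$ you should say explicitly why $\operatorname{rank}\calQ_s\ge 4$; this is because a rank $\le 3$ member has a vertex of dimension $\ge 1$, which necessarily meets any other quadric in the pencil over $\bar k$, forcing $X$ singular.

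For $(3)\Leftrightarrow(4)$ your description of the fibers of $\calG$ is correct (smooth $\PP^3$ over rank-$5$ fibers, two $3$-dimensional components meeting along a copy of the quadric surface over rank-$4$ fibers), and the strategy of checking smoothness of the total space by showing the $\PP^1$-derivative is transverse to the fiberwise tangent space is the right one. But be aware that this is precisely the content of Reid's Theorem~1.10, and carrying it out carefully---especially the converse, that failure of $(3)$ forces $\calG$ singular or a fiber of wrong dimension---requires real work. Your plan is correct but you should not underestimate the length of that verification; the paper avoids it entirely by citation. What your approach buys is a self-contained treatment independent of Reid's (unpublished, though widely circulated) thesis.
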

    \begin{proof}
        The equivalence of conditions~\eqref{cond:smoothdp4},~\eqref{cond:reducedS}, and~\eqref{cond:verticesdistinct} is given by~\cite{Reid}*{Prop. 2.1}. The equivalence of~\eqref{cond:calGSmooth} with any (equivalently all) of the others is given by~\cite{Reid}*{Thm. 1.10}.
    \end{proof}
    
    \begin{defn}
        A pencil of quadrics $\calQ$ over a field of characteristic different from \(2\) satisfies \((\dagger)\) if any of the equivalent conditions in Proposition~\ref{prop:EquivSmoothness} hold.  Given a pencil $\calQ$ satisfying \((\dagger)\), we define $\eps_{\scrS} \in \kk(\scrS)/\kk(\scrS)^{\times2}$ to be the discriminant of a smooth hyperplane section of $\calQ_{\scrS}$; note that the square class of the discriminant does not depend on the choice of hyperplane, nor on the choice of a defining equation for $\calQ_{\scrS}$.
    \end{defn}

    Given a pencil of quadrics satisfying \((\dagger)\), there are even stronger connections among these three objects.
    \begin{prop}\label{prop:ObjectConnections}
        Let $\calQ$ be a pencil of quadrics satisfying \((\dagger)\).  Let $X = X_{\calQ}, \calG = \calG_{\calQ}, $ and $(\scrS, \eps_{\scrS}) = (\scrS_{\calQ}, \eps_{\scrS_{\calQ}})$.  
        \begin{enumerate}
            \item The variety $\calG$ is birational to the symmetric square \(\Sym^2(X)\) of $X$. {Moreover,} \(\calG(k) \neq \emptyset\) if and only if \(X(K) \neq \emptyset\) for some quadratic extension \(K/k\).\label{part:Sym2calGbirational}
            \item\label{prop:connections2} The residues of the Brauer class $[\calG_{\kk(\PP^1)}] \in \Br \kk(\PP^1)$ are 
            \[
                \eps_{\scrS} \in \kk(\scrS)^{\times}/\kk(\scrS)^{\times2} \simeq \HH^1\left(\kk(\scrS), {\tfrac12}\Z/\Z\right) \subset 
                \bigoplus_{t\in (\PP^1)^{(1)}}\HH^1(\kk(t), \Q/\Z).
            \]\label{part:residuesofcalG}
            In particular, $\Norm_{\kk(\scrS)/k}(\eps_{\scrS})\in k^{\times2}$.
            \item Given a pair $(\scrS', \eps_{\scrS'})$ where $\scrS'\subset \PP^1$ is a reduced degree \(5\) subscheme and a class \(\eps_{\scrS'}\in \kk(\scrS')^{\times}/\kk(\scrS')^{\times2}\) of square norm, there exists a unique (up to isomorphism) pencil of quadrics $\calQ$ such that \( (\scrS', \eps_{\scrS'}) = (\scrS_{\calQ}, \eps_{\scrS_{\calQ}}) \).  Thus, for any $t\in \PP^1 - \scrS$, $[\calG_t] \in \Br(\kk(t))$ is determined by $(\scrS, \eps_{\scrS})$.\label{part:ConstructionFromEps}
        \end{enumerate}
    \end{prop}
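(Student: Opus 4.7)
For Part~\eqref{part:Sym2calGbirational}, the plan is to build mutually inverse rational maps between $\Sym^2(X)$ and $\calG$. Given $\{p,q\}\in \Sym^2(X)$, the line $L_{pq}\subset \PP^4$ has the property that, for every $t\in \PP^1$, the restriction $Q_t|_{L_{pq}}$ is a binary quadratic form already vanishing at $p$ and $q$; hence either $L_{pq}\subseteq \calQ_t$ or $L_{pq}\cap \calQ_t = \{p,q\}$, and the first alternative is a single linear condition on $t$, cutting out a unique $t_0\in \PP^1$ generically. Conversely, a line $\ell\subseteq \calQ_t$ meets $X = \bigcap_s \calQ_s$ in $\ell\cap\calQ_s$ for any $s\neq t$, a length-$2$ subscheme of $\ell$, giving a point of $\Sym^2(X)$. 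These maps are inverse where defined. For the point-theoretic statement, a $k$-point of $\calG$ is a $k$-rational line lying on a $k$-quadric of the pencil, whose intersection with $X$ is a $k$-rational $0$-cycle of degree $2$, i.e.\ a quadratic point. The converse follows from Lang--Nishimura applied to the smooth proper variety $\calG$: a quadratic point on $X$ (non-diagonal case) yields a smooth $k$-point of $\Sym^2(X)$ that maps to $\calG$; the diagonal case of a single $k$-point counted twice is handled by running the explicit construction on a generic $k$-line through that point to produce a $k$-point of $\calG$ directly.

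For Part~\eqref{prop:connections2}, Proposition~\ref{prop:EquivSmoothness}\eqref{cond:calGSmooth} ensures $\calG\to \PP^1$ is smooth off $\scrS$, so $[\calG_{\kk(\PP^1)}]$ is unramified at every $t\notin \scrS$. For $s\in \scrS$, $\calQ_s$ is a rank-$4$ cone over a smooth quadric surface of discriminant $\eps_\scrS(s)$, and the geometric components of $\calG_s$ correspond to the two rulings of the base quadric surface, which are swapped by $G_{\kk(s)}$ exactly when $\eps_\scrS(s) \notin \kk(s)^{\times 2}$. A local residue computation (e.g., Henselizing at $s$, identifying $\calG$ with a standard Severi--Brauer degeneration, and applying the residue formula of \cite{CTS-Brauer}) identifies $\partial_s[\calG_{\kk(\PP^1)}]$ with $\eps_\scrS(s) \in \HH^1(\kk(s),\tfrac12\Z/\Z)$. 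Feeding these residues into the Faddeev exact sequence
\begin{equation*}
0 \to \Br(k) \to \Br(\kk(\PP^1)) \xrightarrow{\bigoplus\partial_t} \bigoplus_{t\in(\PP^1)^{(1)}}\HH^1(\kk(t),\Q/\Z) \xrightarrow{\sum\Cor} \HH^1(k,\Q/\Z) \to 0
\end{equation*}
for the class $[\calG_{\kk(\PP^1)}]$ yields $\sum_{s\in \scrS}\Cor_{\kk(s)/k}(\eps_\scrS(s)) = 0$ in $k^\times/k^{\times 2}$, which is the claim $\Norm_{\kk(\scrS)/k}(\eps_\scrS)\in k^{\times 2}$.

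For Part~\eqref{part:ConstructionFromEps}, uniqueness follows from Part~\eqref{prop:connections2}: the class $[\calG_{\kk(\PP^1)}]\in \Br(\kk(\PP^1))[2]$ is determined by $(\scrS,\eps_\scrS)$ modulo $\Br(k)$ via the Faddeev sequence, which determines the Severi--Brauer fibration and, via the incidence variety of its lines in $\PP^4\times\PP^1$, the pencil $\calQ$ itself (with $\Br(k)$-twists absorbed into the projective embedding). For existence, following the classical construction of \cite{Reid}, one builds from $(\scrS',\eps_{\scrS'})$ a rank-$5$ quadratic form on $\OO_{\PP^1}^{\oplus 5}$ valued in $\OO_{\PP^1}(1)$ whose degeneracy subscheme is $\scrS'$ and whose associated \'etale discriminant cover of $\scrS'$ is $\eps_{\scrS'}$; the norm condition is exactly what makes this local data globalize to an honest form on $\PP^1$. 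The identification of $[\calG_t]$ for $t\in \PP^1-\scrS$ from $(\scrS,\eps_\scrS)$ then follows since specialization at $t$ is compatible with residues and a $2$-torsion class of $\Br(\kk(t))$ is pinned down by its residues modulo $\Br(k)$. The principal obstacle is the explicit residue identification in Part~\eqref{prop:connections2}: matching the Galois action on components of $\calG_s$ with the Brauer-theoretic residue requires producing a local trivialization of $\calG$ near $s$ and carefully tracking the resulting cocycle on the \'etale double cover of $\scrS$.
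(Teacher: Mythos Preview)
Your arguments for Parts~\eqref{part:Sym2calGbirational} and~\eqref{prop:connections2} are essentially those of the paper. For the birational map you give the same construction; for the point-theoretic claim you reverse the roles of Lang--Nishimura and the direct geometric argument relative to the paper, which is fine in the non-diagonal case. Your handling of the diagonal case, however, is too vague: a ``generic $k$-line through $p$'' is not contained in any quadric of the pencil, so the sentence as written does not produce a $k$-point of $\calG$. The paper sidesteps this entirely by invoking \cite{SalbergerSkorobogatov}*{Theorem~(0.1)} to conclude that $X(K)$ is infinite whenever nonempty, so one can always choose a Galois-stable pair of \emph{distinct} points. For Part~\eqref{prop:connections2} you correctly identify the residue computation as the crux; the paper does not carry out the local computation you sketch but instead cites \cite{Reid}*{Thm.~1.8} for the splitting behaviour of $\calG_s$ and \cite{Frossard}*{Prop.~2.3} for the identification of the residue with the discriminant.

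There is a genuine gap in your Part~\eqref{part:ConstructionFromEps}. Your uniqueness argument only determines $[\calG_{\kk(\PP^1)}]$ modulo $\Br(k)$ via Faddeev; the phrase ``$\Br(k)$-twists absorbed into the projective embedding'' does not explain how to recover the pencil $\calQ\subset\PP^4\times\PP^1$ from the abstract Severi--Brauer fibration, and indeed a constant Brauer twist of $\calG$ need not arise as the Fano scheme of lines of a pencil of quadrics in $\PP^4$. Similarly, your final sentence asserts that $[\calG_t]$ is ``pinned down by its residues modulo $\Br(k)$'', but $[\calG_t]$ lives in $\Br(\kk(t))$ for a closed point $t$, where there are no residues to speak of; specialization of $[\calG_{\kk(\PP^1)}]$ at $t$ determines $[\calG_t]$ only up to the image of $\Br(k)$, which is not enough. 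The paper does not attempt any such argument: it simply cites the reconstruction theorem of Flynn~\cite{Flynn} (as elaborated by Skorobogatov~\cite{SkoDP4}) for the existence and uniqueness of $\calQ$ from $(\scrS',\eps_{\scrS'})$, and then the determinacy of $[\calG_t]$ follows because $\calG_t$ is the Fano variety of lines on the now-uniquely-determined quadric $\calQ_t$. Your citation of \cite{Reid} for the existence construction is also off; Reid's thesis contains the structure theory of pencils but not this specific inverse construction.
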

    \begin{remark}
        The second statement of Part~\eqref{part:residuesofcalG} provides an alternate proof of a proposition by Wittenberg~\cite{Wittenberg}*{Prop. 3.39}.  
    \end{remark}
    \begin{proof}
        \textbf{\eqref{part:Sym2calGbirational}:} Consider a point $(x,x')\in X\times X -\Delta$, where $\Delta$ denotes the diagonal image of $X$, and let $\ell_{\{x,x'\}}$ be the line joining them. For generic $(x,x')$ the line $\ell_{\{x,x'\}}$ is not contained in $X$, in which case we claim that $\ell_{\{x,x'\}}$ lies on a quadric in the pencil containing $X$. This quadric will be unique since {a line that is contained in more than one quadric in the pencil lies on \(X\).}
        To see that $\ell_{\{x,x'\}}$ is contained in some quadric note that the intersections $\calQ_t \cap \ell_{\{x,x'\}}$ determine a nonzero pencil of binary quadrics (i.e., quadrics in $\PP^1$) that all contain \(x\) and \(x'\).  The singular binary quadrics of this pencil are rank at most \(1\) and contain the distinct points $x$ and $x'$ so they must be identically \(0\) on \(\ell_{\{x,x'\}}\). 
        
        Therefore, we have a rational map
        \[
        	f \colon X \times X \dashrightarrow \mathcal{G}\,,\quad (x,x') \mapsto (t_{\{x,x'\}},\ell_{\{x,x'\}})\,,
        \]
        defined on the locus of pairs $(x,x') \in X\times X - \Delta$ such that the line $\ell_{\{x,x'\}}$ is not contained in $X$, where $t_{\{x,x'\}} \in \PP^1$ is such that $\ell_{\{x,x'\}} \subset \calQ_{t_{\{x,x'\}}}$. Noting that a line $\ell \subset \calQ_t$ which is not contained in $X$ intersects $X$ in \(0\)-dimensional scheme of degree $2$ we see that $f$ is {dominant}, generically of degree $2$, and factors through the symmetric square of $X$.  Thus, the induced map $\Sym^2 X \dasharrow \calG$ is birational.

        If $\calG(k) \ne \emptyset$, then the Lang-Nishimura Theorem (see, e.g.,~\cite{Poonen-Qpoints}*{Theorem 3.6.11}) (which applies since $\calG$ is smooth) implies that $\Sym^2(X)(k) \ne \emptyset$ and, consequently, that there is a quadratic point on $X$.  In particular, there is a quadratic extension $K/k$ with $X(K) \ne \emptyset$. 
        Conversely, if $X(K) \ne \emptyset$ for some quadratic extension $K/k$, then $X(K)$ is infinite by \cite{SalbergerSkorobogatov}*{Theorem (0.1)}. The line through any Galois stable pair of distinct points gives a $k$-rational point on $\calG$.

        \textbf{\eqref{part:residuesofcalG}:} Let $t\in \PP^1$.  By~\cite{Reid}*{Thms. 1.2 and 1.8}, the fiber $\calG_t$ is smooth and geometrically irreducible exactly when $\calQ_t$ has rank $5$.  Thus, for all $t\in \PP^1-\scrS$, the class $[\calG_{\kk(\PP^1)}]$ has trivial residue at $t$.  By Proposition~\ref{prop:EquivSmoothness} and assumption \((\dagger)\), if $t\in \scrS$, then $\calQ_t$ has rank $4$. If $\calQ_t$ is rank $4$ and has square discriminant, then by~\cite{Reid}*{Thm. 1.8} the fiber $\calG_t$ is reducible and split over $\kk(t)$.  If $\calQ_t$ is rank $4$ and has nonsquare discriminant, then the same result of Reid says that $\calG_t$ is irreducible and non-split over $\kk(t)$, but becomes split over the quadratic discriminant extension.  Thus, the residue of $[\calG_{\kk(\PP^1)}]$ at $t$ is the discriminant of $\calQ_t$~\cite{Frossard}*{Prop. 2.3}.  By definition of $\eps_{\scrS}$, this gives the first statement.  The second statement now follows from the Faddeev exact sequence for $\Br \kk(\PP^1)$ (see~\cite{GS-csa}*{Thm 6.4.5} or~\eqref{eq:FaddeevCommutativeDiagram}).

        \textbf{\eqref{part:ConstructionFromEps}:} The first statement is a theorem of Flynn~\cite{Flynn} which was expanded upon by Skorobogatov~\cite{SkoDP4}.  The second statement follows from the first together with the Faddeev exact sequence for $\Br(\kk(\PP^1))$ (see~\cite{GS-csa}*{Thm 6.4.5} or~\eqref{eq:FaddeevCommutativeDiagram}).
    \end{proof}
    
    The proceeding proposition together with Theorem~\ref{thm:localquadpts} yields the following.
    \begin{cor}\label{cor:locptsG}
        Assume $k$ is a local field of characteristic not equal to $2$.  For any pencil of quadric threefolds $\calQ\to \PP^1$ satisfying $(\dagger)$, $\calG_{\calQ}(k)\neq\emptyset$.\hfill \qed
    \end{cor}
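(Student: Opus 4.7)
The plan is to simply combine Theorem~\ref{thm:localquadpts} with Proposition~\ref{prop:ObjectConnections}\eqref{part:Sym2calGbirational}. By hypothesis, $\calQ \to \PP^1$ satisfies $(\dagger)$, so the base locus $X = X_{\calQ} \subset \PP^4_k$ is a smooth complete intersection of two quadrics (a del Pezzo surface of degree $4$) by the equivalence of Proposition~\ref{prop:EquivSmoothness}\eqref{cond:smoothdp4}.

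Since $k$ is a local field of characteristic different from $2$, Theorem~\ref{thm:localquadpts} applies to $X$ and produces a quadratic extension $K/k$ with $X(K) \neq \emptyset$.

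Finally, by Proposition~\ref{prop:ObjectConnections}\eqref{part:Sym2calGbirational}, the existence of a quadratic point on $X$ is equivalent to $\calG_{\calQ}(k) \neq \emptyset$, so we conclude. There is essentially no obstacle here; the content of the corollary is entirely contained in Theorem~\ref{thm:localquadpts} (which is the deep input, established in Section~\ref{sec:ProofOfLocalThm} via semistable models and the Amer--Brumer--Springer theorem) together with the birational correspondence between $\calG$ and $\Sym^2(X)$ already established in Section~\ref{sec:Correspondences}. The proof should therefore be a one-sentence deduction.
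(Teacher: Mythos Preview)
Your proposal is correct and is exactly the paper's argument: the sentence preceding the corollary reads ``The preceding proposition together with Theorem~\ref{thm:localquadpts} yields the following,'' and the \qed\ is placed inline with no further proof. There is nothing to add.
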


\subsection{Notation}\label{subsec:notation}
    For a pencil of quadrics that satisfies $(\dagger)$ we will move freely between the objects $\calQ, X = X_{\calQ}, \calG = \calG_{\calQ}, $ and $(\scrS, \eps_{\scrS}) = (\scrS_{\calQ}, \eps_{\scrS_{\calQ}})$. We will assume that $\scrS\subset \A^1 = \PP^1 - \infty$. This can be arranged by an automorphism of $\PP^1$, provided $k$ has at least $5$ elements. We will write $k[T]$ for the coordinate ring of $\A^1$ and let $f(T)$ be the unique monic polynomial whose vanishing defines $\scrS$.
    
    Let $Q_{\A^1} \in k(T)[x_0,\dots,x_4]$ be a quadratic form whose coefficients are linear polynomials in $k[T]$ and whose vanishing defines $\calQ_{\A^1}$ on $\A^1 \subset \PP^1$. While $Q_{\A^1}$ is only defined up to multiplication by an element of $k^\times$, none of our results depend on this choice. For a (possibly reducible) subscheme $\scrT \subset \A^1 = \Spec(k[T])$, the canonical map $k[T] \to \kk(\scrT)$ can be applied to the coefficients of $Q_{\A^1}$ to obtain a quadratic form $Q_\scrT$ over the $k$-algebra $\kk(\scrT)$ whose vanishing defines $\calQ_\scrT = \calQ \times_{\PP^1} \scrT$. In particular, for $a \in k = \A^1(k)$, the form $Q_a$ is obtained by evaluating the coefficients of $Q_{\A^1}$ at $a$. We define $Q_\infty = Q_1 - Q_0$, so that $Q_{\A^1} = Q_0 + TQ_\infty$.
    
    We will write $\theta$ for the image of $T$ in $\kk(\scrS) = k[T]/\langle f(T)\rangle$. For a subscheme $\scrT \subset \scrS$ we use $\eps_\scrT \in \frac{\kk(\scrT)^\times}{\kk(\scrT)^{\times 2}} \subset \frac{\kk(\scrS)^\times}{\kk(\scrS)^{\times 2}}$ to denote the discriminant corresponding to $\calQ_\scrT$. We will use $\N$ to denote any map induced in an obvious way by the norm map $\Norm_{\kk(\scrS)/k} \colon \kk(\scrS) \to k$.  Note that $\Norm_{\kk(\scrT)/k}(\eps_\scrT) = \Norm_{\kk(\scrS)/k}(\eps_\scrT) = \N(\eps_\scrT)$.

    \subsection{Alternate proof of Theorem~\ref{thm:localquadpts} for odd residue characteristic}\label{sec:Alternate}
    We now give an alternate proof of Theorem~\ref{thm:localquadpts} (valid for local fields of odd residue characteristic) which avoids the classificiation of reducible special fibers.
   
    \begin{prop}
   	    Let $X \subset \PP^4_k$ be a smooth complete intersection of two quadrics over a local field $k$ of characteristic not equal to $2$. Then $X$ has index dividing $2$. If the residue characteristic of $k$ is odd, then there is a quadratic extension $K/k$ such that $X$ has a $K$-point.
    \end{prop}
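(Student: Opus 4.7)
The plan is to exhibit a $k$-rational point on the Fano variety $\calG = \calG_{\calQ}$ associated to the pencil of quadrics containing $X$. By Proposition~\ref{prop:ObjectConnections}\eqref{part:Sym2calGbirational}, such a point yields a quadratic extension $K/k$ with $X(K) \ne \emptyset$, proving the quadratic-point claim in odd residue characteristic and, a fortiori, the index claim in that case. (When $k$ has residue characteristic $2$, the index claim may be deduced from Theorem~\ref{thm:localquadpts}.) So it suffices to find $t_0 \in \PP^1(k) \setminus \scrS$ with $[\calG_{t_0}] = 0$ in $\Br(k)[2]$: the Severi--Brauer $3$-fold $\calG_{t_0}$ will then have a $k$-point.

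By Proposition~\ref{prop:ObjectConnections}\eqref{part:residuesofcalG}, the Brauer class $\alpha := [\calG_{k(T)}] \in \Br(k(T))[2]$ has residues $\eps_\scrS$ on $\scrS$ and trivial residues elsewhere. Writing $\scrS = \coprod_j \Spec L_j$ as a disjoint union of its $k$-irreducible components, with $\theta_j \in L_j$ the image of $T$, and choosing lifts $\tilde\eps_{L_j} \in L_j^\times$ of $\eps_{L_j}$, the Faddeev exact sequence gives
\[
\alpha \;=\; \alpha_0 \;+\; \sum_j \Cor_{L_j(T)/k(T)}\!\left((T-\theta_j,\, \tilde\eps_{L_j})_{L_j(T)}\right)
\]
for some constant $\alpha_0 \in \Br(k)[2]$. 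Specializing at $t_0 \in \OO_k^\times$ yields
\[
\alpha(t_0) \;=\; \alpha_0 \;+\; \sum_j \Cor_{L_j/k}\!\left((t_0-\theta_j,\, \tilde\eps_{L_j})_{L_j}\right).
\]
The hypothesis of odd residue characteristic enters now: in any local field $M$ of odd residue characteristic, the Hilbert symbol $(u,v)_M$ vanishes for units $u, v \in \OO_M^\times$. If $t_0 - \theta_j \in \OO_{L_j}^\times$ (a condition on $\bar{t_0} \in \F_k$ that excludes only finitely many residues) and each lift $\tilde\eps_{L_j}$ can be taken to be a unit, then every summand is zero and $\alpha(t_0) = \alpha_0$.

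The main obstacle is to then force $\alpha_0 = 0$ by an admissible choice of $t_0$ and of lifts. When some $\eps_{L_j}$ is not represented by a unit (hence necessarily by an element of odd valuation in $L_j$), the corresponding summand becomes a tame symbol controlled by the residue class of $t_0 - \theta_j$ in $\F_{L_j}^\times / \F_{L_j}^{\times 2}$, so a finite collection of residue conditions on $t_0$ must be arranged simultaneously to produce the desired cancellation. The norm condition $\N_{\kk(\scrS)/k}(\eps_\scrS) \in k^{\times 2}$ from Proposition~\ref{prop:ObjectConnections}\eqref{part:residuesofcalG} and the finiteness $|k^\times/k^{\times 2}|\le 4$ in odd residue characteristic provide the necessary flexibility. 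A case analysis organized by the partition of $5$ given by the degrees $[L_j:k]$ and by which $\eps_{L_j}$ have representatives of odd valuation should complete the argument; this case analysis is what replaces the classification of reducible special fibers of Section~\ref{sec:UnionOfPlanes} in the original proof.
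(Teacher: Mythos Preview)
Your strategy is genuinely different from the paper's. The paper first passes to an odd-degree extension $k'/k$ with $\scrS(k') \ne \emptyset$; at a $k'$-rational $s \in \scrS$ it either finds a quadratic point directly (when $\calQ_s$ has a smooth $k'$-point, via the line through the vertex) or realizes $X_{k'}$ as a double cover of a quadric surface whose branch curve is a genus~$1$ curve of period dividing~$2$, and then invokes Lichtenbaum's period--index theorem over local fields. This already gives the index-dividing-$2$ claim for every residue characteristic, without appealing to Theorem~\ref{thm:localquadpts}; the upgrade to a quadratic extension of $k$ itself in odd residue characteristic comes from $k^\times/k^{\times 2} \simeq k'^\times/k'^{\times 2}$ together with Amer--Brumer--Springer.

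Your approach --- locate a soluble fiber of $\calG \to \PP^1$ by writing $[\calG_{t_0}]$ as $\alpha_0$ plus a sum of tame Hilbert symbols --- is plausible in outline, but as written it is not a proof: you explicitly defer the decisive case analysis, and that analysis carries essentially all of the content. One case exposes the difficulty concretely. Suppose every $\eps_{L_j}$ is a square in $L_j$ (this can occur by Proposition~\ref{prop:ObjectConnections}\eqref{part:ConstructionFromEps}, which produces a pencil for any $(\scrS,\eps_\scrS)$ of square norm, in particular $\eps_\scrS$ trivial). Then every corestriction term vanishes identically, so $\alpha(t_0) = \alpha_0$ for all admissible $t_0$, and no choice of $t_0$ or of lifts helps; you must show directly that $\alpha_0 = [\calG_\infty] = 0$, i.e.\ that the smooth quadric threefold $\calQ_\infty$ contains a $k$-line. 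That is exactly the kind of statement you set out to establish, and nothing in your sketch addresses it. More generally, even when some $\eps_{L_j}$ admit representatives of odd valuation, the residue conditions you allude to involve $t_0 - \theta_j$ for $\theta_j$ lying in several distinct extensions $L_j$, and these are far from independent as $t_0$ ranges only over $k$; the norm relation $\N(\eps_\scrS) \in k^{\times 2}$ and the bound $|k^\times/k^{\times 2}| = 4$ do not by themselves guarantee a simultaneous solution.
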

   
    \begin{proof}
        First let us prove that $X$ has a quadratic point assuming that $s \in \scrS(k) \ne \emptyset$. After a change of coordinates on the $\PP^1$ parameterizing the pencil and a change of coordinates on $\PP^4$, we may assume that $s = 0$, that $Q_0 = Q_0(x_0,x_1,x_2,x_3)$, and that $Q_\infty = \tilde{Q}_{\infty}(x_0,x_1,x_2,x_3) + x_4^2$. If $\calQ_0$ contains a smooth $k$-point, then the line joining the vertex of $\calQ_0$ and this point will intersect $X$ in a degree $2$ subscheme, which shows that $X$ has a quadratic point.  Thus, we may restrict to the case that $\calQ_0$ has no smooth $k$-points.
        
        Projection away from the vertex of $\calQ_0 \subset \PP^4_k$ gives a double cover $X \to Y:= \calQ_0 \cap V(x_4)$ onto the quadric surface $Y$.  Since $\calQ_0$ has no smooth $k$-points, $Y(k) = \emptyset$. We will prove that, in this case, the branch curve $C$ of the double cover $X\to Y$ has a quadratic point.  Note that by definition of the double cover, $C = X\cap V(x_4)$ and so is a degree $4$ genus $1$ curve that is the base locus of the pencil of quadric surfaces $\calQ' \to \PP^1$ with $\calQ'_t = \calQ_t \cap V(x_4)$. Moreover, $C$ is a $2$-covering of the degree $2$ genus one curve $C'$ given by the equation $y^2 = \det(M)$ where $M$ is the $4\times4$ symmetric matrix with entries in $\HH^0(\OO_{\PP^1}(1))$ corresponding to a defining equation for $\calQ'$ (see \cite{JNT}). 

        Consider the fiber of $C'\to \PP^1$ above $0$.  By definition of $\calQ'$, this is given by the equation $y^2 = \disc(\calQ_0\cap V(x_4))$.  By assumption, $\calQ_0\cap V(x_4)$ has no $k$-points.  Since there is (up to isomorphism) a unique rank $4$ quadric over the local field $k$ that is anisotropic and it has square discriminant, we conclude that $\disc(\calQ_0\cap V(x_4))$ is a square and so $C'(k)\neq \emptyset$. Consequently, $C' \simeq \Jac(C)$ and so the order of $C$ in $\HH^1(k,\Jac(C))$ divides $2$. By a result of Lichtenbaum \cite{Lichtenbaum}*{Theorems 3 $\&$ 4} it follows that $C$ has a point defined over some quadratic extension of the local field $k$. The aforementioned result of Lichtenbaum is stated for $k$ a $p$-adic field, but the proof works for any local field due to Milne's extension of Tate's local duality results to positive characteristic~\cite{ADT}*{Cor. I.3.4, Rmk. I.3.5, Thm. III.7.8}.
            
        Now we can deduce the statement in the proposition. The scheme $\scrS \subset \PP^1_k$ parameterizing singular quadrics in the pencil has degree $5$, so there is an odd degree extension $k'/k$ such that $\scrS(k') \ne \emptyset$. By what we have shown above, $X$ has a $K$-rational point for some quadratic extension $K/k'$. It follows that $X$ has index at most $2$. If the residue characteristic is odd, then the inclusion $k \subset k'$ induces an isomorphism $k^\times/k^{\times 2} \simeq k'^\times/k'^{\times 2}$, so $K$ contains a quadratic extension $k_2/k$ as an odd index subfield. By the theorems of Amer, Brumer and Springer \cites{Amer,Brumer,Springer},  we have $X(K) \ne \emptyset \Rightarrow X(k_2) \ne \emptyset$, so $X$ has a $k_2$-point.
	\end{proof}

    \begin{rmk}\label{rmk:410}
        The preceding proof can be adapted to give an easy proof that a locally solvable del Pezzo surface of degree $4$ over a global field over a field of characteristic different from \(2\) must have index dividing $2$. Indeed, over some odd degree extension $X$ may be written as a double cover of a quadric surface, which is known to satisfy the Hasse principle. Hence $X$ obtains a rational point over some extension of degree $2m$ with $m$ odd.
    \end{rmk}

\section{Arithmetic of the space of lines on the quadrics in the pencil}\label{sec:calG}

In this section we develop the main tools to prove Theorems~\ref{thm:MainIndThm} and~\ref{thm:MainThm} over global fields of characteristic not equal to $2$. We maintain the notation defined in Section~\ref{subsec:notation}.  Specifically, $\calQ \to \PP^1$ is a pencil of quadrics in $\PP^4_k$ over a field $k$ of characteristic not equal to $2$ which satisfies $(\dagger)$, and we let $X = X_{\calQ}$, $\calG = \calG_{\calQ}$ and $(\eps_{\scrS}, \scrS) = (\eps_{\scrS_{\calQ}}, \scrS_{\calQ})$.

In Section~\ref{sec:BrG}, we compute $\Br(\calG)/\Br_0(\calG)$ and construct explicit representatives in $\Br(\calG)$, denoted by $\beta_\scrT$, which are determined by subsets $\scrT\subset \scrS$ such that $\N(\eps_\scrT)\in k^{\times2}$. In Section~\ref{sec:Clif}, we study the rank $4$ quadrics $\calQ_{\scrT}$ corresponding to subsets $\scrT\subset \scrS$ such that $\N(\eps_\scrT)\in k^{\times2}$.  We use Clifford algebras associated to these rank $4$ quadrics to define constant Brauer classes $\CQ{\scrT}\in \Br(k)$ and we show how these are related to the kernel of the canonical map $\Br(k) \to \Br(X)$. The two constructions come together in Sections~\ref{sec:LocalEv} where we show how the $\CQ{\scrT}$ arise when evaluating $\beta_{\scrT}$ at certain local points of $\calG$ (see Lemmas~\ref{lem:EvBetaSqrtEps} and \ref{lem:betanonconstant}).  Finally, in Section~\ref{sec:AdelicEv}, we deduce consequences for the evaluation of $\beta_{\scrT}$ at adelic points of $\calG$.

\subsection{The Brauer group of \texorpdfstring{$\calG$}{G}}\label{sec:BrG}
	It follows from the Faddeev exact sequence (see \cite{GS-csa}*{Thm 6.4.5}) that the homomorphism
	\[
		\gamma' \colon \kk(\scrS)^\times \ni \eps \mapsto \Cor_{\kk(\scrS)/k}(\eps,T-\theta) \in \Br(\kk(\PP^1))
	\]
	induces an isomorphism
	\begin{equation}\label{eq:gamma}
		\gamma \colon \ker\left(\N\colon \frac{\kk(\scrS)^\times}{\kk(\scrS)^{\times 2}} \to \frac{k^\times}{k^{\times 2}}\right) \simeq \ker\left({\Br(\PP^1-\scrS)}[2] \stackrel{\infty^*}{\longrightarrow} {\Br k}[2]\right)\,,
	\end{equation}
    where $\infty^*$ denotes evaluation of the Brauer class at $\infty\in \PP^1 - \scrS$. 
    Recall that $\N(\eps_{\scrS})\in k^{\times2}$ by Proposition~\ref{prop:ObjectConnections}\eqref{prop:connections2}.

    Define $\beta = \pi^*\gamma \colon \ker\left(\N\colon \frac{\kk(\scrS)^\times}{\kk(\scrS)^{\times 2}} \to \frac{k^\times}{k^{\times 2}}\right) \to \Br(\kk(\calG))$. For $\scrT \subset \scrS$ such that $\N(\eps_{\scrT})\in k^{\times2}$, we set $\beta_\scrT := \beta(\eps_\scrT)$.

    \begin{prop}\label{prop:BrFaddeev}
        The map $\beta$ induces a homomorphism
        \begin{equation}
            \ker \left(\N\colon \bigoplus_{s\in \scrS}\langle \eps_s\rangle \to k^{\times}/k^{\times2}\right) \stackrel{\beta}\to \Br(\calG),
        \end{equation}
        whose image surjects onto $\Br(\calG)/\Br_0(\calG)$. Furthermore, $\beta_{\scrS} = [\calG_{\infty}] \in \Br _0 (\calG)$, and for all $\scrT\subset \scrS$ with $\N(\eps_{\scrT})\in k^{\times2}$ and $\eps_{\scrT}\neq \eps_{\scrS}\in \kk(\scrS)^{\times}/\kk(\scrS)^{\times2}$, we have
        \[
            \beta_{\scrT} \in \Br_0(\calG) \subset\Br (\calG) \;\Longleftrightarrow \;\beta_{\scrT}= 0\in \Br(\calG)\; \Longleftrightarrow \;\eps_{\scrT}\in \kk(\scrT)^{\times2}.
        \]
    \end{prop}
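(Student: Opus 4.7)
The strategy will be to decompose $\beta = \pi^* \circ \gamma$, handle each piece separately, and then combine via Amitsur's theorem and the Faddeev exact sequence. The first step is to verify that $\beta_\scrT = \pi^*\gamma(\eps_\scrT)$ actually defines a class in $\Br(\calG)$, rather than merely in $\Br(\pi^{-1}(\PP^1-\scrS))$. Since $\calG$ is smooth (Proposition~\ref{prop:EquivSmoothness}\eqref{cond:calGSmooth}), Grothendieck purity reduces this to checking vanishing of residues along each irreducible component $D \subset \calG_s$ for $s \in \scrS$. The residue of $\gamma(\eps_\scrT)$ at $s$ is $\eps_s$ if $s \in \scrT$ and trivial otherwise, and the residue of $\pi^*\gamma(\eps_\scrT)$ at $D$ is the restriction of this class to $\kk(D)$ multiplied by the ramification index of $\pi$ at $D$. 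Using Reid's description of $\calG_s$ (\cite{Reid}*{Thm.~1.8}), one verifies that $\eps_s$ is always a square in $\kk(D)$: either $\eps_s$ is already a square in $\kk(s)$ (and $\calG_s$ is geometrically split over $\kk(s)$), or $\eps_s$ is a non-square and $\calG_s$ is $\kk(s)$-irreducible with $\kk(D) \supset \kk(s)(\sqrt{\eps_s})$. Either way, the residue vanishes.

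The second step is to identify $\beta_\scrS$. By Proposition~\ref{prop:ObjectConnections}\eqref{part:residuesofcalG}, the classes $\gamma(\eps_\scrS)$ and $[\calG_{\kk(\PP^1)}]$ have the same residues, hence differ by a constant element of $\Br(k)$. Since $\gamma(\eps_\scrS) \in \ker(\infty^*)$ by construction of $\gamma$ while $\infty^*[\calG_{\kk(\PP^1)}] = [\calG_\infty]$, this constant is exactly $[\calG_\infty]$; that is, $\gamma(\eps_\scrS) = [\calG_{\kk(\PP^1)}] + [\calG_\infty]$ in $\Br(\kk(\PP^1))$. The generic fiber $\calG_{\kk(\PP^1)}$ is a Severi--Brauer variety with class $[\calG_{\kk(\PP^1)}]$, and it carries a tautological line (hence a rational point) over $\kk(\calG)$, so Amitsur's theorem gives $\pi^*[\calG_{\kk(\PP^1)}] = 0$ in $\Br(\calG)$. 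Thus $\beta_\scrS = \pi^*\gamma(\eps_\scrS) = \pi^*[\calG_\infty] \in \Br_0(\calG)$.

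The third step treats the kernel characterization and surjectivity. For the kernel, suppose $\beta_\scrT \in \Br_0(\calG)$, so that $\pi^*\gamma(\eps_\scrT) = \pi^*\xi$ in $\Br(\kk(\calG))$ for some $\xi \in \Br(k)$. By Amitsur's theorem applied to the generic Severi--Brauer fiber, the kernel of $\pi^* \colon \Br(\kk(\PP^1)) \to \Br(\kk(\calG))$ is generated by $[\calG_{\kk(\PP^1)}]$, so $\gamma(\eps_\scrT) - \xi \in \{0,\,[\calG_{\kk(\PP^1)}]\}$. In the first case the Faddeev sequence forces all residues of $\gamma(\eps_\scrT)$ to vanish, whence $\eps_\scrT \in \kk(\scrT)^{\times 2}$ and $\beta_\scrT = 0$; in the second case, combining with Step~2 yields $\gamma(\eps_\scrT \eps_\scrS) \in \Br(k)$, so Faddeev forces $\eps_\scrT = \eps_\scrS$, contrary to hypothesis. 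For surjectivity onto $\Br(\calG)/\Br_0(\calG)$, I would use that $\Br(\overline{\calG}) = 0$: the generic fiber of $\overline{\calG} \to \PP^1_{\kbar}$ is a Severi--Brauer variety over $\kbar(T)$, which is trivial by Tsen, so $\overline{\calG}$ is birational to $\PP^1_{\kbar} \times \PP^3_{\kbar}$. Consequently $\Br(\calG)/\Br_0(\calG)$ injects into $\HH^1(k, \Pic(\overline{\calG}))$ via Hochschild--Serre, and I would then compute this $\HH^1$ using the description of $\Pic(\overline{\calG})$ in terms of pullbacks from $\PP^1$ and the geometric components of $\calG_s$ for $s \in \scrS_{\kbar}$, on which Galois acts through the square classes $\eps_s$. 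The main obstacle I expect is this final Galois-cohomology computation, which requires a careful accounting of the geometric structure of $\calG$ near the singular fibers; an alternative would be to extract it from the parallel analyses in Wittenberg~\cite{Wittenberg}. The other steps reduce to relatively formal manipulation with the Faddeev sequence and Amitsur's theorem.
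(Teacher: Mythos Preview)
Your Steps 1, 2, and the kernel half of Step 3 are correct and essentially match the paper's argument: the paper likewise identifies $\ker\pi^* = \langle [\calG_\eta]\rangle$ (via Hochschild--Serre for the Severi--Brauer generic fiber, equivalent to your Amitsur) and then uses residue and $\infty^*$ computations to pin down when $\beta_\scrT \in \Br_0(\calG)$.

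The difference is in surjectivity. You propose to bound $\Br(\calG)/\Br_0(\calG)$ from above by embedding it in $\HH^1(k,\Pic(\overline{\calG}))$ and then computing that $\HH^1$, and you correctly flag this Galois-cohomology computation as the main obstacle. The paper avoids it entirely. The same Hochschild--Serre sequence for $\calG_\eta$ that gives the kernel also gives \emph{surjectivity} of $\pi^*\colon \Br(\kk(\eta)) \twoheadrightarrow \Br(\calG_\eta)$, since $\Pic(\overline{\calG_\eta}) \simeq \Z$ with trivial action and $\Br(\overline{\calG_\eta}) = 0$. Combined with the injection $\Br(\calG)\hookrightarrow \Br(\calG_\eta)$ (smoothness of $\calG$), every class in $\Br(\calG)$ is $\pi^*\alpha$ for some $\alpha\in\Br(\kk(\eta))$. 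Your Step~1 residue analysis, run in reverse, shows that $\pi^*\alpha$ is unramified on $\calG$ exactly when each residue $\partial_t\alpha$ lies in the kernel of restriction to $\kk(\calG_t)$, i.e.\ is trivial for $t\notin\scrS$ and lies in $\langle\eps_t\rangle$ for $t\in\scrS$; Faddeev then forces $\partial_t\alpha\in\ker\N$. Subtracting $\gamma$ of these residues from $\alpha$ leaves a class with trivial residues, hence in $\Br(k)$, so $\pi^*\alpha\equiv\beta(\partial\alpha)\pmod{\Br_0(\calG)}$. This gives surjectivity directly, with no need to analyze $\Pic(\overline{\calG})$ as a Galois module. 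You already have all the ingredients for this argument in your Steps 1 and 3; you just did not combine them.
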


\begin{cor}\label{cor:propsBrG}
    \hfill
	\begin{enumerate}
	       	\item\label{BB01} Every nontrivial element of $\Br(\calG)/\Br_0(\calG)$ is represented by $\beta_{\scrT}$ for some degree $2$ subscheme $\scrT \subset \scrS$ with $\N(\eps_\scrT) \in k^{\times 2}$.     
        \item\label{BrSizeBound} $\Br(\calG)/\Br_0(\calG) \simeq (\Z/2\Z)^n$ for some $n \in \{0,1,2\}$.
        \item\label{BB04} If $\Br(\calG)/\Br_0(\calG)$ is not cyclic, then every degree $2$ subscheme $\scrT\subset \scrS$ with $\N(\eps_\scrT)\in k^{\times2}$ \emph{must} be reducible.  
        \item \label{statement:SingleRatlRank4} Let $s_0\in \scrS(k)$ be such that there exists an $s'\in \scrS(k)$ with $\beta_{\{s_0,s'\}}\in \Br(\calG)-\Br_0(\calG)$.  Then 
          $\{\beta_{\{s_0,s\}} : s\in \scrS(k), \; \N(\eps_{\{s_0,s\}})\in k^{\times2}\}$ generates $\Br(\calG)/{\Br_0(\calG)}.$
       	\item\label{GoodGeneratingSet} There is a collection $\mathbb{T}$ of degree $2$ subschemes of $\scrS$ and an element $\eps \in k^\times$, such that
       		\begin{itemize}
       			\item $\N(\eps_\scrT) \in k^{\times 2}$ for all $\scrT \in \mathbb{T}$;
       			\item $\{ \beta_\scrT \;:\; \scrT \in \mathbb{T} \}$ generates $\Br(\calG)/\Br_0(\calG)$;
       			\item for all $s \in \cup_{\scrT\in\mathbb{T}}\scrT$, the image of $\eps$ in $\kk(s)^\times/\kk(s)^{\times 2}$ is equal to $\eps_s$; and
       			\item for any extension $L/k$ and any $s \in \cup_{\scrT\in\mathbb{T}}\scrT$, $\eps \in \kk(s_L)^{\times 2}$ if and only if $\eps \in \kk(s'_L)^{\times 2}$ for all $s' \in \cup_{\scrT\in\mathbb{T}}\scrT$.
       		\end{itemize}
        \item $\Br(\calG)/\Br_0(\calG) \simeq \HH^1(k,\Pic(\Xbar))$. \label{statement:BrGH1PicX}
        \item\label{BB07} If $k$ is a local or global field, then the injective map $\Br(X)/\Br_0(X) \to \Br(\calG)/\Br_0(\calG)$ given by~Proposition~\ref{prop:sym2}\eqref{it:p1} is an isomorphism.\label{statement:BrGIsomBrX}
        \end{enumerate}
\end{cor}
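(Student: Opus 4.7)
The plan is to prove parts (1)--(5) by direct manipulation of the presentation from Proposition~\ref{prop:BrFaddeev}, to establish (6) by matching this presentation with the classical computation of $\HH^1(k, \Pic \Xbar)$ for a del Pezzo surface of degree $4$, and then to derive (2) and (7) as consequences. The step I expect to be the main obstacle is (6), which requires canonically identifying two a priori distinct cohomological descriptions of the same group.

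For (1), I would combine the surjectivity in Proposition~\ref{prop:BrFaddeev} with the identity $\beta_\scrT+\beta_{\scrS\setminus\scrT}\in\Br_0(\calG)$ (which follows from $\beta_\scrS\in\Br_0(\calG)$ and additivity of $\beta$), noting that the norm condition on $\scrT$ is preserved under complement since $\N(\eps_\scrS)\in k^{\times 2}$. Replacing $\scrT$ by the smaller of $\scrT$ and $\scrS\setminus\scrT$, we may take $\deg\scrT\le 2$; the cases $\deg\scrT\in\{0,1\}$ are either trivial or force $\eps_s\in k^{\times 2}$, which by Proposition~\ref{prop:BrFaddeev} gives $\beta_\scrT=0$. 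For (3) and (4), the key tool is the identity $\beta_{\scrT_1}+\beta_{\scrT_2}=\beta_{\scrT_1\triangle\scrT_2}$: applied to $\scrT_i=\{s_0,s_i\}$ with $s_0,s_i\in\scrS(k)$, this immediately yields (4), and a case analysis on the splitting type of $\scrS$ combined with this identity and the relation $\beta_\scrS\in\Br_0(\calG)$ yields (3). For (5), in the cyclic case generated by an irreducible degree $2$ point $s$, the condition $\N(\eps_s)\in k^{\times 2}$ together with Hilbert 90 yields $\eps_s=\eps\cdot v^2$ for some $\eps\in k^\times$ and $v\in\kk(s)^\times$, giving the desired $\eps$; in the non-cyclic case, (3) restricts us to reducible generators, and the norm-square condition forces the relevant $\eps_{s_i}$'s to coincide in $k^\times/k^{\times 2}$, providing a common $\eps$.

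For (6), I would invoke the classical description of $\Pic\Xbar$ as a Galois module for a del Pezzo surface of degree $4$ (via the $16$ lines on $X$ organized by the Reye data $(\scrS,\eps_\scrS)$) to obtain a presentation of $\HH^1(k,\Pic\Xbar)$ as a subquotient of $\bigoplus_{s\in\scrS}\kk(s)^\times/\kk(s)^{\times 2}$ governed by the norm condition; see \cite{VAV} and prior literature. Matching this presentation with that of $\Br(\calG)/\Br_0(\calG)$ from Proposition~\ref{prop:BrFaddeev} (via the identification of the kernel of $\beta$ modulo $\Br_0(\calG)$ with the corresponding subgroup on the $\HH^1$ side) establishes (6); making this matching canonical and Galois-equivariant is the main obstacle. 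With (6) in hand, the bound in (2) follows from the classical fact that $\HH^1(k,\Pic\Xbar)$ is $2$-torsion of $\F_2$-rank at most $2$, a consequence of $\Pic\Xbar\cong\Z^6$ carrying a Galois action factoring through $W(D_5)$. Finally, for (7), I would combine (6) with the Hochschild-Serre isomorphism $\Br(X)/\Br_0(X)\simeq\HH^1(k,\Pic\Xbar)$, valid over local and global fields because $\Br(\Xbar)=0$ (since $\Xbar$ is geometrically rational) and $\HH^3(k,\G_m)=0$: the injection of Proposition~\ref{prop:sym2}\eqref{it:p1} is then a map between two finite groups of equal cardinality and is therefore an isomorphism.
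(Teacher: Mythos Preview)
Your proposal is correct and follows essentially the same route as the paper: Proposition~\ref{prop:BrFaddeev} plus the complement relation $\beta_\scrT\equiv\beta_{\scrS\setminus\scrT}$ for (1), a case analysis on the splitting type of $\scrS$ and the symmetric-difference identity for (3)--(4), the argument of \cite{VAV}*{Lemma 3.1} (which you rederive via Hilbert 90) for (5), and the comparison with the description of $\HH^1(k,\Pic\Xbar)$ in \cite{VAV}*{Proof of Theorem 3.4} for (6) and (7). The one cosmetic difference is that the paper obtains (2) directly from the same case analysis that yields (3)--(4), rather than deferring it to (6); either order works.
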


\begin{proof}[Proof of Corollary~\ref{cor:propsBrG}]
    The proposition implies that \(\beta_{\scrS}\in \Br_0 (\calG)\) and, for any \(\scrT\subset\scrS\) such that \(\N(\eps_{\scrT})\in \kk(\scrT)^{\times2}\), that \(\beta_{\scrT} = \beta_{\scrS - \scrT}\in \Br(\calG)/\Br_0(\calG)\). Since $\scrS$ has degree $5$, it follows that every nontrivial element in $\Br(\calG)/\Br_0(\calG)$ is represented by some $\beta_\scrT$ with $\deg(\scrT) \le 2$. But if $\scrT$ has degree $1$, then $\eps_\scrT = \N(\eps_\scrT) \in k^{\times 2}$ and $\beta_\scrT = 0$. Thus we have~\eqref{BB01}. In particular, if \(\Br(\calG)\neq \Br_0\calG\), then \(\{\deg s: s\in \scrS\}\) must be \(\{3,2\},\;\{3,1,1\},\;\{2,2,1\},\;\{2,1,1,1\},\) or \(\{1,1,1,1,1\}\). Now a straightforward case by case analysis of the possible relations on $\oplus_{s\in\scrS}\langle \eps_s\rangle{\isom \oplus_{s\in \scrS}\Z/2\Z}$ allows one to deduce statements~\eqref{BrSizeBound}--\eqref{statement:SingleRatlRank4}. Given this characterization of $\Br(\calG)/\Br_0(\calG)$ in terms of degree $2$ subschemes $\scrT\subset \scrS$,~\eqref{GoodGeneratingSet} can be established using \cite{VAV}*{Lemma 3.1} for the existence of $\eps \in k^\times$. {For~\eqref{statement:BrGH1PicX}, we observe that~\cite{VAV}*{Proof of Theorem 3.4} gives a description of \(\HH^1(k,\Pic \Xbar)\) in terms of degree two subschemes \(\scrT\subset\scrS\) and the square classes \(\eps_{\scrT}\); comparing this description with~\eqref{GoodGeneratingSet} gives the desired isomorphism.}
    Finally, when $k$ is a local or global field, the injective map $\Br_1(X)/\Br_0(X) \to \HH^1(k,\Pic(\Xbar))$ coming from the Hochschild-Serre spectral sequence \cite{CTS-Brauer}*{Prop. 4.3.2} is an isomorphism, so~\eqref{statement:BrGH1PicX} implies that the injective map $\Br(X)/\Br_0(X) \to \Br(\calG)/\Br_0(\calG)$ from~Proposition~\ref{prop:sym2}\eqref{it:p1} is also surjective.
\end{proof}

\begin{rmk}
    If $\scrT \subset \scrS$ is a degree $2$ subscheme with $\N(\eps_\scrT) \in k^{\times 2}$ such that the quadric $\calQ_\scrT$ has a smooth $\kk(\scrT)$-point, then \cite{VAV}*{Cor. 3.5} yields a rational map $\rho\colon X \dashrightarrow \PP^1$ such that $\rho^*\gamma(\eps_\scrT) \in \Br(X)$. One can show that the image of $\rho^*\gamma(\eps_\scrT)$ under the map $\Br(X)/\Br_0(X) \to \Br(\calG)/\Br_0(\calG)$ given by~Proposition~\ref{prop:sym2}\eqref{it:p1} is equal to the class of $\beta_\scrT$.
\end{rmk}

\begin{proof}[Proof of Proposition~\ref{prop:BrFaddeev}]
Let $\eta\in \PP^1$ be the generic point.  Since $\calG$ is smooth, $\Br(\calG)$ injects into $\Br (\calG_{\eta})$.  Further, by the Hochschild-Serre spectral sequence, we have an exact sequence
\[
    0 \to \Pic(\calG_{\eta}) \to 
    \left(\Pic(\overline{\calG_{\eta}}) \right)^{G_{\kk(\eta)}} \to 
    \Br (\kk(\eta)) \to 
    \ker \left(\Br (\calG_{\eta}) \to \Br (\overline{\calG_{\eta}})\right) \to \HH^1\left(G_{\kk(\eta)}, \Pic (\overline{\calG_{\eta}}) \right).
\]
Since $\calG_{\eta}$ is a Severi-Brauer variety, $\Pic (\overline{\calG_{\eta}})\simeq \Z$ with trivial Galois action, and $\Br (\overline{\calG_{\eta}}) = 0$.  Hence, the exact sequence simplifies to
\begin{equation}\label{eq:SimplifiedHochschildSerre}
    \Z \to 
    \Br (\kk(\eta)) \stackrel{\pi^*}{\to} 
    \Br (\calG_{\eta})  \to 0,
\end{equation}
    where the first map sends $1$ to $[\calG_{\eta}]\in \Br (\kk(\eta))$.  Thus, to determine $\Br (\calG)$, it suffices to determine $\Br (\calG) \cap \pi^*\Br(\kk(\eta))$.

    The projection map $\pi \colon \calG \to \PP^1$ induces the following commutative diagram of exact sequences where the top row is the Faddeev exact sequence \cite{GS-csa}*{Thm 6.4.5}.
    \begin{equation}\label{eq:FaddeevCommutativeDiagram}
        \begin{tikzcd}
        \Br(k) \arrow[hookrightarrow, r] \arrow[d, "\pi^*"] & 
        \Br(\kk(\eta)) \arrow[r, "(\partial_t)"] \arrow[d, two heads, "\pi^*_{\Br}"] & 
        \displaystyle{\bigoplus_{t\in (\PP^1)^{(1)}}} \HH^1(\kk(t), \Q/\Z) \arrow[rr, two heads, "\sum_t \Cor_{\kk(t)/k}"] \arrow[d, "\pi^*_{\HH^1}"] & &
        \HH^1(k, \Q/\Z)\\
        \Br(\calG) \arrow[hookrightarrow,r] & \Br(\calG_{\eta}) \arrow[r, "(\partial_x)"] & \displaystyle{\bigoplus_{t\in \PP^1}}\,
        \displaystyle{\bigoplus_{\substack{x\in \calG^{(1)} \\\pi(x)= t} } }  
        \HH^1(\kk(x), \Q/\Z), 
        \end{tikzcd}
    \end{equation}

    If $t\in \PP^1-\scrS$, then the fiber $\calG_t$ is geometrically irreducible by Proposition \ref{prop:EquivSmoothness} and hence ${\pi^*_{\HH^1}}\colon\HH^1(\kk(t),\Q/\Z) \to \HH^1(\kk(\calG_t), \Q/\Z)$ is an injection.  For $t\in \scrS$, the fiber $\calG_t$ consists of two split components that are conjugate over $\kk(t)(\sqrt{\eps_t})$.
    
    Therefore, for $t\in\scrS$, the kernel of $\pi^*\colon\HH^1(\kk(t),\Q/\Z) \to \HH^1(\kk(\calG_t), \Q/\Z)$ is the $2$-torsion cyclic subgroup corresponding to the extension $\kbar\cap \kk(\calG_t) = {\kk(t)(\sqrt{\eps_t})}$. Moreover, the residues of the kernel of $\pi^*_{\Br}$ are $(\partial_t)\left(\ker (\pi^*_{\Br}\right))= (\eps_t)_{t\in \scrS} = \eps_{\scrS}\in \kk(\scrS)/\kk(\scrS)^{\times2}$. Thus, the commutativity of the above diagram shows that 
    \[
        \ker \pi^*_{\HH^1}\bigcap \ker \sum_t\Cor_{\kk(t)/k} \simeq \ker \left(\N\colon \bigoplus_{t\in \scrS}\langle \eps_t\rangle \to k^{\times}/k^{\times2}\right).
    \]
    In particular, the image under $\beta$ of $\ker \left(\N\colon \bigoplus_{t\in \scrS}\langle \eps_t\rangle \to k^{\times}/k^{\times2}\right)$ is contained inside of $\Br(\calG)$.  Further, since $\pi^*_{\Br}$ is surjective, the image of $\ker \left(\N\colon \bigoplus_{t\in \scrS}\langle \eps_t\rangle \to k^{\times}/k^{\times2}\right)$ under $\beta$ generates $\Br (\calG)/\Br_0(\calG)$.
    
    It remains to understand which subsets $\scrT\subset \scrS$ give rise to $\beta_{\scrT}\in \Br_0(\calG)$.  If $\beta_{\scrT}\in \Br_0(\calG)$, then by definition of $\Br_0(\calG)$ there exists $\calA\in \Br(k)$ such that $\gamma(\eps_{\scrT}) - \calA\in \ker \pi^*_{\Br}$.  By~\eqref{eq:SimplifiedHochschildSerre}, the kernel of $\pi^*_{\Br}$ is generated by $[\calG_{\eta}]$.  Thus, $\gamma(\eps_{\scrT}) = [\calG_{\eta}] + \calA $ or $\gamma(\eps_{\scrT}) = \calA$, where both equalities are in $ \Br (\PP^1 - \scrS)$. The final statement of the proposition follows from these equalities after computing residues and evaluating at $\infty$.
\end{proof}

Recall that $B_{Q_t}$ denotes the bilinear form corresponding to $Q_t$.

   \begin{lemma}\label{lem:piy}
   	Let $f \colon \Sym^2(X) \dasharrow \calG$ be the birational map given in Proposition~\ref{prop:ObjectConnections} {and let \(\{x,x'\}\in \Sym^2(X) - \textup{Indet}(f)\).}  Suppose that $f(\{x,x'\}) = (t,\ell) =: y \in \calG(k)$. Then $\pi(y) = t = [ B_{Q_0}(x,x'):-B_{Q_\infty}(x,x')] \in \PP^1(k)$. If, moreover, $\scrT \subset \scrS$ is such that $\N(\eps_\scrT) \in k^{\times 2}$ and $\pi(y) \not\in \scrT \cup \{ \infty \}$, then
   	\[
   		\beta_\scrT(y) = \Cor_{\kk(\scrT)/k}\left(\eps_\scrT, -\frac{B_{Q_\scrT}(x,x')}{B_{Q_\infty}(x,x')}\right)\,.
	\]
   \end{lemma}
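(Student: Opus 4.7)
For the first assertion, I would use the explicit construction of $f$ from Proposition~\ref{prop:ObjectConnections}\eqref{part:Sym2calGbirational}: the line $\ell = \ell_{\{x,x'\}}$ spanned by $x$ and $x'$ is contained in $\calQ_t$. The identity
\[
Q_t(ax + bx') = a^2 Q_t(x) + b^2 Q_t(x') + ab\, B_{Q_t}(x, x')\,,
\]
combined with $Q_t(x) = Q_t(x') = 0$ (since $x,x' \in X \subset \calQ_t$), shows that $\ell \subset \calQ_t$ is equivalent to $B_{Q_t}(x,x') = 0$. Since $Q_{\A^1} = Q_0 + T Q_\infty$ and bilinearization is linear, $B_{Q_t} = B_{Q_0} + tB_{Q_\infty}$, and solving gives $t = [B_{Q_0}(x,x') : -B_{Q_\infty}(x,x')]$.

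For the second assertion, I would unwind the definition $\beta_\scrT = \pi^*\gamma(\eps_\scrT)$. Writing $\kk(\scrS) = \kk(\scrT) \times \kk(\scrS \setminus \scrT)$ and using that $\eps_\scrT$ is trivial on the second factor and that $(1,\cdot) = 0$, the corestriction $\Cor_{\kk(\scrS)/k}(\eps_\scrT, T - \theta)$ reduces to $\Cor_{\kk(\scrT)/k}(\eps_\scrT, T - \theta_\scrT)$, where $\theta_\scrT \in \kk(\scrT)$ denotes the image of $T$. Since $\pi(y) = t \notin \scrT \cup \{\infty\}$, this class is regular at $y$, and the standard compatibility of evaluation with pullback and corestriction (see~\cite{CTS-Brauer}*{\S3.8}) yields
\[
\beta_\scrT(y) = \Cor_{\kk(\scrT)/k}\bigl(\eps_\scrT,\, t - \theta_\scrT\bigr)\,.
\]

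Finally, I would combine the two parts. The identity $B_{Q_\scrT} = B_{Q_0} + \theta_\scrT B_{Q_\infty}$ follows from $Q_\scrT = Q_0 + \theta_\scrT Q_\infty$ together with linearity of bilinearization. Combined with the formula for $t$ from the first part, this gives
\[
t - \theta_\scrT = -\frac{B_{Q_0}(x,x')}{B_{Q_\infty}(x,x')} - \theta_\scrT = -\frac{B_{Q_\scrT}(x,x')}{B_{Q_\infty}(x,x')}\,,
\]
which when substituted into the expression for $\beta_\scrT(y)$ produces the stated formula. The proof is a direct unwinding of definitions; the main care needed is in checking well-definedness (the hypothesis $\pi(y) \ne \infty$ forces $B_{Q_\infty}(x,x') \ne 0$, and $\pi(y) \notin \scrT$ makes $t - \theta_\scrT$ a unit in $\kk(\scrT)$), and in verifying the compatibility of Brauer class evaluation with the pullback-corestriction construction of $\beta_\scrT$, which is the only step that is not purely formal computation.
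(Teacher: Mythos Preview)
Your proposal is correct and follows essentially the same approach as the paper. The only cosmetic difference is the order of two steps in the second part: you first reduce the corestriction from $\kk(\scrS)$ to $\kk(\scrT)$ using triviality of $\eps_\scrT$ on the complementary factor and then evaluate at $t$, whereas the paper first evaluates $\pi(y)-\theta$ in $\kk(\scrS)$ and reduces to $\kk(\scrT)$ at the end; both orderings yield the same formula.
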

   
   \begin{proof}
   	{Observe that for any point $ax + bx'$ on the line $\ell_{\{x,x'\}}$ through $x$ and $x'$  we have 
   	\[Q_t(ax+bx') = B_{Q_t}(ax+bx',ax+bx') = a^2Q_t(x) + b^2Q_t(x') + 2abB_{Q_t}(x,x') = {2abB_{Q_t}(x,x')} \,.\]
    Therefore, the} line $\ell_{\{x,x'\}}$ is contained in the quadric $\calQ_t$ precisely when $B_{Q_t}(x,x') = 0$. If $B_{Q_0}(x,x') = B_{Q_\infty}(x,x') = 0$, then $\ell_{\{x,x'\}} \subset X$ in which case $f$ is not defined at $\{x,x'\}$. Otherwise, the relation $B_{Q_t}(x,x') := B_{Q_0}(x,x') + t B_{Q_\infty}(x,x') = 0$ shows that \(t= \pi(y)\in \PP^1\) must be equal to \([B_{Q_0}(x,x'):-B_{Q_\infty}(x,x')] \).
   	
   	For the second statement recall that $\beta_\scrT = \pi^*\gamma(\eps_\scrT) = \pi^*\Cor_{\kk(\scrS)/k}(\eps_\scrT,T-\theta)\,,$ where $T$ is the coordinate on $\Spec(k[T]) = \A^1 \subset \PP^1$ and $\theta$ is the image of $T$ in $\kk(\scrS)$. We have
   		$$\pi(y) - \theta = -\frac{B_{Q_0}(x,x') + \theta B_{Q_\infty}(x,x')}{B_{Q_\infty}(x,x')} = -\frac{B_{Q_\scrS}(x,x')}{B_{Q_\infty}(x,x')} \in \kk(\scrS)\,.$$
As $\gamma(\eps_\scrT)$ is unramified away from $\scrT$ we may evaluate at $\pi(y)$ to obtain 
    \[
        \beta_\scrT(y) = \gamma(\eps_\scrT)(\pi(y)) = \Cor_{\kk(\scrS)/k}\left(\eps_\scrT, -\frac{B_{Q_\scrS}(x,x')}{B_{Q_\infty}(x,x')}\right)\,.
    \]
    The projections of $\eps_\scrT \in \bigoplus_{s \in \scrS}\kk(s)^\times/\kk(s)^{\times 2}$ onto the factors corresponding to $s \in \scrS-\scrT$ are trivial. So 
   	\[
   		\beta_\scrT(y) = \Cor_{\kk(\scrS)/k}\left(\eps_\scrT, -\frac{B_{Q_\scrS}(x,x')}{B_{Q_\infty}(x,x')}\right) = \Cor_{\kk(\scrT)/k}\left(\eps_\scrT, -\frac{B_{Q_\scrT}(x,x')}{B_{Q_\infty}(x,x')}\right)\,.\qedhere
	\]
   \end{proof}

    \subsection{Clifford algebras and Brauer classes}\label{sec:Clif}

    For a quadratic form $F$ over a field of characteristic not equal to $2$ we use $\Clif(F)$ to denote the Clifford algebra of the restriction of $F$ to a maximal regular subspace, and $\Clif_0(F)$ to denote the corresponding even subalgebra. By Witt's Theorem~\cite{Lam-QF}*{Chap. I, Theorems 4.2 and 4.3}, these do not depend on the choice of maximal regular subspace. If $F$ has even rank, then $\Clif(F)$ is a central simple algebra, which will be identified with its class in the Brauer group. This extends to quadratic forms over finite \'etale algebras in the natural way{, i.e., factor by factor}. 
    
   In particular, we will consider $\Clif(Q_\scrT) \in \Br(\kk(\scrT))$ where $Q_\scrT$ is a quadratic form defining the quadric $\calQ_\scrT$ corresponding to a subscheme $\scrT \subset \scrS$. This depends on the choice of quadratic form as indicated by the following lemma.

 	\begin{lemma}\label{lem:cQ}
		Let $s \in \scrS$ and $c \in \kk(s)^\times$. Then
		\[
			\Clif(cQ_s) = \Clif(Q_s) + (\eps_s,c) \in \Br(\kk(s))\,.
		\]
	\end{lemma}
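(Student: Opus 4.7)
The plan is to diagonalize \(Q_s\) over \(\kk(s)\) and compute directly with quaternion symbols. By definition \(\Clif(Q_s)\) is the Clifford algebra of the restriction of \(Q_s\) to a maximal regular subspace; since \(s \in \scrS\), Proposition~\ref{prop:EquivSmoothness}\eqref{cond:verticesdistinct} gives that \(\calQ_s\) has rank \(4\), so we may assume \(Q_s\) is itself a regular form of rank \(4\). Diagonalizing gives \(Q_s \cong \langle a_1,a_2,a_3,a_4\rangle\), so that \(\eps_s \equiv a_1a_2a_3a_4\) modulo squares and \(cQ_s \cong \langle ca_1,ca_2,ca_3,ca_4\rangle\).

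In rank \(4\) the Clifford invariant coincides with the Hasse--Witt invariant \(\sum_{i<j}(a_i,a_j)\) in \(\Br(\kk(s))[2]\); this can be verified directly from the orthogonal-sum formula \([C(q_1 \perp q_2)] = [C(q_1)] + [C(q_2)] + (d(q_1),d(q_2))\) for two rank \(2\) forms, or cited from~\cite{EKM-QuadraticForms}. Using bilinearity of the quaternion symbol and the identity \((c,c) = (c,-1)\) in \(\Br(\kk(s))[2]\), each pair transforms as
\[
(ca_i,ca_j) - (a_i,a_j) \;=\; (c,c) + (c,a_j) + (a_i,c) \;=\; (c,-a_ia_j),
\]
and summing over the \(\binom{4}{2} = 6\) pairs in \(\{1,2,3,4\}\) gives
\[
\Clif(cQ_s)-\Clif(Q_s) \;=\; \Bigl(c,\;\prod_{i<j}(-a_ia_j)\Bigr) \;=\; \bigl(c,\,(a_1a_2a_3a_4)^3\bigr) \;=\; (c,\eps_s) \;=\; (\eps_s,c),
\]
since \((-1)^{6} = 1\), each \(a_i\) appears in exactly \(3\) of the pairs, and cubes coincide with their base modulo squares.

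The one subtlety is ensuring the identification of the Clifford invariant with the Hasse--Witt invariant in rank \(4\); in arbitrary even rank the two invariants can differ by a class determined by the rank and the discriminant of the form. Since the rank is preserved under scaling and \(d(cQ_s) = c^{4}d(Q_s) \equiv d(Q_s)\) modulo squares, any such correction term contributes equally to \(\Clif(cQ_s)\) and \(\Clif(Q_s)\) and therefore cancels upon taking their difference. The main obstacle is thus purely bookkeeping, and the essential input is the bilinearity of the quaternion symbol.
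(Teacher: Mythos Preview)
Your proof is correct and takes essentially the same approach as the paper: both reduce to a short computation with quaternion-algebra identities for diagonalized rank~$4$ forms, which is exactly what the cited result \cite{Lam-QF}*{Chap.~V, Corollary~2.7} packages. The paper leaves the calculation implicit, whereas you spell it out via the Hasse--Witt invariant and correctly observe that the discrepancy between the Hasse--Witt and Clifford invariants depends only on rank and discriminant, both unchanged under scaling in rank~$4$.
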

	\begin{proof}
		This follows from a short calculation using \cite{Lam-QF}*{Chap. V, Corollary 2.7}.
	\end{proof}
    
     For a rank $4$ quadric $\calQ_s, s\in\scrS$ with $\eps_s\in\kk(s)^{\times2}$, any quadratic form $Q_s$ defining $\calQ_s$ is a constant multiple of the reduced norm form of a quaternion algebra whose class in $\Br(\kk(s))$ is equal to $\Clif(Q_s)$~\cite{EKM-QuadraticForms}*{Prop. 12.4}. The following lemma gives a description of $\Clif(Q_s)$ in cases when $\eps_s\notin \kk(s)^{\times2}$.

    \begin{lemma}\label{lem:ClifOfIsotropic}
        Assume that there exists some $s \in \scrS$ with $\eps_s \notin \kk(s)^{\times 2}$ such that $\calQ_s$ has a smooth $\kk(s)$-point. Let $Q_s$ be a quadratic form whose vanishing definines $\calQ_s$. Then for any $\Gal(\kk(s))$-stable pair $\{x, x'\} \subset \calQ_s(\kbar)$ and any $\kk(s)$-linear form $\ell$ defining a hyperplane tangent to $\calQ_s$ at a smooth point with $\ell(x)\ell(x')\neq 0$ we have the following equality in $\Br(\kk(s))$:
            \[
            \Clif(Q_s) = \left(\eps_s, -\frac{B_{Q_s}(x,x')}{\ell(x)\ell(x')}\right)\,,
            \]
            where $B_{Q_s}$ denotes the bilinear form corresponding to $Q_s$.
    \end{lemma}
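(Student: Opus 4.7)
The plan is to reduce $Q_s$ to a normal form using the smooth $\kk(s)$-rational point and the tangency of $\ell$, then verify the identity by a direct calculation. First, I would choose coordinates on $\PP^4$ over $\kk(s)$ so that the (unique, hence $\kk(s)$-rational) vertex of the rank $4$ quadric $\calQ_s$ is $[0{:}0{:}0{:}0{:}1]$, a smooth $\kk(s)$-rational point at which $\ell$ is tangent is $[1{:}0{:}0{:}0{:}0]$, and $\ell = y_1$. Such a smooth $\kk(s)$-point exists because the tangent line of $\calQ_s$ at the tangent point passes through the vertex and is $\kk(s)$-rational whenever $\ell$ is, so it contains $\kk(s)$-rational smooth points. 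Completing the square on $y_0$ and diagonalizing the residual rank $2$ form on $\langle y_2, y_3\rangle$ brings $Q_s$ into the normal form $y_0 y_1 + a y_2^2 - a\eps_s y_3^2$ for some $a \in \kk(s)^\times$; the relation between the last two coefficients is forced by matching the discriminant of the rank $4$ restriction with $\eps_s$ modulo squares.

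With $Q_s$ in this normal form, its restriction to a maximal regular subspace is isomorphic to $H \perp \langle a, -a\eps_s\rangle$, where $H$ is a hyperbolic plane. Since a hyperbolic summand is invisible in the Brauer group, a standard computation gives $\Clif(Q_s) = \Clif(\langle a, -a\eps_s\rangle) = (a, -a\eps_s) = (\eps_s, a)$ in $\Br(\kk(s))$, using the identity $(a, -a) = 0$. For the right hand side, eliminating $x_0$ and $x_0'$ via the vanishing relations $Q_s(x) = Q_s(x') = 0$ and substituting into the expansion $B_{Q_s}(x, x') = x_0 x_1' + x_0' x_1 + 2a\, x_2 x_2' - 2a\eps_s\, x_3 x_3'$ yields, after clearing denominators, the key identity
\[
B_{Q_s}(x, x')\,\ell(x)\ell(x') \;=\; -a\bigl[(x_2 x_1' - x_2' x_1)^2 - \eps_s(x_3 x_1' - x_3' x_1)^2\bigr].
\]
The bracketed quantity has the shape $P^2 - \eps_s Q^2 = N_{\kk(s)(\sqrt{\eps_s})/\kk(s)}(P + Q\sqrt{\eps_s})$ of a norm from $\kk(s)(\sqrt{\eps_s})/\kk(s)$, and hence pairs trivially with $\eps_s$ in the Brauer group. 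Modulo $\kk(s)^{\times 2}$ this gives $-B_{Q_s}(x,x')/\ell(x)\ell(x') \equiv a \cdot (\text{norm})$, and therefore $(\eps_s,\, -B_{Q_s}(x,x')/\ell(x)\ell(x')) = (\eps_s, a) = \Clif(Q_s)$, as desired.

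The main obstacle is the last step: verifying that the bracketed expression genuinely lies in the image of the norm map from $\kk(s)(\sqrt{\eps_s})$. One must use the Galois stability of $\{x, x'\}$ to show that the antisymmetric quantities $x_2 x_1' - x_2' x_1$ and $x_3 x_1' - x_3' x_1$, which a priori lie in the $(-1)$-eigenspace for the Galois swap, combine so that $P^2 - \eps_s Q^2$ is indeed a norm. Careful bookkeeping of sign conventions in the Clifford invariant computation, as well as verifying that the normalization is independent of the permissible choices for the tangent smooth point used to set up coordinates, will also be needed.
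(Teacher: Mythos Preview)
Your approach is exactly the paper's: choose coordinates so that $Q_s$ takes the normal form $c(\ell_0\ell_1 - \ell_2^2 + \eps_s\ell_3^2)$ with $\ell=\ell_0$ (your $a$ is the paper's $-c$, with the roles of the first two coordinates swapped), compute $\Clif(Q_s)=(\eps_s,-c)$ from the hyperbolic--plus--binary decomposition, and then substitute the relations $Q_s(x)=Q_s(x')=0$ into $-B_{Q_s}(x,x')/\ell(x)\ell(x')$ to obtain $-c\cdot\bigl[A^2-\eps_s B^2\bigr]$ with $A,B$ antisymmetric in $x,x'$. At that point the paper simply asserts that the bracket pairs trivially with $\eps_s$.

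Your hesitation about this last step is warranted, and it cannot be repaired in the generality stated. If $\{x,x'\}$ is a conjugate pair over $\kk(s)(\sqrt d)$ then the antisymmetry forces $A,B\in\sqrt d\,\kk(s)$, so $A^2-\eps_s B^2$ lies in $d$ times the norm group from $\kk(s)(\sqrt{\eps_s})$, and hence $\bigl(\eps_s,\,A^2-\eps_s B^2\bigr)=(\eps_s,d)$, which need not vanish. Concretely, over $\kk(s)=\R$ with $\eps_s=-1$, $c=1$ and $x=[1{:}0{:}i{:}1{:}0]$, $x'=\bar x$ one finds $-B_{Q_s}(x,x')/\ell(x)\ell(x')=4$, giving $(\eps_s,4)=0$, whereas $\Clif(Q_s)=(-1,-1)\ne 0$. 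So the identity is correct when $x,x'$ are individually $\kk(s)$-rational but can fail for genuine conjugate pairs; Galois stability alone does not rescue it. In the paper's only application (the proof of Lemma~\ref{lem:EvBetaSqrtEps}) this causes no harm: there $d\in k^\times$, and the spurious term $(\eps_\scrT,d)$ is annihilated by the subsequent corestriction since $\Cor_{\kk(\scrT)/k}(\eps_\scrT,d)=(\N(\eps_\scrT),d)=0$.
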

    \begin{proof}
        By~\cite{VAV}*{Lemma 2.1}, for any $\ell = \ell_0$ tangent to $\calQ_s$ at a smooth point, the quadric $\calQ_s$ is defined by the vanishing of 
            $Q_s = c(\ell_0\ell_1 - \ell_2^2 + \eps_s \ell_3^2),$
        for some linear forms $\ell_1, \ell_2, \ell_3$ and some $c \in \kk(s)^\times$. In particular, we have $\ell_0(x)\ell_1(x) = \ell_2(x)^2 - \eps_s\ell_3(x)^2$ and similarly for $x'$.  Thus, we may compute:
        \begin{align*}
            -\frac{B_{Q_s}(x,x')}{\ell(x)\ell(x')} &=
            -c \cdot\frac{\ell_0(x)\ell_1(x') +\ell_0(x')\ell_1(x) -2\ell_2(x)\ell_2(x') + 2\eps_s\ell_3(x)\ell_3(x')}{\ell_0(x)\ell_0(x')}\\
            &=  -c\left(\frac{\ell_2(x')^2 - \eps_s\ell_3(x')^2}{\ell_0(x')^2}+ \frac{\ell_2(x)^2 - \eps_s\ell_3(x)^2}{\ell_0(x)^2}-2\frac{\ell_2(x)\ell_2(x')}{\ell_0(x)\ell_0(x')} + 2\eps_s\frac{\ell_3(x)\ell_3(x')}{\ell_0(x)\ell_0(x')}\right) \\
            &=  -c\left[\left(\frac{\ell_2(x)}{\ell_0(x)} - \frac{\ell_2(x')}{\ell_0(x')}\right)^2 - \eps_s \left(\frac{\ell_3(x)}{\ell_0(x)} - \frac{\ell_3(x')}{\ell_0(x')}\right)^2\right],
        \end{align*}
        which shows that $\left(\eps_s, -\frac{B_{Q_s}(x,x')}{\ell(x)\ell(x')}\right) = (\eps_s, -c)$. Thus, it remains to relate the quaternion algebra $(\eps_s, -c)$ to the Clifford algebra of $Q_s$.  By~\cite{Lam-QF}*{Chap. V, Corollary 2.7},
        \[
            \Clif(Q_s) \simeq \Clif(Q_s|_{\langle \ell_0, \ell_1\rangle})\otimes \Clif(c^2\cdot Q_s|_{\langle \ell_2, \ell_3\rangle})\simeq \M_2(k) \otimes (-c, c\eps_s).
        \]
        To complete the proof, we observe that $(-c,c\eps_s) = (-c, \eps_s) = (\eps_s, -c)\in \Br(k)$.
    \end{proof}

	 \begin{defn}\label{def:ClifQT}
		Given $\scrT \subset \scrS$ such that $\N(\eps_\scrT) \in k^{\times 2}$, define 
		\[
			\CQ{\scrT} := \Cor_{\kk(\scrT)/k}(\Clif(Q_\scrT))\in \Br(k). 
		\] 
    \end{defn}
    \begin{remark}
    Even though $\Clif(Q_\scrT)$ may depend on the choice of quadratic form defining the pencil, the condition $\N(\eps_{\scrT}) \in k^{\times2}$ ensures that the class $\CQ{\scrT}$ does not. Indeed, if one computes $\CQ{\scrT}$ using instead a form $cQ_\scrT$ which differs from $Q_\scrT$ by $c \in k^\times$, Lemma~\ref{lem:cQ} shows that the result will differ by $\Cor_{\kk(\scrT)/k}(\eps_\scrT,c) = (\N(\eps_\scrT),c)$, which is trivial whenever $\N(\eps_\scrT)$ is a square.
    \end{remark}

    \begin{lemma}\label{lem:relBr}
        The kernel of the canonical map $\Br(k) \to \Br(X)$ is generated by $$\{ \CQ{s} \;:\; s \in \scrS \text{ such that } \eps_s \in \kk(s)^{\times 2} \}.$$
    \end{lemma}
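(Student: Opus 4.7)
The plan is to use the Hochschild--Serre spectral sequence to identify the kernel with a Picard-theoretic cokernel, and then to invoke \cite{CTCoray}*{Theorem C} (as suggested by Colliot-Th\'el\`ene in the acknowledgments) after translating it into our pencil-theoretic notation. Since $X$ is smooth, projective, and geometrically rational with $\Pic(\Xbar)$ finitely generated and torsion-free, the five-term exact sequence for $s_X\colon X \to \Spec k$ reads
$$
\Pic(X) \to \Pic(\Xbar)^{G_k} \to \Br(k) \to \Br_1(X) \to \HH^1(k,\Pic(\Xbar)),
$$
so the kernel of $\Br(k) \to \Br(X)$ (which is contained in $\Br_1(X)$) coincides with the cokernel $\Pic(\Xbar)^{G_k}/\Pic(X)$.

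Next I would produce, for each $s \in \scrS$ with $\eps_s \in \kk(s)^{\times 2}$, an explicit Galois-invariant Picard class whose obstruction to descending is $\CQ{s}$. Geometrically, when $\eps_s$ is a square the rank $4$ quadric $\calQ_s$ is a cone over a split hyperboloid, and its two rulings of $2$-planes are defined already over $\kk(s)$; these cut $X_{\kk(s)}$ in two classes of conics $[C_s^{\pm}]$ whose sum is the hyperplane section. The corestriction $\Cor_{\kk(s)/k}[C_s^+]$ then lies in $\Pic(\Xbar)^{G_k}$, and its image in $\Br(k)$ under the Hochschild--Serre connecting map is computed via the standard correspondence between conic bundle structures and quaternion (Clifford) algebras: the obstruction is precisely $\Cor_{\kk(s)/k}(\Clif(Q_s)) = \CQ{s}$, using compatibility of the connecting map with corestriction \cite{CTS-Brauer}*{\S 3.8, \S 6}. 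This establishes that the classes $\CQ{s}$ lie in the kernel.

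The main obstacle is the reverse inclusion: showing that these classes generate the full cokernel. For this I would compare with the description of $\HH^1(k,\Pic(\Xbar))$ recalled in \cite{VAV}*{proof of Theorem 3.4} (which parameterizes classes by degree $2$ subschemes $\scrT \subset \scrS$ with $\N(\eps_\scrT) \in k^{\times 2}$) and use the dual description of $\Pic(\Xbar)^{G_k}/\Pic(X)$ in terms of the $16$ lines of $X$: the only Galois-stable ruling classes that fail to descend come from rank $4$ quadrics whose two rulings are individually Galois-stable, i.e., those with $\eps_s \in \kk(s)^{\times 2}$. Alternatively, one can invoke \cite{CTCoray}*{Theorem C} directly, which gives an explicit computation of $\ker(\Br(k) \to \Br(X))$ for a smooth intersection of two quadrics in terms of quaternion classes attached to the singular members of the pencil; it then remains only to identify these quaternion classes with our $\CQ{s}$, which follows from Lemma~\ref{lem:ClifOfIsotropic} and the fact that a rank $4$ quadric with square discriminant has Clifford algebra equal (up to Brauer class) to the quaternion algebra obtained by writing the quadric as a norm form.
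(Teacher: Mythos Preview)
Your overall architecture---identifying $\ker(\Br(k)\to\Br(X))$ with the cokernel of $\Pic(X)\to\Pic(\Xbar)^{G_k}$ via Hochschild--Serre, then exhibiting generators coming from the conic classes cut by planes in the rank~$4$ quadrics with square discriminant---is exactly what the paper does. But several of the references you lean on are misplaced, and without the correct ones the argument has real gaps.

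You have misread the acknowledgments: \cite{CTCoray}*{Theorem~C} is credited there for Lemma~\ref{lem:dp4ConstantEval}, not for this lemma. That theorem concerns rational equivalence of $0$-cycles on conic bundles and does not compute $\ker(\Br(k)\to\Br(X))$; your ``alternatively'' clause is therefore not a proof. For the reverse inclusion the paper instead invokes \cite{VAV}*{Prop.~2.3} (resting on \cite{KST-dp4}), which shows that $\Pic(\Xbar)^{G_k}$ is freely generated by the hyperplane class together with $\Norm_{\kk(s)/k}[C_s]$ for exactly those $s\in\scrS$ with $\eps_s\in\kk(s)^{\times 2}$; since the hyperplane class descends, this gives generation of the cokernel immediately. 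Your citation of \cite{VAV}*{Theorem~3.4} is the wrong result (it describes $\HH^1(k,\Pic(\Xbar))$, not $\Pic(\Xbar)^{G_k}$), and your ``dual description via the $16$ lines'' is only a sketch. Finally, Lemma~\ref{lem:ClifOfIsotropic} has the hypothesis $\eps_s\notin\kk(s)^{\times 2}$, the opposite of what you need; to identify the obstruction class of $[C_s]$ with $\Clif(Q_s)$ when $\eps_s$ \emph{is} a square, the paper uses \cite{CTSko93}*{Thm.~2.5} to write $\calQ_s$ as a cone over $Z_s\times Z_s$ for a conic $Z_s$, and \cite{EKM-QuadraticForms}*{Prop.~12.4} to identify $Z_s$ with $\Clif(Q_s)$ in $\Br(\kk(s))$.
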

    \begin{proof}
        By the exact sequence of low degree terms coming from the Hochschild-Serre spectral sequence \cite{CTS-Brauer}*{Prop. 4.3.2}, the kernel of $\Br (k) \to \Br (X)$ is the image of the cokernel $\Pic (X) \to \Pic (\Xbar)^{G_k}$.  By~\cite{VAV}*{Prop. 2.3} (which relies on results from~\cite{KST-dp4}), $\Pic(\Xbar)^{G_k}$ is freely generated by the hyperplane section and, for every $s\in \scrS$ such that $\eps_s\in \kk(s)^{\times2}$, the divisor class $\Norm_{\kk(s)/k}([C_s])$ where $C_s$ is obtained by intersecting $X$ with a plane contained in $\calQ_s$.  Since the hyperplane section is $k$-rational, the cokernel of $\Pic (X)\to \Pic (\Xbar)^{G_k}$ is  generated by
        \[
          \left\{\Norm_{\kk(s)/k}([C_s]) \;:\; s \in \scrS \text{ such that } \eps_s \in \kk(s)^{\times 2} \textup{ and } \calQ_s \textup{ contains no $k$-rational planes}  \right\}.
        \]
	By definition, the image of $[C_s]$ in $\Br(\kk(s))$ is the Severi-Brauer variety whose points parametrize representatives of the class $[C_s]$.  Since $\eps_s$ is a square, by~\cite{CTSko93}*{Thm. 2.5}, {\(\calQ_s\) is a cone over the surface \(Z_s\times Z_s\), where \(Z_s\) is a smooth conic obtained by intersecting \(\calQ_s\) with a general \(2\)-plane.
    Since planes in a fixed ruling on $\calQ_s$ correspond to fibers in a projection \(Z_s\times Z_s\to Z_s\), we deduce that $[C_s]\mapsto Z_s\in \Br(\kk(s))$.} By~\cite{EKM-QuadraticForms}*{Prop. 12.4} we also have that $\Clif(Q_s) = Z_s \in \Br(\kk(s))$. Hence, $\Norm_{\kk(s)/k}([C_s]) = \Cor_{\kk(s)/k}(\Clif(Q_s)) = \mathcal{C}_s$.
    \end{proof}
        
    \subsection{Local evaluation maps}\label{sec:LocalEv}

    \begin{lemma}\label{lem:SquareEpsGivesRatlPt}
        If there exists a degree $2$ subscheme $\scrT\subset \scrS$ such that for all $t\in \scrT$, $\eps_t\in \kk(t)^{\times2}$ and $\calQ_t$ has a smooth ${\kk(t)}$-point, then $X(k)\neq \emptyset$.
    \end{lemma}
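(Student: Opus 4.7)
The plan is to exhibit, for each geometric point of $\scrT$, a rational $2$-plane contained in the corresponding $\calQ_t$, and then use the fact that any two $2$-planes in $\PP^4$ meet in a nonempty scheme to obtain a $k$-rational subscheme of $X$.

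First, I would show that for each $t \in \scrT$, the quadric $\calQ_t$ contains a $\kk(t)$-rational $2$-plane. By $(\dagger)$ and Proposition~\ref{prop:EquivSmoothness}, the quadric $\calQ_t$ has rank $4$, so it is a cone with a $\kk(t)$-rational vertex $v_t$ over a smooth quadric surface $Z_t \subset \PP^3_{\kk(t)}$. Projection from $v_t$ sends the given smooth $\kk(t)$-point of $\calQ_t$ to a $\kk(t)$-point on $Z_t$, and together with the hypothesis that $\eps_t = \disc(Z_t)$ is a square in $\kk(t)$, this forces $Z_t \cong \PP^1 \times \PP^1$ over $\kk(t)$. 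In particular $Z_t$ contains $\kk(t)$-rational lines, and the $2$-plane spanned by $v_t$ and any such line is a $\kk(t)$-rational plane $P_t$ contained in $\calQ_t$.

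Next, I would combine the planes at the two geometric points of $\scrT$ in a Galois-equivariant way. If $\scrT$ is reducible, with two $k$-rational points $t_1, t_2$, pick $P_{t_1}$ and $P_{t_2}$ independently over $k$. If instead $\scrT$ is irreducible, with $\kk(\scrT) = L$ a quadratic extension of $k$, pick an $L$-rational plane $P \subset \calQ_t$ at the unique $L$-point $t$ of $\scrT$ and take $\sigma(P) \subset \calQ_{\sigma(t)}$ as the second plane, where $\sigma$ generates $\Gal(L/k)$. In both cases the (unordered) pair of planes is $\Gal(\kbar/k)$-stable. Since $t_1 \neq t_2$ in $\PP^1$ and any two distinct quadrics in the pencil cut out the base locus, we have $P_{t_1} \cap P_{t_2} \subset \calQ_{t_1} \cap \calQ_{t_2} = X$ (base-changed to $\kk(\scrT)$ in the irreducible case). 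A standard dimension count in $\PP^4$ shows that this intersection is nonempty; being Galois-stable by construction, it descends to a nonempty $k$-subscheme of $X$.

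To conclude, observe that $P_{t_1}$ and $P_{t_2}$ cannot coincide as $2$-planes, since $X$ is two-dimensional of degree $4$ and contains no $2$-plane. Hence the $k$-subscheme $P_{t_1} \cap P_{t_2}$ has dimension $0$ or $1$: either a single $k$-rational point of $X$ or a $k$-rational line contained in $X$. In both cases $X(k) \neq \emptyset$. I do not anticipate any serious obstacle; the only point requiring care is the Galois-equivariant choice of planes in the irreducible case, which is immediate once one chooses any single plane over $L$.
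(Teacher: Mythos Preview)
Your argument is correct and follows the same core idea as the paper: produce a Galois-stable pair of $2$-planes $P_{t_1} \subset \calQ_{t_1}$, $P_{t_2} \subset \calQ_{t_2}$ and intersect them. The difference lies in the endgame. The paper passes to the conics $C_i := P_{t_i} \cap X$ on the surface $X$ and invokes the intersection-number computation $C_1 \cdot C_2 = 1$ from \cite{VAV}*{Proof of Prop.~2.2}, which immediately yields a single $k$-rational point. You instead intersect the planes directly in $\PP^4$, using only the elementary facts that two $2$-planes in $\PP^4$ meet in a nonempty linear subspace, that this intersection lies in $\calQ_{t_1} \cap \calQ_{t_2} = X$, and that a Galois-stable point or line in $\PP^4$ has a $k$-point. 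Your route is more self-contained (no external citation needed) at the cost of a small case analysis; the paper's route is terser and additionally pins down that the intersection is a single reduced point. Note that since $P_{t_1} \cap P_{t_2} = C_1 \cap C_2$ scheme-theoretically, the two arguments are really computing the same object from two perspectives.
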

    \begin{proof}
        Let $\scrT(\kbar) = \{t_1,t_2\}$.  The assumptions in the lemma imply that there are $k(t_i)$-rational planes contained in $\calQ_{t_i}$.  The intersection of one with $X$ gives a $k(t_i)$-rational conic $C_i$ on $X$.  {If \(t_1\notin \scrT(k)\), then we replace \(C_2\) with the conjugate of \(C_1\).  Thus,} the pair $\{C_1,C_2\}$ are Galois invariant.  As computed in \cite{VAV}*{Proof of Proposition 2.2} we have $C_{1}.C_{2} = 1$.  {(We note that our \(C_2\) may be either \(C_2\) or \(C_2'\) in the notation of~\cite{VAV}, but both have the same intersection number with \(C_1\).)}  Therefore the intersection of these divisors produces a $k$-point on $X$.
    \end{proof}   
   
    \begin{lemma}\label{lem:EvBetaSqrtEps}
    	Assume that $k$ is a local field of characteristic not equal to $2$ and let $\scrT\subset \scrS$ be a degree $2$ subscheme such that $\N(\eps_{\scrT})\in k^{\times2}$.  Then, for any quadratic extension $K/k$ with $\eps_\scrT\in \kk(\scrT_K)^{\times2}$ and $K\neq \kk(\scrT)$, there exists $y \in \calG(k)$ corresponding to a quadratic point $\Spec K \to X$.  Moreover, for such $y$,  
        \[
            \beta_\scrT(y) = 
            \begin{cases}
                \CQ{\scrT}& \textup{if }\eps_{\scrT}\notin{ \kk(\scrT)}^{\times2},\\
                0 &  \textup{if }\eps_{\scrT}\in {\kk(\scrT)}^{\times2}.
            \end{cases}
        \]
    \end{lemma}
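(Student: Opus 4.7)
The plan is to produce the point $y$ by first establishing $X(K)\neq\emptyset$, and then to compute $\beta_\scrT(y)$ using Lemma~\ref{lem:piy} together with Lemma~\ref{lem:ClifOfIsotropic} and an argument in the Brauer group of a local field.

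For the existence of $y$, the strategy is to apply Lemma~\ref{lem:SquareEpsGivesRatlPt} to $X_K$ with the degree-$2$ subscheme $\scrT_K\subset\scrS_K$. Its square discriminant hypothesis is exactly $\eps_\scrT\in\kk(\scrT_K)^{\times 2}$, so it remains only to verify that the rank-$4$ quadric $\calQ_{\scrT,K}$ has a smooth $\kk(\scrT_K)$-point. Since $K\neq\kk(\scrT)$, each residue field of a point of $\scrT_K$ is a quadratic extension of a nonarchimedean local field, and restriction along such an extension annihilates the $2$-torsion of the Brauer group; in particular $\Clif(Q_\scrT)$ vanishes in $\Br(\kk(\scrT_K))$, which together with the square discriminant forces isotropy of a rank-$4$ quadric over a local field. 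Lemma~\ref{lem:SquareEpsGivesRatlPt} then produces a $K$-point of $X$, and by \cite{SalbergerSkorobogatov} the set $X(K)$ is in fact infinite. Because $X(k)\subsetneq X(K)$ is a proper analytic subset, we may choose $x\in X(K)\setminus X(k)$, and a generic such choice ensures that $y:=f(\{x,\sigma x\})\in\calG(k)$ (where $\sigma$ generates $\Gal(K/k)$) satisfies $\pi(y)\notin\scrT\cup\{\infty\}$ and the line through $x$ and $\sigma x$ is not contained in $X$.

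Now apply Lemma~\ref{lem:piy} to obtain
\[
    \beta_\scrT(y)=\Cor_{\kk(\scrT)/k}\!\left(\eps_\scrT,-\frac{B_{Q_\scrT}(x,\sigma x)}{B_{Q_\infty}(x,\sigma x)}\right).
\]
In the case $\eps_\scrT\in\kk(\scrT)^{\times 2}$ the symbol is trivial and $\beta_\scrT(y)=0$. Otherwise, $\eps_\scrT\notin\kk(\scrT)^{\times 2}$ together with the hypothesis forces $\kk(\scrT_K)=\kk(\scrT)(\sqrt{\eps_\scrT})$. A rank-$4$ form with non-square discriminant over a local field is automatically isotropic, so Lemma~\ref{lem:ClifOfIsotropic} applies to $\calQ_\scrT$ with the $\Gal(\overline{\kk(\scrT)}/\kk(\scrT))$-stable pair $\{x,\sigma x\}\subset X\subset\calQ_\scrT$ and any linear form $\ell_0$ over $\kk(\scrT)$ defining a tangent hyperplane at a smooth point (chosen so that $\ell_0(x)\ell_0(\sigma x)\neq 0$), yielding
\[
    \Clif(Q_\scrT)=\left(\eps_\scrT,-\frac{B_{Q_\scrT}(x,\sigma x)}{\ell_0(x)\ell_0(\sigma x)}\right)\in\Br(\kk(\scrT)).
\]
Corestricting and subtracting,
\[
    \beta_\scrT(y)-\CQ{\scrT}=\Cor_{\kk(\scrT)/k}\!\left(\eps_\scrT,\frac{\ell_0(x)\ell_0(\sigma x)}{B_{Q_\infty}(x,\sigma x)}\right).
\]

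To show this vanishes, choose the lift of $\sigma$ to $\Gal(\kk(\scrT_K)/\kk(\scrT))$, so that $\ell_0(x)\ell_0(\sigma x)=\Norm_{\kk(\scrT_K)/\kk(\scrT)}(\ell_0(x))$ is a norm from the extension $\kk(\scrT)(\sqrt{\eps_\scrT})$ and hence kills the cup product with $\eps_\scrT$ in $\Br(\kk(\scrT))$. Likewise $B_{Q_\infty}(x,\sigma x)\in k^\times$ by the $\sigma$-invariance and symmetry of the bilinear form $B_{Q_\infty}$; the projection formula gives $\Cor_{\kk(\scrT)/k}(\eps_\scrT,a)=(\N\eps_\scrT,a)=(a^2,a)=0$ for any $a\in k^\times$, and since corestriction is injective on Brauer groups of a quadratic extension of local fields, this forces $(\eps_\scrT,a)=0$ in $\Br(\kk(\scrT))$. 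Thus $a$ is also a norm from $\kk(\scrT_K)$, the full quotient is a norm, and the symbol vanishes, giving $\beta_\scrT(y)=\CQ{\scrT}$. The main obstacle is the isotropy verification for $\calQ_{\scrT,K}$, where the local-field hypothesis is used decisively via killing of $2$-torsion under quadratic restriction; without it the construction of $y$ would require additional input.
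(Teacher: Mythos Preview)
Your argument is essentially correct and follows the same route as the paper: produce $K$-points on $X$ via Lemma~\ref{lem:SquareEpsGivesRatlPt}, then compute $\beta_\scrT(y)$ through Lemma~\ref{lem:piy} and Lemma~\ref{lem:ClifOfIsotropic}. Two small points deserve attention. First, the phrase ``quadratic extension of a nonarchimedean local field'' should simply read ``quadratic extension of a local field''; the archimedean case is handled trivially since $\Br(\C)=0$. Second, and more substantively, your final detour through injectivity of corestriction is both unnecessary and false when $\scrT$ is reducible: in that case $\kk(\scrT)=k\times k$ and $\Cor_{\kk(\scrT)/k}\colon\Br(k)\oplus\Br(k)\to\Br(k)$ is the sum map, which is not injective. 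Fortunately you do not need this step at all. You have already shown that $(\eps_\scrT,\ell_0(x)\ell_0(\sigma x))=0$ in $\Br(\kk(\scrT))$ (so its corestriction vanishes) and that $\Cor_{\kk(\scrT)/k}(\eps_\scrT,B_{Q_\infty}(x,\sigma x))=0$ via the projection formula and $\N(\eps_\scrT)\in k^{\times 2}$; bilinearity of the symbol then gives $\Cor_{\kk(\scrT)/k}\bigl(\eps_\scrT,\ell_0(x)\ell_0(\sigma x)/B_{Q_\infty}(x,\sigma x)\bigr)=0$ directly, which is exactly how the paper concludes.
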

    \begin{proof}
    If $X(k) \ne \emptyset$, then for any nontrivial extension $K/k$ we have $X(k) \subsetneq X(K)$ because $k$ is local (see, e.g., \cite{LiuLorenzini}*{Proposition 8.3}). Then any pair of $\Gal(K/k)$-conjugate points on $X$ will give the required $y \in \calG(k)$. Now we prove the first statement in the case where $X(k) =\emptyset$.  Over any local field, there is a unique rank $4$ quadric (up to isomorphism) that fails to have a point, and it has square discriminant. Furthermore, this anisotropic quadric has a point over any quadratic extension of $\kk(t)$.
    
    If $\eps_t\in\kk(t)^{\times2}$ for some $t\in \scrT$ (equivalently, for all $t \in \scrT$ by Corollary~\ref{cor:propsBrG}\eqref{GoodGeneratingSet}), then $\calQ_t$ may not have a smooth $\kk(t)$-point, but it will have a smooth point over any quadratic extension of $\kk(t)$.  If $K/k$ is a quadratic extension different from $\kk(\scrT)/k$, then $\kk(\scrT_K)$ will be a quadratic extension of $\kk(\scrT)$ and hence we may apply Lemma~\ref{lem:SquareEpsGivesRatlPt}.  {Moreover, since \(\eps_{\scrT}\in \kk(\scrT)^{\times2}\), by definition, \(\beta_{\scrT} = 0\in \Br(\calG)\).}
    
    Now we consider the case when $\eps_t\notin\kk(t)^{\times2}$, so \(Q_t\) has nonsquare discriminant, and thus is isotropic.  Hence, Lemma~\ref{lem:SquareEpsGivesRatlPt} gives the existence of $K$-points on $X$ for any $K$ such that $\eps_\scrT\in \kk(\scrT_K)^{\times2}$.

    Now suppose $y$ corresponds to the line joining the $K/k$-conjugate points $x,x' \in X(K)$, with $K$ satisfying the conditions of the lemma. By continuity of the evaluation map, we may reduce to the case where $\pi(y) \ne \infty$, $\pi(y) \not\in \scrT$.
    By Lemma~\ref{lem:piy} we have
        \begin{align*}
            \beta_\scrT(y) &= 
            \left(\eps_{\scrT}, -B_{Q_\scrT}(x,x')/B_{Q_\infty}(x,x')\right)\\
            &= \Cor_{\kk(\scrT)/k}
            \left(\eps_{\scrT}, -B_{Q_\scrT}(x,x')\right) 
            + 
            \left(\N_{\kk(\scrT)/k}(\eps_{\scrT}), {B_{Q_\infty}(x,x')}\right) \\
            &= \Cor_{\kk(\scrT)/k}
            \left(\eps_{\scrT}, -B_{Q_\scrT}(x,x')\right) &
            (\text{since } N(\eps_\scrT) \in k^{\times 2}\,)
            \\            
            &= \Cor_{\kk(\scrT)/k}\left[
            \left(\eps_{\scrT}, \ell_\scrT(x)\ell_\scrT(x')\right)+ \Clif(Q_\scrT)\right] 
            &\text{ (by Lemma~\ref{lem:ClifOfIsotropic})}\\
            & =  \Cor_{\kk(\scrT_K)/k} (\eps_{\scrT}, \ell_{\scrT}(x)) + \Cor_{\kk(\scrT)/k}\Clif(Q_\scrT)\\
            & = \Cor_{\kk(\scrT)/k}\Clif(Q_\scrT) &\text{(since } \eps_\scrT\in \kk(\scrT_K)^{\times2}\,)\\
            &= \CQ{\scrT} & \text{(by Definition~\ref{def:ClifQT}) }.& 
            \hfill \qedhere
        \end{align*}
    \end{proof}
    
 \begin{lemma}\label{lem:SolubleFiberOfG}
        Assume that $k$ is a local field of characteristic not equal to $2$. Suppose $s \in \scrS(k)$ is such that $\calQ_s$ has a smooth $k$-point and let $v_s$ denote the vertex of $\calQ_s$. For any $t\in \A^1(k) -\{s\}$ sufficiently close to $s$, we have
        \[
            \calG_{t}(k) \neq \emptyset \quad \Longleftrightarrow \quad (\eps_{s},t-s) = \Clif(Q_s) + (\eps_s,-Q_\infty(v_s)) \text{ in } \Br(k)\,.
        \]
    \end{lemma}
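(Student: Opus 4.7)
The plan is to identify $\calG_t$ as the $3$-dimensional Severi--Brauer variety associated to the even Clifford algebra $\Clif_0(Q_t)$, so that $\calG_t(k)\neq\emptyset$ if and only if $[\Clif_0(Q_t)]=0$ in $\Br(k)$, and then to compute this class explicitly for $t$ near $s$.

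First I would change coordinates on $\PP^4$ so that the vertex of $\calQ_s$ is $v_s=[0:0:0:0:1]$; such a change of coordinates is defined over $k$ since $v_s$ is the unique singular point of the $k$-rational quadric $\calQ_s$. Then $Q_s$ involves only $x_0,\dots,x_3$. Proposition~\ref{prop:EquivSmoothness}\eqref{cond:verticesdistinct} ensures that $a:=Q_\infty(v_s)\neq 0$, for otherwise $v_s\in X$ would contradict smoothness. Writing $Q_\infty=Q_\infty'(x_0,\dots,x_3)+2\ell(x_0,\dots,x_3)x_4+a x_4^2$ and completing the square in $x_4$ over $k$ yields the orthogonal decomposition
\[
Q_t \;\simeq\; R_{t-s}\perp\langle (t-s)a\rangle,\qquad R_u:=Q_s+u(Q_\infty'-\ell^2/a),
\]
where $R_u$ is a rank $4$ form in $x_0,\dots,x_3$ with $R_0=Q_s$. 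Because $k$ is local, the isomorphism class of a nondegenerate quadratic form of fixed rank is determined by discrete invariants (discriminant in $k^\times/k^{\times 2}$ and Hasse--Witt invariant in $\Br(k)[2]$), so for $t$ sufficiently close to $s$ we have $R_{t-s}\simeq Q_s$ over $k$; in particular $\Clif(R_{t-s})=\Clif(Q_s)$ in $\Br(k)$ and $\disc(R_{t-s})=\eps_s$ modulo squares.

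I would then invoke the standard isomorphism $\Clif_0(R\perp\langle c\rangle)\simeq \Clif(-cR)$, obtained on generators by $e_i\mapsto e_i\eta$ where $\eta^2=c$, to get $[\Clif_0(Q_t)]=[\Clif(-(t-s)a\cdot R_{t-s})]$. A short Hilbert-symbol calculation --- using additivity of the Hasse--Witt invariant under orthogonal sum, the identity $(c,c)=(c,-1)$ in $\Br(k)[2]$, and the fact that the discriminant of a rank $4$ form is unchanged by scalar multiplication (since $c^4$ is a square) --- gives the scaling formula
\[
[\Clif(cq)]=[\Clif(q)]+(c,\disc q) \qquad \text{in } \Br(k)[2]
\]
for any rank $4$ form $q$ and any $c\in k^\times$. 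Applying this with $c=-(t-s)a$ and $\disc q=\eps_s$, and expanding the resulting Hilbert symbol, we obtain
\[
[\calG_t]\;=\;\Clif(Q_s) + (\eps_s,-(t-s)Q_\infty(v_s))\;=\;\Clif(Q_s)+(\eps_s,-Q_\infty(v_s))+(\eps_s,t-s),
\]
which vanishes in $\Br(k)[2]$ precisely when $(\eps_s,t-s)=\Clif(Q_s)+(\eps_s,-Q_\infty(v_s))$, as required.

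The main obstacle is justifying the identification $[\calG_t]=[\Clif_0(Q_t)]$: the Fano variety of lines on a smooth quadric three-fold in $\PP^4$ is a $3$-dimensional Severi--Brauer variety whose class in $\Br(k)$ is the even Clifford invariant. This is classical --- provable via the half-spin embedding of the orthogonal Grassmannian $\mathrm{OGr}(2,5)$, or via the exceptional isomorphism $\mathrm{Spin}(5)\simeq \mathrm{Sp}(4)$ --- but may require a careful reference to~\cite{EKM-QuadraticForms} or to Reid's analysis of the variety of lines. The scaling-formula bookkeeping is straightforward but tedious; all other steps reduce to continuous specialization of discretely-valued invariants over a local field.
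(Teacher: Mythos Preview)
Your proof is correct and follows essentially the same approach as the paper: identify $[\calG_t]$ with $[\Clif_0(Q_t)]$ (the paper cites \cite{EKM-QuadraticForms}*{Ex.~85.4} for this, resolving your ``main obstacle''), split off the $v_s$-direction orthogonally, reduce $\Clif_0$ of a rank $5$ form to $\Clif$ of a scaled rank $4$ form, and apply the scaling formula (which is the paper's Lemma~\ref{lem:cQ}). The only cosmetic difference is that the paper writes the orthogonal splitting abstractly as $Q_t \simeq Q_t|_{\langle v_s\rangle^\perp}\perp Q_t|_{\langle v_s\rangle}$ rather than completing the square in coordinates.
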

    \begin{rmk}
    	Note that by Lemma~\ref{lem:cQ}, the sum $\Clif(Q_s) + (\eps_s,-Q_\infty(v_s))$ appearing on the right-hand side above does not depend on the choice of quadratic form defining the pencil.
    \end{rmk}    
    \begin{proof}
 	Since $t \in \A^1(k) - \{s\}$ is sufficiently close to $s$ and $\scrS$ is closed, we have $t\notin \scrS$ and $Q_t$ has rank $5$. So by \cite{EKM-QuadraticForms}*{Ex. 85.4} the Severi-Brauer variety $\calG_t$ and the even Clifford algebra $\Clif_0(Q_t)$ (which is a central simple $k$-algebra) determine the same class in $\Br(k)$. In particular, $\calG_{t}(k)\neq \emptyset$ if and only if $\Clif_0(Q_{t}) = 0$ in $\Br(k)$.
 	
 	Since $X$ is smooth, $Q_t(v_s) \ne 0$. So the quadratic forms $Q_t$ and $Q_{t}|_{\langle v_s\rangle^{\perp}}{\oplus} Q_{t}|_{\langle v_s\rangle}$ are equivalent by \cite{Lam-QF}*{Chap. I, Cor 2.5}. Therefore,
 	\begin{align*}
 		\Clif_0(Q_t) &= \Clif(-Q_{t}(v_s)\cdot Q_{t}|_{\langle v_s\rangle^{\perp}}) & (\text{by~\cite{Lam-QF}*{Chap. V, Cor. 2.9}})\\
 		&= \Clif(Q_t|_{\langle v_s\rangle^{\perp}}) + (\disc(Q_t|_{\langle v_s\rangle^{\perp}}),-Q_t(v_s)) & (\text{by~Lemma~\ref{lem:cQ}}).
 	\end{align*}
 	For $t$ sufficiently close to $s$, the quadratic forms $Q_{t}|_{\langle v_s\rangle^{\perp}}$ and $Q_{s}|_{\langle v_s\rangle^{\perp}}$ will be equivalent. For such $t$, $\Clif(Q_t|_{\langle v_s\rangle^{\perp}}) = \Clif(Q_s) \in \Br(k)$ and $\disc(Q_t|_{\langle v_s\rangle^{\perp}}) = \disc(Q_s) \in k^\times/k^{\times 2}$. Hence
        \[
            \Clif_0(Q_t) = \Clif(Q_s) + (\eps_s,-Q_t(v_s))\,.
        \]
        To complete the proof, we note that $Q_t(v_s) {= (Q_s + (t-s)Q_{\infty})(v_s)} = (t-s)Q_{\infty}(v_s)$.
    \end{proof}

    \begin{lemma}\label{lem:betanonconstant}
    	Assume that $k$ is a local field of characteristic not equal to $2$ and $\scrT\subset\scrS$ is a degree $2$ subscheme with $\N(\eps_{\scrT})\in k^{\times2}$ and $\eps_\scrT\notin \kk(\scrT)^{\times2}$. Then there exists $y \in \calG_\scrT(\kk(\scrT))$ such that $\pi(y) = \scrT$. Moreover, for any such $y$,
    	\[
    		\Cor_{\kk(\scrT)/k}(\beta_\scrT(y)) = \CQ{\scrT} + \left(\eps, -\Delta_{\scrT}\N(Q_{\infty}(v_\scrT))\right)\in \Br(k)\,,
    	\]
    	where $\eps \in k^\times$ is an element whose image in $\kk(\scrT)^\times/\kk(\scrT)^{\times 2}$ represents $\eps_\scrT$, $\Delta_{\scrT}$ is the discriminant of $\kk(\scrT)/k$ (which we take to be $1$ if $\scrT$ is reducible), and $v_\scrT$ is the vertex of $\calQ_\scrT$.
    \end{lemma}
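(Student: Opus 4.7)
The plan is to first exhibit \(y\) using the local structure of rank 4 quadrics, then to compute \(\beta_\scrT(y)\) by deforming \(y\) to a nearby smooth fiber where Lemma~\ref{lem:piy} applies, and finally to apply \(\Cor_{\kk(\scrT)/k}\) together with the projection formula. For existence: since \(\eps_\scrT\notin\kk(\scrT)^{\times 2}\) and \(\kk(\scrT)\) is local, the rank 4 quadric \(\calQ_\scrT\) is isotropic (the unique anisotropic rank 4 form over a local field has square discriminant), so the line through \(v_\scrT\) and any smooth \(\kk(\scrT)\)-point of \(\calQ_\scrT\) furnishes the required \(y\).

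Set \(L=\kk(\scrT)\) and assume \(\scrT\) is irreducible, so \(\scrT_L=\{s,\bar s\}\) are Galois-conjugate (the reducible case is analogous, factor by factor). Faddeev yields
\[
\beta_\scrT|_L \;=\; (\eps_s,T-s)\;+\;(\eps_{\bar s},T-\bar s)\;\in\;\Br(\calG_L).
\]
By Lemma~\ref{lem:SolubleFiberOfG}, the condition \(\calG_{s+c}(L)\neq\emptyset\) for \(c\in L^\times\) close to \(0\) is precisely \((\eps_s,c)=\Clif(Q_s)+(\eps_s,-Q_\infty(v_s))\). Using smoothness of \(\calG\) at \(y\), I will choose such \(c\) arbitrarily small together with a corresponding \(y_c\in\calG(L)\) close to \(y\) satisfying \(\pi(y_c)=s+c\). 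Writing \(y_c=f(\{x_c,x'_c\})\) with \(\{x_c,x'_c\}=\ell_{y_c}\cap X\), the relation \(B_{Q_{s+c}}(x_c,x'_c)=0\) gives \(B_{Q_s}(x_c,x'_c)=-c\,B_{Q_\infty}(x_c,x'_c)\) and \(B_{Q_{\bar s}}(x_c,x'_c)=(\bar s-s-c)\,B_{Q_\infty}(x_c,x'_c)\), so Lemma~\ref{lem:piy} produces
\[
\beta_\scrT(y_c)=(\eps_s,c)+(\eps_{\bar s},\bar s-s-c).
\]
Local constancy of \(\beta_\scrT\) on \(\calG(L)\) then gives \(\beta_\scrT(y)=\beta_\scrT(y_c)\) for \(c\) small; substituting the constrained value of \((\eps_s,c)\) and letting \(c\to 0\) yields
\[
\beta_\scrT(y) \;=\; \Clif(Q_s)+(\eps_s,-Q_\infty(v_s))+(\eps_{\bar s},\bar s-s).
\]

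To conclude, I will pick \(\eps\in k^\times\) representing the common square class of \(\eps_s,\eps_{\bar s}\) (available by Corollary~\ref{cor:propsBrG}\eqref{GoodGeneratingSet}) and apply \(\Cor_{L/k}\) term by term. By definition \(\Cor_{L/k}(\Clif(Q_s))=\CQ{\scrT}\); the projection formula together with \(\N(\eps_s)\in k^{\times 2}\) gives \(\Cor_{L/k}((\eps_s,-1))=(\N(\eps_s),-1)=0\), \(\Cor_{L/k}((\eps_s,Q_\infty(v_s)))=(\eps,\N(Q_\infty(v_\scrT)))\), and \(\Cor_{L/k}((\eps_{\bar s},\bar s-s))=(\eps,-(s-\bar s)^2)=(\eps,-\Delta_\scrT)\); summing these contributions produces the claimed identity. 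The principal obstacle will be constructing the approximating \(y_c\) rigorously: because \(\ell_y\) passes through \(v_s\), the point \(y\) lies on the singular locus of \(\calG_s\) (the intersection of the two geometric components), so the differential of \(\pi\) vanishes at \(y\) and \(\pi|_{\calG(L)}\) is not open at \(y\); a careful \(L\)-analytic implicit function argument, using Lemma~\ref{lem:SolubleFiberOfG} to pin down the image of \(\pi\) near \(y\), will be needed to exhibit \(y_c\) with \(c\) in the required square class arbitrarily close to \(0\).
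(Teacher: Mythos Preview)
Your approach is essentially the paper's: deform $y$ to a nearby point over a smooth fiber, identify $(\eps_s,c)$ via Lemma~\ref{lem:SolubleFiberOfG}, then apply corestriction together with the projection formula. The obstacle you flag at the end is illusory: you need not prescribe the square class of $c$ in advance, since any $y_c\in\calG(L)$ near $y$ with $\pi(y_c)\ne s$ will do (the forward direction of Lemma~\ref{lem:SolubleFiberOfG} then forces the square class of $c$), and such $y_c$ exist simply because the proper closed subvariety $\calG_s\subset\calG$ has empty analytic interior in $\calG(L)$---the paper's appeal to the implicit function theorem is admittedly loose for exactly the reason you note, but only this weaker fact is needed.
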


    \begin{proof}
    	By \cite{VAV}*{Lemma 3.1} there exists $\eps\in k^\times$ such that $\eps\cdot \eps_{t}\in \kk(t)^{\times2}$ for all $t\in \scrT$.  Fix a closed point $s \in \scrT$, and let $s'$ be the unique $\kk(s)$ point in $\scrT_{\kk(s)} - \{s\}$. 
        
        Since $\eps_s \not\in \kk(s)^{\times 2}$, $\calQ_s$ is a cone over an isotropic quadric and as such contains smooth $\kk(s)$-points and $\kk(s)$-rational lines (passing through the vertex). Hence $\calG_s(\kk(s))$ is nonempty. By the implicit function theorem, we can find $t \in (\PP^1 - \{s\})(\kk(s))$ arbitrarily close to $s$ such that $\calG_t(\kk(s)) \ne \emptyset$. In addition, by Lemma~\ref{lem:SolubleFiberOfG} and the fact that the evaluation map $\beta \colon \calG(\kk(s)) \to \Br(\kk(s))$ is locally constant and constant on the fibers of $\pi :\calG \to \PP^1$ (because $\beta_\scrT$ is the pullback of a element of $\Br(\kk(\PP^1))$), we may choose such a $t$ sufficiently close to $s$ so that
    	\begin{enumerate}
    		\item $(\eps,t-s) = \Clif(Q_s) + (\eps,-Q_\infty(v_s)) \in \Br (\kk(s))$,
    		\item\label{betanonconstant2} $\beta_\scrT(\calG_s(\kk(s))) = \beta_\scrT(\calG_t(\kk(s))) \in \Br(\kk(s))$, and
    		\item $t-s'$ and $s-s'$ represent the same class in $\kk(s)^{\times 2}$.
    	\end{enumerate}

    	Then for $y_1 \in \calG_s(\kk(s))$ and $y_1' \in \calG_t(\kk(s))$, we have
    	\[
    		\beta_\scrT(y_1) = \beta_\scrT(y_1') = (\eps,(t-s)(t-s')) = \Clif(Q_s) + (\eps,-Q_\infty(v_s)(s-s'))\,{\in \Br(\kk(s))}.
    	\]
    	Suppose $y\colon \Spec(\kk(\scrT)) \to \calG$ is such that $\pi(y) = \scrT$. Then, because $\eps \in k^\times$ we have $\Cor_{\kk(\scrT)/k}(\eps,s-s') = (\eps,\Norm_{\kk(\scrT)/k}(s-s')) = (\eps,(s-s')(s'-s))$. It follows that
    	\begin{align*}
    		\Cor_{\kk(\scrT)/k}(\beta_\scrT(y)) &= \Cor_{\kk(\scrT)/k}\left[ \Clif(Q_{\scrT}) + (\eps, -Q_{\infty}(v_\scrT))+(\eps,s-s')\right]\\
    		&= \Cor_{\kk(\scrT)/k}\left( \Clif(Q_{\scrT})\right)  + (\eps,\N(Q_\infty(v_\scrT))) + (\eps,(s-s')(s' - s))\\
    		&= \CQ{\scrT} + \left(\eps, \N(Q_{\infty}(v_\scrT))\right) + \left(\eps, -\Delta_{\scrT}\right)\,. \hfill \qedhere
    	\end{align*}
    \end{proof}

\subsection{Evaluation of Brauer classes on \texorpdfstring{$\calG(\A_k)$}{G(Ak)}}\label{sec:AdelicEv}

\begin{defn}\label{def:RT}
    Let $k$ be a global field of characteristic not equal to $2$.  Given $\scrT\subset \scrS$ define
    \[
        R_\scrT := \{v\in\Omega_k : \eps_{\scrT_{v}} \in \kk(\scrS_{v})^{\times 2} \text{ and }  \CQ{\scrT_v} \neq 0 \}.
    \]
\end{defn}

\begin{theorem}\label{thm:EvAdelicOnG}
    Assume that $k$ is a global field of characteristic different from $2$.  
    \begin{enumerate}
        \item\label{case:irredT} There exists $(y_v)\in \calG(\A_k)$ such that for all degree $2$ subschemes $\scrT\subset \scrS$ with $\N(\eps_\scrT) \in k^{\times2}$, we have $\sum_{v\in \Omega_k} \inv_v(\beta_\scrT(y_v)) = \frac{\#{R}_\scrT}{2} \in \Q/\Z$.\label{case:ParityObs}
        \item For all $t \in \scrS(k)$ there exists $(y_v)\in \calG(\A_k)$ such that for all degree $2$ subschemes $\scrT\subset \scrS$ with $\N(\eps_\scrT) \in k^{\times2}$ and $t \in \scrT$,
        we have $\sum_{v\in \Omega_k} \inv_v(\beta_\scrT(y_v)) = \frac{\#R_t}{2} \in \Q/\Z$.\label{case:RationalRank4}
    \end{enumerate}
\end{theorem}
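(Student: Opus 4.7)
For Part (1), the plan is to build a single adelic point $(y_v)$ from one global quadratic extension tailored to the Brauer group. Applying Corollary~\ref{cor:propsBrG}\eqref{GoodGeneratingSet}, fix a collection $\mathbb{T}$ whose classes generate $\Br(\calG)/\Br_0(\calG)$ together with an element $\eps\in k^\times$ such that the image of $\eps$ in $\kk(s)^\times/\kk(s)^{\times 2}$ equals $\eps_s$ for every $s\in\bigcup\mathbb{T}$, and set $K=k(\sqrt{\eps})$.  If $\Br(\calG)/\Br_0(\calG)=0$ the statement is trivial, so we may assume $K/k$ is a genuine quadratic extension. At each place $v$ for which $K_v:=K\otimes_k k_v$ is a field, I apply Lemma~\ref{lem:EvBetaSqrtEps} over $k_v$ with $K_v$ to produce $y_v\in\calG(k_v)$; at the other places $\eps_\scrT$ is already a square in $\kk(\scrT_v)$ for every $\scrT\in\mathbb{T}$, so $\beta_\scrT$ vanishes in $\Br(\calG_v)$ and any $y_v\in\calG(k_v)$ (existing by Corollary~\ref{cor:locptsG}) will do. The hypothesis $\eps_\scrT\in\kk(\scrT_{K_v})^{\times 2}$ is automatic since $\eps_\scrT\equiv\eps$ modulo squares. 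The hypothesis $K_v\neq\kk(\scrT_v)$ needs to be checked only when $\eps_\scrT\notin\kk(\scrT_v)^{\times 2}$, and in that case the equality $K_v=\kk(\scrT_v)$ would force $\eps$ to coincide with the discriminant of $\kk(\scrT_v)/k_v$ modulo $k_v^{\times 2}$, whence $\eps_\scrT\equiv\eps$ would be a square in $\kk(\scrT_v)$, a contradiction.

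With this setup Lemma~\ref{lem:EvBetaSqrtEps} gives $\beta_\scrT(y_v)=\CQ{\scrT_v}$ whenever $\eps_\scrT\notin\kk(\scrT_v)^{\times 2}$ and $\beta_\scrT(y_v)=0$ otherwise. Using that $\CQ{\scrT}\in\Br(k)$ is $2$-torsion, global reciprocity delivers
\[
\sum_{v}\inv_v(\beta_\scrT(y_v))=\sum_{v:\,\eps_\scrT\notin\kk(\scrT_v)^{\times 2}}\inv_v(\CQ{\scrT_v})=\sum_{v:\,\eps_\scrT\in\kk(\scrT_v)^{\times 2}}\inv_v(\CQ{\scrT_v})=\frac{\#R_\scrT}{2},
\]
which is the desired identity for every $\scrT\in\mathbb{T}$. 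To extend this to an arbitrary degree $2$ subscheme $\scrT\subset\scrS$ with $\N(\eps_\scrT)\in k^{\times 2}$, I write $[\beta_\scrT]=\sum_i [\beta_{\scrT_i}]$ in $\Br(\calG)/\Br_0(\calG)$ with $\scrT_i\in\mathbb{T}$; the constant error term in $\Br_0(\calG)$ contributes $0$ by reciprocity on $\Br(k)$, and it remains to verify the combinatorial identity $\#R_\scrT\equiv\sum_i\#R_{\scrT_i}\pmod 2$, which follows from the square condition on $\eps$ of the relevant symmetric differences together with reciprocity for the corresponding Clifford invariants.

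For Part (2), fix $t\in\scrS(k)$; every degree $2$ subscheme $\scrT$ containing $t$ with $\N(\eps_\scrT)\in k^{\times 2}$ has the form $\scrT=\{t,t'\}$ with $\eps_{t'}\equiv\eps_t\pmod{k^{\times 2}}$. The plan is to place $y_v$ near $\calG_t$ whenever the fiber is accessible. At places $v$ with $\eps_t\notin k_v^{\times 2}$ the quadric $\calQ_t$ is automatically isotropic over $k_v$, so Lemma~\ref{lem:SolubleFiberOfG} applies: I pick $r_v\in k_v-\{t\}$ close enough to $t$ that $\calG_{r_v}(k_v)\neq\emptyset$ and $r_v-t'$ has the square class of $t-t'$, and take $y_v\in\calG_{r_v}(k_v)$. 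Combining Lemmas~\ref{lem:piy} and~\ref{lem:SolubleFiberOfG} yields
\[
\beta_\scrT(y_v)=\Clif(Q_t)_v+(\eps_t,-Q_\infty(v_t))_v+(\eps_{t'},t-t')_v=:\alpha_{\scrT,v},
\]
which is the $v$-component of a global class $\alpha_\scrT\in\Br(k)$. At places $v$ with $\eps_t\in k_v^{\times 2}$ the class $\eps_\scrT$ is trivial in $\kk(\scrT_v)^\times/\kk(\scrT_v)^{\times 2}$, so $\beta_\scrT=0\in\Br(\calG_v)$ and any $y_v\in\calG(k_v)$ gives $\beta_\scrT(y_v)=0$. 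Global reciprocity on $\alpha_\scrT$ together with the observation that at such places the last two summands of $\alpha_{\scrT,v}$ vanish (since $(\eps_t,\cdot)_v=0$ and $\eps_{t'}$ is also a square), leaving $\alpha_{\scrT,v}=\Clif(Q_t)_v$, collapses the sum to
\[
\sum_v\inv_v(\beta_\scrT(y_v))=\sum_{v:\,\eps_t\in k_v^{\times 2}}\inv_v(\Clif(Q_t)_v)=\frac{\#R_t}{2}.
\]

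The main obstacle lies in Part (1): the combinatorial reduction from the generating set $\mathbb{T}$ to the full collection of degree $2$ subschemes with square norm. At a place $v$ where the chosen $\eps$ does not represent $\eps_s$ modulo squares for some $s\in\scrT$, Lemma~\ref{lem:EvBetaSqrtEps} does not compute $\beta_\scrT(y_v)$ directly, and the identity must be extracted from the $\F_2$-linearity of $\Br(\calG)/\Br_0(\calG)$ together with reciprocity for the corresponding constant Brauer classes. The remaining ingredients---the contrapositive check that $K_v\neq\kk(\scrT_v)$ precisely when needed, the coordinated application of Lemmas~\ref{lem:EvBetaSqrtEps},~\ref{lem:piy}, and~\ref{lem:SolubleFiberOfG}, and the reciprocity manipulations on $\CQ{\scrT}$ and $\alpha_\scrT$---are routine.
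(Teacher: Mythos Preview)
Your approach is essentially identical to the paper's: the same choice of $\mathbb{T}$ and $\eps$ via Corollary~\ref{cor:propsBrG}\eqref{GoodGeneratingSet}, the same construction of $(y_v)$ via Lemma~\ref{lem:EvBetaSqrtEps} in Part~(1) and via Lemma~\ref{lem:SolubleFiberOfG} in Part~(2), and the same reciprocity manipulation at the end.

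The one point where you differ from the paper is worth noting. In Part~(1) the paper simply writes ``It suffices to prove the result for $\{\beta_\scrT : \scrT\in\mathbb{T}\}$'' and then carries out the computation for that generating set only; it never returns to justify the reduction to arbitrary degree~$2$ subschemes $\scrT$ with $\N(\eps_\scrT)\in k^{\times 2}$. You, by contrast, flag this reduction explicitly as ``the main obstacle'' and sketch how to extract it from $\F_2$-linearity of $\Br(\calG)/\Br_0(\calG)$ plus reciprocity for the constant classes. Your sketch is admittedly vague (the claimed identity $\#R_\scrT\equiv\sum_i\#R_{\scrT_i}\pmod 2$ is not actually proved), but the paper's treatment is no more complete on this point: its proof, as written, establishes the identity only for $\scrT\in\mathbb{T}$. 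In practice this is harmless because all downstream applications (Corollaries~\ref{cor:NonemptyAdelicBrSetG}--\ref{cor:ExistenceOf0cycleOfDegree1}) only invoke the theorem for $\scrT$ lying in a generating set or a single $\scrT$, so neither proof leaves a gap that matters. Your version is arguably more honest about where the argument stands.

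One small imprecision in your Part~(2): you write ``$r_v - t'$ has the square class of $t - t'$'' as though there were a single $t'$, but you need this simultaneously for \emph{every} $t'\in\scrS(k)$ with $\eps_t\eps_{t'}\in k^{\times 2}$ in order to get a single $(y_v)$ that works for all $\scrT\ni t$ at once. The paper states this explicitly; it is a trivial fix since the set of such $t'$ is finite and $r_v$ may be taken arbitrarily close to $t$.
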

\begin{proof}
    \begin{enumerate}[wide]
    \item It suffices to prove the result for $\{ \beta_\scrT \;:\; \scrT \in \mathbb{T} \}$, where $\mathbb{T}$ is a collection of degree $2$ subschemes of $\scrS$ as in Corollary~~\ref{cor:propsBrG}\eqref{GoodGeneratingSet}, with corresponding $\eps \in k^\times$ simultaneously representing the discriminants of all $\scrT \in \mathbb{T}$.
    
    We define an adelic point $(y_v) \in \calG(\A_k)$ as follows. For $v \in \Omega_k$ such that $\eps\in \kk(\scrT_v)^{\times2}$ for some $\scrT \in \mathbb{T}$ (equivalently, for all $\scrT \in \mathbb{T}$ by Corollary~\ref{cor:propsBrG}\eqref{GoodGeneratingSet}), let $y_v \in \calG(k_v)$ be any point (which exists by Corollary~\ref{cor:locptsG}). Note that if $\eps \in \kk(\scrT_v)^{\times 2}$, then $\beta_\scrT \otimes k_v = 0$ by Proposition~\ref{prop:BrFaddeev}.
    For each $v \in \Omega_k$ with $\eps\notin \kk(\scrT_v)^{\times2}$ for some (equivalently all) $\scrT \in \mathbb{T}$, let $y_v\in \calG(k_v)$ be a point corresponding to a $k_v(\sqrt{\eps})$-point on $X$, as provided by Lemma~\ref{lem:EvBetaSqrtEps}.  Note that Lemma~\ref{lem:EvBetaSqrtEps} further implies that for such $y_v$, $\beta_\scrT(y_v) = \CQ{\scrT_v}$ for all $\scrT\in \mathbb{T}$. 
    Thus, for any $\scrT \in \mathbb{T}$ we have
\[
	\sum_{v\in \Omega_k} \inv_v(\beta_\scrT(y_v)) = \sum_{\eps \notin \kk(\scrT_v)^{\times 2}} \inv_v \left(\CQ{\scrT_v}\right) = \sum_{\eps \in \kk(\scrT_v)^{\times 2}} \inv_v \left(\CQ{\scrT_v}\right)=
    \frac{\#{R}_\scrT}{2} \in \Q/\Z\,,
\]
where the penultimate equality follows from quadratic reciprocity.

    \item Let $t\in \scrS(k)$ and set $\eps:=\eps_t$.  If $t$ is not contained in any degree $2$ subschemes $\scrT\subset \scrS$ with $\N(\eps_{\scrT}) \in k^{\times2}$, then we need only show that $\calG(\A_k)\neq\emptyset$, which follows from Corollary~\ref{cor:locptsG}. Thus, we may assume there is some degree $2$ subscheme $\scrT\subset\scrS$ containing $t$ such that $\N(\eps_{\scrT}) \in k^{\times2}$. For any such $\scrT$ we have $\eps_{\scrT} = (\eps,\eps) \in \kk(\scrT)^{\times}/\kk(\scrT)^{\times 2}\simeq k^\times/k^{\times2} \times k^{\times}/k^{\times2}$.  
    
    We define an adelic point $(y_v) \in \calG(\A_k)$ as follows. For $v \in \Omega_k$ such that $\eps \in k_v^{\times 2}$, take $y_v$ to be any point of $\calG(k_v)$ (which exists by Corollary~\ref{cor:locptsG}). For $v \in \Omega_k$ such that $\eps \notin k_v^{\times 2}$ we take $y_v \in \calG(k_v)$ to be any point such that $\pi(y_v) \in \PP^1(k_v)$ is close enough $t$ so that Lemma~\ref{lem:SolubleFiberOfG} applies (note that $\calQ_t$ is a cone over an isotropic quadric surface so the hypothesis of the Lemma~\ref{lem:SolubleFiberOfG} is satisfied) and so that, for all $s \in \scrS(k)-\{t\}$ with $\eps\eps_{s} \in k^{\times 2}$, ($\pi(y_v) - s)$ and $(t-s)$ have the same class in $k_v^{\times}/k_v^{\times 2}$.

Suppose $\scrT = \{s,t\}\subset \scrS(k)$ is such that $\N(\eps_\scrT)\in k^{\times2}$. For $v \in \Omega_k$ such that $\eps \in k_v^{\times 2}$, we have $\inv_v(\beta_\scrT(y_v)) = 0$. For $v \in \Omega_k$ such that $\eps \notin k_v^{\times 2}$ we have 
    \begin{align*}
        \inv_v(\beta_\scrT(y_v)) &= \inv_v\left(\eps,(\pi(y_v)-t)(\pi(y_v)-s)\right)\\
        &= \inv_v\left(\eps,\pi(y_v)-t)\right) + \inv_v\left(\eps,t-s\right)\\
        &= \inv_v(\Clif(Q_{t})) + \inv_v(\eps,-Q_\infty(v_t)) + \inv_v(\eps,t - s) \qquad \text{(By Lemma~\ref{lem:SolubleFiberOfG})}\,.
    \end{align*}
    Since $(\eps,-Q_\infty(v_t)(t-s))$ is an element of $\Br(k)$, its local invariants sum to $0$. Furthermore, $(\eps,-Q_\infty(v_t)(t-s))$ has trivial invariant at all $v \in \Omega_k$ where $\eps\in k_v^{\times2}$.  Thus, 
    \[
        \sum_{v \in \Omega_k}\inv_v(\beta_\scrT(y_v)) = 
        \sum_{\eps \notin k_v^{\times 2}}\inv_v(\beta_\scrT(y_v)) = 
        \sum_{\eps \notin k_v^{\times 2}} \inv_v(\Clif(Q_{t})) = \sum_{\eps \in k_v^{\times 2}} \inv_v(\Clif(Q_{t})).
    \]
    where the last equality follows from the fact that the local invariants of $\Clif(Q_t) \in \Br(k)$ sum to $0$. For $v \in \Omega_k$ such that $\eps_t \in k_v^{\times 2}$ we have $\inv_v(\Clif(Q_t)) = \inv_v(\CQ{t_v})$. Hence,
    \[
    	\sum_{v \in \Omega_k}\inv_v(\beta_\scrT(y_v)) =\sum_{\eps \in k_v^{\times 2}} \inv_v({\CQ{t_v}}) 
         = \frac{\#R_{t}}{2}\,, \in{\Q}/{\Z}\,. \qedhere
    \]
    \end{enumerate}
\end{proof}

The following lemma relates the set $R_{\scrT}$ to the condition given in~\eqref{case-global-n4} of Theorem~\ref{thm:MainThm}.
\begin{lemma}\label{lem:RTandSolubility}
        Let $k$ be a global field of characteristic not equal to $2$ and $\scrT \subset \scrS$ a degree $2$ subscheme such that $\N(\eps_\scrT) \in k^{\times 2}$. Then $v \in R_\scrT$ if and only if there are an odd number of components of $\calQ_{\scrT_v} = \cup_{t_v \in \scrT_v}\calQ_{t_v}$ which have no smooth $\kk(t_v)$-point.
\end{lemma}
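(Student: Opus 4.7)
The plan is to reduce the statement to a componentwise calculation over $k_v$ using the fact that a rank $4$ quadric with square discriminant has a smooth point if and only if its Clifford algebra is trivial, and then to control the corestriction via local invariants.

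First I would decompose $\scrT_v = \scrT\times_k k_v$. Since $\kk(\scrT)$ is a degree $2$ étale $k$-algebra, $\kk(\scrT_v)$ is a degree $2$ étale $k_v$-algebra, so either $\kk(\scrT_v) \simeq k_v\times k_v$ (with $\scrT_v = \{t_1,t_2\}$ a union of two $k_v$-rational points) or $\kk(\scrT_v) =: K_v$ is a quadratic field extension of $k_v$ (with $\scrT_v$ a single closed point). The corestriction in $\CQ{\scrT_v}$ becomes a sum or a corestriction from $K_v$ accordingly.

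Next I would observe that under the hypothesis $\eps_{\scrT_v}\in\kk(\scrT_v)^{\times 2}$, for each closed point $t_v\in \scrT_v$ the rank $4$ form $Q_{t_v}$ has square discriminant, hence (up to scalar) is the reduced norm of a quaternion algebra whose class is $\Clif(Q_{t_v})\in\Br(\kk(t_v))$. Independence of $\Clif(Q_{t_v})$ from the choice of defining equation follows from Lemma \ref{lem:cQ}, since the ambiguity $(\eps_{t_v},c)$ is trivial. Crucially, $\calQ_{t_v}$ has a smooth $\kk(t_v)$-point if and only if this quaternion algebra is split, i.e.\ $\Clif(Q_{t_v}) = 0$ in $\Br(\kk(t_v))$.

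In the split case $\kk(\scrT_v)\simeq k_v\times k_v$, we have $\CQ{\scrT_v} = \Clif(Q_{t_1}) + \Clif(Q_{t_2}) \in \Br(k_v)[2]$. Since $\Br(k_v)[2]$ has order at most $2$, this sum is nonzero if and only if exactly one of the two summands is nonzero, equivalent to exactly one of $\calQ_{t_1},\calQ_{t_2}$ failing to have a smooth $k_v$-point, i.e.\ an odd number. In the field case $\kk(\scrT_v)=K_v$ I would invoke the local class-field-theoretic compatibility $\inv_{k_v}\circ \Cor_{K_v/k_v} = \inv_{K_v}$, which implies that $\Cor_{K_v/k_v}\colon \Br(K_v)[2]\to\Br(k_v)[2]$ is injective (and is simply $0\to 0$ in the degenerate case $K_v=\C$, $k_v=\R$, where $\calQ_{\scrT_v}$ is automatically smooth-pointed). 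Therefore $\CQ{\scrT_v}\neq 0$ if and only if $\Clif(Q_{\scrT_v})\neq 0$ in $\Br(K_v)$, if and only if the single component $\calQ_{\scrT_v}$ has no smooth $K_v$-point, giving a count of exactly $1$. Combining both cases, $v\in R_\scrT$ is equivalent to an odd number of components of $\calQ_{\scrT_v}$ failing to have a smooth $\kk(t_v)$-point. The main obstacle is a minor bookkeeping issue, namely the independence of $\Clif(Q_{t_v})$ from the choice of form when the discriminant is a square; otherwise the argument is a straightforward case analysis.
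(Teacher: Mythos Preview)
Your argument is essentially the same as the paper's once one restricts to the case $\eps_{\scrT_v}\in\kk(\scrT_v)^{\times 2}$: both proofs identify $Q_{t_v}$ with a scalar multiple of a quaternion norm form, note that smooth-pointedness is equivalent to $\Clif(Q_{t_v})=0$, and then use that $\Cor_{\kk(t_v)/k_v}$ is an isomorphism on Brauer groups of local fields to translate the parity of nonzero Clifford classes into the parity condition on $\CQ{\scrT_v}$. Your split/nonsplit case analysis of $\kk(\scrT_v)$ is a harmless repackaging of the paper's uniform statement that the corestriction maps are isomorphisms.

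There is, however, a genuine omission: you never treat the case $\eps_{\scrT_v}\notin\kk(\scrT_v)^{\times 2}$. In that case $v\notin R_\scrT$ by definition, but for the ``if and only if'' you must also show that every component $\calQ_{t_v}$ has a smooth $\kk(t_v)$-point (so the count of bad components is zero, hence even). This is where the paper invokes the fact that the unique anisotropic rank $4$ quadratic form over a local field has \emph{square} discriminant; since $\eps_{t_v}\notin\kk(t_v)^{\times 2}$ for each $t_v\in\scrT_v$ (using $\N(\eps_\scrT)\in k^{\times 2}$ and $\deg\scrT=2$ to pass from ``some'' to ``all'' components), each $\calQ_{t_v}$ is isotropic. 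Without this step your argument only establishes the equivalence conditional on $\eps_{\scrT_v}$ being a square, which is not the full statement.
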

\begin{proof}
    Let $v \in \Omega_k$. First suppose that $\eps_{\scrT_v} \not\in \kk(\scrS_v)^{\times 2}$. Then $v \not\in R_\scrT$ by definition. Note also that for all $t_v \in \scrT_v$, $\eps_{t_v} \not\in \kk(t_v)^{\times 2}$ (a priori this must hold for some $t_v \in \scrT_v$; the stronger conclusion holds because $\scrT$ has degree $2$ and $\N(\eps_\scrT) \in k^{\times 2}$). Recall that there is a unique anisotropic quadratic form of rank $4$ over any local field and that it has square discriminant. Hence, when $\eps_{\scrT_v} \not\in \kk(t_v)^{\times 2}$, all components $\calQ_{t_v}$ have smooth $\kk(t_v)$-points.
    
    Now suppose that $\eps_{\scrT_v} \in \kk(\scrS_v)^{\times 2}$. As above $\eps_{t_v} \in \kk(t_v)^{\times 2}$, for each $t_v \in \scrT_v$. Then the rank $4$ quadratic forms $Q_{t_v}$ are equivalent to constant multiples of the norm forms of the quaternion algebras $\Clif(Q_{t_v})$ (see \cite{EKM-QuadraticForms}*{Prop. 12.4}). In particular, $\calQ_{t_v}$ has a smooth $\kk(t_v)$-point if and only if $\Clif(Q_{t_v}) = 0 \in \Br(\kk(t_v))$. The corestriction maps $\Cor_{\kk(t_v)/k_v} : \Br(\kk(t_v)) \to \Br(k_v)$ are isomorphisms, so $\CQ{\scrT_v} = \sum_{t_v \in \scrT_v}\Cor_{\kk(t_v)/k_v}\Clif(Q_{t_v})$ is nonzero if and only if there are an odd number of components of $\calQ_{\scrT_v}$ with no smooth $\kk(t_v)$-points. By definition $v \in R_\scrT$ if and only if $\CQ{\scrT_v} \ne 0$.
\end{proof}

\begin{lemma}\label{lem:BaseChangeR_T}
    Assume that $k$ is a global field of characteristic different from $2$ and suppose $\scrT\subset\scrS$ is irreducible of degree $2$ such that $\N(\eps_\scrT) \in k^{\times 2}$.  For any $t\in \scrT(\kk(\scrT))$, the cardinalities of the sets
     \[
       R_{\scrT}\subset \Omega_k  \quad\textup{and}\quad R_t\subset \Omega_{\kk(\scrT)}
     \]
     have the same parity.
 \end{lemma}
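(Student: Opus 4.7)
The plan is a prime-by-prime local analysis. Set $K := \kk(\scrT)$, which is a quadratic extension of $k$ since $\scrT$ is irreducible of degree $2$. The goal is to show that for each prime $v$ of $k$, the contribution of $v$ to $\#R_\scrT$ has the same parity as the total contribution from the primes $w$ of $K$ lying above $v$ to $\#R_t$. Summing over $\Omega_k$ then yields the lemma.

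The analysis naturally splits into two cases depending on the splitting behavior of $v$ in $K$. When $v$ does not split in $K$, there is a unique prime $w \mid v$ and the identifications $\kk(\scrT_v) = K_w = \kk(t_w)$ realize $\eps_{\scrT_v}$ as literally equal to $\eps_{t_w}$ and $\CQ{\scrT_v}$ as $\Cor_{K_w/k_v}(\CQ{t_w})$. For $v$ nonarchimedean, the identity $\inv_v \circ \Cor_{K_w/k_v} = \inv_w$ together with the invariant isomorphisms $\inv\colon \Br(\cdot)[2] \xrightarrow{\sim} \tfrac{1}{2}\Z/\Z$ shows that $\Cor_{K_w/k_v}$ is an isomorphism on $2$-torsion, so $v \in R_\scrT$ if and only if $w \in R_t$. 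For $v$ real and ramified in $K$ we have $K_w = \C$, and $\Br(\C) = 0$ forces both sides to vanish.

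When $v$ splits in $K$ as $w_1, w_2$ with $K_{w_i} = k_v$, the decomposition $K \otimes_k k_v \cong k_v \times k_v$ identifies $\eps_{\scrT_v}$ with the pair $(\eps_{t_{w_1}}, \eps_{t_{w_2}})$, and the hypothesis $\N(\eps_\scrT) \in k^{\times 2}$ locally forces $\eps_{t_{w_1}}$ and $\eps_{t_{w_2}}$ to lie in the same class in $k_v^\times/k_v^{\times 2}$. If both are nonsquares then $v \notin R_\scrT$ and $w_1, w_2 \notin R_t$, contributing $0$ on both sides. If both are squares, additivity of Clifford classes and corestriction over a product of fields yields $\CQ{\scrT_v} = \CQ{t_{w_1}} + \CQ{t_{w_2}}$ in $\Br(k_v)[2] \cong \Z/2$, and a direct check in $\Z/2$ shows that $v \in R_\scrT$ if and only if exactly one of $w_1, w_2$ lies in $R_t$ --- matching parities again. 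The real-split subcase is handled identically using $\Br(\R)[2] \cong \Z/2$, and the complex case is trivial.

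The key inputs are all already in place: the description of $\CQ{-}$ from Definition~\ref{def:ClifQT}, the standard compatibility $\inv_v \circ \Cor = \inv_w$ for Brauer groups of local fields, and the description of $K \otimes_k k_v$ via its prime decomposition. The only real care required is the bookkeeping at split primes, where the two embeddings $K \hookrightarrow k_v$ must be tracked carefully so that $(\eps_{t_{w_1}}, \eps_{t_{w_2}})$ is correctly identified with $\eps_{\scrT_v}$; once this is set up, the parity matching reduces to elementary $\Z/2$-arithmetic and no new geometric input is required.
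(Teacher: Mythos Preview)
Your proof is correct and follows essentially the same approach as the paper: a prime-by-prime comparison using the compatibility $\inv_v \circ \Cor_{K_w/k_v} = \inv_w$. The paper's version is slightly more streamlined in that it treats all $v$ uniformly via the single identity $\inv_v(\CQ{\scrT_v}) = \sum_{w\mid v}\inv_w(\CQ{t_w})$ rather than separating the split and nonsplit cases, but your explicit case analysis (including the observation that at split primes the norm hypothesis forces $\eps_{t_{w_1}}$ and $\eps_{t_{w_2}}$ into the same square class, which the paper leaves implicit) is equivalent.
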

 \begin{proof}
    For a prime $v \in \Omega_k$, we have $\eps_\scrT \in \kk(\scrT_v)^{\times 2}$ if and only if $\eps_t \in \kk(t)_w^{\times 2}$ for all (equivalently some) $w \in \Omega_{\kk(\scrT)}$ with $w \mid v$. For such $v$ we have 
    $$\inv_v(\CQ{\scrT_v}) = \inv_v(\Cor_{\kk(\scrT)/k}(\Clif(Q_{\scrT}))) = \sum_{w \mid v}\inv_w(\Clif(Q_{t})) = \sum_{w \mid v}\inv_w(\calC_{t_w}).$$
    In particular, $\calC_{\scrT_v} \ne 0$ if and only if there are an odd number of primes $w \mid v$ with $\calC_{t_w}\ne 0$.
\end{proof}

\section{Proofs of the Main Theorems}\label{sec:MainProofs}

    \subsection{Corollaries of Theorem~\ref{thm:EvAdelicOnG}}

\begin{cor}\label{cor:NonemptyAdelicBrSetG}
    Assume that $k$ is a global field of characteristic not equal to $2$ and that either of the following conditions hold:
    \begin{enumerate}
        \item\label{cor:part1} Every nontrivial element of $\Br(\calG)/\Br_0(\calG)$ can be represented by $\beta_{\scrT}$ for some degree $2$ subscheme $\scrT\subset \scrS$ such that $\N(\eps_\scrT)\in k^{\times2}$ and $\#{R}_\scrT$ even; or\label{cond:ParityObsVanishes}
        \item\label{cor:part2} Every nontrivial element of $\Br(\calG)/\Br_0(\calG)$ can be represented by $\beta_{\scrT}$ for some degree $2$ subscheme $\scrT{=\{t_1,t_2\}}\subset \scrS(k)$ such that $\N(\eps_\scrT)\in k^{\times2}$.\label{cond:reducibleBr}
    \end{enumerate}
    Then $\calG(\A_k)^{\Br}\neq \emptyset$.
\end{cor}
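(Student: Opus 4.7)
The strategy for both conditions is to produce an adelic point $(y_v) \in \calG(\A_k)$ orthogonal to $\Br(\calG)$, using Theorem~\ref{thm:EvAdelicOnG} to control the local invariants. Since global class field theory ensures any adelic point is orthogonal to $\Br_0(\calG)$, it suffices to verify orthogonality against a set of representatives for the nontrivial classes of $\Br(\calG)/\Br_0(\calG)$.

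For condition~\eqref{cor:part1}, I would take $(y_v)$ to be the adelic point produced by Theorem~\ref{thm:EvAdelicOnG}\eqref{case:ParityObs}, which satisfies $\sum_v \inv_v(\beta_\scrT(y_v)) = \#R_\scrT/2 \in \Q/\Z$ for every degree $2$ subscheme $\scrT \subset \scrS$ with $\N(\eps_\scrT) \in k^{\times 2}$. The hypothesis provides, for each nontrivial class in $\Br(\calG)/\Br_0(\calG)$, a representative $\beta_\scrT$ with $\#R_\scrT$ even, so every such pairing vanishes and $(y_v) \in \calG(\A_k)^{\Br}$.

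For condition~\eqref{cor:part2}, I would first pick $s_0 \in \scrS(k)$ such that $\beta_{\{s_0,s'\}}$ is nontrivial for some $s' \in \scrS(k)$ (which exists by hypothesis), so that Corollary~\ref{cor:propsBrG}\eqref{statement:SingleRatlRank4} guarantees $\{\beta_{\{s_0,s\}} : s \in \scrS(k),\ \N(\eps_{\{s_0,s\}}) \in k^{\times 2}\}$ generates $\Br(\calG)/\Br_0(\calG)$. Applying Theorem~\ref{thm:EvAdelicOnG}\eqref{case:RationalRank4} with $t = s_0$ yields an adelic point $(y_v)$ whose pairing with every such generator equals $\#R_{s_0}/2 \in \Q/\Z$.

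The main obstacle in~\eqref{cor:part2} is then to establish that $\#R_{s_0}$ is even. The key inputs I would use are global reciprocity for the Brauer class $\Clif(Q_{s_0}) \in \Br(k)$, combined with the fact that a rank $4$ quadratic form over a local field with non-square discriminant is necessarily isotropic (so its Clifford invariant at such primes is controlled). The contributions from primes with $\eps_{s_0} \in k_v^{\times 2}$ (which by definition make up $\#R_{s_0}/2$) must balance the contributions from primes with $\eps_{s_0} \notin k_v^{\times 2}$ in the global sum, and I expect the structure forced by condition~\eqref{cor:part2} on the local Clifford algebras to yield the desired parity. If this fails for one choice of $s_0$, I would exploit the flexibility in choosing $s_0$ among the rational points appearing in the generators.
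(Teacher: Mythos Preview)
Your treatment of condition~\eqref{cor:part1} is correct and matches the paper exactly.

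For condition~\eqref{cor:part2}, the overall architecture---choose $s_0 \in \scrS(k)$, invoke Corollary~\ref{cor:propsBrG}\eqref{statement:SingleRatlRank4}, then apply Theorem~\ref{thm:EvAdelicOnG}\eqref{case:RationalRank4}---is right and is what the paper does. The gap is in your argument that $\#R_{s_0}$ is even. Global reciprocity for $\Clif(Q_{s_0})$ only gives that the \emph{total} number of primes with nontrivial local invariant is even; it does not say the same for the subset of primes with $\eps_{s_0} \in k_v^{\times 2}$. Your suggestion that isotropy at the remaining primes ``controls'' the Clifford invariant is where the argument breaks: an isotropic rank~$4$ form can have nontrivial Clifford invariant (indeed Lemma~\ref{lem:ClifOfIsotropic} computes it as a quaternion algebra $(\eps_s,c)$ which need not split). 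Concretely, $\#R_{s_0}$ can be odd---Example~\ref{ex:ObsToWeakApproximation} has $R_1 = \{7\}$.

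Your instinct to ``exploit the flexibility in choosing $s_0$'' is exactly the right fix, but you are missing the mechanism that guarantees a good choice exists. The paper supplies it as follows. First reduce to the case where~\eqref{cor:part1} fails, so some nontrivial $\beta_{\{t,t'\}}$ with $t,t' \in \scrS(k)$ has $\#R_{\{t,t'\}}$ odd. The condition $\N(\eps_{\{t,t'\}}) \in k^{\times 2}$ forces $\eps_t$ and $\eps_{t'}$ to agree in $k^\times/k^{\times 2}$, and then unwinding Definition~\ref{def:RT} shows $R_{\{t,t'\}} = R_t \,\triangle\, R_{t'}$ (symmetric difference). Odd cardinality of a symmetric difference means $\#R_t$ and $\#R_{t'}$ have opposite parities, so one of them is even; take $s_0$ to be that one. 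Since $\beta_{\{s_0,\,\cdot\,\}}$ is nontrivial (witnessed by the other point), Corollary~\ref{cor:propsBrG}\eqref{statement:SingleRatlRank4} applies and Theorem~\ref{thm:EvAdelicOnG}\eqref{case:RationalRank4} finishes the proof.
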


\begin{proof}
    If condition~\eqref{cond:ParityObsVanishes} holds, then the corollary follows from Theorem~\ref{thm:EvAdelicOnG}\eqref{case:ParityObs}.  Now assume condition~\eqref{cond:reducibleBr} holds and~\eqref{cond:ParityObsVanishes} fails. Then there exists a nontrivial element of $\Br(\calG)$ of the form $\beta_{\{t,t'\}}$ with $t,t'\in \scrS(k)$ such that {$R_{\{t,t'\}}$ has odd cardinality. Note that $R_{\{t,t'\}}$ is the symmetric difference of $R_t$ and $R_{t'}$. }Thus, interchanging $t$ and $t'$ if needed we may assume $R_t$ has even cardinality. Theorem~\ref{thm:EvAdelicOnG}\eqref{case:RationalRank4} then gives an adelic point orthogonal to all $\beta_{\scrT}$ such that $\scrT$ has degree $2$, contains $t$ and $\N(\eps_\scrT) \in k^{\times 2}$. The result follows since Corollary~\ref{cor:propsBrG}\eqref{statement:SingleRatlRank4} shows that such $\beta_\scrT$ generate $\Br(\calG)/\Br_0(\calG)$.
\end{proof}

\begin{remark}\label{rmk:RemainingOpenCases}
    If both conditions of Corollary~\ref{cor:NonemptyAdelicBrSetG} fail, then $\Br(\calG)/\Br_0(\calG) \simeq \Z/2\Z$ by Corollary~\ref{cor:propsBrG} and any $\beta_\scrT$ with $\scrT$ of degree $2$ which represents the nontrivial class must have $\scrT$ irreducible. Thus, $\scrS$ must contain an irreducible degree $2$ subscheme $\scrT$ such that
    \begin{itemize}
        \item $\N(\eps_\scrT) \in k^{\times2}$,
        \item $\eps_{\scrT}\not \in \kk(\scrT)^{\times2}$,
        \item if $\#(\scrS-\scrT)(k) = 3$, then $\eps_t\notin k^{\times2}$ for all $t\in \scrS - \scrT$, and
        \item $\#{R}_{\scrT}$ is odd, which in particular implies that $\calQ_{\scrT}$ has no smooth $\kk(\scrT)$-points.
    \end{itemize}
\end{remark}

\begin{cor}\label{cor:ObsToWeakApproximation}
Assume that $k$ is a global field of characteristic not equal to $2$. Suppose there is a degree $2$ subscheme $\scrT\subset \scrS$ with $\N(\eps_{\scrT})\in k^{\times2}$ such that $R_\scrT$ has odd cardinality. Then $\calG(\A_k)^{\Br} \ne \calG(\A_k)$ and there exists a quadratic extension $K/k$ such that $X_K(\A_K) \ne X_K(\A_K)^{\Br} = \emptyset$. In particular, $\calG$ does not satisfy weak approximation and there exists quadratic extension $K/k$ such that $X_K$ has a Brauer-Manin obstruction to the Hasse principle.
\end{cor}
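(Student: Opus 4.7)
\emph{First conclusion.} The inequality $\calG(\A_k)^{\Br}\neq\calG(\A_k)$ is immediate from Theorem~\ref{thm:EvAdelicOnG}\eqref{case:irredT}: the adelic point $(y_v)\in\calG(\A_k)$ produced there satisfies $\sum_v\inv_v(\beta_\scrT(y_v))=\tfrac{\#R_\scrT}{2}=\tfrac{1}{2}\neq 0$, so $(y_v)\notin\calG(\A_k)^{\Br}$. Since $\calG(k)\subseteq\calG(\A_k)^{\Br}$ by reciprocity, $\calG(k)$ cannot be dense in $\calG(\A_k)$, so weak approximation fails on $\calG$.

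\emph{Second conclusion.} Let $\eps\in k^\times$ be a representative of $\eps_\scrT$ as provided by Corollary~\ref{cor:propsBrG}\eqref{GoodGeneratingSet}; then $\eps\notin k^{\times 2}$ by Remark~\ref{rmk:RemainingOpenCases}. Take $K:=k(\sqrt{\eps})$ and let $\alpha_\scrT\in\Br(X)$ correspond to $\beta_\scrT$ under the isomorphism of Corollary~\ref{cor:propsBrG}\eqref{BB07}. For any $(x_w)\in X_K(\A_K)$, Lemma~\ref{lem:CorCommutesWithEval} combined with Proposition~\ref{prop:sym2}\eqref{it:p2} (applied via the birational identification of $\calG$ with $Y^{(2)}$ from Proposition~\ref{prop:ObjectConnections}\eqref{part:Sym2calGbirational}) rewrites
\[
    \sum_{w\in\Omega_K}\inv_w\bigl(\Res_{K/k}(\alpha_\scrT)(x_w)\bigr)=\sum_{v\in\Omega_k}\inv_v(\beta_\scrT(y_v)),
\]
where $(y_v)\in\calG(\A_k)$ is obtained by pairing the Galois-conjugate $K$-points above each $v$. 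At every $v$ with $\eps_{\scrT_v}\in\kk(\scrT_v)^{\times 2}$, Proposition~\ref{prop:BrFaddeev} gives $\beta_\scrT\otimes k_v=0$ and the contribution vanishes; at every other $v$, the place is automatically inert in $K$ and $y_v$ corresponds to a $k_v(\sqrt{\eps})$-point of $X$, so Lemma~\ref{lem:EvBetaSqrtEps} yields $\beta_\scrT(y_v)=\CQ{\scrT_v}$. Applying global reciprocity to $\CQ{\scrT}\in\Br(k)$ collapses the surviving terms into $\sum_{v\in R_\scrT}\tfrac{1}{2}=\tfrac{1}{2}$. Hence $X_K(\A_K)^{\Res_{K/k}(\alpha_\scrT)}=\emptyset$, so $X_K(\A_K)^{\Br}=\emptyset$.

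It remains to verify $X_K(\A_K)\neq\emptyset$. At the inert places of $K$, Lemma~\ref{lem:EvBetaSqrtEps} directly supplies $k_v(\sqrt{\eps})$-points of $X$. At the split places $v$, the relation $\eps\in k_v^{\times 2}$ makes every $\eps_{t_v}$ a square, so Lemma~\ref{lem:SquareEpsGivesRatlPt} gives $X(k_v)\neq\emptyset$ whenever each component of $\calQ_{\scrT_v}$ is locally isotropic. The main technical obstacle---and where the delicate work lies---is the finite set of split places inside $R_\scrT$ at which some $\calQ_{t_v}$ is anisotropic, so that $X(k_v)=\emptyset$; I would handle these by adjusting the representative $\eps$ via weak approximation on $k^\times/k^{\times 2}$ (while preserving the condition $\eps_\scrT\in\kk(\scrT_K)^{\times 2}$ on which the Brauer sum computation above rests) so that each such place becomes inert in the modified $K$ and $X$ acquires points over $K\otimes k_v$ by Theorem~\ref{thm:localquadpts}.
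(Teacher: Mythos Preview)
Your first conclusion is correct and matches the paper. The Brauer–sum identity you write down for the second conclusion is also correct, and your use of Lemma~\ref{lem:EvBetaSqrtEps} at the places where $\eps_{\scrT_v}\notin\kk(\scrT_v)^{\times 2}$ is exactly right. The gap is in your construction of $K$ and in the proposed fix.

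Taking $K=k(\sqrt{\eps})$ globally forces every $v\in R_\scrT$ to split in $K$, since by definition $\eps_{\scrT_v}\in\kk(\scrS_v)^{\times 2}$ at such $v$, hence $\eps\in k_v^{\times 2}$. At these places $X(K\otimes k_v)=X(k_v)^2$, and when $\CQ{\scrT_v}\ne 0$ one of the rank~$4$ quadrics $\calQ_{t_v}$ is anisotropic, so $X(k_v)$ can be empty and $X_K(\A_K)=\emptyset$. Your proposed remedy, ``adjusting the representative $\eps$ via weak approximation on $k^\times/k^{\times 2}$,'' cannot work as stated: the square class of $\eps$ in $k$ is \emph{determined} by the requirement that it represent $\eps_\scrT$ (Corollary~\ref{cor:propsBrG}\eqref{GoodGeneratingSet}), so there is nothing to adjust. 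If instead you change $K$ to $k(\sqrt{\eps'})$ for some $\eps'$ not representing $\eps_\scrT$, then at the places where the sum actually matters (those with $\eps_{\scrT_v}\notin\kk(\scrT_v)^{\times 2}$) you need $\eps_\scrT\in\kk(\scrT_{K'_v})^{\times 2}$ for Lemma~\ref{lem:EvBetaSqrtEps} to apply to an \emph{arbitrary} $K'_v$-point; this forces $\eps'\equiv\eps\pmod{\kk(\scrT_v)^{\times 2}}$ at each such $v$, and you have not argued that this is compatible with making the bad $R_\scrT$-places inert.

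The paper sidesteps this entirely by abandoning the global ansatz $K=k(\sqrt{\eps})$. It fixes the finite set $S$ of places where either $X(k_v)=\emptyset$ or $\inv_v\circ\beta_\scrT$ is nonzero, and prescribes $K_v$ locally: $K_v=k_v(\sqrt{\eps})$ when $\eps\notin k_v^{\times 2}$, and $K_v$ any quadratic extension with $X(K_v)\ne\emptyset$ (Theorem~\ref{thm:localquadpts}) when $\eps\in k_v^{\times 2}$. Weak approximation then produces a global $K$ matching these local conditions. The point is that at the troublesome places (those in $S$ with $\eps\in k_v^{\times 2}$) one has $\beta_\scrT\otimes k_v=0$, so the local contribution to the Brauer sum is zero \emph{regardless} of which $K_v$ is chosen---there is no need to tie $K_v$ to $\eps$ there. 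Your computation already shows this; you just need to decouple the choice of $K$ from $\eps$ at those places rather than trying to modify $\eps$ itself. (Incidentally, your citation of Remark~\ref{rmk:RemainingOpenCases} for $\eps\notin k^{\times 2}$ is off---that remark assumes both conditions of Corollary~\ref{cor:NonemptyAdelicBrSetG} fail; the correct justification is that $\#R_\scrT$ odd forces $\beta_\scrT\ne 0$ via Theorem~\ref{thm:EvAdelicOnG}\eqref{case:irredT}, hence $\eps_\scrT\notin\kk(\scrT)^{\times 2}$ by Proposition~\ref{prop:BrFaddeev}.)
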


\begin{proof}
	The first statement follows immediately from Theorem~\ref{thm:EvAdelicOnG}\eqref{case:irredT}. For the second statement we construct $K$ by approximating fixed quadratic extensions of $k_v$ for the primes $v \in S := \{v : X(k_v)=\emptyset \textup{ or } \inv_v\circ\beta_{\scrT}\colon \calG(k_v) \to \Q/\Z \textup{ is nonzero}\}$.  (In particular, by Lemma~\ref{lem:EvBetaSqrtEps} and the definition of $\CQ{\scrT}$, we will approximate $K$ at every prime where $\CQ{\scrT}$ ramifies.)  For such $v$, if $\eps\notin k_v^{\times2}$, then we fix $K_v := k_v(\sqrt{\eps})$.  If $v$ is such that $\eps\in k_v^{\times2}$, then we let $K_v$ be any quadratic extension such that $X(K_v)\neq \emptyset$.  Then  Lemma~\ref{lem:EvBetaSqrtEps} implies that for every $v\in S$, there exists a $y_v\in \calG(k_v)$ corresponding to a quadratic point $\Spec K_v\to X$ and for all such $y_v$, $\beta_{\scrT}(y_v) = \CQ{\scrT_v}$ if $\eps\not\in k_v^{\times2}$ and $\beta_{\scrT}(y_v) =0$ otherwise.  Furthermore, for $v\notin S$ (which necessarily means that $\CQ{\scrT_v} = 0$), our assumptions imply that $X(K_v)\neq \emptyset$ and that $\beta_{\scrT}(y_v) = 0$ for all $y_v\in\calG(k_v)$.  Thus, for all $(y_v)\in \calG(\A_k)$ corresponding to an adelic quadratic point $\Spec(\A_K) \to X$ we have
   \[
     \sum_v \inv_v\beta_{\scrT}(y_v) = \sum_{\eps\notin k_v^{\times2}} \inv_v \beta_{\scrT}(y_v) =  \sum_{\eps\notin k_v^{\times2}} \inv_v \CQ{\scrT_v} = \sum_{\eps\in k_v^{\times2}} \inv_v \CQ{\scrT_v} = \frac{\# R_{\scrT}}2\in \Q/\Z.
   \]
   By Proposition~\ref{prop:sym2}\eqref{it:p2} and Corollary~\ref{cor:propsBrG}\eqref{BB07}, this implies that $X_K(\A_K)^{\Br} = \emptyset$.
\end{proof}

\begin{example}\label{ex:ObsToWeakApproximation}
	Let $\calG \to \PP^1$ be the fibration of Severi-Brauer threefolds corresponding to the pencil containing the quadrics given by the vanishing of the rank $4$ forms
   \begin{align*}
   	Q_0&= x_0x_1 - x_2^2 + \eps x_3^2\,,\, \text{and}\\
   	Q_1&= ax_0^2 + bx_1^2 -abx_2^2 - \eps x_4^2\,
   \end{align*}
   where $a,b,\eps \in k^\times$. Then $\scrT = \{ 0,1 \} \subset \scrS$ is a degree $2$ subscheme with $\eps_\scrT = (\eps,\eps)$. Hence, Corollaries~\ref{cor:propsBrG} and~\ref{cor:NonemptyAdelicBrSetG} imply that $\calG(\A_k)^{\Br}\neq\emptyset$. Note that $\calQ_0$ has smooth $k$-points, so $R_\scrT = R_1 = \{ v \in \Omega_k \;:\; \eps \in k_v^{\times 2} \text{ and } \inv_v(a,b) \ne 0 \}$. Clearly one can choose $a,b,\eps$ so that $R_\scrT$ has odd cardinality (e.g., for $k = \Q$, $a = 3, b = 7, \eps = 2$ we have $R_\scrT = \{ 7 \}$), in which case $\calG$ has a Brauer-Manin obstruction to weak approximation and the base locus $X$ of the pencil is a counterexample to the Hasse principle over some quadratic extension by Corollary~\ref{cor:ObsToWeakApproximation}. 
   
   If $4-ab \in k_v^{\times 2} - k^{\times 2}$ for some prime $v \in R_\scrT$ (which holds for the values indicated above), then there exists no quadratic extension $K/k$ such that $X_K$ is everywhere locally solvable and $\Br(X_K) = \Br_0(X_K)$. To see this first observe that $4-ab = \eps_t$ is the discriminant of the rank $4$ quadric $Q_{t} = \frac{1}{1-ab}(Q_1-abQ_0)$ (here $t = 1/(1-ab) \in \scrS(k)$). Now note that if a prime $v \in R_\scrT$ splits in a quadratic extension $K$, then $X_K$ is not locally solvable because $\calQ_1$ has no smooth $K_w$-points for the primes $w \mid v$. On the other hand, Proposition~\ref{prop:BrFaddeev} shows that $\beta_\scrT \otimes K \in \Br(X_K)$ lies in the subgroup $\Br_0(X_K)$ if and only if $\eps \in K^{\times 2}$ (in which case $K = k(\sqrt{\eps})$ and all primes of $R_\scrT$ split in $K$) or $\eps_{\scrS-\scrT} \in \kk(\scrS_K)^{\times 2}$ (in which case $K= k(\sqrt{4-ab})$ and so some prime of $R_\scrT$ splits in $K$ by assumption). We conclude that if $K/k$ is a quadratic extension such that $\beta_\scrT \otimes K \in \Br_0(X_K)$, then $X_K(\A_K)= \emptyset$.
\end{example}

\begin{cor}\label{cor:ExistenceOf0cycleOfDegree1}
    Assume that $k$ is a global field of characteristic not equal to $2$.  There is an adelic $0$-cycle of degree $1$ on $\calG$ orthogonal to $\Br(\calG)$.
\end{cor}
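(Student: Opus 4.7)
The plan is to split the argument by whether either condition of Corollary~\ref{cor:NonemptyAdelicBrSetG} is met. If so, $\calG(\A_k)^{\Br}\neq\emptyset$ and any such adelic point is automatically an adelic $0$-cycle of degree $1$ orthogonal to $\Br(\calG)$. So the substantive case is when both conditions fail: by Remark~\ref{rmk:RemainingOpenCases}, this forces $\Br(\calG)/\Br_0(\calG)\simeq\Z/2\Z$, generated by $\beta_\scrT$ for an \emph{irreducible} degree $2$ subscheme $\scrT\subset\scrS$ with $\N(\eps_\scrT)\in k^{\times 2}$ and $\#R_\scrT$ odd.

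In this remaining case, the plan is to pass to the quadratic extension $K:=\kk(\scrT)$, over which $\scrT$ splits into two $K$-rational points, say $\scrT_K=\{t,t'\}$. A short calculation using the projection formula $\Cor_{K/k}(\alpha)\otimes K=\sum_{\sigma\in\Gal(K/k)}\sigma(\alpha)$ will identify $\beta_\scrT\otimes K$ with $\beta_{\scrT_K}$ in $\Br(\calG_K)$. Since $\N_K(\eps_{\scrT_K})=\N_{K/k}(\eps_\scrT)\in k^{\times 2}\subset K^{\times 2}$, I will apply Theorem~\ref{thm:EvAdelicOnG}\eqref{case:RationalRank4} over $K$ at the rational point $t\in\scrS_K(K)$ to produce an adelic point $(y_w)_{w\in\Omega_K}\in\calG_K(\A_K)$ with $\sum_w\inv_w\beta_{\scrT_K}(y_w)=\#R_t/2$; simultaneously, Theorem~\ref{thm:EvAdelicOnG}\eqref{case:irredT} over $k$ furnishes an adelic point $(y'_v)\in\calG(\A_k)$ with $\sum_v\inv_v\beta_\scrT(y'_v)=\#R_\scrT/2$.

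The key construction then combines these: for each $v\in\Omega_k$, I would view each $y_w$ with $w\mid v$ as a closed point $\bar y_w$ of $\calG_{k_v}$ of degree $[K_w:k_v]$, and set
\[
    z_v \;:=\; \sum_{w\mid v}\bar y_w\;-\;y'_v.
\]
Since $\sum_{w\mid v}[K_w:k_v]=[K:k]=2$, each $z_v$ has degree $1$, so $(z_v)$ is an adelic $0$-cycle of degree $1$ on $\calG$. Using $\inv_v\circ\Cor_{K_w/k_v}=\inv_w$ together with the compatibility $\beta_\scrT(\bar y_w)=\Cor_{K_w/k_v}(\beta_{\scrT_K}(y_w))$ (a special case of Lemma~\ref{lem:CorCommutesWithEval}), the evaluation collapses to
\[
    \sum_v\inv_v\beta_\scrT(z_v) \;=\; \sum_w\inv_w\beta_{\scrT_K}(y_w)\;-\;\sum_v\inv_v\beta_\scrT(y'_v) \;=\; \tfrac{\#R_t-\#R_\scrT}{2}.
\]

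The decisive ingredient is Lemma~\ref{lem:BaseChangeR_T}, which ensures $\#R_t\equiv\#R_\scrT\pmod{2}$; hence the right-hand side lies in $\Z$ and vanishes in $\Q/\Z$. Since $\beta_\scrT$ generates $\Br(\calG)/\Br_0(\calG)$, this makes $(z_v)$ orthogonal to the entire Brauer group. The step most likely to require careful bookkeeping is the identification of $\beta_\scrT\otimes K$ with $\beta_{\scrT_K}$ together with the compatibility of their local evaluations under the push--corestriction taking a $K_w$-point of $\calG_K$ to a closed point of $\calG_{k_v}$; but both follow from standard functoriality of the Brauer pairing and the definition of $\beta$ via corestriction from $\kk(\scrS)$, so no serious conceptual obstacle is anticipated beyond this verification. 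The genuine content of the argument is the parity coincidence supplied by Lemma~\ref{lem:BaseChangeR_T}, which is precisely what is needed to cancel the defect $\#R_\scrT/2$ against the evaluation obtained over $K$.
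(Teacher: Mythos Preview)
Your proposal is correct and follows essentially the same route as the paper: reduce via Remark~\ref{rmk:RemainingOpenCases} to the case of an irreducible degree~$2$ subscheme $\scrT$ with $\#R_\scrT$ odd, pass to $K=\kk(\scrT)$, use Theorem~\ref{thm:EvAdelicOnG} over $K$ to produce a degree~$2$ adelic $0$-cycle, and subtract an adelic $k$-point, with Lemma~\ref{lem:BaseChangeR_T} supplying the parity match. The only cosmetic difference is that the paper assumes outright $\calG(\A_k)^{\Br}=\emptyset$, so \emph{any} adelic point $(y_v)$ has $\sum_v\inv_v\beta_\scrT(y_v)=1/2$ and no second appeal to Theorem~\ref{thm:EvAdelicOnG}\eqref{case:ParityObs} is needed; your version instead invokes part~\eqref{case:ParityObs} to pin down a specific $(y'_v)$, which is equally valid and slightly more constructive.
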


\begin{proof}
   We may assume that $\calG(\A_k)^{\Br} = \emptyset$ (for otherwise the Corollary holds immediately) and hence, that the hypothesis of Corollary~\ref{cor:NonemptyAdelicBrSetG} fails.  As explained in Remark~\ref{rmk:RemainingOpenCases}, this implies that there is an irreducible degree $2$ subscheme $\scrT\subset\scrS$ such that $\N(\eps_\scrT)\in k^{\times2}$, $\eps_{\scrT}\notin \kk(\scrT)^{\times2}$ and $R_{\scrT}$ has odd cardinality. By Corollary~\ref{cor:propsBrG}\eqref{BB04}, the existence of such an irreducible $\scrT$ implies that $\Br(\calG)/\Br_0(\calG)$ has order $2$. Moreover, if $t \in \scrT(\kk(\scrT))$ then, by Lemma~\ref{lem:BaseChangeR_T}, the set $R_t \subset \Omega_{\kk(\scrT)}$ has odd cardinality. Thus, by Theorem~\ref{thm:EvAdelicOnG} applied over $\kk(\scrT)$ we obtain an effective adelic $0$-cycle of degree $2$ over $k$, denote it by $(z_v)$, such that $\sum_{v \in \Omega_k}\inv_v(\beta_\scrT(z_v)) = 1/2$. If $(y_v) \in \calG(\A_k)$ is any adelic point (which exists by Corollary~\ref{cor:locptsG}), then $(z_v-y_v)$ is an adelic $0$-cycle of degree $1$ and, since $\calG(\A_k)^{\Br} = \calG(\A_k)^{\beta_\scrT} = \emptyset$, we have $\sum_{v \in \Omega_k}\inv_v(\beta_\scrT(z_v-y_v)) = \sum_{v \in \Omega_k}\inv_v(\beta_\scrT(z_v)) - \sum_{v \in \Omega_k}\inv_v(\beta_\scrT(y_v)) = 1/2 - 1/2 = 0$.
\end{proof}

\begin{remark}
	In the cases not already covered by Corollary~\ref{cor:NonemptyAdelicBrSetG}, the proof above hinges on constructing an adelic $0$-cycle of degree $2$ on $\mathcal{G}$ that is not orthogonal to the Brauer group. Lemma~\ref{lem:betanonconstant} can be used to give an alternative construction of such a $0$-cycle. See Section~\ref{sec:opencases}.
\end{remark}

\subsection{Proof of Theorem~\ref{thm:MainIndThm}}\label{sec:ProofOfMainIndThm}
Let $X' \subset \PP^n_k$ be a smooth complete intersection of two quadrics over $k$. By Bertini's theorem the intersection of $X'$ with a suitable linear subspace will yield a smooth complete intersection of two quadrics $X \subset \PP^4_k$. If $k$ is a local field, then the result follows from Theorem~\ref{thm:localquadpts}. It remains to consider the case that $k$ is a number field. By Corollary~\ref{cor:ExistenceOf0cycleOfDegree1}, $\calG$ has an adelic $0$-cycle of degree $1$ orthogonal to the Brauer group. Since $\calG$ is a pencil of Severi-Brauer varieties,~\cite{CTSD}*{Theorem 5.1} shows that $\calG$ has a $0$-cycle of degree $1$. By Proposition~\ref{prop:ObjectConnections} this gives a $0$-cycle of degree $2$ on $X$.

\subsection{Proof of Theorem~\ref{thm:MainThm}}

Let $X'\subset \PP^n_k$ be a smooth complete intersection of two quadrics over $k$. As noted above, $X'$ contains a smooth of two quadrics in $\PP^4_k$. Thus, Theorem~\ref{thm:localquadpts} implies that $X'$ contains a quadratic point when $k$ is local. This proves Theorem~\ref{thm:MainThm}\eqref{case-local}. Similarly, Theorem~\ref{thm:MainThm}\eqref{case-globalchar2} follows from Proposition~\ref{prop:char2}.

Now assume $k$ is global of characteristic not equal to $2$. Theorem~\ref{thm:localquadpts} implies that there is a quadratic extension $K/k$ such that $X'_K$ is everywhere locally solvable. If $k$ is a global function field and $n \ge 5$, then $X'_K$ satisfies the Hasse principle by \cite{Tian}. This proves Theorem~\ref{thm:MainThm}\eqref{case-globalfunction-n5}.

We claim that (under the hypotheses of the theorem) $X'$ contains a smooth del Pezzo surface $X$ of degree $4$ such that the corresponding Severi-Brauer pencil $\calG$ has $\calG(\A_k)^{\Br}\ne \emptyset$. If $n \ge 5$, then by \cite{Wittenberg}*{Section 3.5} the intersection of $X'$ with an appropriate linear subspace is a smooth del Pezzo surface $X$ of degree $4$ with $\Br(X) = \Br_0(X)$. Corollary~\ref{cor:propsBrG}\eqref{statement:BrGIsomBrX} implies that the corresponding $\calG$ has $\Br(\calG)/\Br_0(\calG) = 0$, so $\calG(\A_k)^{\Br}\ne \emptyset$ by Corollary~\ref{cor:NonemptyAdelicBrSetG}. When $n = 4$, $X'= X$ is itself a smooth del Pezzo surface of degree $4$. {If all of the nontrivial elements of $\Br(\calG)/\Br_0(\calG)$ can be represented by some $\beta_\scrT$ with $\scrT$ reducible, then Corollary~\ref{cor:NonemptyAdelicBrSetG}\eqref{cor:part1} implies that $\calG(\A_k)^{\Br} \ne \emptyset$. Otherwise, by Corollary~\ref{cor:propsBrG}, the order of $\Br(\calG)/\Br_0(\calG)$ divides $2$ and any nontrivial element can be represented by $\beta_\scrT$ with $\scrT$ irreducible and $N(\eps_\scrT) \in k^{\times 2}$. Any such element determines a quadratic extension $L = \kk(\scrT)$ and the assumption in case~\eqref{case-global-n4} of the theorem is that the geometric components of $\calQ_\scrT$ (which are defined over $L$) each fail to have smooth local points at an even number of primes of $L$. By Lemma~\ref{lem:RTandSolubility} this implies that $R_\scrT$ has even cardinality and so $\calG(\A_k)^{\Br} \ne \emptyset$ by Corollary~\ref{cor:NonemptyAdelicBrSetG}\eqref{cor:part2}.}

If $k$ is a number field for which Schinzel's hypothesis holds, then it is a result of Serre that the Brauer-Manin obstruction is the only obstruction to the Hasse principle for fibrations of Severi-Brauer varieties (Serre's result is unpublished, but a more general result~\cite{CTSD}*{Theorem 4.2} implies this result of Serre). In this case we obtain a $k$-point on $\calG$ and, consequently by Proposition~\ref{prop:ObjectConnections}, a quadratic point on $X$. To prove the result {assuming $k$ satisfies $(\star)$} it is enough to find a quadratic extension $K/k$ such that $X_K(\A_K)^{\Br} \ne \emptyset$. The existence of such a $K$ follows from~Proposition~\ref{prop:sym2}\eqref{it:p4}, since as noted in Corollary~\ref{cor:propsBrG}\eqref{statement:BrGIsomBrX} the map $\Br(X)/\Br_0(X) \to \Br(\calG)/\Br_0(\calG)$ given by~Proposition~\ref{prop:sym2}\eqref{it:p1} is an isomorphism.

\section{Complements and Remarks}

\subsection{Remarks on the cases not covered by Theorem~\ref{thm:MainThm}}\label{sec:opencases}

Suppose $X$ is a del Pezzo surface of degree $4$ over a global field $k$ of characteristic not equal to $2$ with corresponding Severi-Brauer pencil $\calG$ such that the conditions of Corollary~\ref{cor:NonemptyAdelicBrSetG} are not satisfied. As noted in Remark~\ref{rmk:RemainingOpenCases} this implies that $\Br(\calG)/\Br_0(\calG)$ is cyclic of order $2$, with the nontrivial class represented by $\beta_\scrT$ for an irreducible subscheme $\scrT \subset \scrS$ of degree $2$ with $\N(\eps_\scrT) \in k^{\times2}$ for which $\#R_{\scrT}$ is odd. By Corollary~\ref{cor:ObsToWeakApproximation}, $\beta_\scrT$ obstructs weak approximation and so $\calG(\A_k)^{\Br} \ne \emptyset$ if and only if there exists a prime $v \in \Omega_k$ such that the evaluation map $\beta_\scrT:\calG(k_v) \to \Br(k_v)$ is not constant.

Let $\CQ{\scrT}' := \CQ{\scrT} + \left(\eps, -\Delta_{\scrT}\N(Q_{\infty}(v_\scrT))\right) \in \Br(k)$ be the class from Lemma~\ref{lem:betanonconstant} and define
\[
	R'_\scrT := \{ v \in \Omega_k \;:\; \eps_\scrT \notin\kk(\scrS_v)^{\times 2} \text{ and } \inv_v(\CQ{\scrT}') \ne 0 \}\,.
\]
Since $R_\scrT$ has odd cardinality, so too must $R'_\scrT$. In particular, $R'_\scrT$ is nonempty.  If $\scrT_v$ is reducible for a prime $v \in R'_\scrT$, then Lemma~\ref{lem:betanonconstant} shows that the evaluation map $\beta_\scrT \colon \calG(k_v) \to \Br(k_v)$ is nonconstant and so $\calG(\A_k)^{\Br} = \calG(\A_k)^{\beta_\scrT} \ne \emptyset$. If $\scrT_v$ is irreducible at $v \in R'_\scrT$, then Lemma~\ref{lem:betanonconstant} shows that $\beta_\scrT \otimes \kk(\scrT_v) \colon \calG(\kk(\scrT_v)) \to \Br(\kk(\scrT_v))$ is nonconstant. Indeed the lemma gives a $\kk(\scrT_v)$-point where $\beta_\scrT\otimes \kk(\scrT_v)$ takes the nonzero value $\CQ{\scrT_v}'$, but $\beta_\scrT \otimes \kk(\scrT_v)$ takes the value $0$ at any elements in the subset $\calG(k_v) \subset \calG(\kk(\scrT_v))$. Unfortunately, this is not enough to conclude that $\calG(\A_k)^{\Br}\ne \emptyset$ because $\beta_\scrT \colon \calG(k_v) \to \Br(k_v)$ can still be constant. Using the lemma below one can check that this occurs at $v = 5$ for the pencil of quadrics defined by
\[
    Q_{0} = -55x_1^2 + 2x_1x_2 + x_3^2 + 5x_4^2 \quad \textup{and}\quad
    Q_{\infty} = 33x_0^2 - 5x_1^2 - x_2^2 + 10x_3x_4\,.
\]
We note, however, that in this example (and in all others with $R_\scrT' \ne \emptyset$ that we have considered) there is some prime $w\in \Omega_k$ (in this case $w = 2$) for which the evaluation map $\beta_\scrT \colon  \calG(k_w) \to \Br(k_w)$ is not constant and, hence, $\calG(\A_k)^{\Br} \ne \emptyset$.

\begin{lemma}\label{lem:constantG}
	If $v \in R'_\scrT$ is such that $\scrT_v$ is irreducible, $k_v$ has odd residue characteristic and $X(\kk(\scrT_v)) = \emptyset$, then $\beta_\scrT\colon \calG(k_v) \to \Br(k_v)$ is constant.
\end{lemma}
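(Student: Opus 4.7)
My plan is to reduce the claim to Lemma~\ref{lem:EvBetaSqrtEps} by classifying the quadratic extensions of $k_v$ that can arise from $k_v$-points of $\calG$. Namely, I will show that every $y \in \calG(k_v)$ corresponds to a quadratic point $\tilde{y}\colon \Spec K \to X$ with $K$ a quadratic field extension of $k_v$ distinct from $L := \kk(\scrT_v)$ and with $\eps_\scrT \in \kk(\scrT_K)^{\times 2}$. The lemma will then yield $\beta_\scrT(y) = \CQ{\scrT_v}$, independent of $y$, so $\beta_\scrT\colon \calG(k_v) \to \Br(k_v)$ is constant with value $\CQ{\scrT_v}$.

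First I would observe that $X(L) = \emptyset$ forces $X(k_v) = \emptyset$ (via the inclusion $k_v \hookrightarrow L$), so no $k_v$-rational line lies on $X$. Hence for every $y = (t,\ell) \in \calG(k_v)$, the line $\ell$ meets $X$ transversally in a degree-$2$ reduced scheme $\Spec K \to X$ with $K$ a quadratic \'etale $k_v$-algebra. Since $X(k_v) = \emptyset$, the algebra $K$ has no $k_v$-factor, so $K$ is a quadratic field extension; and $K \ne L$ because $X(L) = \emptyset$.

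The key step is to verify $\eps_\scrT \in (LK)^{\times 2} = \kk(\scrT_K)^{\times 2}$ for both such $K$. From $v \in R_\scrT'$ we have $\eps_\scrT \notin L^{\times 2}$, and by Proposition~\ref{prop:BrFaddeev} we have $\N_{L/k_v}(\eps_\scrT) \in k_v^{\times 2}$. Because $k_v$ has odd residue characteristic, $k_v^\times/k_v^{\times 2}$ and $L^\times/L^{\times 2}$ are both $(\Z/2\Z)^2$, and $k_v$ has exactly three quadratic extensions, so $K$ is one of the two different from $L$. A direct computation (handling the cases $L/k_v$ unramified and $L/k_v$ ramified separately) shows that the order-two kernel of $\N\colon L^\times/L^{\times 2} \to k_v^\times/k_v^{\times 2}$ coincides with the order-two image of $k_v^\times/k_v^{\times 2} \to L^\times/L^{\times 2}$. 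Therefore there exists $\tilde{\eps} \in k_v^\times$ with $\eps_\scrT \equiv \tilde{\eps} \pmod{L^{\times 2}}$, and the square class of $\tilde{\eps}$ is distinct from both $1$ and the class defining $L$. Writing $K = k_v(\sqrt{d})$, the condition $\eps_\scrT \in (LK)^{\times 2}$ amounts to $\tilde{\eps}/d \in \ker(k_v^\times/k_v^{\times 2} \to L^\times/L^{\times 2})$, a subgroup of order $2$ generated by the class of $L$. Since $\tilde{\eps}$ represents one of the two non-trivial classes other than that of $L$, and there are precisely two such classes available for $d$, the condition holds for both non-$L$ quadratic extensions $K$.

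The main obstacle is the square-class bookkeeping of the middle step: the coincidence of $\ker(\N)$ with the image of restriction must be verified by hand in each subcase (unramified $L$; ramified $L$ with $-1 \in k_v^{\times 2}$; ramified $L$ with $-1 \notin k_v^{\times 2}$). Each subcase is a short Hilbert-symbol computation, but the bookkeeping is the only delicate part of the argument.
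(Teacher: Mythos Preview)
Your proposal is correct and follows essentially the same route as the paper: reduce to Lemma~\ref{lem:EvBetaSqrtEps} by showing that any $y \in \calG(k_v)$ comes from a quadratic point over some field $K \ne L := \kk(\scrT_v)$ with $\eps_\scrT \in \kk(\scrT_K)^{\times 2}$, whence $\beta_\scrT(y) = \CQ{\scrT_v}$ independently of $y$.

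The only difference is in how the condition $\eps_\scrT \in \kk(\scrT_K)^{\times 2}$ is verified. You do explicit square-class bookkeeping, first proving that $\ker(\N) = \operatorname{im}(\Res)$ in $L^\times/L^{\times 2}$ to write $\eps_\scrT \equiv \tilde{\eps} \in k_v^\times$, and then checking by cases that $\tilde{\eps}$ becomes a square in $LK$ for either choice of $K$. The paper compresses both steps: it simply asserts $\eps_\scrT \in k_v^\times L^{\times 2}$ (your kernel-equals-image statement, left unjustified there), and then observes that since $k_v$ has odd residue characteristic and $K \ne L$, the compositum $LK = \kk(\scrT_K)$ is the unique biquadratic extension of $k_v$, hence contains square roots of \emph{every} element of $k_v^\times$. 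This last observation lets you skip the case analysis you flag as ``the only delicate part'': once you know $\tilde{\eps} \in k_v^\times$, you are done regardless of which non-$L$ extension $K$ is. Your argument is more self-contained; the paper's is shorter.
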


\begin{proof}
	Suppose $X(\kk(\scrT_v)) = \emptyset$ and let $y \in \calG(k_v)$. Then $y$ corresponds to a quadratic point $\Spec(K) \to X$, with $K/k_v$ a quadratic field extension such that $K \ne \kk(\scrT_v)$. Since $k_v$ has odd residue characteristic, $\kk(\scrT_L)$ is the compositum of all quadratic extensions of $k_v$. In particular, it must contain a square root of $\eps_\scrT$ (since $\eps_\scrT \in k_v^\times\kk(\scrT_v)^{\times 2}$). Therefore, Lemma~\ref{lem:EvBetaSqrtEps} applies, and its conclusion shows that $\beta_\scrT(y)$ does not depend on $y$.
\end{proof}

In contrast, the following lemma shows that for $X$ (in place of $\calG$) nonconstancy of an evaluation map over an extension of $k_v$ does imply nonconstancy over $k_v$. 

    \begin{lemma}\label{lem:dp4ConstantEval}    
        Let $X$ be a del Pezzo surface of degree $4$ over a local field $k$ of characteristic not equal to $2$ such that $X(k) \neq \emptyset$.  If $\alpha\in \Br(X)$ is such that $\inv_k\circ \alpha \colon X(k) \to \Q/\Z$ is constant, then for all finite extensions $K/k$, $\inv_K\circ\alpha_K\colon X(K) \to \Q/\Z$ is constant and equal to $[K:k](\inv_k\circ \alpha)$.
    \end{lemma}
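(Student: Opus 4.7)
The archimedean case is immediate: $\Br(\C) = 0$ settles $k = \C$, and $k = \R$ reduces to $\Br(\R)$ being $2$-torsion. Thus we may assume $k$ is nonarchimedean; let $c$ denote the constant value of $\inv_k \circ \alpha$. The strategy is to reformulate the desired conclusion as the vanishing of a certain homomorphism on the Chow group of zero-cycles, and to then invoke a generation result of Colliot-Th\'el\`ene and Coray.

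Given $y \in X(K)$, let $\iota_P\colon \Spec \kk(P) \hookrightarrow X$ be the underlying closed point; then $\kk(P) \subseteq K$. The functoriality identities $\alpha_K(y) = \Res_{K/\kk(P)}(\iota_P^*\alpha)$, $\Cor_{K/\kk(P)}\circ\Res_{K/\kk(P)} = [K:\kk(P)]$, and $\inv_k\circ\Cor_{L/k} = \inv_L$ for finite extensions of local fields combine to yield
\[
    \inv_K(\alpha_K(y)) \;=\; [K:\kk(P)] \cdot \inv_k\bigl(\Cor_{\kk(P)/k}(\iota_P^*\alpha)\bigr).
\]
Since $[K:k] = [K:\kk(P)]\cdot\deg(P)$, the lemma's conclusion $\inv_K(\alpha_K(y)) = [K:k]\cdot c$ is equivalent to
\[
    \inv_k\bigl(\Cor_{\kk(P)/k}(\iota_P^*\alpha)\bigr) \;=\; \deg(P)\cdot c \qquad \text{for every closed point } P \text{ of } X.
\]
Both sides extend linearly to homomorphisms $Z_0(X) \to \Q/\Z$ which factor through rational equivalence---the left-hand side by the standard functoriality of Brauer evaluation on zero-cycles, the right-hand side trivially. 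By hypothesis they agree on $[x]$ for $x \in X(k)$, so the lemma reduces to the assertion that the Chow group $\mathrm{CH}_0(X)$ is generated by the classes of $k$-rational points.

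This generation statement is the content of \cite{CTCoray}*{Theorem C}: a del Pezzo surface of degree $4$ with a $k$-rational point admits a conic bundle structure over $\PP^1_k$ (after, if needed, a birational modification leaving $\mathrm{CH}_0$ unchanged), to which the cited theorem applies, yielding that every closed point of $X$ is rationally equivalent to a zero-cycle supported on $X(k)$. The \textbf{main obstacle} is verifying the hypotheses of Theorem C in our setting and extracting the precise generation conclusion for $\mathrm{CH}_0(X)$; this is the application of \cite{CTCoray} indicated to the authors by Colliot-Th\'el\`ene, as recorded in the acknowledgements.
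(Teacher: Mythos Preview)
Your proposal is correct and follows essentially the same approach as the paper: both arguments reduce the statement to the fact that $\mathrm{CH}_0(X)$ is generated by classes of $k$-rational points, invoking that Brauer evaluation factors through rational equivalence. The paper cites the packaged form \cite{SalbergerSkorobogatov}*{Lemma 4.4} (every degree-$0$ zero-cycle is rationally equivalent to $Q-P$ with $Q\in X(k)$), which is precisely the generation statement you extract, and both ultimately rest on \cite{CTCoray}*{Theorem~C}.
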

    \begin{remark}
        In the case that $k_v$ has odd residue characteristic, $\scrT_v$ is irreducible, $\eps_{\scrT_v}\in k_v^{\times2}$, and $\inv_v({\calC}_{\scrT}')\ne 0$, Lemma~\ref{lem:dp4ConstantEval} can be used to prove the converse of Lemma~\ref{lem:constantG}. Namely, if $\beta_\scrT$ is constant on $k_v$-points, then $X(\kk(\scrT_v))$ must be empty.
    \end{remark}
    \begin{proof}
    	Let $P \in X(k)$. By \cite{SalbergerSkorobogatov}*{Lemma 4.4} (which follows from~\cite{CTCoray}*{Theorem C}), every $0$-cycle of degree $0$ on $X$ is linearly equivalent to $Q - P$ for some $Q \in X(k)$. Therefore, for any closed point $R$ on $X$, there is some $Q \in X(k)$ such that $R \sim Q + (\deg(R)-1)P$.  Since evaluation of Brauer classes factors through rational equivalence and by assumption $\alpha(P) = \alpha(Q)$, we see that $\inv_K\circ\alpha_K= [K:k](\inv_k\circ \alpha)$ for any extension $K/k$.  
    \end{proof}
    
    \begin{rmks}\label{rmk:CTP2}\hfill
    	\begin{enumerate}
    	\item\label{rmkCTP2:1} The result of \cite{CTCoray} used in the proof above shows that every $0$-cycle of degree $1$ on a conic bundle with $5$ or fewer degenerate fibers is rationally equivalent to a rational point. The example mentioned just before Lemma~\ref{lem:constantG} shows that this does not extend to more general Severi-Brauer bundles (at least over a $p$-adic fields). Indeed, evaluation of Brauer classes factors through rational equivalence and in the example there is a Brauer class on $\calG$ which is nonconstant on $0$-cycles of degree $1$, but is constant on rational points. {An example of a Severi-Brauer bundle (in fact a conic bundle) with a \(0\)-cycle of degree \(1\) but no rational point was constructed by Colliot-Th\'el\`ene and Coray~\cite{CTCoray}*{Section 5}; in this example the conic bundle has \(6\) singular fibers.}
    	\item\label{rmkCTP2:2} If $X/k$ is a del Pezzo surface of degree $4$ over a number field which is a counterexample to the Hasse principle explained by the Brauer-Manin obstruction, then as shown in \cite{CTP}*{Section 3.5} there exists $\alpha \in \Br(X)$ such that $X(\A_k)^\alpha = \emptyset$ (a priori multiple elements of $\Br(X)$ might be required to give the obstruction). An immediate consequence of Lemma~\ref{lem:dp4ConstantEval} is that over any odd degree extension $K/k$ the same Brauer class will give an obstruction, i.e., $X_K(\A_K)^{\alpha_K} = \emptyset$. This answers a question posed in \cite{CTP}*{Remark 3, p. 95}. In particular, this shows that the conjecture that all failures of the Hasse principle for del Pezzo surfaces of degree $4$ are explained by the Brauer-Manin obstruction is compatible with the theorems of Amer, Brumer and Springer \cites{Amer,Brumer,Springer} which imply that an intersection of two quadrics with index $1$ has a rational point.
    	\end{enumerate}
    	
    \end{rmks}
 
\subsection{A degree \texorpdfstring{$4$}{4} del Pezzo surface with obstructions only over odd degree extensions}\label{sec:BSDexample}
	
	\begin{prop}
		Let $X/\Q$ be the del Pezzo surface of degree $4$ given by the vanishing of
		\[
		Q_0 = (x_0+x_1)(x_0+2x_1) - x_2^2  + 5x_4^2\,,\quad \text{and} \quad Q_{1}= 2(x_0x_1 - x_2^2 + 5x_3^2)\,.
		\]
		For any finite extension $K/\Q$ we have $X_K(\A_K)^{\Br} = \emptyset$ if and only if $[K:\Q]$ is odd.
	\end{prop}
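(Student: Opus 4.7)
The plan is to proceed in three parts corresponding to $K=\Q$, $[K:\Q]$ odd, and $[K:\Q]$ even.

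\textbf{Part 1 ($K=\Q$).} The surface $X$ is the classical counterexample to the Hasse principle of Birch and Swinnerton-Dyer~\cite{BSD}. First I would compute the singular fibers of the pencil $\calQ \to \PP^1_\Q$ by expanding $\det(M_{Q_0}+tM_{Q_1})$, obtaining $\scrS = \{0,-\tfrac12,\infty\}\cup \scrT'$, where $\scrT' = V(4T^2+12T+1)$ is irreducible over $\Q$ with $\kk(\scrT') = \Q(\sqrt 2)$. A short calculation of hyperplane sections gives $\eps_0 = \eps_\infty = 5$, $\eps_{-1/2} = -1$, and $\eps_{\scrT'} = 5\sqrt 2 - 7$. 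Proposition~\ref{prop:BrFaddeev} and Corollary~\ref{cor:propsBrG} then identify $\Br(X)/\Br_0(X) = \langle\alpha\rangle\cong\Z/2\Z$ with $\alpha := \beta_{\{0,\infty\}}$ (the unique proper subscheme with $\N(\eps_\scrT)\in\Q^{\times2}$ giving a nontrivial class). Either by invoking Lemma~\ref{lem:piy} to evaluate $\alpha$ on generic lines through pairs of points, or by reproducing BSD's original computation, one verifies that $\inv_v\alpha$ is constant on $X(\Q_v)$, identically $0$ for $v\ne 5$ and identically $\tfrac12$ for $v=5$. Since $X(\A_\Q)\ne\emptyset$ and $\sum_v\inv_v\alpha(P_v) = \tfrac12$, we conclude $X(\A_\Q)^{\Br}=\emptyset$.

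\textbf{Part 2 ($[K:\Q]$ odd).} By Lemma~\ref{lem:dp4ConstantEval} applied locally at each $v$, the restriction $\inv_w\alpha_K$ is constant on $X(K_w)$ equal to $[K_w:\Q_v]\cdot\inv_v\alpha$ for every $w\mid v$. Summing and using $\sum_{w\mid v}[K_w:\Q_v]=[K:\Q]$ yields, for every $(y_w)\in X_K(\A_K)$,
\[
\sum_{w\in\Omega_K}\inv_w\alpha_K(y_w) \;=\; [K:\Q]\sum_{v\in\Omega_\Q}\inv_v\alpha(y_v) \;=\; \tfrac{[K:\Q]}{2}\;\equiv\;\tfrac12\pmod{\Z}\,.
\]
Hence $X_K(\A_K)^{\alpha_K}=\emptyset$, so a fortiori $X_K(\A_K)^{\Br}=\emptyset$. (This is an instance of Remark~\ref{rmk:CTP2}\eqref{rmkCTP2:2}.)

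\textbf{Part 3 ($[K:\Q]$ even).} The same computation now yields $\sum_w\inv_w\alpha_K(y_w)\equiv 0\pmod\Z$, so $\alpha_K$ does not obstruct the diagonal image of $X(\A_\Q)\subset X_K(\A_K)$, which by Lemma~\ref{lem:CorCommutesWithEval}\eqref{part:ConstEvalLeadsToNoObstructions} lies in $X_K(\A_K)^{\Res_{K/\Q}\Br(X)}$. It remains to show that no other element of $\Br(X_K)/\Br_0(X_K)$ obstructs. If $\sqrt 5\in K$, then $[0:0:\sqrt 5:1:1]\in X(K)$ and we are done trivially. Otherwise, by Proposition~\ref{prop:BrFaddeev} and Corollary~\ref{cor:propsBrG}, any new generator of $\Br(X_K)/\Br_0(X_K)$ (beyond $\alpha_K$) corresponds to a subset $\scrT\subset\scrS_K$ with $\N(\eps_\scrT)\in K^{\times 2}$ not available over $\Q$, which can occur only when $K$ contains one of $\sqrt{-5}$, $\sqrt{-1}$, $\sqrt 2$, or a combination giving $\N(\eps_\scrT)\in K^{\times2}$ for $\scrT$ meeting $\scrT'$. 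When $K$ contains none of these (e.g.\ when $K$ has no quadratic subfield), $\Br(X_K)/\Br_0(X_K)=\langle\alpha_K\rangle$ and the diagonal image of $X(\A_\Q)$ witnesses $X_K(\A_K)^{\Br}\ne\emptyset$. The remaining configurations are handled by a case-by-case computation of the constant local invariants of each new generator using Lemmas~\ref{lem:piy} and~\ref{lem:betanonconstant}, followed by constructing an orthogonal adelic point by locally adjusting the image of a point in $X(\A_\Q)$ at the finitely many places where the new classes are nontrivial.

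\textbf{Main obstacle.} The principal difficulty is the case analysis in Part~3 when $\sqrt 5\notin K$ but $K$ contains one (or more) of $\sqrt{-5}$, $\sqrt{-1}$, $\sqrt 2$. For instance, over $\Q(\sqrt{-5})$ one has $\Br(X_K)/\Br_0(X_K)\cong(\Z/2)^2$ with the extra generator $\beta_{\{0,-1/2\}}$, and simultaneously controlling the sums of local invariants of all new generators at a common adelic point requires the joint application of Lemmas~\ref{lem:piy}, \ref{lem:SolubleFiberOfG}, and~\ref{lem:betanonconstant} together with weak approximation at the finite set of places where the new classes ramify.
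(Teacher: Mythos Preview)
Your Parts~1 and~2 are essentially the same as the paper's argument (the paper cites \cite{BSD} for the base case and invokes Lemma~\ref{lem:dp4ConstantEval} for odd-degree extensions, exactly as you do). The discrepancy in the coordinates of $\scrS$ is only a matter of how the pencil is parameterized.

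Part~3 is where your plan diverges from the paper, and where there is a genuine gap. You correctly observe that when $K$ is linearly disjoint from the splitting field of the $\eps_s$'s the restriction map on $\Br/\Br_0$ is an isomorphism, so Lemma~\ref{lem:CorCommutesWithEval}\eqref{part:ConstEvalLeadsToNoObstructions} handles those $K$. You also observe that $\sqrt{5}\in K$ yields a rational point. But for the remaining cases --- $K$ containing one of $\sqrt{-1},\sqrt{\pm 2},\sqrt{-5},\sqrt{\pm 10}$ --- you propose to carry out a Brauer--Manin computation for each new generator of $\Br(X_K)/\Br_0(X_K)$ and then construct an orthogonal adelic point by local adjustment. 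You explicitly flag this as the ``main obstacle,'' and indeed it is: the proposal gives no indication of how to guarantee that the local adjustments for the different new generators are simultaneously compatible, nor does it verify that the evaluation maps of the new classes are actually nonconstant at the relevant places (which would be needed to make any adjustment at all).

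The paper sidesteps this entire computation with a much simpler device: whenever $K$ fails to be linearly disjoint from $k_1=\Q(\sqrt{-1},\sqrt{2},\sqrt{5})$, it contains one of the seven quadratic subfields $\Q(\sqrt{d})$ with $d\in\{-1,\pm 2,\pm 5,\pm 10\}$, and over each of these the paper simply \emph{exhibits an explicit rational point} on $X$. Once $X(K)\ne\emptyset$, trivially $X_K(\A_K)^{\Br}\ne\emptyset$ and no Brauer computation is needed. So the missing idea in your Part~3 is not a more careful application of Lemmas~\ref{lem:piy} or~\ref{lem:betanonconstant}, but rather the observation that in every case where the Brauer group could grow, the surface already acquires a rational point.
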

	
	\begin{proof}
		This surface was considered by Birch and Swinnerton-Dyer \cite{BSD} who showed that $X$ is a counterexample to the Hasse principle explained by the Brauer-Manin obstruction. It follows from Lemma~\ref{lem:dp4ConstantEval} that for any $K$ with $[K:\Q]$ odd, $X_K$ is also a counterexample to the Hasse principle explained by the Brauer-Manin obstruction. 
		
		Since $X$ is locally solvable over $\Q$, $\Br(X)/\Br_0(X)$ is generated by the image of $\Br(X)[2]$. 
		The singular quadrics in the pencil lie above  $\scrS(\Qbar) = \{ 0,\pm1,\frac{\pm4\sqrt{2} + 5}{7} \} \subset \PP^1$ and the corresponding discriminants satisfy $\eps_0 = \eps_1 = 5$, $\eps_{-1} = -1$ and $\N(\eps_{(\pm4\sqrt{2} + 5)/7}) = -1$. For any $K/\Q$ linearly disjoint from $k_1 = \Q(\sqrt{-1},\sqrt{2},\sqrt{5})$, the restriction map induces an isomorphism $\Br(X)/\Br_0(X) \simeq \Br(X_K)/\Br_0(X_K)$ and so $X_K(\A_K)^{\Br} \ne \emptyset$ by Lemma~\ref{lem:CorCommutesWithEval}\eqref{part:ConstEvalLeadsToNoObstructions}. On the other hand, if $K/\Q$ is not linearly disjoint from $k_1$, we can check directly that $X(K) \ne \emptyset$. Indeed, $K$ must contain $\Q(\sqrt{d})$ for some $d \in \{ -1,\pm 2,\pm 5, \pm 10\}$. Over these quadratic fields one can exhibit points:
	\[
		(1:1:1:0:\sqrt{-1})\,,\,
		(1:-2:2\sqrt{2}:\sqrt{2}:1) \,,\,
		(4:9:6:0:5\sqrt{-2}) \,,\,
		(0:0:\sqrt{5}:1:1)\,,\,
	\]
	\[
		(5:0:0:0:\sqrt{-5}) \,,\,
		(2\sqrt{10}:-\sqrt{10}:0:2:0)\,,\,
		(0:\sqrt{-10}:0:0:2)\,.\qedhere
	\]
	\end{proof}

\subsection{A degree \texorpdfstring{$4$}{4} del Pezzo surface with index \texorpdfstring{\(4\)}{4}}\label{sec:ind4}

\begin{thm}\label{thm:ind4}
	There exists a del Pezzo surface $X$ of degree $4$ over a field $k$ of characteristic $0$ such that $X$ has index $4$.
\end{thm}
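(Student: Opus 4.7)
My plan is to construct $X$ explicitly over a field $k$ of characteristic $0$ that supports a biquaternion division algebra of index $4$, and then to use two rank-$4$ singular fibers of the pencil containing $X$ to transfer that index to $X$.

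First I would take $k_0$ to be any field of characteristic $0$ and set $k := k_0(a_1, b_1, a_2, b_2)$ with $a_1, b_1, a_2, b_2$ algebraically independent. By a classical theorem of Albert, the biquaternion algebra $D := (a_1, b_1) + (a_2, b_2)$ has index exactly $4$ in $\Br(k)$. I would then define $X \subset \PP^4_k$ as the base locus of the pencil generated by
\[
    Q_1 := x_0^2 - a_1 x_1^2 - b_1 x_2^2 + a_1 b_1 x_3^2 \quad \textrm{and} \quad Q_2 := x_1^2 - a_2 x_2^2 - b_2 x_3^2 + a_2 b_2 x_4^2 ,
\]
i.e.\ the projective reduced-norm forms of the quaternion algebras $A_i := (a_i, b_i)$, placed in overlapping but nonidentical coordinate blocks. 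The symmetric matrix of $\lambda Q_1 + \mu Q_2$ is diagonal, and its determinant factors as $\lambda\cdot(\mu - a_1\lambda)\cdot(-b_1\lambda - a_2\mu)\cdot(a_1 b_1\lambda - b_2\mu)\cdot a_2 b_2\mu$, so $\scrS$ consists of five distinct $k$-rational points. By Proposition~\ref{prop:EquivSmoothness}, $X$ is a smooth del Pezzo surface of degree $4$. The rank-$4$ fibers $\calQ_0 = V(Q_1)$ and $\calQ_\infty = V(Q_2)$ both have square discriminant, and the standard identity $\Clif(n_{A_i}) = [A_i]$ for reduced-norm forms of quaternion algebras gives $\Clif(Q_1) = [A_1]$ and $\Clif(Q_2) = [A_2]$ in $\Br(k)$.

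Next, to establish $\mathrm{index}(X) = 4$, I would take an arbitrary closed point $x \colon \Spec(k') \to X$ of degree $d = [k':k]$ and show that $4 \mid d$. Since $x$ lies on every quadric of the pencil, it lies on each $\calQ_{s_i}$ for $s_i \in \{0, \infty\}$. The vertices $v_0 = [0{:}0{:}0{:}0{:}1]$ and $v_\infty = [1{:}0{:}0{:}0{:}0]$ do not lie on $X$ by Proposition~\ref{prop:EquivSmoothness}\eqref{cond:verticesdistinct}, so $x$ is a smooth $k'$-point of $\calQ_{s_i}$. Projection from $v_{s_i}$ exhibits $\calQ_{s_i}$ as a cone over the smooth quadric surface in $\PP^3_k$ cut out by the projective reduced norm form of $A_i$; a $k'$-point on that surface is equivalent to the existence of a nonzero element of reduced norm zero in $A_i \otimes_k k'$, i.e., to $A_i$ being split by $k'$. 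Hence $k'$ splits both $A_1$ and $A_2$, and therefore their sum $D$, forcing $\mathrm{ind}(D) = 4$ to divide $d$. Combined with the degree-$4$ zero-cycle obtained by intersecting $X$ with a $k$-rational $2$-plane in $\PP^4_k$, this yields $\mathrm{index}(X) = 4$.

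The construction is clean enough that no serious obstacle arises: the factorization of the determinant is a direct calculation, the identification $\Clif(n_{A_i}) = [A_i]$ is the classical fact that the full Clifford algebra of a quaternion norm form is a matrix algebra over the quaternion algebra itself, and the characterization of $k'$-points of a projective norm form is immediate from the definition of the reduced norm. The genuine nontrivial input is Albert's theorem that the generic biquaternion has index $4$; everything else is elementary. If instead one wants the base field $k$ to be $C_3$ (as hinted in the introduction), one could take $k_0 = \mathbb{C}$ and shrink to a suitable $C_3$ subfield like $\mathbb{C}(t_1,t_2,t_3)$, adjusting $a_i, b_i$ accordingly while preserving nonsplitness of $D$; this is the only step where some care is needed.
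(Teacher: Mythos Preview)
Your argument is correct. The key point---that any closed point on $X$ yields an extension $k'$ splitting both quaternion algebras $A_1$ and $A_2$, hence splitting the biquaternion $D$, forcing $\operatorname{ind}(D)=4$ to divide $[k':k]$---is sound, and the determinant computation confirming smoothness checks out. One minor comment: the Clifford-algebra identification $\Clif(Q_i)=[A_i]$ is not strictly needed for the logic; what you actually use is the more elementary fact that the projective norm-form quadric of $A_i$ has a $k'$-point if and only if $A_i$ splits over $k'$.

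Your route is genuinely different from the paper's. The paper argues indirectly: it builds an iterated Laurent-series field $k=k_0((t_1))\cdots((t_4))$, invokes a result of Lang--Tate to produce a torsor of period~$2$ and index~$16$ under the Jacobian of a genus-$2$ curve, and then uses the Flynn--Skorobogatov correspondence (torsor $\leadsto$ Kummer variety $\leadsto$ degree-$4$ del Pezzo) to push the index bound down to the del Pezzo surface via a degree-$4$ covering. This has the advantage of linking the example to the period--index problem for abelian surfaces (feeding into Theorem~\ref{thm:PI}), but it relies on substantial machinery. The paper also records, in Remark~\ref{rmk:PI}, two further constructions due to Ottem (Lefschetz hyperplane over $\kk(\PP^3_\C)$) and Wittenberg (degeneration over $k((t))$). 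Your construction is closest in spirit to Wittenberg's---both hinge on a quadric surface staying pointless over a prescribed quadratic extension---but yours is entirely explicit and self-contained, using only Albert's theorem and the splitting criterion for quaternion norm forms. Note, however, that your base field $k_0(a_1,b_1,a_2,b_2)$ is $C_4$ (for $k_0$ algebraically closed) rather than $C_3$, so the final remark about specializing to a $C_3$ field would indeed require the care you acknowledge; the paper's $C_3$ example comes from Ottem's construction instead.
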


\begin{proof}
	Let $k_0$ be an algebraically closed field of characteristic $0$. For $i = 1,\dots,2g$, set $k_i := k_{i-1}((t_i))$ and set $k := k_{2g}$. By a result of Lang and Tate \cite{LangTate}*{p. 678}, if $A/k_0$ is an abelian variety of dimension $g$ and $n$ is an integer, then there exists a torsor under $A_k = A \times_{k_0} \Spec(k)$ of period $n$ and index $n^{2g}$. In particular, if $C/k_0$ is any genus $2$ curve, then there exists a torsor under the Jacobian $J = \Jac(C_k)$ of $C_k$ of period $2$ and index $16$. Since $C$ is defined over the algebraically closed field $k_0$, it has a rational Weierstrass point over $k$. As observed by Flynn \cite{Flynn}, and worked out in detail by Skorobogatov \cite{SkoDP4}, if $J_\lambda$ is a $2$-covering $\pi_\lambda:J_\lambda \to J$ (i.e., a twist of $[2]:J \to J$ corresponding to $\lambda \in \HH^1(k,J[2])$), then there are morphisms
\[
	J_\lambda \leftarrow \tilde{J}_\lambda \rightarrow Z_\lambda \rightarrow X_\lambda\,,
\]
where $\tilde{J}_\lambda \to J_\lambda$ is the blow up of $J_\lambda$ at $\pi_\lambda^{-1}(0_J)$, $Z_\lambda$ is the desingularized Kummer variety associated to $J_\lambda$ and $Z_\lambda \to X_\lambda$ is a double cover of a del Pezzo surface of degree $4$. In particular, there is a degree $4$ morphism $\tilde{J}_\lambda \to X_\lambda$. So the index of $X_\lambda$ is at least $\operatorname{index}(J_\lambda)/4$, which will equal $4$ for suitable choice of $\lambda$ by the aforementioned result of Lang and Tate.
\end{proof}

\begin{theorem}\label{thm:PI}
	Suppose $k$ is a number field and $Y$ is a torsor of period $2$ under the Jacobian of a genus $2$ curve over $k$ with a rational Weierstrass point. The index of $Y$ divides $8$.
\end{theorem}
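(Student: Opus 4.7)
The plan is to reduce the bound to Theorem~\ref{thm:MainIndThm} via the Flynn--Skorobogatov construction already employed in the proof of Theorem~\ref{thm:ind4}. Since $Y$ has period dividing $2$, I can write $Y = J_\lambda$ for some $\lambda \in \HH^1(k,J[2])$ representing the associated $2$-covering $\pi_\lambda \colon Y \to J$. The hypothesis of a rational Weierstrass point on $C$ ensures that the construction of Flynn and Skorobogatov descends to $k$ and produces a chain of $k$-morphisms of smooth projective $k$-varieties
\[
  Y \longleftarrow \tilde{Y} \longrightarrow Z_\lambda \longrightarrow X_\lambda\,,
\]
where $\tilde{Y} \to Y$ is the blowup of $Y$ along $\pi_\lambda^{-1}(0_J)$, $Z_\lambda$ is the desingularized Kummer surface of $Y$, $Z_\lambda \to X_\lambda$ is a double cover, and $X_\lambda \subset \PP^4_k$ is a del Pezzo surface of degree $4$, i.e., a smooth complete intersection of two quadrics. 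Writing $f \colon \tilde{Y} \to X_\lambda$ for the composition, $f$ is finite of degree $4$ (and hence finite flat, by miracle flatness applied to the two smooth projective surfaces $\tilde{Y}$ and $X_\lambda$).

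Next, I would apply Theorem~\ref{thm:MainIndThm} to $X_\lambda$ over the number field $k$ to obtain a $k$-rational $0$-cycle $z$ of degree $2$ on $X_\lambda$. For any closed point $P \subset X_\lambda$ appearing in $z$, the scheme-theoretic fiber $f^{-1}(P)$ is finite flat over $\Spec k(P)$ of length $4$, so it is a $0$-dimensional subscheme of $\tilde{Y}$ of degree $4 \cdot [k(P):k]$ over $k$. Consequently, the pullback cycle $f^{*} z$ is a $k$-rational $0$-cycle on $\tilde{Y}$ of degree $4 \cdot \deg(z) = 8$. Pushing forward along the proper birational morphism $\tilde{Y} \to Y$, which preserves degrees of $0$-cycles, yields a $0$-cycle of degree $8$ on $Y$, proving that the index of $Y$ divides $8$.

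The substance of the argument is entirely in Theorem~\ref{thm:MainIndThm}; the remaining steps are formal intersection-theoretic bookkeeping. The only delicate point to verify is that the Flynn--Skorobogatov diagram is defined over $k$ rather than merely over $\kbar$. A rational Weierstrass point on $C$ allows one to write an affine model $y^2 = f(x)$ with $\deg f = 5$ and $f \in k[x]$, from which the pencil of quadrics in $\PP^4_k$ cutting out $X_\lambda$ can be obtained explicitly (see~\cite{SkoDP4}). With that descent in hand, no further arithmetic input is required, and the upper bound on the index of $X_\lambda$ propagates by the degree-$4$ formula above to the claimed bound on the index of $Y$.
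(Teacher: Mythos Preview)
Your proof is correct and follows essentially the same approach as the paper: both use the Flynn--Skorobogatov chain $J_\lambda \leftarrow \tilde{J}_\lambda \to Z_\lambda \to X_\lambda$ to bound $\operatorname{index}(Y)$ by $4\cdot\operatorname{index}(X_\lambda)$ and then invoke Theorem~\ref{thm:MainIndThm}. You have supplied more detail (the flatness and pullback/pushforward bookkeeping) than the paper's two-line proof, but the substance is identical.
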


\begin{proof}
	As in the proof of the previous theorem, the index of $Y$ divides $4 \operatorname{index}(X)$ for some del Pezzo surface $X$ of degree $4$. The result follows from Theorem~\ref{thm:MainIndThm}.
\end{proof}

\begin{rmks}\label{rmk:PI}\hfill
	\begin{enumerate}
		\item The conclusion of Theorem~\ref{thm:PI} was known to hold by work of Clark \cite{Clark}*{Theorems 2 and 3} when $k$ is a $p$-adic field and when $k$ is a number field and $Y$ is locally solvable.
		
        \item Arguing as in the proof of the theorem we see that the Kummer variety $Z_\lambda$ has index dividing $4$ when $k$ is a local or global field. This is lower than one would expect, given that $Z_\lambda$ is an intersection of $3$ quadrics in $\PP^5_k$.
		\item The result of Lang-Tate quoted in the proof above shows that over general fields of characteristic $0$, there are examples where $Z_\lambda$ and $Y$ have index $8$ and $16$, respectively.
		
        \item In response to a preliminary report on this work by the authors, John Ottem suggested the following alternate proof of Theorem~\ref{thm:ind4} {which gives an example over the \(C_3\) field \(\kk(\PP^3_{\C})\)}.  Let \(D_1,D_2\subset \PP^3\times \PP^4\) be two general $(2,2)$ divisors over $\C$, and let \(Y = D_1\cap D_2\).  Then, by the Lefschetz hyperplane theorem (applied twice), restriction gives an isomorphism $\HH^4(\PP^3\times \PP^4,\Z) \stackrel{\sim}{\to}\HH^4(D_1, \Z)\stackrel{\sim}{\to} \HH^4(Y, \Z)$.  Note that the generic fiber of the first projection is a del Pezzo surface of degree $4$ over $\kk(\PP^3)$.  Hence any threefold $V\subset Y$ can be expressed as $aH_1^2 + bH_1H_2 + cH_2^2$, where $H_i$ denotes the pullback of $\OO(1)$ under the projection $\pi_i$.  Then the degree of $V\to \PP^3$ is given by
		\[
            V.H_1^3 = V.H_1^3.X = (a H_1^2+b H_1H_2+cH_2^2). H_1^3.(2H_1+2H_2)^2, 
        \]
        which must be divisible by $4$.  Thus $Y_{\kk(\PP^3)}$ has index $4$. {Note that to apply the Lefschetz hyperplane theorem, we need \(\dim D_i> 5\), so this argument does not extend to \(\kk(\PP^2_\C)\).}
        \label{item:Ottem} 
        
        This construction suggested by Ottem generalizes to arbitrary complete intersections.  Namely, given a sequence of degrees $(d_1, \dots, d_r)$ and an ambient dimension $N$, one can consider an intersection of general $(d_1, d_1), (d_2, d_2), \dots, (d_r,d_r)$ hypersurfaces in $\PP^M\times \PP^N$.  If $M > N-r$, then the same argument as above yields a $(d_1, \dots, d_r)$ smooth complete intersection $Y\subset \PP^N_{\kk(\PP^M)}$ with index $d_1d_2\cdots d_r$.

        \item\label{rmk:Wittenberg} After viewing an early draft of this paper, Olivier Wittenberg shared a correspondence of his from 2013~\cite{Wittenberg-PersonalCommunication} that provides yet another construction that proves Theorem~\ref{thm:ind4}.  {Let \(k\) be any field of characteristic different \(2\) such that there exists a quadric surface \(Q\) with no \(k\)-points that remains pointless after a quadratic extension \(k'/k\). Wittenberg's construction gives an example over the field \(k((t))\).
        
        Let \(f\) be a general rank \(2\) quadric in \(\PP^4\) that splits over \(k'\).  Then for a general quadric \(g\), the intersection \(Q\cap V(f + tg)\) is a smooth del Pezzo surface of degree \(4\) that has index \(4\) over \(k((t))\).  Indeed, the smooth locus of the special fiber has index \(4\) by construction, so (for general enough \(g\)), the general fiber must also have index \(4\).  This construction of Wittenberg extends to give complete intersections of $n$ quadrics with index $2^n$ (over fields of larger transcendence degree).} \label{item:Wittenberg}

	\end{enumerate}
\end{rmks}

\subsection{The index of a degree \texorpdfstring{$d$}{d} del Pezzo surface}\label{sec:dP}
    
    The following table gives sharp upper bounds for the indices of degree $d$ del Pezzo surfaces over local fields, number fields and arbitrary fields of characteristic $0$. The entries in the column $d = 4$ are a consequence of the results in this paper, while for $d \ne 4$, they can be deduced fairly easily from known results as described below.
    	
    	\begin{center}
    \begin{tabular}{|r||c|c|c|l|c|l|c|c|c|}\hline
    $d$ & $9$& $8$ & $7$ & $6$ & $5$ & $4$ & $3$ & $2$ & $1$\\
    \hline\hline
    $k$      arbitrary & $3$ & $4$ & $1$ & $6$       & $1$ & $4$ [Thm.~\ref{thm:ind4}]& $3$ & $2$ & $1$ \\
    \hline
    $k$ a number field & $3$ & $2$ & $1$ & $6$       & $1$ & $2$ [Thm.~\ref{thm:MainIndThm}] & $3$ & $2$  & $1$ \\
    \hline
    $k$ a local field  & $3$ & $2$ & $1$ & $2$ or $3$ & $1$ & $2$ [Thm.~\ref{thm:MainIndThm}]& $3$ & $2$ & $1$ \\
    \hline
    \end{tabular}
    \end{center}

	When $d = 9$, $Y$ is a Severi-Brauer surface and so the index of $Y$ divides $3$ and examples of index $3$ exist whenever $\Br(k)$ contains an element of order $3$. 
	
	 When $d = 8$, $Y = \Res_{L/k}(C)$ is the restriction of scalars of a conic $C/L$ defined over a degree $2$ \'etale algebra $L/k$~\cite{Poonen-Qpoints}*{Prop. 9.4.12}. Since the conic has a point over some quadratic extension $L'/L$, the index of $Y$ divides $4$ and over general fields there are examples with index $4$. Over local and global fields however, the index must divide $2$. Indeed, in this case $C$ will have a point over a quadratic extension $L'/L$ of the form $L' = k' \otimes_k L$ for some quadratic extension $k'/k$. The universal property of restriction of scalars then gives $Y(k') \ne \emptyset$, showing that the index divides $2$.
	
	When $d = 7$, $Y(k) \ne \emptyset$ over any field $k$ and so the index is always equal to $1$. The same applies to $d = 1,5$ (see, e.g.,~\cite{Poonen-Qpoints}*{Thm 9.4.8 and Section 9.4.11}).
	
	For $d = 6$, $Y$ is determined by a $\Gal(L/k)$-stable triple of geometric points on a Severi-Brauer surface $S/L$ over a quadratic \'etale algebra $L/k$ such that if $S \not\simeq \PP^2_L$ then the class of $S$ in the Brauer group does not lie in the image of $\Br(k) \to \Br(L)$~\cite{Corn}. If $k$ is a local field and $L$ is a quadratic field extension, then the map $\Br(k) \to \Br(L)$ is an isomorphism, so either $S = \PP^2_L$ (in which case $Y$ has index dividing $2$) or $L = k\times k$ in which case the index of $Y$ divides $3$. One can construct examples of index $6$ over number fields, by arranging to have index $2$ at one completion and index $3$ at another.
	
	For $d = 3$ and $k$ local, index $1$ implies the existence of a $k$-rational point~\cite{Coray}, and so a cubic surface without points over some local field has index $3$. This gives examples of index $3$ over number fields as well. 
	
	For $d = 2$, index $2$ examples can be obtained by blowing up a degree $4$ del Pezzo surface of index $2$ at a quadratic point. By Theorems~\ref{thm:MainIndThm} and~\ref{thm:MainThm}, any del Pezzo surface of degree $4$ without points over a local field gives such an example. The surface considered in Section~\ref{sec:BSDexample} gives an example over a number field.


\begin{bibdiv}
    \begin{biblist}

    \bib{Amer}{article}{
    	author={Amer, M.},
    	title = {Quadratische Formen \"uber Funktionenk\"orpern},
    	note = {unpublished dissertation},
    	date={1976},
    	publisher={Johannes Gutenberg Universit¨at, Mainz}
	}
    
    \bib{JNT}{article}{
    author={An, Sang Yook},
    author={Kim, Seog Young},
    author={Marshall, David C.},
    author={Marshall, Susan H.},
    author={McCallum, William G.},
    author={Perlis, Alexander R.},
    title={Jacobians of genus one curves},
    journal={J. Number Theory},
    volume={90},
    date={2001},
    number={2},
    pages={304--315},
    issn={0022-314X},
    doi={10.1006/jnth.2000.2632},
    }
    
    \bib{BeckerMacLane}{article}{
   author={Becker, M. F.},
   author={MacLane, S.},
   title={The minimum number of generators for inseparable algebraic
   extensions},
   journal={Bull. Amer. Math. Soc.},
   volume={46},
   date={1940},
   pages={182--186},
   issn={0002-9904},
   doi={10.1090/S0002-9904-1940-07169-1},
}
            
    \bib{BSD}{article}{
    author={Birch, B. J.},
    author={Swinnerton-Dyer, H. P. F.},
    title={The Hasse problem for rational surfaces},
    journal={J. Reine Angew. Math.},
    volume={274(275)},
    date={1975},
    pages={164--174},
    issn={0075-4102},
    doi={10.1515/crll.1975.274-275.164},
    }

    \bib{Brumer}{article}{
    author={Brumer, Armand},
    title={Remarques sur les couples de formes quadratiques},
    language={French, with English summary},
    journal={C. R. Acad. Sci. Paris S\'{e}r. A-B},
    volume={286},
    date={1978},
    number={16},
    pages={A679--A681},
    issn={0151-0509},
    }
    
    \bib{CaoLiang}{article}{
    author={Cao, Yang},
    author={Liang, Yongqi},
    title={\'Etale Brauer-Manin obstruction for Weil restrictions},
    eprint={arxiv:math/2202.04383}
    }
	
    \bib{Cheah}{article}{
        author={Cheah, Jan},
        title={Cellular decompositions for nested Hilbert schemes of points},
        journal={Pacific J. Math.},
        volume={183},
        date={1998},
        number={1},
        pages={39--90},
        issn={0030-8730},
        doi={10.2140/pjm.1998.183.39},
     }

    \bib{Clark}{article}{
    author={Clark, Pete},
    title={The period-index problem in WC-groups II: abelian varieties},
    note={preprint},
    eprint={arXiv:math/0406135}
    }

            
	  \bib{CTCoray}{article}{
	   author={Colliot-Th\'{e}l\`ene, Jean-Louis},
	   author={Coray, Daniel},
	   title={L'\'{e}quivalence rationnelle sur les points ferm\'{e}s des surfaces
	   rationnelles fibr\'{e}es en coniques},
	   language={French},
	   journal={Compositio Math.},
	   volume={39},
	   date={1979},
	   number={3},
	   pages={301--332},
	   issn={0010-437X},
	}

    \bib{CTP}{article}{
        author={Colliot-Th\'{e}l\`ene, Jean-Louis},
        author={Poonen, Bjorn},
        title={Algebraic families of nonzero elements of Shafarevich-Tate groups},
        journal={J. Amer. Math. Soc.},
        volume={13},
        date={2000},
        number={1},
        pages={83--99},
        issn={0894-0347},
        doi={10.1090/S0894-0347-99-00315-X},
        }

    \bib{CTS1}{article}{
    author={Colliot-Th\'{e}l\`ene, J.-L.},
    author={Sansuc, J.-J.},
    title={La descente sur les vari\'{e}t\'{e}s rationnelles},
    language={French},
    conference={
        title={Journ\'{e}es de G\'{e}ometrie Alg\'{e}brique d'Angers, Juillet
        1979/Algebraic Geometry, Angers, 1979},
    },
    book={
        publisher={Sijthoff \& Noordhoff, Alphen aan den Rijn---Germantown, Md.},
    },
    date={1980},
    pages={223--237},
    }
    
    \bib{CTS-Schinzel}{article}{
   author={Colliot-Th\'{e}l\`ene, Jean-Louis},
   author={Sansuc, Jean-Jacques},
   title={Sur le principe de Hasse et l'approximation faible, et sur une
   hypoth\`ese de Schinzel},
   language={French},
   journal={Acta Arith.},
   volume={41},
   date={1982},
   number={1},
   pages={33--53},
   issn={0065-1036},
   doi={10.4064/aa-41-1-33-53},
}

    \bib{CTSSDI}{article}{
    author={Colliot-Th\'{e}l\`ene, Jean-Louis},
    author={Sansuc, Jean-Jacques},
    author={Swinnerton-Dyer, Peter},
    title={Intersections of two quadrics and Ch\^{a}telet surfaces. I},
    journal={J. Reine Angew. Math.},
    volume={373},
    date={1987},
    pages={37--107},
    issn={0075-4102},
    } 

    \bib{CTSSDII}{article}{
    author={Colliot-Th\'{e}l\`ene, Jean-Louis},
    author={Sansuc, Jean-Jacques},
    author={Swinnerton-Dyer, Peter},
    title={Intersections of two quadrics and Ch\^{a}telet surfaces. II},
    journal={J. Reine Angew. Math.},
    volume={374},
    date={1987},
    pages={72--168},
    issn={0075-4102},
    }

    \bib{CTSko93}{article}{
   author={Colliot-Th\'{e}l\`ene, Jean-Louis},
   author={Skorobogatov, Alexei N.},
   title={Groupe de Chow des z\'{e}ro-cycles sur les fibr\'{e}s en quadriques},
   language={French, with English summary},
   journal={$K$-Theory},
   volume={7},
   date={1993},
   number={5},
   pages={477--500},
   issn={0920-3036},
   doi={10.1007/BF00961538},
}

    \bib{GoodReductionBM}{article}{
    author={Colliot-Th\'{e}l\`ene, Jean-Louis},
    author={Skorobogatov, Alexei N.},
    title={Good reduction of the Brauer-Manin obstruction},
    journal={Trans. Amer. Math. Soc.},
    volume={365},
    date={2013},
    number={2},
    pages={579--590},
    issn={0002-9947},
    doi={10.1090/S0002-9947-2012-05556-5},
    }
    
    \bib{CTS-Brauer}{book}{
    author={Colliot-Th\'{e}l\`ene, Jean-Louis},
    author={Skorobogatov, Alexei N.},
   title={The Brauer-Grothendieck Group},
   series={Ergebnisse der Mathematik und ihrer Grenzgebiete. 3. Folge},
   volume={71},
   publisher={Springer International Publishing},
   date={2021},
   pages={XVI+451},
   isbn={978-3-030-74247-8},
}

    \bib{CTSD}{article}{
    author={Colliot-Th\'{e}l\`ene, Jean-Louis},
    author={Swinnerton-Dyer, Peter},
    title={Hasse principle and weak approximation for pencils of
    Severi-Brauer and similar varieties},
    journal={J. Reine Angew. Math.},
    volume={453},
    date={1994},
    pages={49--112},
    issn={0075-4102},
    doi={10.1515/crll.1994.453.49},
    }

\bib{Coray}{article}{
   author={Coray, D. F.},
   title={Algebraic points on cubic hypersurfaces},
   journal={Acta Arith.},
   volume={30},
   date={1976},
   number={3},
   pages={267--296},
   issn={0065-1036},
   doi={10.4064/aa-30-3-267-296},
}  

\bib{Corn}{article}{
   author={Corn, Patrick},
   title={Del Pezzo surfaces of degree 6},
   journal={Math. Res. Lett.},
   volume={12},
   date={2005},
   number={1},
   pages={75--84},
   issn={1073-2780},
   doi={10.4310/MRL.2005.v12.n1.a8},
}

\bib{Demyanov}{article}{
   author={Demyanov, V. B.},
   title={Pairs of quadratic forms over a complete field with discrete norm
   with a finite field of residue classes},
   language={Russian},
   journal={Izv. Akad. Nauk SSSR. Ser. Mat.},
   volume={20},
   date={1956},
   pages={307--324},
   issn={0373-2436},
}

\bib{DD}{article}{
   author={Dolgachev, Igor},
   author={Duncan, Alexander},
   title={Regular pairs of quadratic forms on odd-dimensional spaces in
   characteristic 2},
   journal={Algebra Number Theory},
   volume={12},
   date={2018},
   number={1},
   pages={99--130},
   issn={1937-0652},
   doi={10.2140/ant.2018.12.99},
}

\bib{EKM-QuadraticForms}{book}{
   author={Elman, Richard},
   author={Karpenko, Nikita},
   author={Merkurjev, Alexander},
   title={The algebraic and geometric theory of quadratic forms},
   series={American Mathematical Society Colloquium Publications},
   volume={56},
   publisher={American Mathematical Society, Providence, RI},
   date={2008},
   pages={viii+435},
   isbn={978-0-8218-4329-1},
   doi={10.1090/coll/056},
}
	

\bib{Flynn}{article}{
   author={Flynn, E. V.},
   title={Homogeneous spaces and degree 4 del Pezzo surfaces},
   journal={Manuscripta Math.},
   volume={129},
   date={2009},
   number={3},
   pages={369--380},
   issn={0025-2611},
   doi={10.1007/s00229-009-0268-1},
}

\bib{Frossard}{article}{
   author={Frossard, Emmanuelle},
   title={Fibres d\'{e}g\'{e}n\'{e}r\'{e}es des sch\'{e}mas de Severi-Brauer d'ordres},
   language={French},
   journal={J. Algebra},
   volume={198},
   date={1997},
   number={2},
   pages={362--387},
   issn={0021-8693},
   review={\MR{1489903}},
   doi={10.1006/jabr.1997.7141},
}

\bib{GS-csa}{book}{
   author={Gille, Philippe},
   author={Szamuely, Tam{\'a}s},
   title={Central simple algebras and Galois cohomology},
   series={Cambridge Studies in Advanced Mathematics},
   volume={101},
   publisher={Cambridge University Press},
   place={Cambridge},
   date={2006},
   pages={xii+343},
   isbn={978-0-521-86103-8},
   isbn={0-521-86103-9},
}


\bib{HeathBrown}{article}{
   author={Heath-Brown, D. R.},
   title={Zeros of pairs of quadratic forms},
   journal={J. Reine Angew. Math.},
   volume={739},
   date={2018},
   pages={41--80},
   issn={0075-4102},
   doi={10.1515/crelle-2015-0062},
}

\bib{Kanevsky}{article}{
   author={Kanevsky, Dimitri},
   title={Application of the conjecture on the Manin obstruction to various
   Diophantine problems},
   note={Journ\'{e}es arithm\'{e}tiques de Besan\c{c}on (Besan\c{c}on, 1985)},
   journal={Ast\'{e}risque},
   number={147-148},
   date={1987},
   pages={307--314, 345},
   issn={0303-1179},
   review={\MR{891437}},
}

\bib{Kollar}{article}{
   author={Koll\'{a}r, J\'{a}nos},
   title={Polynomials with integral coefficients, equivalent to a given
   polynomial},
   journal={Electron. Res. Announc. Amer. Math. Soc.},
   volume={3},
   date={1997},
   pages={17--27},
   issn={1079-6762},
   doi={10.1090/S1079-6762-97-00019-X},
}

\bib{KST-dp4}{article}{
   author={Kunyavski\u{\i}, B. \`E.},
   author={Skorobogatov, A. N.},
   author={Tsfasman, M. A.},
   title={del Pezzo surfaces of degree four},
   language={English, with French summary},
   journal={M\'{e}m. Soc. Math. France (N.S.)},
   number={37},
   date={1989},
   pages={113},
   issn={0037-9484},
}

\bib{Lam-QF}{book}{
   author={Lam, T. Y.},
   title={Introduction to quadratic forms over fields},
   series={Graduate Studies in Mathematics},
   volume={67},
   publisher={American Mathematical Society, Providence, RI},
   date={2005},
   pages={xxii+550},
   isbn={0-8218-1095-2},
   doi={10.1090/gsm/067},
}

\bib{LangTate}{article}{
   author={Lang, Serge},
   author={Tate, John},
   title={Principal homogeneous spaces over abelian varieties},
   journal={Amer. J. Math.},
   volume={80},
   date={1958},
   pages={659--684},
   issn={0002-9327},
   doi={10.2307/2372778},
}

\bib{Lichtenbaum}{article}{
   author={Lichtenbaum, Stephen},
   title={The period-index problem for elliptic curves},
   journal={Amer. J. Math.},
   volume={90},
   date={1968},
   pages={1209--1223},
   issn={0002-9327},
   doi={10.2307/2373297},
}

\bib{LiuLorenzini}{article}{
   author={Liu, Qing},
   author={Lorenzini, Dino},
   title={New points on curves},
   journal={Acta Arith.},
   volume={186},
   date={2018},
   number={2},
   pages={101--141},
   issn={0065-1036},
   doi={10.4064/aa170322-23-8},
}

	\bib{ADT}{book}{
author={Milne, James S.},
title={Arithmetic Duality Theorems},
year={2006},
publisher={BookSurge, LLC},
edition={Second edition},
pages={viii+339},
isbn={1-4196-4274-X}
	}
%

\bib{Poonen-Qpoints}{book}{
   author={Poonen, Bjorn},
   title={Rational points on varieties},
   series={Graduate Studies in Mathematics},
   volume={186},
   publisher={American Mathematical Society, Providence, RI},
   date={2017},
   pages={xv+337},
   isbn={978-1-4704-3773-2},
   doi={10.1090/gsm/186},
}

\bib{Reid}{article}{
	author={Reid, M.},
	title={The complete intersection of two or more quadrics},
	note={PhD. Thesis, Cambridge},
	date={1972}
}

\bib{SalbergerSkorobogatov}{article}{
   author={Salberger, P.},
   author={Skorobogatov, A. N.},
   title={Weak approximation for surfaces defined by two quadratic forms},
   journal={Duke Math. J.},
   volume={63},
   date={1991},
   number={2},
   pages={517--536},
   issn={0012-7094},
   doi={10.1215/S0012-7094-91-06322-2},
}


\bib{SkoDP4}{article}{
   author={Skorobogatov, Alexei},
   title={del Pezzo surfaces of degree 4 and their relation to Kummer
   surfaces},
   journal={Enseign. Math. (2)},
   volume={56},
   date={2010},
   number={1-2},
   pages={73--85},
   issn={0013-8584},
   doi={10.4171/LEM/56-1-3},
}

\bib{SZproducts}{article}{
   author={Skorobogatov, Alexei N.},
   author={Zarhin, Yuri G.},
   title={The Brauer group and the Brauer-Manin set of products of
   varieties},
   journal={J. Eur. Math. Soc. (JEMS)},
   volume={16},
   date={2014},
   number={4},
   pages={749--768},
   issn={1435-9855},
   doi={10.4171/JEMS/445},
}

\bib{Smith}{article}{
   author={Smith, Larry},
   title={On the invariant theory of finite pseudoreflection groups},
   journal={Arch. Math. (Basel)},
   volume={44},
   date={1985},
   number={3},
   pages={225--228},
   issn={0003-889X},
   review={\MR{784089}},
   doi={10.1007/BF01237854},
}

\bib{Springer}{article}{
   author={Springer, T. A.},
   title={Quadratic forms over fields with a discrete valuation. II. Norms},
   journal={Nederl. Akad. Wetensch. Proc. Ser. A. {\bf 59} = Indag. Math.},
   volume={18},
   date={1956},
   pages={238--246},
}

	
\bib{stacks-project}{misc}{
  author       = {The {Stacks project authors}},
  title        = {The Stacks project},
  url = {\url{https://stacks.math.columbia.edu}},
  year         = {2022},
  label = {SP}
}

\bib{Tian}{article}{
   author={Tian, Zhiyu},
   title={Hasse principle for three classes of varieties over global
   function fields},
   journal={Duke Math. J.},
   volume={166},
   date={2017},
   number={17},
   pages={3349--3424},
   issn={0012-7094},
   doi={10.1215/00127094-2017-0034},
}
            
\bib{VAV}{article}{
   author={V\'{a}rilly-Alvarado, Anthony},
   author={Viray, Bianca},
   title={Arithmetic of del Pezzo surfaces of degree 4 and vertical Brauer
   groups},
   journal={Adv. Math.},
   volume={255},
   date={2014},
   pages={153--181},
   issn={0001-8708},
   doi={10.1016/j.aim.2014.01.004},
}

\bib{Wittenberg}{book}{
   author={Wittenberg, Olivier},
   title={Intersections de deux quadriques et pinceaux de courbes de genre
   1/Intersections of two quadrics and pencils of curves of genus 1},
   language={French},
   series={Lecture Notes in Mathematics},
   volume={1901},
   publisher={Springer, Berlin},
   date={2007},
   pages={viii+218},
   isbn={978-3-540-69137-2},
   isbn={3-540-69137-5},
   doi={10.1007/3-540-69137-5},
}

\bib{Wittenberg-PersonalCommunication}{misc}{
   author={Wittenberg, Olivier},
   date = {2013},
   note = {personal communication}
}
\end{biblist}
\end{bibdiv}

\end{document}